\newtheorem{defi}{Definition}[section]
\newtheorem{thm}{Theorem}[section]
\newtheorem{lem}{Lemma}[section]
\newtheorem{rmk}{Remark}[section]
\newtheorem{prop}{Proposition}[section]
\newcommand{\vv}[1]{\boldsymbol{#1}}
\numberwithin{equation}{section}
\newcommand{\beq}{\begin{equation}}
\newcommand{\eeq}{\end{equation}}
\newcommand{\ben}{\begin{eqnarray}}
\newcommand{\een}{\end{eqnarray}}
\newcommand{\beno}{\begin{eqnarray*}}
\newcommand{\eeno}{\end{eqnarray*}}
\let\f=\frac
\let\vth=\vartheta
\newcommand{\be}{\begin{equation} \label}
	\newcommand{\ee}{\end{equation}}
\newcommand{\bea}{\begin{eqnarray}\label}
	\newcommand{\eea}{\end{eqnarray}}
\newcommand{\bas}{\begin{eqnarray*}}
	\newcommand{\eas}{\end{eqnarray*}}
\newcommand{\bit}{\begin{itemize}}
	\newcommand{\eit}{\end{itemize}}
\newcommand{\N}{{\mathbb N}}
\newcommand{\Z}{{\mathbb Z}}
\newcommand{\R}{{\mathbb R}}
\newcommand{\pa}{\partial}
\newcommand{\eps}{\varepsilon}
\newcommand{\rr}[1]{\left( #1 \right)}
 \def\aa{{\mathsf a}}
 \newcommand{\wei}{\langle v \rangle}
\newcommand{\ba}{\begin{aligned}}
\newcommand{\ea}{\end{aligned}}
 \def\na{\nabla}
 \newcommand{\lr}[1]{\langle #1 \rangle}
\def\eqdefa{\buildrel\hbox{\footnotesize def}\over =}
\let\pa=\partial
\let\Ga=\Gamma
\let\z=\zeta
\let\lam=\lambda
\let\f=\frac
\let\om=\omega
\let\D=\Delta
\let\Lam=\Lambda
\let\Om=\Omega
\let\ka=\kappa
\def\mP{\mathbb{P}}
\def\a{\mathfrak{a}}
\def\c{\mathfrak{c}}
\def\mfR{\mathfrak{R}}
\def\mfS{\mathfrak{S}}
\def\ma{\mathbf{a}}
\def\mb{\mathbf{b}}
\def\mc{\mathbf{c}}
\def\pa{\partial}
\def\msA{\mathscr A}
\def\bb{\mathfrak b}
\def\virgp{\raise 2pt\hbox{,}}
\def\cdotpv{\raise 2pt\hbox{;}}
\def\eqdefa{\buildrel\hbox{\footnotesize def}\over =}
\def\C{\mathop{\mathbb C\kern 0pt}\nolimits}
\def\DD{\mathop{\mathbb D\kern 0pt}\nolimits}
\def\EE{\mathop{{\mathbb E \kern 0pt}}\nolimits}
\def\K{\mathop{\mathbb K\kern 0pt}\nolimits}
\def\N{\mathop{\mathbb N\kern 0pt}\nolimits}
\def\Q{\mathop{\mathbb Q\kern 0pt}\nolimits}
\def\R{\mathop{\mathbb R\kern 0pt}\nolimits}
\def\SS{\mathop{\mathbb S\kern 0pt}\nolimits}
\def\sfT{\mathsf{T}}
\def\<{\langle}
\def\>{\rangle}
\def\th{\theta}
\def\al{\alpha}
\def\be{\beta}
\def\de{\delta}
\def\gs{\gtrsim}
\def\ls{\lesssim}
\def\S{\mathbb{S}}
\def\vep{\varepsilon}
\begin{document}

\title[The Landau equation with harmonic potential]{On Landau equation with harmonic potential: nonlinear stability of time-periodic Maxwell-Boltzmann distributions 
}

\author[C. Cao, L.-B. He and J. Ji]{Chuqi Cao, Ling-Bing He and Jie Ji}
\address[C. Cao]{Department of Applied Mathematics, The Hong Kong Polytechnic University, 
Hong Kong,  P. R.  China.} \email{chuqicao@gmail.com}
\address[L.-B. He]{Department of Mathematical Sciences, Tsinghua University\\
	Beijing 100084,  P. R.  China.} \email{hlb@tsinghua.edu.cn}
\address[J. Ji]{Beijing International Center for Mathematical Research, Peking University\\
	Beijing 100871,  P. R.  China.} \email{jij22@pku.edu.cn}

\begin{abstract}   We provide the first and rigorous confirmations of the hypotheses by  Ludwig Boltzmann in his   seminal paper \cite{Boltzmann} within the context of the Landau equation in the presence of a harmonic potential. We prove that
(i) Each {\it entropy-invariant solution} can be identified as a {\it time-periodic Maxwell-Boltzmann distribution}. Moreover, these distributions can be characterized by thirteen conservation laws, which sheds light on the global dynamics. 
(ii) Each {\it time-periodic Maxwell-Boltzmann distribution} is nonlinearly stable,  including neutral asymptotic stability and Lyapunov stability. Furthermore, the convergence rate is entirely reliant on the thirteen conservation laws and is optimal when compared to the linear scenario.
\end{abstract}

\maketitle

\setcounter{tocdepth}{1}
\tableofcontents




\section{Introduction} In 1876, Boltzmann published a paper titled ``\"Uber die Aufstellung und Integration der Gleichungen, welche die Molekularbewegung in Gasen bestimmen''. This paper constitutes a significant contribution to kinetic theory and statistical mechanics. In \cite{Boltzmann}, Boltzmann formulated the kinetic equation, which describes the statistical behavior of particles and their evolution over time due to collisions. Mathematically, it can be expressed as follows:
\ben\label{kineticwithexternalpotential}
\pa_tF+v\cdot\na_x F-\na_x\Phi\cdot\na_vF=\mathcal{C}_{coll}(F,F).
\een

 Several explanations are in order:

\noindent $\bullet$ $F(t,x,v)\geq0$ stands for the distribution of particles that at time $t\in\R^+$ with position $x\in\R^3_x$ and velocity $v\in\R^3_v$. In \eqref{kineticwithexternalpotential}, $(-\na_x\Phi)$ is the acceleration due to the external potential $\Phi=\Phi(x)$, and $\mathcal{C}_{coll}(F,F)$ denotes the collision effect due to  binary collisions.  Typically, $\mathcal{C}_{coll}(F,F)$ represents the Boltzmann or Landau collision operator.

\noindent $\bullet$ The streaming term of \eqref{kineticwithexternalpotential}:
 $S(F):=-v\cdot\na_x F+\na_x\Phi\cdot\na_vF$ is the local change of $F$ per second due to the independent
motion of the molecules in phase space.   $S(F)$  can also be regarded as the one particle Liouville operation $\{H,F\}$ with $H=H(x,v):=|v|^2/2+\Phi(x)$ and $\{H,\cdot\}:=(\na_xH)\cdot\na_v-(\na_vH)\cdot\na_x$. This implies that $(\pa_t-S)F=0$ is governed by the Hamiltonian system and can be  solved by   trajectory. 
\subsection{Boltzmann's argument on the approach to equilibrium } Now we turn to the basic problem of Boltzmann: why and how is the equilibrium state reached in time. Since Boltzmann's arguments are well-known, let us sketch his proof. The first step relies on the famous $H$-theorem. If the entropy $\mathscr{H}(F)$ and the entropy dissipation $\mathscr{D}(F)$ are defined by
\begin{equation}\label{entropy}
\mathscr{H}(F)(t):=\int_{\R^3_x\times\R^3_v}F\log Fdvdx,\quad
\mathscr{D}(F)(t):=-\int_{\R^3_x\times\R^3_v}\mathcal{C}_{coll}(F,F)\log F dvdx,
\end{equation} then the $H$-theorem states that
\begin{equation}\label{H-theorem}
\f d{dt}\mathscr{H}(F)(t)+\mathscr{D}(F)(t)=0.
\end{equation}
By the change of variables for the collision term $\mathcal{C}_{coll}$, one may easily derive that $\mathscr{D}(F)\ge0$. 
This implies the assertions that $\mathscr{H}(F)(t)$ decreases in time and 
\ben\label{entropyinvariant} &&\mbox{$F$ is an entropy-invariant-in-time solution of \eqref{kineticwithexternalpotential}}\Leftrightarrow \mathscr{H}(F)(t)=\mathscr{H}(F)(0), \quad \forall t\ge0\notag \\ &&\Leftrightarrow \mathscr{D}(F)(t)=0,\quad \forall t\ge0 \Leftrightarrow \mathcal{C}_{coll}(F,F)(t)=0, \quad \forall t\ge0. \een 
The detailed proof will be given in the Appendix. Since entropy-invariant solutions represent states of the system that are in some sense equilibrium solutions, let us give a more precise definition as follows:

\begin{defi}\label{Defoentropy-invariant}
	 $M = M(t, x, v)\ge 0$ is a {\it entropy-invariant solution} of \eqref{kineticwithexternalpotential}, if 
	 \ben\label{integcondi}\int_{\R^3_x\times\R^3_v}(|x|^2 + |v|^2 + 1)M(t, x, v)dxdv < +\infty,\een and for any $t\ge0$,
	\begin{equation}\label{equationforeisol}
		 \partial_tM + v \cdot \nabla_xM - \na_x\Phi \cdot \nabla_vM = 0 =\mathcal{C}_{coll}(M, M).
	\end{equation}
	\end{defi}
 \begin{rmk} The  condition \eqref{integcondi} is to ensure the solution satisfies the basic conservation laws(see Lemma \ref{CLaw} for details). While \eqref{equationforeisol} stems from  \eqref{kineticwithexternalpotential} and \eqref{entropyinvariant} directly.
\end{rmk}

It is not difficult to deduce that if  $\mathcal{C}_{coll}(M, M)=0$, then $M$ should be the local Maxwellian, i.e.,  
\ben\label{localMaxwellian}M=A\exp\bigg\{-\f{\beta}2|v-u|^2\bigg\},\een
where $A,\beta$ and the average velocity $u$ can still be functions of $x$ and $t$.  Since the local Maxwellian $M$ has to fulfill the first equation in \eqref{equationforeisol}, by some careful computation, we may finally   
derive that $M$ should be the Maxwell-Boltzmann distribution, i.e.,
\ben\label{Maxwell-Boltzmann distrubution} M=A_0\exp\bigg\{-\beta\big(|v|^2/2+\Phi(x)\big)\bigg\}, \een
where  $\beta$ equals a universal constant independent of $x$ and $t$,  $A_0$ and $\beta$ are determined  by the given total number and total energy of the molecules. This would be justified rigorously for  a sufficiently general external potential $\Phi(x)$ (which include the shape of the vessel or of the wall potential).  As a result, Boltzmann showed that any initial distribution $F_0$ approaches in the course of time the Maxwell-Boltzmann distribution. However, Boltzmann also pointed out that 
for some special potentials for instance a harmonic potential(i.e., $\Phi(x)=\omega_0^2|x|^2/2$), the  Maxwell-Boltzmann distribution will not be reached in time. This is mainly due to the fact that 
\[
M= \exp \bigg\{   -|v\cos t +x\sin t|^2 -|x|^2-|v|^2 \bigg\} 
\] 
is a special solution \eqref{equationforeisol} with harmonic potential $\Phi(x)= |x|^2/2$.  Uhlenbeck  commented in the book \cite{uhlenbeck-ford} that ``for such special potentials(refer to harmonic potential) there are a host of special solutions of the
Boltzmann equation, in which the dependence on the velocity always has the form \eqref{localMaxwellian} but where the $A, \be$ and $u$ can be functions of space and time. Boltzmann himself gave a detailed discussion of these solutions (see \cite{Boltzmann}). They have however only a limited interest.''  
\medskip

Let us conclude some remarks on Boltzmann's argument on the approach to   equilibrium which are due to George E. Uhlenbeck and G.W. Ford(see \cite{uhlenbeck-ford}):

\noindent $\bullet$ Clearly Boltzmann does not actually solve the initial value
problem stated in \cite{Boltzmann}. In fact he supposes in the first place that the initial value problem has an unique solution.

\noindent $\bullet$ The argument due to Boltzmann does not mean, that the approach to
equilibrium happens in two sharply separated successive stages: first the approach in velocity space to the local Maxwellian  \eqref{localMaxwellian} and then the approach in phase space to  the ``barometric'' distribution $A_0\exp\{-\beta\Phi(x)\}$. In particular, both approaches are coupled to each other. 

\noindent $\bullet$ Definitely  the approach to equilibrium in velocity space is quite different from the approach to equilibrium in phase
space. The picture of the approach to the complete Maxwell-Boltzmann distribution \eqref{Maxwell-Boltzmann distrubution}  must and can be confirmed by a deeper study of \eqref{kineticwithexternalpotential}.

\subsection{Landau equation in the presence of a harmonic potential}\label{Q12} The primary objective of this study is to offer {\it a comprehensive mathematical validation} of Boltzmann's argument concerning the approach to equilibrium within the framework of the Landau equation in the presence of a harmonic potential. From a physical standpoint, this study remains significant as comprehending the Landau equation in the context of a harmonic potential is pivotal for modeling and forecasting the behavior of confined charged particles in plasmas. Such understanding holds relevance in diverse fields including fusion research, plasma processing, and the manipulation and trapping of particles.
 Our focus will be on elucidating the following questions:

\smallskip

\noindent \underline{$(Q1).$ Characterization of Entropy-Invariant Solutions.} Since the equation \eqref{kineticwithexternalpotential} enjoys the $H$-theorem, the  basic question would be as follows:

 `` When the initial data satisfying \eqref{integcondi} is given, what kind of state will the solution approach from the perspective of long-time dynamics? ''

From the classical results on the  kinetic equations, the  ``final''  state should be the Gaussian function which is determined by the initial data and its relevant conservation laws. Thus, the natural question arises: does this hold for equation \eqref{kineticwithexternalpotential} as well? Obviously, the entropy-invariant solutions would be the good candidates.
Therefore, the central questions are:

\underline{(i).} Can we provide a full characterization of the entropy-invariant solutions?

\underline{(ii).} What is the connection between the entropy-invariant solutions and conservation laws?

\noindent Answering these questions is crucial for gaining valuable insights into the underlying dynamics and constraints of the system. In particular, these will explain the emergence of a host of time-periodic solutions and give the mathematical response to Uhlenbeck's comment.

\smallskip

\noindent \underline{$(Q2).$ Nonlinear Stability and Global Dynamics.} Can we prove the nonlinear stability of the entropy-invariant solution? What about the mathematical depiction of the interaction between the approach to equilibrium in velocity space and the approach to equilibrium in phase space? Can we determine the optimal convergence rate to equilibrium?

 Addressing these questions would provide crucial insights into the global dynamics of the Landau equation in the presence of a harmonic potential with general initial data. In particular, establishing the nonlinear stability of the entropy-invariant solution would characterize its robustness. Understanding the interplay between local Maxwellian equilibrium in velocity space and  the ``barometric'' distribution  in phase space, as well as determining optimal convergence rates, would further elucidate the system's long-term behavior. 

\smallskip 

Mathematically we will consider the Cauchy problem of 
\begin{equation}\label{CauchyLandau}\left\{ \begin{aligned}
&\pa_t F+v\cdot\na_x F-x\cdot \na_v F=Q(F,F);\\
&F|_{t=0}=F_0(x,v),
\end{aligned} \right.
\end{equation}
where $Q$ is the Landau collision operator defined by
\beno
Q(G,F)(v)=\na_v\cdot \int_{\R^3} a(v-v_*)\big\{G(v_*)\na_v F(v)-F(v)\na_v G(v_*)\big\}dv_*.
\eeno
Here $a$ is a $3 \times 3$ matrix-valued function that is symmetric and non-negative defined by 
\beno
a_{ij}(z)=|z|^{-1}\Pi_{ij}(z)=|z|^{-1}\Big(\de_{ij}-\f{z_iz_j}{|z|^2}\Big).
\eeno
Comparing to \eqref{kineticwithexternalpotential}, here we choose  $\Phi(x):=|x|^2/2$  and   $\mathcal{C}_{coll}(F,F):=Q(F,F)$.

\subsubsection{Landau collision operator} 
   Landau collision operator is derived by Landau through the weak-coupling limit of the Boltzmann collision operator with cutoff Rutherford cross-section  in 1936. It is used to model collisions between charged particles in plasma physics. The Landau collision operator is a bi-linear operator acting  only on the velocity variable $v$. 
If we introduce
\ben\label{defibc}
b_i(z)=\sum_{j=1}^3\pa_j a_{ij}(z)=-2|z|^{-3}z_i,\quad c(z)=\sum_{i=1}^3\sum_{j=1}^3\pa_{ij}a_{ij}(z)=-8\pi \de_0(z),
\een
 Landau collision operator can be also written as
\beno
Q(G,F)=\sum_{i,j=1}^3 (a_{ij}*G)\pa_{ij} F+8\pi GF.
\eeno

\subsubsection{Conservation laws of \eqref{CauchyLandau} and $H$-theorem} We have the following lemma: 
	\begin{lem}\label{CLaw}
	If $F = F(t, x, v)$ is a global solution to equation \eqref{CauchyLandau}, then for all $t>0$,
	\begin{equation*}
		\frac{d}{dt}\int_{\R^3_x\times\R^3_v}F(t, x, v)\phi(t,x,v)dxdv = 0,
	\end{equation*}
if $\phi(t,x,v)=1, X(t), V(t), |X(t)|^2, X(t)\cdot V(t), |V(t)|^2$ and  $X(t)\wedge V(t)$. Here $(X(t),V(t)):=(x\cos t - v\sin t, x\sin t + v\cos t)$  and  $a \wedge b: = ab^\tau - ba^\tau=(a_ib_j-b_ia_j)_{3\times3}$ if $a,b\in\R^3$. 
\end{lem}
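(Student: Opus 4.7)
The plan is to differentiate $\int F\phi\,dxdv$ in time, use equation \eqref{CauchyLandau}, and integrate by parts in $x$ and $v$ to absorb the streaming contribution into $\phi$. Writing the transport operator $\cT:=\pa_t+v\cdot\na_x-x\cdot\na_v$ and assuming enough decay of $F$ to kill the boundary terms, one obtains the bookkeeping identity
\[
\frac{d}{dt}\int_{\R^3_x\times\R^3_v}F\phi\,dxdv=\int_{\R^3_x\times\R^3_v}F\,(\cT\phi)\,dxdv+\int_{\R^3_x\times\R^3_v}Q(F,F)\,\phi\,dxdv.
\]
Thus the lemma reduces to two pointwise checks: $(\mathrm{i})$ $\cT\phi\equiv 0$, and $(\mathrm{ii})$ $\int_{\R^3_v}Q(F,F)(t,x,v)\,\phi(t,x,v)\,dv=0$ for every fixed $(t,x)$.

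For $(\mathrm{i})$, the characteristics of $\cT$ solve the harmonic oscillator $\dot x=v$, $\dot v=-x$, so the flow starting from $(x_0,v_0)$ is $(x_0\cos t+v_0\sin t,\,-x_0\sin t+v_0\cos t)$. Inverting this rotation yields exactly $X(t)=x\cos t-v\sin t=x_0$ and $V(t)=x\sin t+v\cos t=v_0$. Hence $X(t)$ and $V(t)$ are by construction constant along characteristics, i.e.\ $\cT X(t)=\cT V(t)=0$, and the product rule then gives $\cT\phi=0$ for every $\phi$ in the list (including the trivial case $\phi=1$). I would also record here the algebraic identity $X(t)\wedge V(t)=x\wedge v$, which follows from $\cos^2t+\sin^2t=1$ and makes the angular-momentum conservation manifest and $t$-independent.

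For $(\mathrm{ii})$, I would invoke the classical Landau collision invariants: $\int_{\R^3_v}Q(F,F)(v)\,\psi(v)\,dv=0$ whenever $\psi\in\mathrm{span}\{1,v_1,v_2,v_3,|v|^2\}$. A direct expansion shows that each candidate $\phi$, viewed as a polynomial in $v$ with $(t,x)$-dependent coefficients, lies in this span. Indeed the affine quantities $X(t)$, $V(t)$, and each component of $X(t)\wedge V(t)=x\wedge v$ involve only $1$ and the $v_i$; and each of the quadratic scalars $|X(t)|^2$, $X(t)\cdot V(t)$, $|V(t)|^2$ expands without any cross term $v_iv_j$ ($i\neq j$), reducing to the form $a(t,x)+b(t,x)\cdot v+c(t)|v|^2$. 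Consequently the inner $v$-integral against $Q(F,F)$ vanishes pointwise in $x$, and Fubini closes the argument.

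The central conceptual input is the geometric observation that $(X(t),V(t))$ are precisely the Hamiltonian-flow coordinates of the trajectory through $(x,v)$, which automatically supplies all thirteen collision invariants adapted to the harmonic confinement; once this is in place, the remaining work is arithmetic. The only genuinely technical point is the integration by parts: justifying the decay of boundary terms is routine given condition \eqref{integcondi} and the moment/energy control that is standing throughout the paper, so I would expect this to be the least interesting part of the proof.
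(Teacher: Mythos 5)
Your proof is correct and follows exactly the standard verification the paper has in mind (it omits the proof as ``easily verified''): the transport part vanishes because $X(t),V(t)$ are constants along the harmonic-oscillator characteristics \eqref{Hamiltonian}--\eqref{trjecforlandau}, and the collision part vanishes because each $\phi$, expanded in $v$, lies in $\mathrm{span}\{1,v_1,v_2,v_3,|v|^2\}$ (with the useful observation $X(t)\wedge V(t)=x\wedge v$). No gaps beyond the routine decay justification for the integrations by parts, which you already flag.
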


The proof of the lemma is omitted here as it can be easily verified.
Thanks to Lemma \ref{CLaw}, we can conveniently introduce a mapping $\Psi$ that maps the solutions to \eqref{CauchyLandau} to the corresponding conserved quantities. In other words, $\Psi$ establishes a relationship between the solution and the conserved quantities, allowing us to analyze and understand the system's behavior based on these conserved quantities.

\begin{defi}\label{ConservMap}
	Let $F(t,x,v)$ be a global solution to \eqref{CauchyLandau} with the initial data $F_0=F_0(x,v)$.  $\Psi$ is called a conserved mapping if  
	\beno
	\Psi(F(t))=(\Psi_1(F)(t),\cdots,\Psi_7(F)(t))^\tau:=\int_{\R^3_x\times\R^3_v} F(t, x, v)\begin{bmatrix} 1 \\ X(t) \\ V(t) \\ |X(t)|^2 \\ X(t) \cdot V(t) \\ |V(t)|^2 \\ X(t)\wedge V(t) \end{bmatrix}dxdv. 
	\eeno
Again $(X(t),V(t))=(X_1(t),X_2(t),X_3(t),V_1(t),V_2(t), V_3(t))=( x\cos t - v\sin t, x\sin t + v\cos t)$. Thanks to the conservation laws, we have $\Psi(F(t)) = \Psi(F_0)$ for all $t>0$. Therefore, it is common practice to use the notation $\Psi(F)$ or $\Psi(F_0)$ instead of $\Psi(F(t))$. 
\end{defi}

\begin{rmk} \textnormal{ This shorthand notation  signifies that the conserved quantities obtained through $\Psi$ are independent of time and can be directly associated with the initial state $F_0$. We may think that 
 $\Psi_1(F), \cdots, \Psi_6(F)$ and $\Psi_7(F)$  correspond to the laws of conservation for mass, initial center of mass, momentum, scalar inertial moment, scalar momentum moment, energy and angular momentum, respectively. It is crucial to highlight that the reason why $\Psi_7(F)$ can be recognized as the conservation of angular momentum is because the components of the skew-symmetric matrix $X(t)\wedge V(t)$ are derived from $X(t)\times V(t)$.  } 
\end{rmk}

\begin{rmk}  \textnormal{ To compare the differences between different initial data, we introduce 2-norm  for $\Psi(F_0)$, i.e., if $\Psi(F_0)=\mathsf{V}=(\mathsf{V}_1,\cdots,\mathsf{V}_7)\in\R^+\times\R^3\times\R^3\times \R^+\times \R\times \R^+\times\mathbb{M}_3(\R)$, then
\ben\label{normofpsif} \|\Psi(F_0)\|_2^2=\|\mathsf{V}\|_2^2:=\sum_{i=1}^7 \|\mathsf{V}_i\|_2^2.  \een 
Here $\mathbb{M}_3(\R)$  represents the set of $3\times3$ matrices with all components in $\R$ and if $A=(a_{ij})_{m\times n}$, then $\|A\|_2^2:=\sum_{i=1}^m\sum_{j=1}^n |a_{ij}|^2$.}
\end{rmk}

Of course, equation \eqref{CauchyLandau} also enjoys the $H$-theorem \eqref{H-theorem}. In this situation, we have 
\ben\label{entropydisspationoflandau}
\mathscr{D}(F)(t)=\f12\sum_{i,j=1}^3\int_{\R^3_x\times\R^6}a_{ij}(v-v_*) \left (\f{\pa_i F}F(v)-\f{\pa_i F}F(v_*) \right) \left(\f{\pa_j F}F(v)-\f{\pa_j F}F(v_*) \right)FF_*dvdv_*dx\geq0.
 \een  
\subsubsection{Harmonic potential} The harmonic potential refers to a specific type of potential energy function used in physics, particularly in the field of classical mechanics and quantum mechanics. It describes the behavior of a particle or system in a potential field that follows Hooke's law. Generally, it is given by 
\ben \label{HarmonicP}\Phi(x):=\f12 \omega_0^2 |x|^2,\een where $\omega_0^2$ is a force constant.
The harmonic potential exhibits specific characteristics, such as a restoring force proportional to the displacement and a symmetric potential energy curve centered at the equilibrium position. Let us conclude its impact on the equation \eqref{kineticwithexternalpotential}.  
\smallskip

\noindent$\bullet$ The equation \eqref{kineticwithexternalpotential} yields a greater number of conservation laws compared to the general external potential $\Phi(x)$ (refer to Lemma \ref{CLaw} for further information).
\smallskip

\noindent$\bullet$ This abundance of conservation laws gives rise to numerous entropy-invariant solutions for the equation \eqref{kineticwithexternalpotential}. Consequently, the global dynamics of \eqref{kineticwithexternalpotential} exhibits a diverse range of phenomena.

\noindent$\bullet$ Let \eqref{HarmonicP} hold  and set the   notation for the averages of a general dynamical quantity $\chi=\chi(x,v)$ by
\ben\label{averagechi} \lr{\chi}:=\int_{\R^3_x\times\R^3_v} \chi(x,v)F(t,x,v)dxdv.\een
By \eqref{kineticwithexternalpotential}, one may easily derive that
\begin{equation}\label{monopole}\left\{ \begin{aligned}
&\f{d}{dt}\lr{|x|^2}=2\lr{x\cdot v};\\
&\f{d}{dt}\lr{x\cdot v}=\lr{|v|^2}-\omega_0^2\lr{|x|^2};\\
&\f{d}{dt}\lr{|v|^2}=-2\omega_0^2\lr{x\cdot v}.
\end{aligned} \right.
\end{equation}
If we seek a solution of the equation \eqref{monopole} in the form of $e^{i\omega t}$, it can be verified that for this solution, $\omega=2\omega_0$. Thus, the explicit solution is given by $(-\frac{e^{i2\omega_0t}}{\omega_0^2}, -\frac{ie^{i2\omega_0t}}{\omega_0}, e^{i2\omega_0t})$. Therefore there is no damping for the breathing mode of a classical   gas confined in a harmonic  trap.  The occurrence of this monopole undamped solution was also pointed out by Boltzmann(see \cite{Boltzmann}). 

\noindent$\bullet$ The streaming term of \eqref{CauchyLandau} is governed by the following trajectory:
\ben\label{Hamiltonian}  \left\{ \begin{aligned}
&\f{d}{dt}\mathsf{X}(t)=\mathsf{V}(t), \quad\f{d}{dt}\mathsf{V}(t)=-\mathsf{X}(t),\\ 
&\mathsf{X}(t)|_{t=0}=x,\,\, \mathsf{V}(t)|_{t=0}=v,
\end{aligned} \right.
\een
then  $(\mathsf{X}(t),\mathsf{V}(t))=(x\cos t+v\sin t, -x\sin t+v\cos t)$ and
\ben\label{trjecforlandau} (\pa_t+v\cdot\na_x-x\cdot\na_v)F=0\Leftrightarrow \f{d}{dt}F(t,\mathsf{X}(t),\mathsf{V}(t))=0\Leftrightarrow F(t,x,v)=F(0,X(t),V(t)), \een
where $(X(t), V(t))=(x\cos t-v\sin t, x\sin t+v\cos t)$.

\subsection{Short review and difficulties}  
The stability and  convergence rate  towards the {\it entropy-invariant  solution} to \eqref{kineticwithexternalpotential} is an important and old problem.  Given the extensive body of literature on this subject, we will provide a brief overview of previous works closely connected to our own research.

$\bullet$ Without an external potential, since the question $(Q1)$ proposed in the last subsection is quite clear, 
previous studies have focused on various approaches to the global dynamics. In the context of perturbation theory, we direct readers to \cite{AMUXY,Guo1,Guo2,GS,LYY} and references therein, which delve into the micro-macro decomposition inspired by Grad’s 13 moments method (see \cite{Grad}). Additionally, \cite{DV2} presents the entropy-production method, analyzing a suitable set of ordinary differential systems, while \cite{GMM,V2} explores the hypocoercivity method in exponentially weighted spaces and discusses the theory of space enlargement in polynomially weighted spaces (see also developments in \cite{CHJ,KS}). When the spatial variable $x$ is in $\R^3$, we refer readers to \cite{BGGL, Luk} and references therein for the stability  results thanks to the dispersion effect.

$\bullet$ When an external potential is present, in particular for the harmonic potential,  much of the research has concentrated on \eqref{kineticwithexternalpotential} within a linear framework.  Let us introduce a normalized static Maxwell-Boltzmann distribution and Maxwell distribution:
 \ben\label{normalstaMBd}\mathcal{M}:=(2\pi)^{-3}e^{-\frac{1}{2}(|x|^2+|v|^2)},\quad \mu:=(2\pi)^{-\f32}e^{-\f12|v|^2}. \een Then the linear framework can be expressed as
\begin{equation}\label{Linearinkineticeqs}
\frac{\partial F}{\partial t}+v\cdot\nabla_xF-x\cdot\nabla_vF=\mathcal{L}(F):= \mathcal{C}_{coll}(\mu,F)+\mathcal{C}_{coll}(F,\mu).
\end{equation}
 Comparing it  to nonlinear equation \eqref{kineticwithexternalpotential}(see also \eqref{pertubofcauchylandau}), there exist two major differences:

\underline{(1).} The ``barometric'' distribution    $(2\pi)^{-\frac{3}{2}}e^{-\frac{1}{2}|x|^2}$, inducing strong degeneration, is absent. This introduces the primary challenge for the nonlinear stability and alters the convergence rate from the exponential rate to the polynomial rate(see Theorem \ref{Thmnonstability} for details). 

\underline{(2).} The linear equation \eqref{Linearinkineticeqs} does not enjoy $H$-theorem but the conservation laws described in Lemma \ref{CLaw} are still valid. Moreover, {\it entropy-invariant} solution can also be defined for \eqref{Linearinkineticeqs} in the same spirit of Definition \ref{Defoentropy-invariant}. In \cite{KJFSCS}, the authors call them by {\it special macroscopic modes}.    For nonlinear equation \eqref{kineticwithexternalpotential}, by Proposition \ref{periodicmax},
if  $(X(t), V(t))=(x\cos t - v\sin t, x\sin t + v\cos t)$, then the {\it entropy-invariant} solution $M$   will satisfy that
 \beno \ln M \in \mathrm{span}\{1, X(t), V(t), |X(t)|^2, X(t) \cdot V(t), |V(t)|^2, X(t) \wedge V(t)\}. \eeno
 On the other hand, in \cite{KJFSCS},  {\it special macroscopic mode} $M$ for the  linear equation \eqref{Linearinkineticeqs} will satisfy that
   \beno    M/\mathcal{M} \in \mathrm{span}\{1, X(t), V(t), |X(t)|^2, X(t) \cdot V(t), |V(t)|^2, X(t) \wedge V(t)\}. \eeno

 For the linear stability on the BGK model, see \cite{BC}, and for the Fokker-Planck and linear Boltzmann models, refer to \cite{Tabata Decay 1993, Tabata Decay 1994,DV1,DMS,HN1,HN2,KJFSCS} and references therein. Let us specify two results on the  hypocoercivity and the time decay estimate.  

 \underline{(1).}  In \cite{Tabata Decay 1993}, Tabata investigated the linearized version of the equation presented in \eqref{kineticwithexternalpotential} centered around the state $\mathcal{M}$. This linearized form is expressed as
 \ben\label{symlinearBeq} \frac{\partial F}{\partial t}=\mathbf{B}F:=-v\cdot\nabla_xF+x\cdot\nabla_vF+\mathcal{M}^{-\f12}\big(\mathcal{C}_{coll}(\mathcal{M}, \mathcal{M}^{\f12}F)+\mathcal{C}_{coll}(\mathcal{M}^{\f12}F,\mathcal{M})\big).\een 
 He established the temporal decay pattern of the semigroup $e^{t\mathbf{B}}$ within spaces like $L^2(\mathbb{R}^3_x \times \mathbb{R}^3_v) \cap (\text{Ker}(\mathbf{B}))^\perp$, employing distinct exponential weights for both   $x$ and $v$ variables. This approach heavily relies on two key points: (a)  The operator $\mathbf{B}$ exhibits favorable properties regarding its spectrum, which arise from the Boltzmann collision operator with the hard sphere model.
(b) The   exponential weight but with negative index can be propagated over time for the linear equation \eqref{symlinearBeq}.
While point (a) suggests that extending this to soft potentials requires additional effort, the validity of point (b) for the nonlinear equation remains unclear. These observations imply that a robust method for achieving linear stability necessitates further exploration, and the resolution of the nonlinear problem remains a distant goal.

 \underline{(2).} It is worth noting that in recent research by \cite{KJFSC,KJFSCS},  special macroscopic modes beyond the static Maxwell-Boltzmann distribution(see \eqref{Maxwell-Boltzmann distrubution}) are considered. New methodologies are introduced to classify all special macroscopic modes and prove hypocoercivity results with constructive convergence rates in exponentially weighted spaces. Furthermore, they investigate general confining potentials and discuss their geometric properties' consequences in terms of symmetry, partial symmetry, or lack of symmetry under rotations.

\smallskip

  The primary barrier to achieving nonlinear stability can be attributed to three factors:
\smallskip

$\bullet$ The inclusion of the streaming term $v \cdot \nabla_x - x \cdot \nabla_v$ induced by the external harmonic potential establishes the Hamiltonian structure, resulting in the absence of dispersion effects and the presence of a breathing mode, indicating the lack of damping. This implies that the interaction between the streaming term and the collision term acts as the primary mechanism for system stabilization. The initial question now revolves around the mathematical representation of this interaction.

$\bullet$  Linearizing equation \eqref{CauchyLandau} around $\mathcal{M}$ is a straightforward process. Letting $f:=F-\mathcal{M}$, we obtain the following equation:   
\ben\label{pertubofcauchylandau} \pa_tf+v\cdot \na_x f-x\cdot \na_v f=(2\pi)^{-\f32}e^{-\f12|x|^2}(\underbrace{Q(\mu,f)+Q(f,\mu)}_{:=L(f)})+Q(f,f).\een
  It is important to note that the degeneration factor $e^{-\frac{1}{2}|x|^2}$ introduces challenges in two aspects: (i). It significantly impacts the decay mechanism(see Subsection \ref{ansatz} for details).
(ii). It creates a strong imbalance between the linear term and the nonlinear term.  To be more specific, let us consider the following illustrative toy model:  
\ben\label{toynoneq} 
\partial_t f + v \cdot \nabla_x f - x \cdot \nabla_v f = - e^{-\frac{1}{2} |x|^2} f + f^2. 
\een  
It is apparent that the linear term \(e^{-\frac{1}{2} |x|^2}f\) surpasses the nonlinear term  \(f^2\) only when \(f \lesssim e^{-\frac{1}{2} |x|^2}\).  This necessitates that \(f\) resides in a weighted space with a weight exceeding \(e^{\frac{1}{2} |x|^2}\). However, as outlined in the section on weight selection (see Subsection \ref{weightchoice} below), we are restricted to working within a weighted space characterized by \(\mathcal{M}^{-2a}\), where \(a \in [1/2,  1)\). This limitation implies that a direct estimation of the nonlinear term using the linear term is unfeasible. Consequently, this discrepancy stands as a primary reason why this issue has persisted as an open problem for an extended period.

$\bullet$ The general form  of the {\it time-periodic Maxwell-Boltzmann distribution}(see \eqref{expliciformeisolu}) deviates from the factorization property observed in the static distribution  where $\mathcal{M}=(2\pi)^{-\frac{3}{2}}e^{-\frac{1}{2}|x|^2}\times (2\pi)^{-\frac{3}{2}}e^{-\frac{1}{2}|v|^2}$. This separation of variables allows us to obtain the standard linear collision operator $L(f)$, which simplifies the analysis by providing comprehensive information through the lens of the standard linear theory. However, in the general case, the mixing of $x$ and $v$ variables in the form \eqref{expliciformeisolu} complicates the problem even at the linear level, presenting challenges in analyzing the collision operator.

\subsection{Notations and main results}  

 \subsubsection{Notations}
$(i)$ We utilize the notation $a\ls b(a\gs b)$ to signify the existence of a uniform constant $C$, which may vary across different contexts, ensuring $a\leq Cb(a\geq Cb)$. We denote $a\sim b$ when $a\ls b$ and $b\ls a$. 

$(ii).$ We represent $C_{a_1,a_2,\cdots,a_n}$(or $C(a_1,a_2,\cdots,a_n)$) by a constant dependent on parameters $a_1,a_2,\cdots,a_n$. Additionally, the parameter $\varepsilon,\epsilon$ and $\epsilon_i,i\in\N$ denote various positive numbers significantly less than 1, determined in different scenarios.

$(iii).$ $\R^+$ and $\Z_+^m$ with $m\ge1$ are used to denote the set $\{x\ge0|x\in\R\}$ and the set $\{z=(z_1,\cdots,z_m)\in\Z^m|z_i\ge0, i=1,\cdots,m\}$ respectively. We also specify the set $\R^+_{>0}:=\{x>0|x\in\R\}$.
$\mathbf{1}_\Om$ is the characteristic function of the set $\Om$. We use $(\cdot,\cdot)_{\R^3},(\cdot,\cdot)_{L^2_v}$, and $(\cdot,\cdot)_{L^2_{x, v}}$ to denote the inner product in $\R^3$, $L^2_{\R^3_v}$, and $L^2_{\R^3_v\times\R^3_x}$, respectively. Occasionally, we employ $(\cdot,\cdot)$ to represent the inner product briefly without ambiguity.

$(iv).$ We denote $\pa^\al_\be$ by   $\pa_{x_1}^{\al_1}\pa_{x_2}^{\al_2}\pa_{x_3}^{\al_3}\pa_{v_1}^{\be_1}\pa_{v_2}^{\be_2}\pa_{v_3}^{\be_3}$ with $\al=(\al_1,\al_2,\al_3),\be=(\be_1,\be_2,\be_3)\in \Z^3_+$. We also set: $\al\le\be$ if $\al_i\le\be_i$ for all $1\le i\le 3$. If $T,S$ are two operators, then  the commutator $[T,S]$ is defined by $[T,S]=TS-ST$.

$(v).$ $\mathbb{M}_3(\R)$ represents the set of $3\times3$ matrices with all components in $\R$. We also use $\mathbb{I}$ to represent the unit matrix or identity operator. $\mathbb{I}_{3\times 3}$ is used to emphasize that it is a $3\times3$ unit matrix.  If $A=(a_{ij})_{m\times n}$, then $\|A\|_2^2:=\sum_{i=1}^m\sum_{j=1}^n |a_{ij}|^2$ and $|A|:=\sqrt{A^\tau A}$. Here $A^\tau$ denotes the transpose of $A$.

 $(vi).$    For the convenience, if $\<x\>:=(1+|x|^2)^{1/2}$, we define
\ben\label{exponentialfunction} \mathcal{P}_x^\a:=\lr{x}^\a, \quad \mathcal{Z}_x^{\a}:=\exp\{\a|x|^2\},\quad \mbox{for}\quad \a\in\R. \een
 We further define
\ben\label{exponentialfunctionxv}
\mathcal{P}_{x,v}^\a:=\mathcal{P}_{x}^\a\mathcal{P}_{v}^\a , \quad  \mathcal{Z}_{x,v}^\a:= \mathcal{Z}_x^\a \mathcal{Z}_v^\a,\quad \mbox{for}\quad \a\in \R.
\een  

\subsubsection{Function spaces} We provide definitions for spaces involving different variables.
\smallskip

$(1)$ \textit{Function Spaces in   $v$ variable.} Let $f=f(v)$. For $m\in\Z_+,l\in\R$, we define the weighted Sobolev space $H_l^m$ as follows:
\beno H^m_l:=\Big\{f(v)|\|f\|_{H^m_l}^2=\sum_{|\al|\le m}\|\mathcal{P}_v^{l}\pa^\al_v f\|_{L^2}^2<+\infty\Big\}.\eeno

$(2)$ \textit{Function Spaces in $v$ and $x$ variables with polynomial weights.} Let $f=f(x,v)$. For $n\geq0$, $m\in\Z_+,l\in\R$, the weighted Sobolev space $H^n_xH^m_l$ is defined by:
\beno
H^n_xH^m_l:=\Big\{f(x,v)|\|f\|^2_{H^n_xH^m_l}=\sum_{|\al|\leq n}\|\pa^\al_xf\|^2_{H^m_l}<+\infty \Big\}.
\eeno 
For simplicity, we set $\|f\|_{H^n_xH^m_l}:=\|f\|_{H^n_xL^2_v}$ when $m=l=0$ and $\|f\|_{H^n_xH^m_l}:=\|f\|_{L^2_xH^m_l}$ when $n=0$. Also, for $N\geq 0$,
$H^N_{x,v}:=\Big\{f(x,v)|\|f\|^2_{H^N_{x,v}}=\sum_{|\al|+|\be|\leq N}\|\pa^\al_x\pa^\be_v f\|^2_{L^2_{x,v}}<+\infty\Big\}.$
 
\smallskip

$(3)$ \textit{Function Spaces in $v$ and $x$ variables with exponential weights.} For $\de\geq0$ and $\eta\ll1$, we introduce  several function spaces with exponential weight defined as follows:
\ben
 &&\mathcal{H}^{N,\de}_{x}:=\Big\{h\in H^N_{x} |\|h\|^2_{\mathcal{H}^{N,\de}_{x}}:=\sum_{|\al|\leq N}\int_{\R^3_x}|\mathcal{Z}_x^\delta\pa^\al_x h|^2dx<+\infty\Big\};\label{xepwspace}\\
&&
\mathcal{H}^{N,\de}_{x}L^2_v:=\Big\{f\in H^N_{x}L^2_v |\|f\|^2_{\mathcal{H}^{N,\de}_{x}L^2_v}:=\sum_{|\al|\leq N}\int_{\R^3_x\times\R^3_v}|\mathcal{Z}_x^\delta\pa^\al_xf(x,v)|^2dvdx<+\infty\Big\}.\label{Pfspace}\\
&&\mathcal{H}^N_{x,v}:=\Big\{f\in H^N_{x,v} |\|f\|^2_{\mathcal{H}^N_{x,v}}:=\sum_{|\al|+|\be|\leq N}\|\mathcal{Z}_{x,v}^{\f12( 1-2\de -\eta(|\al|+|\be|))}\pa^\al_\be f\|^2_{L^2_{x,v}}<+\infty\Big\}.
\een

$(4)$ \textit{Function Spaces in $t,x,v$ variables.} Let $f=f(t,x,v)$ and $X$ be a function space in $x,v$ variables. Then $L^p([0,T],X)$ and $ L^\infty([0,T],X)$ are defined as follows:
\[ L^p([0,T],X):=\bigg\{f(t,x,v)\big|\|f\|^p_{ L^p([0,T],X)}=\int_0^T\|f(t)\|^p_{X}dt<+\infty\bigg\},\quad 1\leq p<\infty,\]
\[L^\infty([0,T],X):=\Big\{f(t,x,v)|\|f\|_{ L^\infty([0,T],X)}=\mathrm{esssup}_{t\in [0,T]}\|f(t)\|_{X}<+\infty\Big\}. \]
 
\subsubsection{Main results} The first two theorems focus on the answering to $(Q1)$ listed in  Section \ref{Q12}.  We begin with  the characterization of the entropy-invariant solutions. 
\begin{thm}[Characterization of entropy-invariant solution]\label{Classificationofeiit} 
Let \begin{multline}\label{omegadomain} \mathcal{O}:=\{(a,b,c,\mathsf{R})\in \R^+\times\R\times\R^+\times\mathbb{M}_3(\R)|\, ac-b^2>0, \mathsf{R}\, \mbox{is skew-symmetric}, \mathrm{tr}(|\mathsf{R}|)<2\sqrt{ac-b^2}  \}, \end{multline}
where $|\mathsf{R}|:=\sqrt{\mathsf{R}^\tau\mathsf{R}}$. $M$ is an entropy-invariant solution to \eqref{CauchyLandau} if and only if  
\begin{multline}\label{expliciformeisolu}
		M(t, x, v) = m\frac{\sqrt{\det \mathsf{Q}}}{(2\pi)^3}\exp\bigg\{-\frac{1}{2}\big[a|X(t)-y|^2 + 2b(X(t)-y) \cdot (V(t)-z) + c|V(t)-z|^2 + 2(X(t)-y)^\tau \mathsf{R}(V(t)-z)\big]\bigg\},
	\end{multline}
	where $(X(t),V(t))=(x\cos t - v\sin t, x\sin t + v\cos t)$,  $(m,y,z)\in \R^+\times\R^3\times\R^3$, $(a,b,c,\mathsf{R})\in \mathcal{O}$ and $\mathsf{Q}:=(ac-b^2)\mathbb{I}+\mathsf{R}^2$. 
\end{thm}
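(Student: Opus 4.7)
The plan is to combine the local-Maxwellian constraint from $Q(M,M)=0$ with the Hamiltonian transport by analyzing the polynomial structure of $\log M$. By \eqref{trjecforlandau}, the second identity in \eqref{equationforeisol} forces $M(t,x,v)=M_0(X(t),V(t))$ with $M_0(y,z):=M(0,y,z)$, while the first identity forces $v\mapsto M(t,x,v)$ to be a local Maxwellian at every $(t,x)$; hence $\log M(t,x,v)$ is a polynomial of degree at most two in $v$ wherever $M>0$ (the case $M\equiv 0$ is trivial). The task thus reduces to identifying all admissible $M_0$.

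Evaluating at $t=0$ and $t=\pi/2$ shows that $\log M_0(y,z)$ has degree $\leq 2$ in $y$ (at each $z$) and degree $\leq 2$ in $z$ (at each $y$). To upgrade to total degree $\leq 2$, I would match coefficients in the identity ``the $v^\gamma$ coefficient of $\log M_0(X(t),V(t))$ vanishes for $|\gamma|>2$ as a function of $(t,x)$'': the ``top $v$-order'' contribution of $y^\alpha z^\beta$ equals $(-\sin t)^{|\alpha|}(\cos t)^{|\beta|}v^{\alpha+\beta}$, and linear independence of $\{\sin^k t\cos^{|\gamma|-k}t\}_{k=0}^{|\gamma|}$ together with successive lower-order equations (involving $x$-factors) forces all corresponding coefficients to vanish. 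Accordingly, write
\[\log M_0(y,z)=-\tfrac12\bigl(y^\tau \mathsf{K}_1 y+2y^\tau \mathsf{K}_2 z+z^\tau \mathsf{K}_3 z\bigr)+\ell_1\cdot y+\ell_2\cdot z+C_0,\]
with symmetric $\mathsf{K}_1,\mathsf{K}_3$ and arbitrary $\mathsf{K}_2\in\mathbb{M}_3(\R)$.

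The decisive structural step uses the Maxwellian requirement for \emph{every} $t$. Substituting $(X(t),V(t))$, the $v\otimes v$ coefficient in $\log M$ equals $-\tfrac12\bigl[\sin^2 t\,\mathsf{K}_1+\cos^2 t\,\mathsf{K}_3-2\sin t\cos t\,\mathsf{K}_2^{\mathrm{sym}}\bigr]$ with $\mathsf{K}_2^{\mathrm{sym}}:=(\mathsf{K}_2+\mathsf{K}_2^\tau)/2$; rewriting via $\cos 2t,\sin 2t$ and matching the constant, $\cos 2t$, and $\sin 2t$ components to a scalar multiple of $\mathbb{I}$ forces $\mathsf{K}_1=a\mathbb{I}$, $\mathsf{K}_3=c\mathbb{I}$, and $\mathsf{K}_2^{\mathrm{sym}}=b\mathbb{I}$; hence $\mathsf{K}_2=b\mathbb{I}+\mathsf{R}$ with $\mathsf{R}$ skew-symmetric. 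Completing the square absorbs $\ell_1,\ell_2$ into shifted centers $y,z\in\R^3$, producing \eqref{expliciformeisolu}. Integrability \eqref{integcondi} is equivalent to positive-definiteness of $\mathsf{K}=\begin{pmatrix}a\mathbb{I}&b\mathbb{I}+\mathsf{R}\\ b\mathbb{I}-\mathsf{R}&c\mathbb{I}\end{pmatrix}$; the Schur complement of $a\mathbb{I}$ equals $\mathsf{Q}/a$, giving $\det\mathsf{K}=\det\mathsf{Q}$. Using that a $3\times3$ skew $\mathsf{R}$ has eigenvalues $0,\pm i\lambda$ (so $\mathsf{Q}=(ac-b^2)\mathbb{I}+\mathsf{R}^2$ has eigenvalues $ac-b^2$ and $ac-b^2-\lambda^2$ twice, with $\mathrm{tr}(|\mathsf{R}|)=2\lambda$), positivity of $\mathsf{K}$ reduces to $a>0$, $ac-b^2>0$, and $\mathrm{tr}(|\mathsf{R}|)<2\sqrt{ac-b^2}$, i.e., to $(a,b,c,\mathsf{R})\in\mathcal{O}$. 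The prefactor $m\sqrt{\det\mathsf{Q}}/(2\pi)^3$ then matches the six-dimensional Gaussian mass integral.

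Conversely, any $M$ of the form \eqref{expliciformeisolu} depends on $(t,x,v)$ only through $(X(t),V(t))$ and hence solves the transport equation trivially; a direct computation shows the $v\otimes v$ coefficient in $\log M$ equals $-\tfrac12 g(t)\mathbb{I}$ with $g(t):=a\sin^2 t-2b\sin t\cos t+c\cos^2 t$, and $g$ is strictly positive under $(a,b,c,\mathsf{R})\in\mathcal{O}$ (its minimum over $t$ equals $\tfrac12\bigl((a+c)-\sqrt{(a-c)^2+4b^2}\bigr)$, which is positive iff $ac>b^2$), so $M(t,x,\cdot)$ is a genuine Maxwellian in $v$ and $Q(M,M)\equiv 0$; integrability is built into $\mathcal{O}$. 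The main obstacle is the rigidity argument of the third paragraph: simultaneously extracting that $\mathsf{K}_1,\mathsf{K}_3,\mathsf{K}_2^{\mathrm{sym}}$ are scalar multiples of $\mathbb{I}$ from a single polynomial identity in $v$ valid for all $t$, and translating the abstract positivity $\mathsf{Q}>0$ into the geometric description of $\mathcal{O}$ via the skew spectral structure of $\mathsf{R}$.
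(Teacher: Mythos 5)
Your reduction to the initial datum $M_0$ via \eqref{trjecforlandau}, and everything from the quadratic ansatz onward --- the $\cos 2t/\sin 2t$ matching that gives $\mathsf{K}_1=a\mathbb{I}$, $\mathsf{K}_3=c\mathbb{I}$, $\mathsf{K}_2=b\mathbb{I}+\mathsf{R}$, the Schur-complement identity $\det\mathsf{K}=\det\mathsf{Q}$, and the spectral translation of $\mathsf{Q}>0$ into $\mathrm{tr}(|\mathsf{R}|)<2\sqrt{ac-b^2}$ --- is correct and parallels the paper's own computations (cf. \eqref{abcRdiagonal} and \eqref{deofU}--\eqref{deofS}). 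The genuine gap is the step that upgrades ``separately quadratic'' to ``total degree $\le 2$''. The identity you propose to exploit, namely the vanishing for all $t$ of the coefficients of $v$-monomials of degree $>2$ in $\log M_0(X(t),V(t))$, does \emph{not} imply that $\log M_0$ is a quadratic polynomial. Counterexample: let $\log M_0$ contain the bidegree-$(2,2)$ term $(y_1z_2-y_2z_1)^2$. Since $X_1(t)V_2(t)-X_2(t)V_1(t)=x_1v_2-x_2v_1$ (angular momentum is conserved by the harmonic flow --- this is precisely why $X(t)\wedge V(t)$ appears in \eqref{characterM}), the composed term equals $(x_1v_2-x_2v_1)^2$ and produces no $v$-monomial of degree $\ge 3$ at any time, yet it has total degree $4$; degree-$3$ terms such as $(y_1z_2-y_2z_1)(\ell\cdot y)$ behave the same way. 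Concretely, your linear-independence bookkeeping does not close: distinct monomials $y^\alpha z^\beta$ with the same $(|\alpha|,|\beta|)$ and the same $\alpha+\beta$ carry the same trigonometric factor, so only certain sums of coefficients are forced to vanish, and the ``successive lower-order equations involving $x$-factors'' constrain only monomials of $v$-degree $\ge 3$, which the above terms simply never generate.

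What excludes such terms is not the $v$-degree bound but the full local-Maxwellian structure: the $v$-Hessian of $\log M(t,x,\cdot)$ must be a scalar multiple of $\mathbb{I}$ (with an $(t,x)$-dependent scalar) at every point, and this isotropy has to be imposed already at the level of the general bidegree-$(2,2)$ polynomial, not only after the quadratic ansatz has been assumed (you invoke it only in your ``decisive structural step'', which presupposes the quadratic form). Building it in requires a further coefficient analysis, essentially what the paper does in Proposition \ref{periodicmax}: substitute $\ln M=a(t,x)+b(t,x)\cdot v+c(t,x)|v|^2$ into the transport equation and solve the resulting system \eqref{lnM13Eqs}, which encodes the isotropy and the coupling across times simultaneously and yields \eqref{characterM}; the Gaussian identification, moment normalization, positivity from integrability and the description of $\mathcal{O}$ then proceed much as in your last two paragraphs. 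So either redo your degree reduction with the isotropy constraint included (already at $t=0,\pi/2$ it shows, e.g., that $(y_1z_2-y_2z_1)^2$ makes the $z$-Hessian at fixed $y$ anisotropic), or fall back on the PDE-system route; as written, the passage ``Accordingly, write $\log M_0=$ quadratic'' is not justified.
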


\begin{rmk}  The explicit form  \eqref{expliciformeisolu} provides a comprehensive characterization    of the entropy-invariant solution. This characterization  is entirely determined by thirteen conserved quantities and the integrable condition \eqref{integcondi}. 
\end{rmk}

\begin{rmk} The static Maxwell-Boltzmann distribution \eqref{Maxwell-Boltzmann distrubution} can be derived by setting $(m,y,z)=(1,0,0)$ and $(a,b,c,\mathsf{R})=(1,0,1,0)$ in the explicit form  \eqref{expliciformeisolu}. This implies that equilibrium(refer to the static Maxwell-Boltzmann distribution) is not necessarily achieved in an harmonic field(see \cite{Ce}). 
 \end{rmk}

\begin{rmk} It is not difficult to check that \eqref{omegadomain} is equivalent to 
\begin{multline}\label{omegadomain1} \mathcal{O}:=\{(a,b,c,\mathsf{R})\in \R^+\times\R\times\R^+\times\mathbb{M}_3(\R)|\, ac-b^2>0, \mathsf{R}\, \mbox{is skew-symmetric}, 	(ac-b^2)\mathbb{I}+\mathsf{R}^2>0  \}. \end{multline}
One may prove it following facts \eqref{deofU}, \eqref{deofR} and \eqref{deofS}. \end{rmk}

In fact, by utilizing the notation introduced in {\it Definition \ref{ConservMap}}, we have $(\Psi_1(M),\Psi_2(M),\Psi_3(M))=m(1,y,z)$, and the constraints for $(\Psi_4(M),\Psi_5(M),\Psi_6(M),\Psi_7(M))$ precisely align with the constraints specified in the definition of $\mathcal{O}$ in \eqref{omegadomain}. 
Because of \eqref{expliciformeisolu}, the entropy-invariant solution can also be identified as follows:
\begin{defi}\label{TPMBdis} $M$ is called a {\it time-periodic Maxwell-Boltzmann distribution} if it takes the form \eqref{expliciformeisolu}, with constrains that $(X(t),V(t))=(x\cos t - v\sin t, x\sin t + v\cos t)$,  $(m,y,z)\in \R^+\times\R^3\times\R^3$, $(a,b,c,\mathsf{R})\in \mathcal{O}$ and $\mathsf{Q}:=(ac-b^2)\mathbb{I}+\mathsf{R}^2$. 
\end{defi}

 The next theorem elucidates the relationship between $(m, y, z, a, b, c, \mathsf{R})$ in \eqref{expliciformeisolu} and the thirteen conservation laws, with a focus on unveiling the connection between $(\Psi_4(M), \Psi_5(M), \Psi_6(M), \Psi_7(M))$ and $(a, b, c, \mathsf{R})$. This is crucial for gaining valuable insights into the global dynamics.

\begin{thm}\label{biinjectionofMF0}  Let 
\ben
 &&\mathscr{M}:=\bigg\{ M(t,x,v)\big| M\,\, \mbox{a {\it time-periodic Maxwell-Boltzmann distribution} with $m>0$}\bigg\},\label{setofM} \\
&& \mathscr{I}:=\bigg\{\mathsf{V}= \Psi(F_0)\bigg|F_0(x,v)\ge0, 0<\int_{\R^6} F_0(1+|x|^2+|v|^2)dxdv<\infty\bigg\}.\label{setofF0}
 \een
 There exists a bijection mapping  $\widetilde{\mathscr{F}}:\mathscr{I}\rightarrow \mathscr{M}$ such that  $\Psi(\widetilde{\mathscr{F}}(\mathsf{V}))=\mathsf{V}$ for any $\mathsf{V}\in\mathscr{I}$. Moreover, 
if $M_i=\widetilde{\mathscr{F}}(\mathsf{V}_i)$ has  the form   \eqref{expliciformeisolu} with $(m_i,y_i,z_i,a_i,b_i,c_i,\mathsf{R}_i)\in \R^+_{>0}\times\R^6\times\mathcal{O}$, for $i=1,2$,
satisfying that   $\|\mathsf{V}_1-\mathsf{V}_2\|_2\ll1$(see \eqref{normofpsif} for the definition), then exists a constant $C(\mathsf{V}_1)$ such that
\ben\label{stabbijection3}
 \|(m_1-m_2,y_1-y_2,z_1-z_2,a_1-a_2, b_1-b_2,c_1-c_2,\mathsf{R}_1-\mathsf{R}_2)\|_2\le C(\mathsf{V}_1)\|\mathsf{V}_1-\mathsf{V}_2\|_2.
 \een
\ben\label{stabbijection4}
 \f{M_{1}}{M_{2}}+\f{M_{2}}{M_{1}}\le 4\exp\bigg\{C(\mathsf{V}_1)\|\mathsf{V}_1-\mathsf{V}_2\|_2(|x|^2+|v|^2+1)\bigg\}.\een
\end{thm}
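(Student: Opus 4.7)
By Theorem \ref{Classificationofeiit}, any $M \in \mathscr{M}$ is parametrized by the thirteen scalars $(m, y, z, a, b, c, \mathsf{R}) \in \R^+_{>0} \times \R^6 \times \mathcal{O}$, and $\Psi(M)$ carries the same total of thirteen independent entries (one from $\Psi_1$, three each from $\Psi_2,\Psi_3$, one each from $\Psi_4,\Psi_5,\Psi_6$, and three from the skew $\Psi_7$); the plan is therefore to construct $\widetilde{\mathscr{F}}$ as the explicit inverse of the smooth moment map $\Phi:(m,y,z,a,b,c,\mathsf{R}) \mapsto \Psi(M)$. Writing the quadratic form in the exponent of \eqref{expliciformeisolu} in block form with the $6\times 6$ symmetric matrix $\mathsf{A} = \left(\begin{smallmatrix} a\mathbb{I} & b\mathbb{I}+\mathsf{R} \\ b\mathbb{I}-\mathsf{R} & c\mathbb{I}\end{smallmatrix}\right)$, and using that $\mathsf{R}$ commutes with $\mathsf{Q}$, the Schur complement yields the covariance blocks $\mathsf{A}^{-1}_{XX} = c\mathsf{Q}^{-1}$, $\mathsf{A}^{-1}_{VV} = a\mathsf{Q}^{-1}$, $\mathsf{A}^{-1}_{XV} = -(b\mathbb{I}+\mathsf{R})\mathsf{Q}^{-1}$. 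Since $\mathsf{R}\mathsf{Q}^{-1}$ is skew (hence traceless) and contributes only a skew part to the cross block, the Gaussian moment identities read $\Psi_1 = m$, $(\Psi_2, \Psi_3) = m(y, z)$, together with
\[ \Psi_4 - m|y|^2 = mc\tau, \quad \Psi_5 - m y\cdot z = -mb\tau, \quad \Psi_6 - m|z|^2 = ma\tau, \quad \Psi_7 - m\,y\wedge z = -2m\mathsf{R}\mathsf{Q}^{-1},\]
where $\tau := \mathrm{tr}(\mathsf{Q}^{-1}) > 0$.

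The first three identities recover $(m,y,z)$ immediately. Setting $\cA := (\Psi_6 - m|z|^2)/m$, $\cB := -(\Psi_5 - my\cdot z)/m$, $\cC := (\Psi_4 - m|y|^2)/m$, and $\cR := -(\Psi_7 - m\,y\wedge z)/(2m)$, the remaining system becomes $a\tau = \cA$, $b\tau = \cB$, $c\tau = \cC$, $\mathsf{R}\mathsf{Q}^{-1} = \cR$. I would invert this by spectral reduction: since the $3\times 3$ skew $\mathsf{R}$ has eigenvalues $\{0, \pm i\rho\}$ for some $\rho \ge 0$, $\mathsf{Q}^{-1}$ has eigenvalues $\{1/d, 1/(d-\rho^2), 1/(d-\rho^2)\}$ with $d := ac - b^2$, and the three scalar equations $d\tau^2 = D := \cA\cC - \cB^2$, $\tau = 1/d + 2/(d-\rho^2)$, $\|\cR\|_2^2 = 2\rho^2/(d-\rho^2)^2$ collapse via elimination into a single quadratic
\[(D - 4r^2)^2 d^2 - (10D + 24r^2) d + 9 = 0, \qquad r := \|\cR\|_2/\sqrt{2},\]
whose physically admissible root (singled out by the limit $r \to 0$, which must yield $d = 9/D$) is $d = [10D + 24r^2 + 8\sqrt{D(D+12r^2)}]/[2(D-4r^2)^2]$. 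Back-substitution gives $\tau$, $\rho$, $(a,b,c) = (\cA, \cB, \cC)/\tau$, and since $\mathsf{R}$ commutes with and shares the axis of $\cR$, one obtains $\mathsf{R} = (d-\rho^2)\cR$. One then checks the constraints defining \eqref{omegadomain}: $d > 0$ because $D > 0$ by Cauchy--Schwarz applied to the nonnegative measure $F_0$ (the centered second-moment pair being positive), while $\mathrm{tr}(|\mathsf{R}|) = 2\rho < 2\sqrt{d}$ follows from $\rho = r(d-\rho^2)$ combined with $d > \rho^2$. This establishes the bijection $\widetilde{\mathscr{F}}$.

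For \eqref{stabbijection3}, the explicit inverse constructed above displays each of $(m, y, z, a, b, c, \mathsf{R})$ as a smooth function of $\mathsf{V}$ throughout the interior of the admissible region, so the inverse function theorem (or direct differentiation of the quadratic root formula) yields local Lipschitz continuity with a constant depending on $\mathsf{V}_1$. For \eqref{stabbijection4}, I would write $\log(M_1/M_2) = \log\bigl(m_1\sqrt{\det\mathsf{Q}_1}/(m_2\sqrt{\det\mathsf{Q}_2})\bigr) - \tfrac{1}{2}[\mathcal{Q}_1 - \mathcal{Q}_2](X(t), V(t))$, where $\mathcal{Q}_i$ denotes the quadratic form in the exponent of \eqref{expliciformeisolu}. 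The coefficients of $\mathcal{Q}_1 - \mathcal{Q}_2$ (in $x, v$) are linear combinations of parameter differences, so \eqref{stabbijection3} together with $|X(t)|^2 + |V(t)|^2 = |x|^2 + |v|^2$ yields $|\log(M_1/M_2)| \le C(\mathsf{V}_1)\|\mathsf{V}_1 - \mathsf{V}_2\|_2 (1 + |x|^2 + |v|^2)$, after which $M_1/M_2 + M_2/M_1 = 2\cosh(\log(M_1/M_2)) \le 2\exp(|\log(M_1/M_2)|)$ delivers \eqref{stabbijection4} (with the constant $4$ providing extra room). The main obstacle throughout is the algebraic inversion of the skew-symmetric block in the second paragraph: the nonlinear coupling between $\rho$ and $d$ forces the spectral reduction and the careful selection of the correct quadratic root, together with the verification that the resulting parameters remain inside $\mathcal{O}$; by comparison, the Lipschitz and ratio estimates are routine consequences of smoothness.
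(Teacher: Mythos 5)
Your algebraic reduction is essentially sound, and it is a genuinely different (more explicit) route than the paper's: writing $s:=r\rho$, the moment system gives $D=r^{2}(3+2s)^{2}/(s(1+s))$, and eliminating $s$ via $3+2s=\sqrt{Dd}$ does produce your quadratic $(D-4r^{2})^{2}d^{2}-(10D+24r^{2})d+9=0$ with the $+$ root admissible; the clean selection criterion is $\sqrt{Dd}\ge 3$ (i.e. $s\ge 0$), and uniqueness follows from the strict monotonicity of $s\mapsto r^{2}(3+2s)^{2}/(s(1+s))$ — two points you should state explicitly rather than appeal to ``the limit $r\to 0$''. By contrast the paper first normalizes $(m,y,z)=(1,0,0)$ and then runs Levermore's ansatz \eqref{abcrassume}--\eqref{formulaofmu} for existence, with a separate uniqueness-by-contradiction argument; your scalar reduction buys a closed-form inverse, after which the Lipschitz bound \eqref{stabbijection3} and the ratio bound \eqref{stabbijection4} are indeed routine, as in the paper.

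The genuine gap is the admissibility of the data. Your inversion is only well defined when $D>4r^{2}$: the function $s\mapsto r^{2}(3+2s)^{2}/(s(1+s))$ decreases from $+\infty$ to $4r^{2}$ on $(0,\infty)$, so a solution $d>0$ exists if and only if $D>4r^{2}$ (and at $D=4r^{2}$ the leading coefficient of your quadratic vanishes). In the paper's notation this is exactly the strict inequality $\tfrac12\mathrm{tr}(|\mathcal{R}|)<\sqrt{\mathsf{a}\mathsf{c}-\mathsf{b}^{2}}$ for the centered second moments of $F_0$, i.e. strict positive definiteness of the centered $6\times 6$ moment matrix, which is Proposition \ref{properV100} and is not automatic: it can fail for degenerate measures, and proving it uses that $F_0$ is a nonnegative \emph{function} of positive mass (bounded below on a set of positive measure), so that any degenerate direction would force $F_0$ to be supported on a null set. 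Your proposal only invokes Cauchy--Schwarz, which at best gives $D\ge 0$ (even strictness needs the density argument) and says nothing about the angular-momentum block; and the claim that ``$2\rho<2\sqrt{d}$ follows from $d>\rho^{2}$'' is circular, since $d>\rho^{2}$ is a property of the solution whose existence is precisely what must be established. You need this admissibility step (or an equivalent of Proposition \ref{properV100}) before $\widetilde{\mathscr{F}}$ is defined on all of $\mathscr{I}$; with it added, the rest of your argument goes through.
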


 \begin{rmk} In the proof of Theorem \ref{biinjectionofMF0}, we offer a more thorough elucidation of the bijection mapping $\widetilde{\mathscr{F}}$. Precisely, $(a, b, c, \mathsf{R})$ is entirely determined by $(\Psi_4(M), \Psi_5(M), \Psi_6(M), \Psi_7(M))$ through equation \eqref{equalcondition1} and the corresponding translation operation. Our approach is inspired by the methodology outlined in \cite{Levermore1}.
  \end{rmk}

 \begin{rmk} Estimates \eqref{stabbijection3} and \eqref{stabbijection4} yield stability results concerning the bijection mapping $\widetilde{\mathscr{F}}$, thereby enabling us to establish the overall Lyapunov stability of the Maxwell-Boltzmann distribution.
\end{rmk}

\begin{rmk} Utilizing Theorem \ref{biinjectionofMF0}, the relative entropy associated with \eqref{CauchyLandau} can be rigorously defined as follows:
\ben\label{relativeentorpy} 
\mathscr{H}(F|\widetilde{\mathscr{F}}(\Psi(F_0)))(t):=\int_{\R^3_x\times\R^3_v}\bigg(F\log \f{F}{\widetilde{\mathscr{F}}(\Phi(F_0))}-F+\widetilde{\mathscr{F}}(\Psi(F_0))\bigg)dvdx. \een
Subsequently, the $H$-theorem can be reformulated as: 
\begin{equation}\label{Hthmrevised}
\f d{dt}\mathscr{H}(F|\widetilde{\mathscr{F}}(\Psi(F_0)))(t)+\mathscr{D}(F)(t)=0.
\end{equation}
This constitutes the primary dissipative mechanism for equation \eqref{CauchyLandau}. Moreover, this indicates that time-periodic Maxwell-Boltzmann distribution $\widetilde{\mathscr{F}}(\Psi(F_0))$ is the only candidate of the final state of the solution $F(t)$ generated by the initial data $F_0$ from the perspective of the long-time dynamics. 
\end{rmk}

\begin{rmk} In the proof process, we only rely on the properties of the collision operator rather than its specific expression. Therefore, the conclusions in Theorem \ref{Classificationofeiit} and Theorem \ref{biinjectionofMF0} are also valid for the Boltzmann equation with harmonic potential.
\end{rmk}

  Now we can state our  nonlinear stability result which gives the affirmative answer to $(Q2)$:
\begin{thm}[Nonlinear stability]\label{Thmnonstability}
	Let $M$ be a time-periodic Maxwell-Boltzmann distribution   in \eqref{expliciformeisolu} with $(m,y,z)\in \R^+_{>0}\times\R^3\times\R^3$ and  $(a,b,c,\mathsf{R})\in \mathcal{O}$. Let  $r:=\f{\mathrm{tr}(|\mathsf{R}|)}{2\sqrt{(ac-b^2)}}$ with $|\mathsf{R}|=\sqrt{\mathsf{R}^\tau\mathsf{R}}$ and $\c:=\f{1+r}4$.  
\begin{itemize}
\item[(i).](Asymptotic stability) Suppose that  $F_0=F_0(x,v)$ satisfies that $F_0\ge0$ and $\Psi(F_0)=\Psi(M)$(see Definition \ref{ConservMap}).   
 For any $\delta\in (0,\f14-\f{\c}2)$ and $\eta_0\in(0,\f{1-4\delta}{4\c}-\f12)$, if $q_0:=\f{1-4\delta}{4\c}-\eta_0$, $\eta<\min\{\eta_0/1200,1/5-1/(10q_0)\}$,   there exists a constant $\vep=\vep(M,\delta, \eta_0,r)\ll1$ such that if
	\beno
	\sum_{|\al|+|\be|\leq 5}\|M^{-1+2\de+\eta(|\al|+|\be|)}(0)\pa^\al_\be( F_0-M(0))\|_{L^2_{x,v}}\leq \vep ,
	\eeno
then equation \eqref{CauchyLandau} admits a unique global solution $F=F(t,x,v)$ verifying that $F\ge0$ and
	\begin{align*} &\sum_{|\al|+|\be|\leq 5}\big(\|M^{-1+2\de+\eta(|\al|+|\be|)}(t)\pa^\al_\be (F(t)-M(t))\|_{L^2_{x,v}}+\lr{t}^{q_0}\|M^{-\f12}(t)\pa^\al_\be (F(t)-M(t))\|_{L^2_{x,v}}\big)\\
		&\lesssim \sum_{|\al|+|\be|\leq 5}\|M^{-1+2\de+\eta(|\al|+|\be|)}(0)\pa^\al_\be (F_0-M(0))\|_{L^2_{x,v}}.
\end{align*}

\item[(ii).](Lyapunov stability) Suppose that  $F_0=F_0(x,v)$ satisfies that $F_0\ge0$ and $\Psi_1(F_0)>0$. 
 For any $\delta\in (0,\f16-\f{\c}3)$, there exists a constant $\vep=\vep(M,\delta,r)\ll1$ such that if
	\beno
	\sum_{|\al|+|\be|\leq 5}\|M^{-1+2\de}(0)\pa^\al_\be( F_0-M(0))\|_{L^2_{x,v}}\leq \vep,
	\eeno
equation \eqref{CauchyLandau} admits a unique global solution $F=F(t,x,v)$ verifying that $F\ge0$ and
	\beno \sum_{|\al|+|\be|\leq 5} \|M^{-1+4\de}(t)\pa^\al_\be (F(t)-M(t))\|_{L^2_{x,v}} \lesssim \vep.
\eeno
\end{itemize}
\end{thm}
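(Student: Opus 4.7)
The plan is to write $F = M + f$ and study the perturbation equation
\[
(\partial_t + v\cdot\nabla_x - x\cdot\nabla_v)f = Q(M,f) + Q(f,M) + Q(f,f),
\]
where the transport of $M$ vanishes because $M$ is an entropy-invariant solution. For part (i) the hypothesis $\Psi(F_0)=\Psi(M)$ guarantees that $f(t,\cdot,\cdot)$ lies in the orthogonal complement of the thirteen-dimensional kernel of the time-dependent linearized operator $\mathcal{L}_M(f):=Q(M,f)+Q(f,M)$ for every $t$, which is the key to extracting dissipation. I would carry out all estimates in the weighted space $H^5_{x,v}$ with the weight $M^{-1+2\delta+\eta(|\alpha|+|\beta|)}$ dictated by the statement: the factor $M^{-1+2\delta}$ is slightly weaker than the natural $M^{-1}$ of the linearized problem (so that it can be propagated despite the $e^{-|x|^2/2}$ degeneration noted in the toy model \eqref{toynoneq}), while the $\eta(|\alpha|+|\beta|)$ correction compensates for the loss created by the commutators $[\partial^\alpha_\beta, v\cdot\nabla_x - x\cdot\nabla_v]$ and $[\partial^\alpha_\beta, \mathcal{L}_M]$.

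The key intermediate result would be a \emph{time-periodic coercivity estimate}: for $f$ perpendicular to the thirteen macroscopic modes, there is $\lambda=\lambda(M)>0$ such that $-(\mathcal{L}_M f, M^{-1}f)_{L^2_{x,v}} \gtrsim \lambda \|M^{-1/2}f\|_{\mathcal{D}}^2$, where $\mathcal{D}$ is the anisotropic Landau dissipation norm built from the coefficient matrix $a_{ij}*M$. Because $M$ is a Gaussian in the rotated coordinates $(X(t),V(t))$, changing to those coordinates freezes the coefficients in time and reduces the computation to a perturbation of the classical Landau coercivity around a nondegenerate Gaussian; the skew-symmetric $\mathsf R$-cross-term is controlled via the non-degeneracy condition $(ac-b^2)\mathbb I+\mathsf R^2>0$ from \eqref{omegadomain1}. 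Next I would establish hypocoercivity by a Kawashima-type augmented energy functional $\mathcal{E}(f)+\kappa\mathcal{I}(f)$, where $\mathcal{I}$ is built from commutators like $(\nabla_v f, \nabla_x f)$, to transfer the velocity dissipation into spatial dissipation through the streaming operator. The rate $\langle t\rangle^{-q_0}$ with $q_0=(1-4\delta)/(4\c)-\eta_0$ and $\c=(1+r)/4$ should come from interpolating between the weak weighted norm $H^5_{x,v}(M^{-1+2\delta+\eta(|\alpha|+|\beta|)})$ and the strong $M^{-1/2}$ norm, in the spirit of the polynomial decay obtained in \cite{CHJ,GMM,KS}; the exponent $\c$ encodes precisely how much the off-diagonal $\mathsf R$ eats into the spectral gap.

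Nonlinear control of $Q(f,f)$ is the delicate part: I would split $Q(f,f)=Q(M^{1/2}\cdot M^{-1/2}f, M^{1/2}\cdot M^{-1/2}f)$ and use the standard weighted trilinear estimates (Guo-type), absorbing one power of $M^{-1/2}f$ into the dissipation norm and leaving the other in $L^\infty_{x,v}$ via Sobolev in $H^5_{x,v}$. The essential smallness that beats the degeneracy imbalance described after \eqref{toynoneq} is the a priori bound $\|M^{-1+2\delta}f(t)\|_{H^5_{x,v}}\ll 1$, which allows one to trade powers of $M^{-\eta(|\alpha|+|\beta|)}$ coming from commutators for a small constant. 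Global existence and uniqueness then follow by a standard continuity argument once this a priori bound closes.

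Part (ii) reduces to (i) via Theorem \ref{biinjectionofMF0}. Given $F_0$ with $\Psi(F_0)$ close (but not equal) to $\Psi(M)$, I define $\widetilde M := \widetilde{\mathscr{F}}(\Psi(F_0))\in\mathscr{M}$. By \eqref{stabbijection3}--\eqref{stabbijection4} the parameters of $\widetilde M$ are $\mathcal{O}(\varepsilon)$-close to those of $M$ and $\widetilde M/M$ is controlled pointwise by $\exp\{C\varepsilon(1+|x|^2+|v|^2)\}$; hence the weights $M^{-1+2\delta}$ and $\widetilde M^{-1+3\delta}$ are comparable up to a harmless constant. Applying part (i) to the perturbation $F-\widetilde M$ around $\widetilde M$ (note that $\Psi(F-\widetilde M)\equiv 0$ by construction) produces a global solution with $\|\widetilde M^{-1+3\delta}(F(t)-\widetilde M(t))\|_{H^5_{x,v}}\lesssim \varepsilon$; adding back $\widetilde M-M$ in the $M^{-1+4\delta}$ weight (with the extra power of $\delta$ to swallow the Gaussian loss coming from \eqref{stabbijection4}) yields the stated Lyapunov bound.

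The main obstacle, as flagged by the authors, is the nonlinear energy estimate under the degenerate weight $M^{-1+2\delta}$: unlike the case without harmonic potential, the collision gain does not dominate $|Q(f,f)|$ in the region $|x|\gg 1$, and this must be compensated by (a) the genuine time-periodic dissipation extracted from $\mathcal{L}_M$ in all of phase space (not just near $x=0$) and (b) the hypocoercive transfer through the rotating Hamiltonian vector field. Getting a clean and sharp coercivity constant $\lambda(M)$ that degenerates only through $r\to 1$ — equivalently, matching the optimal linear rate $q_0$ — is the finest point in the argument.
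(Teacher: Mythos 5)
Your proposal reproduces the broad skeleton (perturbation around $M$, weighted $H^5_{x,v}$ energy, part (ii) via Theorem \ref{biinjectionofMF0} and \eqref{stabbijection3}--\eqref{stabbijection4}, which indeed matches the paper), but two of its load-bearing steps would not go through as described. First, the claimed ``time-periodic coercivity + Kawashima hypocoercivity'' route misidentifies both the dissipation structure and the origin of the rate. After the paper's change of coordinates the linearized collision term is $\mathscr{C}_1 e^{-\frac12|x|^2}L(f)$: the coercivity constant degenerates like $e^{-\frac12|x|^2}$ in $x$, so there is no estimate of the form $-(\mathcal{L}_M f, M^{-1}f)\gtrsim\lambda\|M^{-1/2}f\|^2_{\mathcal D}$ with a spectral gap uniform in phase space, and orthogonality to the thirteen conserved quantities does not put $f$ outside the kernel of the collision part, which is the infinite-dimensional space of local Maxwellian modes $\mathbf a(t,x)\mu+\mathbf b(t,x)\cdot v\mu+\mathbf c(t,x)(|v|^2-3)\mu$. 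Controlling this macroscopic part is a major piece of the paper (the macroscopic system \eqref{macroeq2}, the zero/non-zero mode splitting in which the thirteen conservation laws \eqref{conmabc} pin down the zero modes, and the Gaussian-weighted Poincar\'e and Poincar\'e--Korn inequalities of Lemma \ref{lemKPI}); a commutator functional of the type $(\nabla_v f,\nabla_x f)$ is not what is used and, with the rotating Hamiltonian field and no dispersion, there is no indication it would close. Moreover the rate $\lr{t}^{-q_0}$ with $q_0=\frac{1-4\delta}{4\c}-\eta_0$ does not come from interpolating a coercivity constant: it comes from the weight-transfer mechanism, i.e.\ the corrected weight $W_a^2=\mathcal N e^{a(|x|^2+|v|^2)}+\mfS^{-1}x\cdot v\,e^{(a-\c-\vartheta)(|x|^2+|v|^2)}$ together with $\mathcal T(\mfS^{-1}x\cdot v)=-(|x|^2-|v|^2-2(\mfR\mfS^{-1}x,v))$ and Lemma \ref{mixturegainweight}, which converts the propagated strong exponential weight into algebraic decay of the $\mathcal M^{-1/2}$ norm; the factor $\c=\frac{1+r}{4}$ enters through this geometry (and is seen to be sharp on trajectories), not through a shrinking spectral gap.

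Second, your nonlinear closure (``absorb one factor of $M^{-1/2}f$ into the dissipation, put the other in $L^\infty$, use smallness of $\|M^{-1+2\delta}f\|_{H^5}$'') is exactly the direct estimate the paper explains is unfeasible: since the admissible weight is capped at $\mathcal M^{-(1-2\delta)}$ (integrability of $G$), in the region $|x|\gg1$ the degenerate dissipation $e^{-\frac12|x|^2}L(f)$ gives essentially nothing with which to absorb $Q(f,f)$, no matter how small the energy is. The paper's resolution is the time-phase splitting with the cutoff $\psi(t,x)$ built on $\{|x|^2/4\gtrless q_0\ln(t+e)\}$: on $\mathscr D_1$ the equation is treated as near-vacuum and the surplus spatial weight produces a factor $\lr{t}^{-q_0}$, while on $\mathscr D_2$ one uses $e^{\frac12|x|^2}\lesssim\lr{t}^{2q_0}$ together with the decaying low-weight norm and the full (micro plus macro) dissipation; closing the Gr\"onwall argument requires $2q_0>1$, which is precisely where the constraints $\delta<\frac14-\frac{\c}{2}$ and $\eta_0<\frac{1-4\delta}{4\c}-\frac12$ are used. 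Without this splitting and the quantitative decay ansatz feeding back into the nonlinear estimate, the continuity argument you sketch cannot be closed.
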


We begin with the comments on asymptotic stability:

\noindent\underline{(1).} As far as we know, this signifies a comprehensive and initial understanding of the asymptotic stability pertaining to the time-periodic Maxwell-Boltzmann distribution. Moreover, our employment of the micro-macro decomposition (as delineated in equation \eqref{deofP}) strengthens the concept that the route to equilibrium intertwines the evolution towards the local Maxwellian in velocity space and the progression towards a barometric distribution in phase space. However, we omit the vacuum scenario from our findings due to our methodology's heavy reliance on collision dynamics.
\smallskip 

\noindent\underline{(2).} The quantitative assessment of the convergence rate illustrates its dependency solely on $(a, b, c, \mathsf{R}) \in \mathcal{O}$ and highlights its optimality in comparison to the toy model described in equation \eqref{toymodelofNSlandau}. The constraints on $(\delta, \eta_0)$ stem from the weight-transfer lemma (refer to Lemma \ref{mixturegainweight}) and the condition $\frac{1-4\delta}{4\c}-\eta_0>\frac{1}{2}$, which are pivotal in our approach. This also implies that as the rotational effect intensifies (i.e., $r\rightarrow1$), the weight function $M^{-1+2\delta}$ will converge towards $M^{-1}$, approaching the threshold, considering $M$ as the background solution.
 \smallskip

\noindent\underline{(3).} From the result on the asymptotic stability(more precisely, see \eqref{anstzenergy}), one may easily check that 
\beno
 \Big|\f{F(t,x,v)}{M(t,x,v)}-1\Big|\leq C \<t\>^{-q_0}M^{-\f12}(t,x,v),\quad (x,v)\in \R^3_x\times\R^3_v.
\eeno
Thus we derive that
\begin{align*}
0\le \mathscr{H}(F)(t)- \mathscr{H}(M)(t)= \mathscr{H}(F|M)(t) \leq& \int_{\R^3_x\times\R^3_v} F\log (1+C\<t\>^{-q_0}M^{-\f12}) dxdv\\
\leq & C\<t\>^{-q_0} \int_{\R^3_x\times\R^3_v} F(t)M(t)^{-\f12} dxdv\leq C\<t\>^{-q_0}.
\end{align*}
This, together with Theorem \ref{biinjectionofMF0}, indicates that   the {\it time-periodic   Maxwell-Boltzmann distribution} $M$  is the unique minimizer of $\mathscr{H}(F)$ over the set of all solutions to equation \eqref{CauchyLandau} that satisfy the same conservation laws as $M$. Furthermore, the entropy of each solution in this set will asymptotically converge to $\mathscr{H}(M)$ at an explicit rate.
These findings suggest that Uhlenbeck's statement in \cite{uhlenbeck-ford} claiming that ``the time-periodic Maxwell-Boltzmann distributions have only limited interest'' is incorrect.

\smallskip

\noindent\underline{(4).} Our findings, as stated in Theorem \ref{biinjectionofMF0} and Theorem \ref{Thmnonstability}, together with the approach used in \cite{DV2}, provide a comprehensive understanding of the nonlinear problem with general initial data: if the solution maintains boundedness in the Sobolev spaces with exponential weights,  the solution $F$ will gradually approach $\widetilde{\mathscr{F}}(\Psi(F_0))$. Crucially, the convergence rate is determined solely by the thirteen conserved quantities associated with the initial data $F_0$.   Our findings demonstrate that the long-term behavior of the solutions is in fact intimately tied to the conserved quantities, which gives them significant theoretical importance.
 \smallskip

\noindent\underline{(5).} Relaxing the requirement for rapid decay of the initial data in both the $x$ and $v$ variables is also quite an interesting problem, even at the linear level. Undoubtedly, the relative entropy, $\mathscr{H}(F|\widetilde{\mathscr{F}}(\Psi(F_0)))$, will play a central role in this further investigation.
 
\smallskip
 
Some comments on Lyapunov stability are in order:
 \smallskip

\noindent\underline{(1).} Lyapunov stability hinges greatly on two critical factors: the emergence of a new time-periodic Maxwell-Boltzmann distribution, dictated by the initial perturbation, which remains proximate to the background solution; and the neutral asymptotic stability ensuring the convergence of the solution towards this new distribution.  
 \smallskip

\noindent\underline{(2).} The decrease in weight in the final estimate primarily stems from the stability outcome \eqref{stabbijection4} regarding the bijection mapping $\widetilde{\mathscr{F}}$.

\subsection{Ideas and strategies} 
 
 Our main ideas and strategies can be concluded as four aspects:
(i) reducing the investigation of the general case to a specific static Maxwell-Boltzmann distribution through a change of coordinates; (ii)  constructing a suitable weighted function to transfer the weight from the velocity variable to the phase variable and  capturing the decay machanism; 
(iii)  employing the variant Poincar\'e inequality to establish the macroscopic estimates;  (iv) devising a time-phase splitting method to close the energy argument.

 \subsubsection{Reduction to a normalized static Maxwell-Boltzmann distribution} 
To restore the factorization property of the general Maxwell-Boltzmann distribution \eqref{expliciformeisolu}, we will utilize a change of coordinates to convert \eqref{expliciformeisolu} into the normalized static Maxwell-Boltzmann distribution $\mathcal{M} =(2\pi)^{-3}e^{-\frac{1}{2}(|x|^2+|v|^2)}$. Concurrently, we will meticulously track all transformations and apply them to the equation to achieve the desired reduction. Following this reduction, the analysis of the Maxwell-Boltzmann distribution \eqref{expliciformeisolu} to \eqref{CauchyLandau} will be reduced to some equation involving the normalized static Maxwell-Boltzmann distribution $\mathcal{M}$. The key point utilized here is to fully exploit the Galilean invariance property of the collision operator.
\smallskip

Let $M$ be a {\it time-periodic Maxwell-Boltzmann distribution} represented by the form \eqref{expliciformeisolu}. In the subsequent steps, we will achieve the reduction. Through a straightforward calculation, we find
\ben\label{psi123}
(\Psi_1(M),\Psi_2(M),\Psi_3(M))=m(1,y,z).
\een


 \noindent\underline{\textit{Step 1. Normalization of mass and momentum.}} Recall that $m=\Psi_1(M)$, we introduce
\begin{equation}\label{FtoF1}
F_1(t, x, v) := m^{-1}F(t, x, v), \quad M_1(t, x, v) := m^{-1}M(t, x, v).
\end{equation}
Then, one can verify that
\begin{equation*}
\partial_tF_1 + v \cdot \nabla_xF_1 - x \cdot \nabla_vF_1 = mQ(F_1, F_1).
\end{equation*}
Next, since $(\Psi_2(M_1),\Psi_3(M_1))=(y,z)$, by setting
\begin{equation}\label{F1toF2}
\begin{aligned}
F_2(t, x, v) &:= F_1(t, x + y\cos t + z\sin t, v - y\sin t + z \cos t),\\
M_2(t, x, v) &:= M_1(t, x + y\cos t + z \sin t, v - y\sin t + z \cos t),
\end{aligned}
\end{equation}
we obtain the following equation, leveraging the Galilean invariance property of the collision operator:
\begin{equation*}
\partial_tF_2 + v \cdot \nabla_xF_2 - x \cdot \nabla_vF_2 = mQ(F_2, F_2).
\end{equation*}

\noindent\underline{\textit{Step 2. Maxwell-Boltzmann distribution with quadratic form only in $(x,v)$ variables.}} It is straightforward to verify that $(\Psi_1(M_2),\Psi_2(M_2),\Psi_3(M_2))=(1,0,0)$. Thus, the form of \eqref{expliciformeisolu} will be reduced to
\beno
	M_2(t, x, v) = \frac{\sqrt{\det \mathsf{Q}}}{(2\pi)^3}\exp\bigg\{-\frac{1}{2}\big[a|X(t)|^2 + 2bX(t) \cdot V(t) + c|V(t)|^2 + 2X(t)^\tau \mathsf{R}V(t)\big]\bigg\},
\eeno
where $(a,b,c,\mathsf{R})\in \mathcal{O}$ (see the definition \eqref{omegadomain}) and $\mathsf{Q} = (ac - b^2)\mathbb{I} + \mathsf{R}^2 > 0$. Direct computation yields
\begin{equation*}
	a|X(t)|^2 + 2bX(t) \cdot V(t) + c|V(t)|^2 + 2X(t)^\tau \mathsf{R}V(t) = A(t)|x|^2 + 2B(t)x \cdot v + C(t)|v|^2 + 2x^\tau \mathsf{R}v,
\end{equation*}
where\begin{equation}\label{deofCt}
	\begin{aligned}
	A(t) :=& a\cos^2t + 2b\sin t\cos t + c\sin^2t;\\
	B(t) :=& (c - a)\sin t\cos t + b(\cos^2t - \sin^2t);\\
C(t) :=& a\sin^2t - 2b\sin t\cos t + c\cos^2t.
	\end{aligned}
\end{equation}  Thanks to the constraints in \eqref{omegadomain}, for any $t \ge 0$, we deduce that
\begin{equation}\label{Ctproperty}
	0 < \frac{a + c}{2} - \sqrt{( \frac{a - c}{2}  ) ^2 + b^2} \le A(t), ~~C(t) \le \frac{a + c}{2} + \sqrt{(\frac{a - c}{2} )^2 + b^2},
\end{equation}
and
\begin{equation*}
	C'(t)=-2B(t),\quad B'(t)=C(t)-A(t),\quad A(t)C(t) - B^2(t) = ac - b^2 > 0.
\end{equation*}
Let $\mathsf{S}:=\mathsf{Q}^{\frac{1}{2}}$, the above relations imply that
\begin{equation*}
	a|X(t)|^2 + 2bV(t) \cdot X(t) + c|V(t)|^2 + 2X(t)^\tau \mathsf{R}V(t) = C(t)\Big|v + \frac{B(t)x - \mathsf{R}x}{C(t)}\Big|^2 + \frac{1}{C(t)}|\mathsf{S}x|^2.
\end{equation*} Now we introduce
\begin{equation}\label{F2toF3}
	\begin{aligned}
	M_3(t, x, v) &:= M_2(t, \sqrt{C(t)}x, \frac{v - B(t)x + \mathsf{R}x}{\sqrt{C(t)}})= \frac{\det \mathsf{S}}{(2\pi)^3}\exp\{-\frac{1}{2}(|v|^2 + |\mathsf{S}x|^2)\},\\
	F_3(t, x, v) &:= F_2(t, \sqrt{C(t)}x, \frac{v - B(t)x + \mathsf{R}x}{\sqrt{C(t)}}).
		\end{aligned}
\end{equation}
To derive the equation for $F_3$, we observe that
\begin{equation*}
	\begin{aligned}
		\partial_tF_3 &= \partial_tF_2 - \frac{B(t)}{\sqrt{C(t)}}x \cdot \nabla_xF_2 + \frac{A(t) - C(t)}{\sqrt{C(t)}}x \cdot \nabla_vF_2 + \frac{B(t)(v - B(t)x + \mathsf{R}x)}{C(t)\sqrt{C(t)}} \cdot \nabla_vF_2;\\
		\nabla_xF_3 &= \sqrt{C(t)}\nabla_xF_2 - \frac{B(t)}{\sqrt{C(t)}}\nabla_vF_2 - \frac{\mathsf{R}\nabla_vF_2}{\sqrt{C(t)}}; \:\:\:\:\:\:\:\:\:\:\:\: \nabla_vF_3 = \frac{\nabla_vF_2}{\sqrt{C(t)}},\\
	\end{aligned}
\end{equation*}
which immediately implies that
\begin{equation*}
\begin{aligned}
\nabla_vF_2 &= \sqrt{C(t)}\nabla_vF_3, \quad \nabla_xF_2 = \frac{\nabla_xF_3 + B(t)\nabla_vF_3 + \mathsf{R}\nabla_vF_3}{\sqrt{C(t)}},\\
\partial_tF_2 &= \partial_tF_3 + [C(t) - A(t)]x \cdot \nabla_vF_3 - \frac{2B(t)}{C(t)}\mathsf{R}x \cdot \nabla_vF_3 + \frac{2B^2(t)}{C(t)}x \cdot \nabla_vF_3 + \frac{B(t)}{C(t)}(x \cdot \nabla_x - v \cdot \nabla_v)F_3.
\end{aligned}
\end{equation*}
On one hand, we have
\beno
	&&(\partial_t + v \cdot \nabla_x - x \cdot \nabla_v)F_2(t, \sqrt{C(t)}x, \frac{v - B(t)x + \mathsf{R}x}{\sqrt{C(t)}}) \\
	&&= (\partial_t + \frac{v \cdot \nabla_x - \mathsf{S}x \cdot \mathsf{S}\nabla_v + \mathsf{R}x \cdot \nabla_x - \mathsf{R}v \cdot \nabla_v}{C(t)})F_3(t, x, v).
\eeno
On the other hand, since the collision operator $Q$ is Galilean invariant, one may have 
\begin{equation*}
	mQ(F_2, F_2)(t, \sqrt{C(t)}x, \frac{v - B(t)x + \mathsf{R}x}{\sqrt{C(t)}}) = mQ(F_3, F_3)(t, x, v).
\end{equation*}
These lead to the equation for $F_3$:
\begin{equation*}
	\partial_t F_3 + \frac{v \cdot \nabla_x - \mathsf{S}x \cdot \mathsf{S}\nabla_v + \mathsf{R}x \cdot \nabla_x - \mathsf{R}v \cdot \nabla_v}{C(t)}F_3 = mQ(F_3, F_3).
\end{equation*}

\noindent\underline{\it Step 3. Maxwell-Boltzmann distribution with normal form of $(x,v)$ variables.}  Let
\begin{equation}\label{F3toF4}
	\begin{aligned}
	M_4(t, x, v) &:= (\det \mathsf{S})^{-1}M_3(t, \mathsf{S}^{-1}x, v) = (2\pi)^{-3}e^{-\f12 (|x|^2+|v|^2)},\\
	F_4(t, x, v) &:= (\det \mathsf{S})^{-1}F_3(t, \mathsf{S}^{-1}x, v).
		\end{aligned}
\end{equation}
Then we derive that 
\beno
	\partial_t F_4+ \frac{\mathsf{S}v \cdot \nabla_x - \mathsf{S}x \cdot \nabla_v + \mathsf{R}x \cdot \nabla_x - \mathsf{R}v \cdot \nabla_v}{C(t)}F_4 = m(\det \mathsf{S})Q(F_4, F_4).
\eeno

\noindent\underline{\it  Step 4. Normalization of $\mathsf{R}$ and $\mathsf{S}$.} Since $\mathsf{R}$ is skew-symmetric,  there exists an orthogonal matrix $\mathsf{U}$ such that if $|\mathsf{R}|:=\sqrt{\mathsf{R}^\tau\mathsf{R}}$, then
\ben\label{deofU} \mathsf{U}^\tau\mathsf{R}\mathsf{U}= 
\begin{pmatrix}
	0 &  \f12 \mathrm{tr}|\mathsf{R}| & 0\\
	-\f12 \mathrm{tr}|\mathsf{R}| & 0& 0\\
	0 & 0 & 0
\end{pmatrix}.\een 
 Since $\mathsf{S}=\sqrt{(ac-b^2)\mathbb{I}+\mathsf{R}^2}>0$, we derive that $\mathrm{tr}|\mathsf{R}|\in [0,2\sqrt{ac-b^2})$, which implies that     
\ben
&\label{deofR} \mathfrak{R}:=(\sqrt{ac-b^2})^{-1}\mathsf{U}^\tau\mathsf{R}\mathsf{U}= 
\begin{pmatrix}
	0 & r & 0\\
	-r & 0& 0\\
	0 & 0 & 0
\end{pmatrix}, \quad\mbox{where}\,\, r:=\f{\mathrm{tr}|\mathsf{R}|}{2\sqrt{ac-b^2}}\in[0,1),
\\ \label{deofS}
 & \mathfrak{S}:= (\sqrt{ac-b^2})^{-1}\mathsf{U}^\tau\mathsf{S}\mathsf{U}= 
\begin{pmatrix}
	\sqrt{1-r^2} & 0 & 0\\
	0 & \sqrt{1-r^2} & 0\\
	0 & 0 & 1
\end{pmatrix}.
\een
 Now we set
\begin{equation}\label{F4toF5}
	\begin{aligned}
M_5(t, x, v) &:=M_4(t, \mathsf{U}x, \mathsf{U}v) = (2\pi)^{-3}e^{-\f12 (|x|^2+|v|^2)},\\
F_5(t, x, v) &:=F_4(t, \mathsf{U}x, \mathsf{U}v).
		\end{aligned}
\end{equation}
It is easy to see that $F_5$ satisfies that
\ben\label{reduceeq}
\partial_t F_5 + \f{\sqrt{ac-b^2}}{C(t)}(\mfS v \cdot \nabla_x - \mfS x \cdot \nabla_v + \mfR x \cdot \nabla_x - \mfR v \cdot \nabla_v)F_5 = m(ac-b^2)^{3/2}(\det \mfS)Q(F_5, F_5).
\een
   We can introduce the following definition:
\begin{defi}[Normalization-Scaled Landau equation in the presence of harmonic potential]
Let $M$ be a time-periodic Maxwell-Boltzmann distribution defined from \eqref{expliciformeisolu} to the equation \eqref{CauchyLandau}, with $(m,y,z)\in \mathbb{R}^+_{>0}\times\mathbb{R}^6$, $(a,b,c,\mathsf{R})\in\mathcal{O}$, as defined in \eqref{omegadomain}, and $\mathsf{Q}:=(ac-b^2)\mathbb{I}+\mathsf{R}^2$. Following arguments from previous steps, $\mathcal{M}$, defined in \eqref{normalstaMBd}, is the steady solution to the so-called Normalization-Scaled Landau equation in the presence of harmonic potential, which is given by
\begin{equation}\label{NSlandauCauchy}
\begin{aligned}
\partial_t G &+ \mathscr{C}_l(t)\left( \mfS v \cdot \nabla_x -  \mfS x \cdot \nabla_v + \mfR x \cdot \nabla_x -\mfR v \cdot \nabla_v\right)G =\mathscr{C} Q(G, G),
\end{aligned}
\end{equation}
where  $\mathscr{C}_l(t):=\frac{\sqrt{ac-b^2}}{C(t)}$ and $\mathscr{C} :=m(ac-b^2)^{3/2}\left(\det \mfS\right)$. Here $C(t)$, $\mfR$, and $\mfS$ are defined in \eqref{deofCt}, \eqref{deofR}, and \eqref{deofS}, respectively. 
\end{defi}

Several remarks are in order:
\begin{rmk}
It is easy to verify that the coefficients $\mathscr{C}_l(t)$ and $\mathscr{C}$ are bounded from below and above, thanks to \eqref{Ctproperty} and the fact that $\det\mathfrak{S}=(ac-b^2)^{-\frac{3}{2}}\sqrt{\det \mathsf{Q}}$. Thus, they can be regarded as universal constants when applying the energy method.  
\end{rmk}

\begin{rmk}
The relationship between solutions to \eqref{CauchyLandau} and \eqref{NSlandauCauchy} can be summarized as follows:
\begin{equation}\label{RelationFG}
\begin{aligned}
G(t,x,v) &= m^{-1}\left(\det \mathsf{S}\right)^{-1} F\left(t,\sqrt{C(t)}\mathsf{S}^{-1}\mathsf{U}x+y\cos t+z\sin t,\frac{\mathsf{U}v-B(t)\mathsf{S}^{-1}\mathsf{U}x+\mathsf{R}\mathsf{S}^{-1}\mathsf{U}x}{\sqrt{C(t)}}-y\sin t+z\cos t\right),\\
\mathcal{M}(x,v) &= m^{-1}\left(\det \mathsf{S}\right)^{-1} M\left(t,\sqrt{C(t)}\mathsf{S}^{-1}\mathsf{U}x+y\cos t+z\sin t,\frac{\mathsf{U}v-B(t)\mathsf{S}^{-1}\mathsf{U}x+\mathsf{R}\mathsf{S}^{-1}\mathsf{U}x}{\sqrt{C(t)}}-y\sin t+z\cos t\right),
\end{aligned}
\end{equation}
where $\mathsf{S}=\sqrt{(ac-b^2)\mathbb{I}+\mathsf{R}^2}$, and $\mathsf{U}$ is defined through \eqref{deofU}.
\end{rmk}

  The equation \eqref{NSlandauCauchy} still has  thirteen conservation laws.  Similar to the mapping  $\Psi$ for \eqref{CauchyLandau}(see Definition \ref{ConservMap}), we can  introduce the conserved mapping $\bar{\Psi}$ for these thirteen conservation laws of \eqref{NSlandauCauchy}.  

\begin{defi}\label{ConservMap1}
	Let $G(t,x,v)$ be a global solution to \eqref{NSlandauCauchy} with the initial data $G_0=G_0(x,v)$.  $\bar{\Psi}$ is called a conserved mapping of \eqref{NSlandauCauchy} if 
	\ben\label{Gconserved}
	\bar{\Psi}(G(t)):=\int_{\R^3_x\times\R^3_v} G(t, x, v)\begin{bmatrix} 1 \\  v  \\ \mfS^{-1}x \\ |v + \mfR\mfS^{-1}x|^2  \\ \mfS^{-1}x \cdot v  \\  |\mfS^{-1}x|^2 \\ (v + \mfR\mfS^{-1}x) \wedge \mfS^{-1}x \end{bmatrix}dxdv. 
	\een 
\end{defi}
\begin{rmk}\label{v2Rx}
	Using \eqref{equality for G -1 x R G -1 x} and \eqref{equality for R G -1 x} it is easy to verify that $|v|^2+(v, \mfR\mfS^{-1}x)=|v+ \mfR\mfS^{-1}x|^2-r((v+ \mfR\mfS^{-1}x)\wedge(\mfS^{-1}x))_{12}$, thus $|v + \mfR\mfS^{-1}x|^2$ in \eqref{Gconserved} can be replaced by $|v|^2+(v, \mfR\mfS^{-1}x)$.
\end{rmk}

\begin{rmk}\label{rmk140} In the appendix, we shall give a detailed proof to the fact that
\ben\label{conservationofNSlandau}
\f{d}{dt}\int_{\R^3_x\times\R^3_v} G(t, x, v)\begin{bmatrix} 1 \\ v   \\ \mfS^{-1}x \\ |v + \mfR\mfS^{-1}x|^2 \\ \mfS^{-1}x \cdot v \\ |\mfS^{-1}x|^2 \\   (v + \mfR\mfS^{-1}x) \wedge \mfS^{-1}x   \end{bmatrix} dxdv=0.
\een
This justifies the validity of the definition of $\bar{\Psi}$.
 \end{rmk} 
Now the investigation of the stability of $M$ to the equation \eqref{CauchyLandau} is transformed into that of the stability analysis of $\mathcal{M}$ to the equation \eqref{NSlandauCauchy}. Let $G:=\mathcal{M}+f$. We will focus on the following equation:
\ben\label{pertubNSlandaucauchy} 
&&\partial_t f + \mathscr{C}_l(t)\left( \mfS v \cdot \nabla_x -  \mfS x \cdot \nabla_v + \mfR x \cdot \nabla_x -\mfR v \cdot \nabla_v\right)f=  \mathscr{C}_1e^{-\f12|x|^2}L(f)+\mathscr{C}_2Q(f,f),
\een
where  $L(f):=Q(\mu, f)+Q(f,\mu)$ with $\mu=(2\pi)^{-\f32}e^{-\f12|v|^2}$ and $(\mathscr{C}_1, \mathscr{C}_2):=((2\pi)^{-\f32}\mathscr{C}, \mathscr{C})$. 

Thanks to \eqref{RelationFG},  the asymptotic stability in Theorem \ref{Thmnonstability} can be reduced to prove:
\begin{thm}[Stability of \eqref{NSlandauCauchy} or Global wellposedness of \eqref{pertubNSlandaucauchy}]\label{ThmGstofNSlandau} Let $(a,b,c,\mathsf{R})\in \mathcal{O}$(see the definition \eqref{omegadomain}), $r:=\f{\mathrm{tr}(|\mathsf{R}|)}{2\sqrt{(ac-b^2)}}$ with $|\mathsf{R}|=\sqrt{\mathsf{R}^\tau\mathsf{R}}$ and $\c:=\f{1+r}4$. Suppose   $f_0$ satisfies that $\mathcal{M}+f_0\geq0$ and  $\bar{\Psi}(f_0)=0$, where  $\bar{\Psi}$ is the conserved mapping for \eqref{NSlandauCauchy}(see Definition \ref{ConservMap1}). Then for any $\delta\in (0,\f14-\f{\c}2)$ and $\eta_0\in(0,\f{1-4\delta}{4\c}-\f12)$, if $q_0:=\f{1-4\delta}{4\c}-\eta_0$, $\eta<\min\{\eta_0/1200,1/5-1/(10q_0)\}$, and  \beno
	\sum_{|\al|+|\be|\leq 5}\| \mathcal{M}^{-1+2\delta+\eta(|\al|+\be|)}\pa^\al_\be f_0\|_{L^2_{x,v}}\leq \vep\ll1,
	\eeno
the equation \eqref{pertubNSlandaucauchy} admits a global solution $f=f(t,x,v)$ such that $\mathcal{M}+f\ge0$ and
\beno 
\sum_{|\al|+|\be|\leq 5}(\| \mathcal{M}^{-1+2\delta+\eta(|\al|+\be|)}\pa^\al_\be f\|_{L^2_{x,v}}+\lr{t}^{q_0}\| \mathcal{M}^{-\f12}\pa^\al_\be f\|_{L^2_{x,v}})\lesssim \sum_{|\al|+|\be|\leq 5}\| \mathcal{M}^{-1+2\delta+\eta(|\al|+\be|)}\pa^\al_\be f_0\|_{L^2_{x,v}} .
\eeno
\end{thm}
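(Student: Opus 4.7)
\textbf{Proof proposal for Theorem \ref{ThmGstofNSlandau}.}
The plan is to run a continuity/bootstrap argument on two energy functionals simultaneously: a high--weight functional $\mathcal{E}_H(t):=\sum_{|\al|+|\be|\le 5}\|\mathcal{M}^{-1+2\de+\eta(|\al|+|\be|)}\pa^\al_\be f\|_{L^2_{x,v}}^2$, which is only required to remain bounded, and a low--weight functional $\mathcal{E}_L(t):=\sum_{|\al|+|\be|\le 5}\|\mathcal{M}^{-1/2}\pa^\al_\be f\|_{L^2_{x,v}}^2$, for which we must extract a polynomial decay $\mathcal{E}_L(t)\ls \langle t\rangle^{-2q_0}\mathcal{E}_L(0)$. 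Local existence and positivity of $\mathcal{M}+f$ follow from standard Landau theory once coercivity of the linearized part is available. The bootstrap assumption is $\mathcal{E}_H(t)+\langle t\rangle^{2q_0}\mathcal{E}_L(t)\le \vep^{3/2}$ on $[0,T^*)$, and I will close it by improving the right hand side to $\vep^2$.

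First I would set up the linear framework. Applying $\pa^\al_\be$ to \eqref{pertubNSlandaucauchy}, the transport part $\mathscr{C}_l(t)(\mfS v\cdot\na_x-\mfS x\cdot\na_v+\mfR x\cdot\na_x-\mfR v\cdot\na_v)$ commutes nicely (it is linear in $x,v$), generating lower--order terms that can be absorbed. Testing with $\mathcal{M}^{-2+4\de+2\eta(|\al|+|\be|)}\pa^\al_\be f$ and using the $x$--localized Landau dissipation $\mathscr{C}_1e^{-|x|^2/2}\langle-L(\pa^\al_\be f),\mu^{-1}\mathcal{M}^{-2+\cdots}\pa^\al_\be f\rangle\ge \kappa \int e^{-|x|^2/2} \|\pa^\al_\be f\|_{D}^2$, I get control of a degenerate dissipation norm. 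To transfer velocity--weight into the $x$--weight required to dominate the streaming and nonlinear terms, I would invoke the weight--transfer Lemma (referenced as Lemma \ref{mixturegainweight} in the paper): the presence of the skew--rotation $\mfR$ twists the trajectory so that oscillation in time moves information between $x$ and $v$, and when combined with the $v$--coercivity from $L$, one gains a full $\mathcal{M}^{-1+2\de+\eta(|\al|+|\be|)}$ weight. This is where the upper bound $\de<\frac14-\frac{\c}{2}$ and the small parameter $\eta<\eta_0/1200$ come from: they guarantee the weights produced by the transfer step still lie below the threshold $\mathcal{M}^{-1}$.

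Next, the macroscopic estimate. Write $f=Pf+(I-P)f$ with $P$ the $L^2(\mathcal{M}^{-1})$--orthogonal projection onto the 13--dimensional null space of the linearization around $\mathcal{M}$, whose basis is dictated by the conserved quantities in Definition \ref{ConservMap1}. The condition $\bar{\Psi}(f_0)=0$ together with \eqref{conservationofNSlandau} enforces $\bar{\Psi}(f(t))=0$ for all $t\ge 0$; a variant Poincar\'e inequality (in the spirit of \cite{KJFSCS}) then gives $\|Pf\|\ls \|(I-P)f\|_{D}+\|\text{nonlinear remainder}\|$, allowing the microscopic dissipation to dominate the full $L^2$ norm modulo the time--oscillating macroscopic modes. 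The subtle point is the interaction between $P$ and the streaming operator $\mathscr{C}_l(t)(\mfS v\cdot\na_x+\cdots)$: it produces a skew--symmetric transfer which has to be treated by integrating over one period together with an integration--by--parts in $t$, giving the actual coercivity constant $\c=(1+r)/4$ that determines the optimal rate $q_0$.

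The main obstacle, and the true novelty to be executed, is closing the nonlinear estimate in the weighted space $\mathcal{M}^{-1+2\de+\eta(|\al|+|\be|)}$ despite the fatal imbalance illustrated by \eqref{toynoneq}: the linear dissipation carries a factor $e^{-|x|^2/2}$ but the nonlinearity $Q(f,f)$ does not. My plan to resolve this is a \emph{time--phase splitting}: split space into $\{|x|\le R(t)\}$ with $R(t)\sim\sqrt{\log\langle t\rangle}$ and its complement. On the inner region the factor $e^{-|x|^2/2}$ is bounded below, so the linear coercivity absorbs $Q(f,f)$ directly after using the bootstrap smallness. On the outer region there is essentially no linear damping, but $\mathcal{M}^{-1+2\de}$ is very large there, so a tiny amount of $L^\infty_x$ smallness of $f\mathcal{M}^{-1}$, obtained from Sobolev embedding using all five derivatives in $\mathcal{E}_H$, forces $\|Q(f,f)\|$ into the high--weight energy itself times $\vep^{1/2}$. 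Finally, the decay rate $q_0$ is obtained by combining the bounded high--norm with the dissipation: an interpolation of the form $\|Pf\|_{\mathcal{M}^{-1/2}}^2\le \|f\|_{\mathcal{M}^{-1+2\de+\eta_0}}^{2\theta}\|f\|_{D}^{2(1-\theta)}$, where $\theta$ is dictated by the gap $\frac{1-4\de}{4\c}-\eta_0=q_0>\frac12$, plugs into a differential inequality $\frac{d}{dt}\mathcal{E}_L+\kappa \mathcal{E}_L^{1+1/q_0}\le 0$, yielding exactly the polynomial decay $\langle t\rangle^{-q_0}$ asserted in the theorem. Once both $\mathcal{E}_H$ and $\langle t\rangle^{2q_0}\mathcal{E}_L$ are controlled by $C\vep^2$, the continuity argument extends the solution globally and gives the stated estimate.
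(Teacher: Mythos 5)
Your skeleton reproduces the paper's architecture (bootstrap on a bounded high-weight norm plus a $\lr{t}^{q_0}$-decaying low-weight norm, the weight-transfer lemma, micro-macro decomposition with variant Poincar\'e inequalities, a time-phase splitting, and an interpolation argument for the rate), but the step that actually carries the theorem — closing the nonlinear estimate in the region without dissipation — fails as you wrote it. On the outer region you invoke $L^\infty$ smallness of $f\mathcal{M}^{-1}$; this quantity is not controlled by either of your functionals, since the admissible weights are capped strictly below $\mathcal{M}^{-1}$ (at $\mathcal{M}^{-(1-2\delta)}$, $\delta>0$), which is precisely the imbalance exhibited by the toy model \eqref{toynoneq}. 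Moreover, even granting such smallness, your outer-region bound reads $\|Q(f,f)\cdot\|\ls \vep^{1/2}\mathcal{E}_H$ with no time-decaying prefactor; since there is no dissipation there, Gr\"onwall only gives $\mathcal{E}_H(t)\ls \mathcal{E}_H(0)e^{C\vep^{1/2}t}$, which destroys the bootstrap. The correct mechanism is that on $\mathscr{D}_1$ one has $e^{|x|^2/4}\gs \lr{t}^{q_0}$, hence $\|f\mathbf{1}_{\mathscr{D}_1}\|_{L^\infty_{x,v}}\ls \lr{t}^{-2q_0}\big(\lr{t}^{q_0}\sum_{|\al|+|\be|\le5}\|e^{|x|^2/4}\pa^\al_\be f\|_{L^2_{x,v}}\big)$, and the standing hypothesis $q_0>\f12$ makes the coefficient $\lr{t}^{-2q_0}$ integrable in time; this is exactly why the decaying norm with weight $\mathcal{M}^{-1/2}$ must be part of the bootstrap. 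Your inner-region sentence (``$e^{-|x|^2/2}$ is bounded below'') is likewise not literally true — it is only $\gs\lr{t}^{-2q_0}$ — though there the deficit can be repaired, as in the paper, by writing $e^{|x|^2/2}=e^{|x|^2/4}\cdot e^{|x|^2/4}$ and using $e^{|x|^2/4}\ls\lr{t}^{q_0}$ on $\mathscr{D}_2$ together with the decaying weighted $L^\infty$ bound, so that the nonlinearity is absorbed by the full dissipation.

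A second genuine discrepancy concerns where the constant $\c=\f{1+r}4$ and the rate $q_0$ come from, and what the macroscopic projection is. In the paper the rate is produced entirely by the weight/trajectory mechanism: the cross term $\mfS^{-1}x\cdot v$ in the weight \eqref{deW} and Lemma \ref{mixturegainweight} convert the transport of the weight into a dissipation with index $\a-\c-\vth$, and $\c$ is dictated by the least-damped trajectories (e.g. $x=(x_1,x_2,0)$, $v=\sqrt{(1-r)/(1+r)}(x_2,-x_1,0)$ in \eqref{examplesharpconverge1}); it does not arise from ``integrating over one period'' the interaction of a projection with the streaming term, and no such time-averaging appears. Relatedly, the paper's $\mathbb{P}$ is the velocity-space macroscopic projection onto $\mathrm{span}\{1,v,|v|^2\}\mu$ (equation \eqref{deofP}), not an $L^2(\mathcal{M}^{-1})$-projection onto the $13$-dimensional kernel of the full linearized flow; the thirteen conservation laws enter only to control the spatial zero modes of $(\ma,\mb,\mc)$ (Lemma \ref{abceqtoabcs}), while the non-zero modes are dissipated via the variant Poincar\'e and Poincar\'e--Korn inequalities with exponential weights (Lemma \ref{lemKPI}, Theorem \ref{macroestimate}). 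Your version would require a weighted hypocoercivity estimate around the $13$-dimensional space in the presence of the degenerate factor $e^{-|x|^2/2}$, which is not available off the shelf and in any case would not produce the polynomial rate, since the rate here is not a spectral-gap phenomenon. So the plan is structurally right, but these two load-bearing steps must be replaced by the paper's arguments.
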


  This concludes the reduction step. In the following section, we will delve into various facets of the statement and elucidate the core concept of Theorem \ref{ThmGstofNSlandau}. This will encompass discussions on the selection of the weight $\mathcal{M}^{-1+2\delta}$, the ansatz regarding the time factor $\<t\>^{q_0}$, the adjustment involving micro-macro decomposition, and the fundamental approach underpinning the proof of nonlinear stability.

\subsubsection{Selection of weight function}\label{weightchoice}  As elucidated in the explanation for the toy model \eqref{toynoneq}, ensuring the dominance of the linear term over the nonlinear term mandates the multiplication of a weight surpassing $e^{\frac{1}{2}|x|^2}$. Simultaneously, if we mandate that the weight function commutes with the stream operator $ \mfS v \cdot \nabla_x -  \mfS x \cdot \nabla_v + \mfR x \cdot \nabla_x -\mfR v \cdot \nabla_v$, the optimal choice for the weight function becomes $\mathcal{M}^{-2a}$ with $a \in [1/2,1)$. This implies that the solution $f$ will be endowed with the weight $\mathcal{M}^{-a}$.
This configuration offers two key benefits:
(a). By selecting $a=\f12$, we can  recover the coercivity estimate from the symmetric linearized theory.
(b). For $a \in (1/2,1)$, we achieve the inequality: \[
(L f  , f e^{a|v|^2}    ) \le -\lambda  \Vert f e^{\f a2  |v|^2}  \Vert_{L^2_{-1/2} }^2 + C   \Vert f  \Vert_{L^2_{-1/2} }^2.
\] 
 For detailed insights, we direct readers to refer to Lemma \ref{coer}.

Finally, given $G = \mathcal{M} + f$, the stipulation that $a < 1$ corresponds to the requirement that $G \in L^1(\mathbb{R}^3_x \times \mathbb{R}^3_v)$.

\subsubsection{Ansatz of the decay estimate of \eqref{pertubNSlandaucauchy}} \label{ansatz} To capture the decay mechanism of \eqref{pertubNSlandaucauchy}, one may first consider the toy model which can be presented as follows:
\ben\label{toymodelofNSlandau}  
\partial_t g +  \mathcal{T}   g =-e^{-\f12|x|^2}g,  \quad \mathcal{T}   g :=  \left( \mfS v \cdot \nabla_x -  \mfS x \cdot \nabla_v + \mfR x \cdot \nabla_x -\mfR v \cdot \nabla_v\right)g.
\een

\noindent$\bullet$ On one hand, by trajectory method, we can get the explicit formula for \eqref{toymodelofNSlandau}, i.e.,
 \beno g(t,\tilde{X}(t),\tilde{V}(t))=e^{-\int_0^t \exp\{-\f12|\tilde{X}(s)|^2\}  ds}g_0(x,v), \eeno 
where $(\tilde{X}(t),\tilde{V}(t))=(x\cos t+(\mfS v+\mfR x)\sin t,  v\cos t-(\mfR v+\mfS x)\sin t)$. Choosing $x:=(x_1,x_2,0)$ and  $v:=\sqrt{\f{1-r}{1+r}}(x_2, -x_1,0)$, where $r$ is defined in \eqref{deofR}, will imply that $|\tilde{X}(s)|^2=\f{1+r}2(|x|^2+|v|^2)$. From this, we derive that
\beno |g(t,\tilde{X}(t),\tilde{V}(t))|=|g_0(x, v)    |\exp    \bigg\{-e^{-\f   {1+r}4(|x|^2+|v|^2)}t     \bigg\}. \eeno 
Recalling that $\mathcal{M}^{-\f12}g$ still satisfies  \eqref{toymodelofNSlandau}, we eventually get that for $\delta\in(0,\f14)$,
\ben\label{examplesharpconverge1} |(g\mathcal{M}^{-\f12})(t,\tilde{X}(t),\tilde{V}(t))|   \sim |g_0(x,v)|\mathcal{M}^{-(1-2\delta)} t^{-\f{1-4\delta}{1+r}},
\een
if $e^{-\f{1+r}4(|x|^2+|v|^2)}t\sim1$.

\noindent$\bullet$ On the other hand,   let $\vartheta\ll1$, $N_1\gg1 $, $a\in[\f{1}{2},1)$. Following the ideas from \cite{Bakry Rate 2008, Douc Subgeometric 2009, Mischler Exponential 2016, Wu Large 2001}, we make full use of the weight function defined by 
\begin{equation}
\label{definition W a}
 W^2_a(x, v):=N_1e^{  a   ( |x|^2 + |v|^2  )   }+\mfS^{-1}x\cdot v e^{(  a   -\f{1+r}{4}- \vartheta)(|x|^2+|v|^2)}\sim \mathcal{M}^{-2a}, \quad  W_a(x, v) \sim \mathcal{M}^{-a},
 \end{equation}
and facts that $\mathcal{T} (\mfS ^{-1}x \cdot v) = -(|x|^2-|v|^2 - 2 (\mfR  \mfS ^{-1} x, v ) )$ as well as Lemma \ref{mixturegainweight} to derive  that 
\ben\label{111} 
\f{d}{dt}\|gW_a\|_{L^2}^2+\|g\mathcal{M}^{-a+\f{1+r}4+\vartheta}\|_{L^2}^2\le 0.  
\een 
 Choosing $a=\f12$ and $a=1-2\delta$ with $\delta\in(0,\f14)$, then we get that $\f{d}{dt}\|gW_{\f12}\|_{L^2}^2+\|g\mathcal{M}^{-\f{1-r}4+\vartheta}\|_{L^2}^2\le 0$ and $\f{d}{dt}\|gW_{1-2\delta}\|_{L^2}^2\le0$. Observing that $\|gW_{\f12}\|_{L^2}\le  C\|gW_{\f{1-r}4-\vartheta}\|_{L^2}^{\f{\f12-2\delta}{\f12-2\delta+\f{1+r}4+\vartheta}}\|gW_{1-2\delta}\|_{L^2}^{\f{\f{1+r}4+\vartheta}{\f12-2\delta+\f{1+r}4+\vartheta}}$, we have
\beno \f{d}{dt}\|gW_{\f12}\|_{L^2}^2+ C\|gW_{\f12}\|_{L^2}^{2(1+\f{\f{1+r}4+\vartheta}{\f12-2\delta})}\le 0,\eeno   
 which implies that
 \ben\label{examplesharpconverge2}  \|g\mathcal{M}^{-\f12}\|^2_{L^2}\sim \|gW_{\f12}\|^2_{L^2}\le Ct^{-\f{2(1-4\delta)}{1+r+4\vartheta}}.\een
Note that for any $r\in(0,1)$, we can choose sufficiently small $\vartheta$ and $\de$ such that the above convergence rate is larger than $(1+t)^{-1}$, which is essential for nonlinear stability of equation \eqref{pertubNSlandaucauchy}(see Subsection \ref{nonlinear stability subsection} for more details).  

Results \eqref{examplesharpconverge1} and \eqref{examplesharpconverge2} imply several key points: 
\noindent\underline{(i).} The trajectory method and the energy method can be employed to establish the optimal decay rate of the toy model \eqref{toymodelofNSlandau}. Indeed, the decay estimate in the space with low-index exponential weight is driven by the high-index exponential weight inherent in the solution itself. Contrasting with \cite{Tabata Decay 1993}, the proof presented here is straightforward and resilient.
\noindent\underline{(ii).} The decay rate is heavily influenced by the scaling matrix $\mfS$, the rotation matrix $\mfR$, and the propagation of the exponential weight. 
\noindent\underline{(iii).} The term $\mfS ^{-1}x \cdot v$ and Lemma \ref{mixturegainweight} is crucial in transferring the weight from the $v$ variable to the $x$ variable.  Additionally, the decay mechanism is solely determined by the exponential weight and is independent of the regularity.
 
\subsubsection{The macroscopic equation}  
To establish the nonlinear results, we adjust the standard micro-macro method, deviating from the symmetrization of the linear collision operator $L(f)$ in \eqref{pertubNSlandaucauchy}. This divergence primarily arises from the streaming term on the left-hand side of \eqref{pertubNSlandaucauchy}.  Mathematically, if $f$ solves \eqref{pertubNSlandaucauchy}, then the expression:
\begin{equation}\label{deofP}
	\begin{aligned}
\mathbb{P}f&:= \left (\int_{\R^3 } f  dv \right)\mu+\sum_{i=1}^3  \left (\int_{\R^3}v_ifdv  \right)v_i\mu+ \left (  \int_{\R^3}   \f{|v|^2-3}6fdv  \right)(|v|^2-3)\mu\\
&:=\mathbf{a}(t,x)\mu+\mathbf{b}(t,x)\cdot v\mu+\mathbf{c}(t,x)(|v|^2-3)\mu,
	\end{aligned} 
\end{equation}
is identified as the macroscopic part of $f$, which is associated with the fluid equations.  
The corresponding macroscopic equation is
\begin{equation}\label{macroeq2}  
	\left\{ 
	\begin{aligned}
&\pa_t(\ma e^{\f12|x|^2})=-\mathscr{C}_l(t)(\mfS\na_x\cdot \mb)e^{\f12|x|^2}-\mathscr{C}_l(t)\mfR x\cdot\na_x(\ma e^{\f12|x|^2})+\sfT_{11};\\
&\pa_t(\mb e^{\f12|x|^2})=-\mathscr{C}_l(t)\mfS\na_x(\ma e^{\f12|x|^2})-2\mathscr{C}_l(t)(\mfS\na_x\mc)e^{\f12|x|^2}-\mathscr{C}_l(t)\mfR\mb e^{\f12|x|^2}-\mathscr{C}_l(t)(\mfR x\cdot\na_x)(\mb e^{\f12|x|^2})+\sfT_{21};\\
&\pa_t(\mc e^{\f12|x|^2})=-\f13\mathscr{C}_l(t) \mfS\na_x\cdot (\mb e^{\f12|x|^2})-\mathscr{C}_l(t)\mfR x\cdot\na_x (\mc e^{\f12|x|^2})+\sfT_{31};\\
&\pa_t(\mc e^{\f12|x|^2}) \mathbb{I}_{3\times 3}=-\mathscr{C}_l(t)\na^{sym}_\mfS(\mb e^{\f12|x|^2})-\mathscr{C}_l(t)\mfR x\cdot\na_x (\mc e^{\f12|x|^2})\mathbb{I}_{3\times 3}+\sfT_{41}+\pa_t \sfT_{42};\\
&\mathscr{C}_l(t)\mfS\na_x (\mc e^{\f12|x|^2})=\sfT_{51}+\pa_t\sfT_{52},
	\end{aligned}\right.
\end{equation}
where $\sfT_{11}$ and $\sfT_{31}$ are scalar functions, $\sfT_{21}$, $\sfT_{51}$, and $\sfT_{52}$ are vector functions, and $\sfT_{41}$ and $\sfT_{42}$ are matrix functions which only depend on microscopic part $(\mathbb{I-P})f$ and nonlinear term $\mathscr{C}_2Q(f,f)$. One may check the precise expression in (\ref{macroeq}-\ref{defofT2}) and the proof of \eqref{macroeq2} in Proposition \ref{abcequa}.

\subsubsection{Macroscopic estimates: zero mode, non-zero mode and variant Poincar\'e inequality.} Through the micro-macro decomposition, the solution $f$ to the actual model \eqref{pertubNSlandaucauchy} is segregated into the microscopic part $(\mathbb{I}-\mathbb{P})f$ and the macroscopic part $\mathbb{P}f$. As per Lemma \ref{coer}, only the microscopic segment mirrors the analogous scenario to the toy model \eqref{toynoneq}. Therefore, for attaining linear or nonlinear stability, controlling the macroscopic component $\mathbb{P}f$ becomes paramount.
To achieve this, our  insights unfold in the subsequent domains:
\smallskip

\noindent $\bullet$ Our initial insight involves partitioning $(\ma, \mb, \mc)$ into two facets: the zero mode $(\ma_0, \mb_0, \mc_0)$ and the non-zero mode $(\ma_{\mathsf{n}}, \mb_{\mathsf{n}}, \mc_{\mathsf{n}})$. We anticipate two outcomes: (1) The zero mode $(\ma_0, \mb_0, \mc_0)$ can be regulated by the non-zero mode; (2) We can attain complete dissipation of the non-zero mode by employing the Poincar\'e inequality since $(\ma_{\mathsf{n}}, \mb_{\mathsf{n}}, \mc_{\mathsf{n}})$ adheres to the condition:
\begin{equation}
	\label{nonzeromodecondition}
	\int_{\R^3} ((\ma_{\mathsf{n}}, \mb_{\mathsf{n}}, \mc_{\mathsf{n}})  e^{\frac 1 2|x|^2} )  e^{ - \frac 1 2 |x|^2} dx =0.
\end{equation}
  This elucidates why we express the macroscopic equation in terms of $((\ma, \mb, \mc)e^{\frac{1}{2}|x|^2})$ rather than $(\ma, \mb, \mc)$.

\noindent $\bullet$ To ensure that the zero-mode component can be assessed through estimations based on the non-zero mode, we extensively utilize the thirteen conservation laws listed in \eqref{conservationofNSlandau}. Although the computation is intricate, it is direct.

\noindent $\bullet$ To align the energy estimates with the microscopic part, it becomes imperative to estimate the non-zero mode $(\ma_{\mathsf{n}}, \mb_{\mathsf{n}}, \mc_{\mathsf{n}})$ in   Sobolev spaces with exponential weights. This necessitates the establishment of a modified Poincar\'e    inequality. Specifically, we adapt the standard Poincar\'e inequality to suit our unique context (refer to Lemma \ref{lemKPI}), i.e.,    
\[
\int_{\R^3} h^2  \langle x\rangle^{2}    e^{ - (1-\delta) |x|^2}    dx   \lesssim  \int_{\R^3}  |\nabla_x h |^2  e^{ - (1-\delta) |x|^2} dx,\quad\mbox{if}\,\, \int_{\R^3}  h e^{-\frac 1 2| x|^2} dx =0,\quad \delta \in (0, \f12). 
\]
In our case, if $h:=(\ma_{\mathsf{n}}, \mb_{\mathsf{n}}, \mc_{\mathsf{n}})e^{\frac{1}{2}|x|^2}$, then the left-hand side of the  inequality turns to be
\[\int_{\R^3}  |(\ma_{\mathsf{n}}, \mb_{\mathsf{n}}, \mc_{\mathsf{n}})|^2  \langle x\rangle^{2}    e^{\delta |x|^2}    dx. \]

These revelations pave the way for achieving complete dissipation of the macroscopic quantities $(\ma, \mb, \mc)$. Coupled with the ansatz outlined in Subsection \ref{ansatz} for the microscopic part, this progression enables us to attain linear stability, a pivotal stride for tackling the  nonlinear conundrum.

\subsubsection{Strategy of nonlinear stability} \label{nonlinear stability subsection} 
To establish nonlinear stability, we must address the significant degeneration induced by the Gaussian distribution $e^{-\frac{|x|^2}{2}}$ relative to the nonlinear term. Fortunately, with the newfound linear stability, we gain two critical tools: the complete dissipation effect denoted by $D(f)$, resulting from both the microscopic and macroscopic parts, and the decay estimate in a space with a low-index exponential weight. Our innovative approach can be summarized as follows:

\noindent$\bullet$ To counteract this degeneration effect, we utilize a time-phase splitting method that divides the time-phase domain into two regions: $\mathscr{D}_1:=\{(t,x)|\frac{|x|^2}{4}\geq q_0\ln(t+e)\}$ and $\mathscr{D}_2:=\{(t,x)|\frac{|x|^2}{4}\leq q_0\ln(t+e)\}$, where $q_0:=\frac{1-4\delta}{1+r}$ originates from the toy model \eqref{toymodelofNSlandau}.

\noindent$\bullet$ In the $\mathscr{D}_1$ region, we treat the equation as nearly vacuum-like. The additional weight induces polynomial decay, aiding in refining the energy estimate crucial for controlling the nonlinear term. Conversely, in the $\mathscr{D}_2$ region, we utilize the full dissipation $D(f)$ to bound the nonlinear term.

\noindent$\bullet$ Let us illustrate our strategy in the context of the toy model \eqref{toynoneq}. Following the notations in Subsection \ref{ansatz} and considering $W_a \sim e^{\frac{a}{2}(|x|^2+|v|^2)}$, we establish the existence of a universal constant $\lambda$ such that:
\beno
&&(-v \cdot \nabla_x f + x \cdot \nabla_v f  - e^{- \frac 1 2 |x|^2} f , W_a^2  f ) \\
&\le& -\lambda ( \Vert e^{(\frac a 2-\frac 1 4 ) |x|^2+\f a2 |v|^2}  f \Vert_{L^2_{x, v}}^2 +  \Vert e^{(\frac a 2-\frac 1 8  -\vartheta) (|x|^2 + |v|^2)}  f \Vert_{L^2_{x, v}}^2 ):= -\lambda D_{a} (f).
\eeno
We are now positioned to bound the nonlinear term. In region $\mathscr{D}_1$, we have $ \langle t \rangle^{q_0} \lesssim e^{\frac{|x|^2}{4}}$. Therefore
\[
 |(f^2, W_a^2  f  \mathbf{1}_{\mathscr{D}_1} ) | \lesssim      \Vert   f \mathbf{1}_{\mathscr{D}_1} \Vert_{L^\infty_{x, v}} \Vert W_a  f \Vert_{L^2_{x, v}}^2    \lesssim   \langle t \rangle^{ -2 q_0} ( \langle t \rangle^{q_0}\sum_{|\al|+|\be|\le 5} \Vert e^{\frac 1 4 |x|^2 } \pa^\alpha_\beta f  \Vert_{L^2_{x, v}}) \Vert W_a  f \Vert_{L^2_{x, v}}^2. 
\]
In region $\mathscr{D}_2$, , we have $e^{\frac {|x|^2} {4} } \lesssim \langle  t \rangle ^{q_0}$, leading to
\[
 |(f^2, W_a^2 f  \mathbf{1}_{\mathscr{D}_2} ) | \lesssim      \Vert e^{\frac 1 2 |x|^2 }  f  \mathbf{1}_{\mathscr{D}_2} \Vert_{L^\infty_{x, v}}    \Vert e^{( \frac a 2-\frac 1 4 ) |x|^2+\f a2 |v|^2}  f \Vert_{L^2_{x, v}}^2  \lesssim   ( \langle t \rangle^{q_0}\sum_{|\al|+|\be|\le 5} \Vert e^{\frac 1 4 |x|^2 }\lr{x}^5 \pa^\alpha_\beta f  \Vert_{L^2_{x, v}})  D_a(f).
\]
If $2q_0>1$, these derivations enable us to finalize the energy estimates by selecting $ \langle t \rangle^{q_0}\sum\limits_{|\al|+|\be|\le 5} \Vert e^{\frac 1 4 |x|^2 } \lr{x}^5\pa^\alpha_\beta f  \Vert_{L^2_{x, v}}^2$ and $\sum\limits_{|\al|+|\be|\le 5}  \Vert W_a  \pa^\alpha_\beta f \Vert_{L^2_{x, v}}^2$ as the energy functionals.

To extend this strategy to the full model \eqref{pertubNSlandaucauchy}, we must carefully balance the energy estimates from the microscopic and macroscopic parts, a task considerably more intricate than in \eqref{toynoneq}. For detailed insights, please refer to Section \ref{Section nonlinear stability}.

\subsection{Organization of the paper} The remainder of this paper is structured as follows:
In Section \ref{Section invariant}, we present the classification of the entropy-invariant solutions.
Section \ref{Section Landau} provides auxiliary lemmas concerning the Landau operator and the weight-transfer lemma.
Section \ref{Section macroscopic} is dedicated to proving a variant of the Poincar\'e and Poincar\'e-Korn inequalities and deriving estimates for the macroscopic quantities. The proof of our main theorems is completed in Section \ref{Section nonlinear stability}. Additional supplementary material is included in the Appendix.

\section{Proof of Theorem \ref{Classificationofeiit} and Theorem \ref{biinjectionofMF0}}\label{Section invariant}
This section is dedicated to proving Theorem \ref{Classificationofeiit} and Theorem \ref{biinjectionofMF0}. To achieve this, we break down the proof into several steps. The first step involves characterizing an entropy-invariant solution through thirteen conservation laws.

\begin{prop}\label{periodicmax}
	$M=M(t, x, v)\geq0$ is an entropy-invariant solution of equation \eqref{CauchyLandau} if and only if \ben &&\int_{\R^3_x\times\R^3_v}(|x|^2 + |v|^2 + 1)M(t, x, v)dxdv < +\infty, \quad\mbox{and}\nonumber\\
	 &&\ln M \in \mathrm{span}\{1, X(t), V(t), |X(t)|^2, X(t) \cdot V(t), |V(t)|^2, X(t) \wedge V(t)\}, \label{characterM}
	\een  
	where $(X(t), V(t))=(x\cos t - v\sin t, x\sin t + v\cos t)$.
\end{prop}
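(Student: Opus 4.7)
The plan is to establish the two implications separately. For the converse direction, if $\ln M$ lies in the claimed span, then $\ln M$ is a polynomial of degree at most two in $(x,v)$ whose Hessian in $v$ is a multiple of the identity, so $M(t, x, \cdot)$ is a local Maxwellian at every $(t,x)$ and the standard kernel characterization of the Landau operator yields $Q(M,M) = 0$. Moreover, direct computation shows that the basic pair $(X(t), V(t))$ lies in the kernel of the streaming operator $\pa_t + v\cdot\na_x - x\cdot\na_v$; consequently every polynomial in $(X(t), V(t))$, in particular each element of the claimed basis, is annihilated by it, so $M$ solves the first equation in \eqref{equationforeisol}. The integrability condition \eqref{integcondi} is secured by requiring the coefficients to make the quadratic form in $(x,v)$ positive definite.

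For the forward implication, start from the two identities in Definition \ref{Defoentropy-invariant}. Since $Q(M,M) = 0$, the function $M$ is a local Maxwellian in $v$ at each $(t,x)$, so
\beno
\ln M(t,x,v) = A(t,x) + B(t,x)\cdot v + C(t,x)|v|^2.
\eeno
Substituting into the streaming equation and separating by powers of $v$ produces a hierarchy: the cubic-in-$v$ terms give $\na_x C = 0$, hence $C = C(t)$; the quadratic terms give $\dot C(t)\,\delta_{ij} + \tfrac12(\pa_{x_i}B_j + \pa_{x_j}B_i) = 0$, yielding $B(t,x) = -\dot C(t)\,x + \Omega(t) x + b_0(t)$ with $\Omega(t)$ a skew $3\times 3$ matrix; the linear-in-$v$ terms force $\na_x A$ to equal an actual gradient, which in turn requires $\dot\Omega \equiv 0$ and determines $A(t,x) = (C(t) + \tfrac12\ddot C(t))|x|^2 - \dot b_0(t)\cdot x + A_0(t)$; finally the $v$-independent terms yield the ODEs $\dddot C + 4\dot C = 0$, $\ddot b_0 + b_0 = 0$ and $\dot A_0 = 0$.

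The remaining step is to re-express the parametrized solution $(A, B, C)$ in the claimed $13$-dimensional basis. Using the identities
\beno
|X(t)|^2 + |V(t)|^2 = |x|^2 + |v|^2, \quad X(t)\cdot V(t) = \tfrac12\sin 2t\,(|x|^2-|v|^2) + \cos 2t\,(x\cdot v),
\eeno
together with $|X(t)|^2 - |V(t)|^2 = \cos 2t\,(|x|^2-|v|^2) - 2\sin 2t\,(x\cdot v)$, the oscillatory quadratic part of $\ln M$ (whose time dependence at frequency $2$ is prescribed by $\dddot C + 4\dot C = 0$) decomposes exactly into a linear combination of $|X(t)|^2, X(t)\cdot V(t), |V(t)|^2$. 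The rotation term $(\Omega_0 x)\cdot v$ is absorbed via the identity $X(t)^\tau \mathsf{R}\,V(t) = x^\tau \mathsf{R}\,v$ valid for any skew $\mathsf{R}$ (which follows from $x^\tau\mathsf{R} x = v^\tau\mathsf{R} v = 0$ after expanding), placing it in the span of the components of $X(t)\wedge V(t)$. The linear piece $b_0(t) = p\cos t + q\sin t$ rearranges into $(-q)\cdot X(t) + p\cdot V(t)$, and $A_0$ is the coefficient of the constant $1$.

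The main obstacle is this final algebraic matching: one must verify that the $13$ free parameters emerging from the ODE hierarchy, namely the scalar $A_0$, two vectors $p,q$, three scalars $c_0, k_1, k_2$ from $C$, and one skew matrix $\Omega_0$, are in bijection with the $13$ coefficients in the target span, and that the specific ODE frequencies (doubled frequency $2$ for $C$, fundamental frequency $1$ for $b_0$, and invariance $\dot\Omega_0 \equiv 0$) are precisely those dictated by the $t$-dependence of the quadratic, linear, and rotational monomials in $(X(t), V(t))$, respectively. Once this correspondence is verified, combining with the positivity/integrability constraints gives the claimed characterization.
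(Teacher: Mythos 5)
Your proposal is correct and follows essentially the same route as the paper: use $Q(M,M)=0$ to write $\ln M = a(t,x)+b(t,x)\cdot v + c(t,x)|v|^2$, plug into the streaming equation, separate by powers of $v$ to get $c=c(t)$, an affine-in-$x$ field $b$ with skew gradient plus a trace part, a quadratic $a$, the time ODEs $\partial_t^3 c + 4\partial_t c=0$, $\partial_t^2 b_0 + b_0=0$, constancy of the rotation, and then re-express everything in the $(X(t),V(t))$ basis via the identities you list (the paper's Proposition 2.1 does exactly this, deriving the spatial structure by showing $\partial_i^2 b_j=0$ and comparing a polynomial ansatz where you invoke the Killing-field/curl-free packaging). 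The final algebraic matching you defer is the routine coefficient comparison the paper carries out explicitly, so no genuine gap remains.
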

\begin{proof} We observe that $(\partial_t + v \cdot \nabla_x - x \cdot \nabla_v)(\ln M) = M^{-1}(\partial_t + v \cdot \nabla_x - x \cdot \nabla_v)M$, which implies that if $M$ satisfies \eqref{characterM}, then by \eqref{trjecforlandau}, we have $Q(M,M)=0$ and $(\partial_t + v \cdot \nabla_x - x \cdot \nabla_v)M=0$. According to Definition \ref{Defoentropy-invariant}, $M$ is an entropy-invariant solution of \eqref{CauchyLandau}.

	Conversely, let us  assume that $M$ is an entropy-invariant solution of \eqref{CauchyLandau}. We only need to prove \eqref{characterM}. Since $Q(M, M) = 0$, then $\ln M \in \mathrm{span}\{1, v, |v|^2\}$, which implies that
	\begin{equation}\label{lnM}
		\ln M = a(t, x) + v \cdot b(t, x) + |v|^2c(t, x).
	\end{equation}
	By \eqref{equationforeisol}, we deduce that
	\begin{equation*}
		\partial_ta + v \cdot \partial_tb + |v|^2\partial_tc + v \cdot \nabla a + (v \cdot \nabla)(v \cdot b) + v|v|^2 \cdot \nabla c - x \cdot b - 2x \cdot vc = 0,
	\end{equation*}
	which implies that
	\begin{equation}\label{lnM13Eqs}
		\begin{aligned}
			&\partial_ta - x \cdot b = 0;\quad
			\partial_tb_i + \partial_ia - 2x_ic = 0;\quad
			\partial_tc + \partial_ib_i = 0;\\
			&\partial_ib_j + \partial_jb_i = 0, i \neq j; \quad
			\partial_ic= 0.\\
		\end{aligned}
	\end{equation}
	These will imply that (1).  $c(t, x) = c(t)$; (2).  Applying $\partial_i$ to the forth equation in \eqref{lnM13Eqs} will yield that
	\begin{equation*}
		0 = \partial_i^2b_j + \partial_i\partial_jb_i = \partial_i^2b_j - \partial_j\partial_tc = \partial_i^2b_j.
	\end{equation*}
	(3). Taking $\partial_i$ in the third equation in \eqref{lnM13Eqs},  we have  $0 = \partial_i\partial_tc + \partial_i^2b_i = \partial_i^2b_i$, which mean that $\partial_i^2b_j = 0, \:\:\: \forall 1 \le i, j \le 3.$
	From this, we set
	\begin{equation*}
		b_i(t, x) = b_{i0}(t) + \sum_{j = 1}^3b_{ij}(t)x_j + \sum_{j < k}b_{ijk}(t)x_jx_k + b_{i4}(t)x_1x_2x_3.
	\end{equation*}
	Put the above form in the forth equation in \eqref{lnM13Eqs}, one may derive that
	\begin{equation*}
		b_{ij}(t) + b_{ji}(t) = 0, i \neq j, \:\:\: b_{ijk}(t) = 0, 1 \le i \le 3, j <k, \:\:\: b_{i4}(t) = 0, 1 \le i \le 3.
	\end{equation*}
	The third equation leads that $\partial_tc(t) + b_{ii}(t) = 0, 1 \le i \le 3$. In general, the form of $b(t, x)$ is
	\begin{equation}\label{lnMbform}
		b_i(t, x) = b_{i0}(t) + \sum_{j = 1}^3b_{ij}(t)x_j.
	\end{equation}
	With properties that
	\begin{equation}\label{lnMeqbc}
		b_{ij}(t) + b_{ji}(t) = 0, i \neq j; \:\:\: \partial_tc(t) + b_{ii}(t) = 0, 1 \le i \le 3.
	\end{equation}
	(4). Applying  $\partial_i$ to the second equation in \eqref{lnM13Eqs}, we get that $\partial_i^2a = 2c(t) - \partial_tb_{ii}(t)$. From \eqref{lnMeqbc} we have $\partial_1^2a = \partial_2^2a = \partial_3^2a$ which depends only on $t$. Besides, applying  $\partial_j$ to the second equation in \eqref{lnM13Eqs}, we get that $\partial_t\partial_jb_i + \partial_i\partial_ja = 0$. By symmetric property, it holds that $\partial_t\partial_ib_j + \partial_i\partial_ja = 0$. Again using \eqref{lnMeqbc} we have $\partial_i\partial_ja = 0$. Now we may write
	\begin{equation}\label{lnMaform}
		a(t, x) = a_0(t) + \sum_{j = 1}^3a_j(t)x_j + a_4(t)|x|^2.
	\end{equation}

	Using \eqref{lnMbform} and \eqref{lnMaform}, the first and second equations in \eqref{lnM13Eqs} turn to be 
	\begin{equation*}
		\begin{gathered} \partial_ta_0 + \sum_{i = 1}^3x_i\partial_ta_i + |x|^2\partial_ta_4 - \sum_{i = 1}^3x_i(b_{i0} + \sum_{j = 1}^3b_{ij}x_j) = 0;\\
			\partial_tb_{i0} + \sum_{j = 1}^3\partial_tb_{ij}x_j + a_i + 2x_ia_4 - 2x_ic = 0 
			. 
		\end{gathered}
	\end{equation*}
	By comparing the coefficients, we get that
	\begin{equation}\label{lnMeqs}
		\begin{gathered} \partial_ta_0 = 0(\Rightarrow a_0(t) \equiv a_0), \:\:\: \partial_ta_i - b_{i0}=0, \:\:\: \partial_ta_4 - b_{ii} = 0; \\
			a_4 = c - \frac{1}{2}\partial_tb_{ii}, \:\:\: \partial_tb_{ij} = 0(\Rightarrow b_{ij}(t) \equiv b_{ij}, i \neq j), \:\:\: \partial_tb_{i0} + a_i = 0. 
		\end{gathered}
	\end{equation}
	From these, we first derive that   $\partial_t^2b_{i0} + b_{i0} = 0$, which implies that
	\begin{equation*}
		b_{i0}(t)= \alpha_i\cos t + \beta_i\sin t, \:\:\: a_i(t)= \alpha_i\sin t - \beta_i\cos t.
	\end{equation*}
	Secondly, thanks to the second equation in \eqref{lnMeqbc}, we have
	\begin{equation*}
		a_4 = c - \frac{1}{2}\partial_tb_{ii}, \:\:\: \partial_ta_4 - b_{ii} = 0, \:\:\: \partial_tc + b_{ii}= 0,
	\end{equation*}
	which yield  that $\partial_t^3c + 4\partial_tc = 0$. From this, we deduce that 
	\begin{equation*}
		\begin{aligned}
			c(t)= c_0 + c_1\sin2t - c_2\cos2t;\,\, 
			a_4(t)= c_0 - c_1\sin2t + c_2\cos2t;\,\, 
			b_{ii}(t) = -2c_1\cos2t - 2c_2\sin2t. 
		\end{aligned}
	\end{equation*}
	
	Now  substituting all the estimates into \eqref{lnMaform}, \eqref{lnMbform} and $c(t,x)=c(t)$, we have
	\begin{equation*}
		\begin{aligned}
			a(t, x) &= a_0 + (\alpha\sin t - \beta\cos t) \cdot x + (c_0 - c_1\sin2t + c_2\cos2t)|x|^2;\\
			b(t, x) &= \alpha\cos t +\beta\sin t + \begin{pmatrix} -2c_1\cos2t - 2c_2\sin2t & b_{12} & b_{13} \\ -b_{12} & -2c_1\cos2t - 2c_2\sin2t & b_{23} \\ -b_{13} & -b_{23} & -2c_1\cos2t - 2c_2\sin2t \end{pmatrix} \begin{pmatrix} x_1 \\ x_2 \\ x_3 \end{pmatrix};\\
			c(t, x) &= c(t) = c_0 + c_1\sin2t - c_2\cos2t.
		\end{aligned}
	\end{equation*}
	
	By \eqref{lnM}, it is not difficult to check that
	\beno
	\ln M &=& a_0 - \beta \cdot X(t) + \alpha \cdot V(t) + b_{12}(x_2v_1 - x_1v_2) + b_{13}(x_3v_1 - x_1v_3) + b_{23}(x_3v_2 - x_2v_3) \\
	&&+ (c_0 + c_2)|X(t)|^2 - 2c_1X(t)\cdot V(t) + (c_0 - c_2)|V(t)|^2.
	\eeno
	Noting that for any $i\neq j$, $x_iv_j-x_jv_i=X_i(t)V_j(t)-X_j(t)V_i(t)$, we conclude the result \eqref{characterM} and then end the proof.
\end{proof}

Now we are in a position to prove Theorem \ref{Classificationofeiit}.

\begin{proof}[Proof of Theorem \ref{Classificationofeiit}] Thanks to Proposition \ref{periodicmax}, Theorem \ref{Classificationofeiit} can be reduced to proving that    \eqref{integcondi} and \eqref{characterM} is equivalent to \eqref{expliciformeisolu} with $(m,y,z)\in \mathbb{R}^+\times\mathbb{R}^3\times\mathbb{R}^3$, $(a,b,c,\mathsf{R})\in \mathcal{O}$(see \eqref{omegadomain}), and $\mathsf{Q}:=(ac-b^2)\mathbb{I}+\mathsf{R}^2$. Starting from \eqref{expliciformeisolu}, we can readily verify that \eqref{integcondi} and \eqref{characterM} hold. Thus, we only need to prove the inverse result.

 Suppose that $M$ is an entropy-invariant solution of \eqref{CauchyLandau} satisfying  \eqref{integcondi} and \eqref{characterM}. In the next, we will show that  $M$ really takes the form \eqref{expliciformeisolu} with constrains stated in the above. By \eqref{trjecforlandau}, we know that if $M_0(x,v):=M(0,x,v)$, then
 \beno M(t,x,v)=M_0(X(t),V(t)),\eeno where $(X(t), V(t))=(x\cos t - v\sin t, x\sin t + v\cos t)$.
Observing that $(X(t), V(t))$ is involved in \eqref{characterM} and \eqref{expliciformeisolu}, we reduce the desired result to proving that if  
\ben
\label{M0charcter} &&\ln M_0\in \mathrm{span}\{1,v,x,|v|^2, x \cdot v, |x|^2, v \wedge x\},\quad \mbox{and}\\
\label{M0integralcondition} &&\int_{\R^3_x\times \R^3_v} M_0(1+|x|^2+|v|^2)dxdv<\infty, \een then  there exist $(m,y,z)\in\R^+\times\R^3\times\R^3$ and $(a,b,c,\mathsf{R})\in \mathcal{O}$ such that
\ben\label{explicitformM0} M_0(x, v) = m\frac{\sqrt{\det \mathsf{Q}}}{(2\pi)^3}\exp\bigg\{-\frac{1}{2}\big[a|x-y|^2 + 2b(x-y) \cdot (v-z) + c|v-z|^2 + 2(x-y)^\tau \mathsf{R}(v-z)\big]\bigg\}. \een

$\bullet$ We first handle the case that $(\Psi_1(M_0),\Psi_2(M_0),\Psi_3(M_0))=(1,0,0)$. By \eqref{M0charcter}, we may assume that \ 
\ben\label{roughM0} M_0=  \exp\bigg\{-\frac{1}{2}\big[a|x|^2 + 2bx \cdot v + c|v|^2 + 2x^\tau \mathsf{R} v\big] + \mathsf{p} \cdot (x, v) + d\bigg\},\een 
where  $a, b, c, d \in \R, \mathsf{p}\in \R^6$ and $\mathsf{R}\in \mathbb{M}_3(\R)$ is skew-symmetric. Using the notations that
\begin{equation}\label{DefizP}
		\mathsf{z} := \begin{pmatrix} x \\ v \end{pmatrix} \in \mathbb{R}^6, \:\:\: \mathsf{P} := \begin{pmatrix} a\mathbb{I} & b\mathbb{I} + \mathsf{R} \\ b\mathbb{I} - \mathsf{R} & c\mathbb{I} \end{pmatrix},
	\end{equation}
	we have $a|x|^2 + 2bx \cdot v + c|v|^2 + 2x^\tau \mathsf{R}v = \mathsf{z}^\tau \mathsf{P}\mathsf{z}$. Since $(\Psi_1(M_0),\Psi_2(M_0),\Psi_3(M_0))=(1,0,0)$, we deduce that 
\beno
		&&\int_{\R^6} e^{-\frac{1}{2}\mathsf{z}^\tau \mathsf{P}\mathsf{z} + \mathsf{p} \cdot \mathsf{z} + d}d\mathsf{z} = 1, \:\:\: \int_{\R^6} \mathsf{z}e^{-\frac{1}{2} \mathsf{z}^\tau  \mathsf{P} \mathsf{z} +  \mathsf{p} \cdot  \mathsf{z} + d}d \mathsf{z} = 0, \quad\mbox{and}\\
			&& 0 = \int_{\mathsf{p} \cdot \mathsf{z} \ge 0}\mathsf{p} \cdot \mathsf{z}e^{-\frac{1}{2}\mathsf{z}^\tau \mathsf{P}\mathsf{z} + \mathsf{p} \cdot \mathsf{z} + d}d\mathsf{z} + \int_{\mathsf{p} \cdot \mathsf{z} \le 0}\mathsf{p} \cdot \mathsf{z}e^{-\frac{1}{2}\mathsf{z}^\tau \mathsf{P}\mathsf{z} + \mathsf{p} \cdot \mathsf{z} + d}d\mathsf{z}\\
			&&= \int_{\mathsf{p} \cdot \mathsf{z} \ge 0}\mathsf{p} \cdot \mathsf{z}e^{-\frac{1}{2}\mathsf{z}^\tau \mathsf{P}\mathsf{z} + d}(e^{\mathsf{p} \cdot \mathsf{z}} - e^{-\mathsf{p} \cdot \mathsf{z}})d\mathsf{z} \ge 0,\eeno
	which implies that $\mathsf{p} \cdot \mathsf{z} = 0, a.e. \, \mathsf{z} \in \mathbb{R}^6$. Thus $\mathsf{p}= 0$.
	
	By definition \eqref{DefizP}, $\mathsf{P}$ is a symmetric matrix. Then there exists a orthogonal matrix $\mathsf{E}$ and the diagonal matrix $\mathsf{D} := \mathrm{diag}\{\lambda_i\}_{i = 1}^6$ such that $\mathsf{P}= \mathsf{E}^\tau \mathsf{D}\mathsf{E}$. Suppose that $\{\lambda_i| 1 \le i \le 6\}$ are eigenvalues of $\mathsf{P}$, then
	\begin{equation*}
		1=\int_{\R^6} e^{-\frac{1}{2}\mathsf{z}^\tau \mathsf{P}\mathsf{z} + d}d\mathsf{z}  = e^d\prod_{i = 1}^6\int e^{-\frac{1}{2}\lambda_i\mathsf{z}_i^2}d\mathsf{z}_i.
	\end{equation*}
	This forces $\lambda_i > 0$ for $1 \le i \le 6$ and thus $\mathsf{P}$ is a positive definite matrix. Moreover, $e^d = \frac{\sqrt{\det \mathsf{P}}}{(2\pi)^3}$.

	Since $\mathsf{P}$ is positive definite, $\mathsf{z}^\tau \mathsf{P}\mathsf{z} = a|x|^2 + 2bx \cdot v + c|v|^2 + 2x^\tau \mathsf{R}v > 0$ for all $\mathsf{z} \neq 0$. Choosing $x = kv$ with $|v| = 1$ will lead to  $ak^2 + 2bk + c > 0$ for all $k \in \mathbb{R}$. Therefore $a, c > 0$ and $ac-b^2>0$. Moreover, it is not difficult to check that
	\ben\label{abcRdiagonal} 
	\begin{pmatrix} \mathbb{I} & 0  \\ -a^{-1}(b\mathbb{I}-\mathsf{R})  &  \mathbb{I} \end{pmatrix} \begin{pmatrix} a\mathbb{I} & b\mathbb{I} + \mathsf{R} \\ b\mathbb{I} - \mathsf{R} & c\mathbb{I} \end{pmatrix}\begin{pmatrix} \mathbb{I} & 0  \\ -a^{-1}(b\mathbb{I}-\mathsf{R})  &  \mathbb{I} \end{pmatrix}^\tau=\begin{pmatrix} a\mathbb{I} & 0  \\  0 & a^{-1}((ac-b^2)\mathbb{I}+\mathsf{R}^2) \end{pmatrix}.   \een 
    Thus  $\mathsf{Q}=(ac-b^2)\mathbb{I}+\mathsf{R}^2$ is also positive definite and $\det \mathsf{Q}=\det \mathsf{P}   = (e^d(2\pi)^3)^2$.   Patching together all the estimates,  by \eqref{roughM0}, we arrive at 
\ben\label{explicitformM0100}
		M_0(x, v) = \frac{\sqrt{\det \mathsf{Q}}}{(2\pi)^3}\exp\bigg\{-\frac{1}{2}\big[a|x|^2 + 2bx \cdot v + c|v|^2 + 2x^\tau \mathsf{R}v\big]\bigg\},
	\een 
	where $(a,b,c,\mathsf{R})\in\mathcal{O}$. This proves the claim \eqref{explicitformM0}.

	$\bullet$ For the general case, if $\Psi_1(M_0)=0$, then $M_0(x,v)=0$. For the case $\Psi_1(M_0)\neq0$, let $(\Psi_1(M_0),\Psi_2(M_0),$\\$\Psi_3(M_0))=m(1,y,z)$, by the change of coordinates, if 
	\beno \widetilde{M}_0(t, x, v) := m^{-1}M_0(t, x + y, v  + z ), \eeno 
    then $(\Psi_1(\widetilde{M}_0),\Psi_2(\widetilde{M}_0),\Psi_3(\widetilde{M}_0))=(1,0,0)$. Following the previous step, we easily conclude  \eqref{explicitformM0} and then complete the proof of the theorem.
\end{proof}

The following two propositions aim to elucidate additional properties of $\mathscr{I}$ which defined in \eqref{setofF0}.

\begin{prop}\label{properV100} Suppose that $\mathsf{V}\in\mathscr{I}$ with $\mathsf{V}=\Psi(F_0)=(1,0,0,\mathsf{a},\mathsf{b},\mathsf{c},\mathcal{R})$. Then $(\mathsf{a},\mathsf{b},\mathsf{c},\mathcal{R})$ satisfies the constrains that $\mathsf{a},\mathsf{c}>0$ and $\f12\mathrm{tr}(|\mathcal{R}|)<\sqrt{\mathsf{a}\mathsf{c}-\mathsf{b}^2}$.  
\end{prop}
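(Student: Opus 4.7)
The plan is to prove the three claims---$\mathsf{a}>0$ and $\mathsf{c}>0$; $\mathsf{a}\mathsf{c}-\mathsf{b}^2>0$; and $\tfrac12\mathrm{tr}(|\mathcal{R}|)<\sqrt{\mathsf{a}\mathsf{c}-\mathsf{b}^2}$---separately, with the third being the main business. The first two reduce to Cauchy--Schwarz in the Hilbert space $L^2(F_0\,dxdv)$: $\mathsf{a}=\int_{\R^6}|x|^2F_0\,dxdv=0$ would force $F_0$ to vanish outside the measure-zero set $\{x=0\}\subset\R^6$, incompatible with $F_0\ge0$, $F_0\in L^1$ of positive mass $\Psi_1(F_0)=1$; similarly for $\mathsf{c}>0$. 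The weak bound $\mathsf{b}^2\le \mathsf{a}\mathsf{c}$ is Cauchy--Schwarz for the inner product $\langle\phi,\psi\rangle_{F_0}:=\int F_0\phi\cdot\psi\,dxdv$, and equality would confine $\mathrm{supp}(F_0)$ to a $3$-plane $\{v=\lambda x\}\subset\R^6$, which is again impossible.

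For the third claim I would first extract a scalar invariant of $\mathcal{R}$. Introduce $\boldsymbol{\omega}:=\int F_0(x\times v)\,dxdv\in\R^3$; then the skew-symmetric matrix $\mathcal{R}=\int F_0(xv^\tau-vx^\tau)\,dxdv$ acts on $\R^3$ as $-\boldsymbol{\omega}\times\cdot$, so the eigenvalues of $\mathcal{R}$ are $\{0,\pm i|\boldsymbol{\omega}|\}$ and those of $|\mathcal{R}|=\sqrt{\mathcal{R}^\tau\mathcal{R}}$ are $\{0,|\boldsymbol{\omega}|,|\boldsymbol{\omega}|\}$, giving $\mathrm{tr}(|\mathcal{R}|)=2|\boldsymbol{\omega}|$. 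The target reduces to $|\boldsymbol{\omega}|^2<\mathsf{a}\mathsf{c}-\mathsf{b}^2$. Since an orthogonal change of variables $(x,v)\mapsto(Ox,Ov)$ leaves $\mathsf{a},\mathsf{b},\mathsf{c},|\boldsymbol{\omega}|$ invariant while rotating $\boldsymbol{\omega}$, I may assume WLOG that $\boldsymbol{\omega}=|\boldsymbol{\omega}|e_3$, so only the first two components of $x$ and $v$ matter.

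The decisive step is to package the remaining data into a single complex Cauchy--Schwarz via $z:=x_1+ix_2$, $w:=v_1+iv_2$. The identity $\bar zw=(x_1v_1+x_2v_2)+i(x_1v_2-x_2v_1)$ gives, after integration against $F_0$,
\[
\mathsf{b}_{12}^2+|\boldsymbol{\omega}|^2=\left|\int_{\R^6} F_0\bar zw\,dxdv\right|^2\le\left(\int_{\R^6} F_0|z|^2\,dxdv\right)\left(\int_{\R^6} F_0|w|^2\,dxdv\right)=\mathsf{a}_{12}\mathsf{c}_{12},
\]
where $\mathsf{a}_{12},\mathsf{b}_{12},\mathsf{c}_{12}$ denote the contributions restricted to indices $1,2$. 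Equality would place $\mathrm{supp}(F_0)$ inside a $4$-plane of $\R^6$, which is excluded by $F_0\in L^1$ of positive mass, so the inequality is strict. To finish, I would verify the purely algebraic identity
\[
\mathsf{a}\mathsf{c}-\mathsf{b}^2-(\mathsf{a}_{12}\mathsf{c}_{12}-\mathsf{b}_{12}^2) = (\mathsf{a}_{12}\mathsf{c}_3+\mathsf{a}_3\mathsf{c}_{12}-2\mathsf{b}_{12}\mathsf{b}_3)+(\mathsf{a}_3\mathsf{c}_3-\mathsf{b}_3^2)\ge0,
\]
where the first bracket is non-negative by AM--GM combined with the scalar Cauchy--Schwarz bounds $\mathsf{a}_j\mathsf{c}_j\ge\mathsf{b}_j^2$ for $j\in\{12,3\}$, and the second by Cauchy--Schwarz on the third coordinate. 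Chaining everything yields $|\boldsymbol{\omega}|^2<\mathsf{a}\mathsf{c}-\mathsf{b}^2$.

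The main obstacle is choosing the right pairing. A naive matrix Cauchy--Schwarz applied directly to $\mathcal{R}$ (comparing Frobenius norms) gives only $\mathrm{tr}(|\mathcal{R}|)^2\le 2\int F_0|x\times v|^2\,dxdv=2\int F_0(|x|^2|v|^2-(x\cdot v)^2)\,dxdv$, which lacks the correlation cancellation needed to produce $\mathsf{a}\mathsf{c}-\mathsf{b}^2$ on the right. The complex-variable trick in the slice orthogonal to $\boldsymbol{\omega}$ is exactly what bundles $\mathsf{b}_{12}$ and $|\boldsymbol{\omega}|$ into the same Cauchy--Schwarz, producing the sharp inequality, and the strictness is then a clean consequence of $F_0$ being an $L^1$ density rather than a measure concentrated on a proper affine subspace.
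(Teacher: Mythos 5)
Your proof is correct, but it follows a genuinely different route from the paper's. The paper reduces the trace constraint, via the normal forms \eqref{deofU}--\eqref{deofS} and the congruence \eqref{abcRdiagonal}, to the positive definiteness of the full $6\times 6$ second-moment matrix $\int_{\R^6}\begin{pmatrix}|x|^2\mathbb{I} & (x\cdot v)\mathbb{I}+x\wedge v\\ (x\cdot v)\mathbb{I}-x\wedge v & |v|^2\mathbb{I}\end{pmatrix}F_0\,dxdv$, which it proves by the completion-of-squares identity \eqref{postivargu1} together with a kernel analysis of $(|x|^2|v|^2-(x\cdot v)^2)\mathbb{I}+(x\wedge v)^2$ and a measure-theoretic argument ruling out a nonzero null vector. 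You instead work directly with scalar invariants: you identify $\tfrac12\mathrm{tr}(|\mathcal{R}|)=|\boldsymbol{\omega}|$ with the magnitude of the total angular momentum $\boldsymbol{\omega}=\int F_0(x\times v)$, rotate so that $\boldsymbol{\omega}\parallel e_3$, and run a single complex Cauchy--Schwarz in the plane orthogonal to $\boldsymbol{\omega}$ (bundling $\mathsf{b}_{12}$ and $|\boldsymbol{\omega}|$ into $\left|\int F_0\bar z w\right|^2$), finishing with the elementary AM--GM/Cauchy--Schwarz bookkeeping that shows $\mathsf{a}\mathsf{c}-\mathsf{b}^2\ge \mathsf{a}_{12}\mathsf{c}_{12}-\mathsf{b}_{12}^2$. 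The strictness in both arguments comes from the same source: an $L^1$ density of positive mass cannot be carried by a proper linear subspace of $\R^6$ (your $3$- and $4$-plane arguments play the role of the paper's eigenvector/positive-measure-set step). What the paper's formulation buys is structural reuse: the positive definite $6\times6$ matrix and the congruence \eqref{abcRdiagonal} recur in the characterization of $\mathcal{O}$ (cf. \eqref{omegadomain1}) and in Proposition \ref{properM0100}, where its inverse is computed explicitly. What yours buys is a short, self-contained derivation of the sharp scalar inequality $|\boldsymbol{\omega}|^2<\mathsf{a}\mathsf{c}-\mathsf{b}^2$ without passing through block matrices; the only point to state carefully is that the equality case of the complex Cauchy--Schwarz is linear dependence of $z$ and $w$ in $L^2(F_0)$ (including the degenerate cases $z\equiv 0$ or $w\equiv 0$), all of which still confine the support to a real-codimension-two subspace, so the strict inequality indeed follows.
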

\begin{proof}  
By \eqref{deofU}, \eqref{deofR} and \eqref{deofS}, the desired result can be reduced to prove that $(\mathsf{a}\mathsf{c}-\mathsf{b}^2)\mathbb{I}+\mathcal{R}^2$ is positive definite. Thanks to \eqref{abcRdiagonal}, it can be further reduced to prove 
 \beno 
\begin{pmatrix} \mathsf{a}\mathbb{I} & \mathsf{b}\mathbb{I} + \mathcal{R} \\ \mathsf{b}\mathbb{I} - \mathcal{R} & \mathsf{c}\mathbb{I} \end{pmatrix}=\int_{\R^6} \begin{pmatrix} |x|^2\mathbb{I} & (x\cdot v)\mathbb{I} + x\wedge v \\ (x\cdot v) \mathbb{I} -x\wedge v& |v|^2\mathbb{I} \end{pmatrix} F_0dxdv
 \eeno is positive definite. It is not difficult to check that 
  \ben\label{postivargu1}  &&(z_1^\tau,z_2^\tau)\begin{pmatrix} \mathsf{a}\mathbb{I} & \mathsf{b}\mathbb{I} + \mathcal{R} \\ \mathsf{b}\mathbb{I} - \mathcal{R} & \mathsf{c}\mathbb{I} \end{pmatrix}\begin{pmatrix} z_1\\z_2\end{pmatrix}=\int_{\R^6}|x|^2\bigg|z_1+\f{(x\cdot v)z_2+(x\wedge v)z_2}{|x|^2}\bigg|^2F_0dxdv\nonumber \\
 &&+ \int_{\R^6} |x|^{-2}z_2^\tau \big((|x|^2|v|^2-(x\cdot v)^2)\mathbb{I}+(x\wedge v)^2\big)z_2F_0dxdv.\een 
Thus the matrix is semi-positive definite since $(|x|^2|v|^2-(x\cdot v)^2)\mathbb{I}+(x\wedge v)^2$ is semi-positive definite.
Let $(z_1,z_2)\in \R^3\times\R^3$ and suppose that \ben\label{0} (z_1^\tau,z_2^\tau)\begin{pmatrix} \mathsf{a}\mathbb{I} & \mathsf{b}\mathbb{I} + \mathcal{R} \\ \mathsf{b}\mathbb{I} - \mathcal{R} & \mathsf{c}\mathbb{I} \end{pmatrix}\begin{pmatrix} z_1\\z_2\end{pmatrix}=0.\een
 Note that  $x$ and $v$ are the only two eigenvectors corresponding to the eigenvalue $0$ of the matrix $(|x|^2|v|^2-(x\cdot v)^2)\mathbb{I}+(x\wedge v)^2$.
 From the fact that $\Psi_1(F_0)=1$, we may assume that there exists a measurable set $E\subset \R^6$ such that $|E|>0$ and $F_0(x,v)>\eta>0$ for any $(x,v)\in E$. 
  Then \eqref{postivargu1} and \eqref{0} imply that for any $(x,v)\in E$, it holds that $(x\times v)\perp z_2$, and thus $(x\times z_2)\perp v$. Then we obtain that 
\beno 0<|E|<\int_{\R^3_x}\int_{(x\times z_2)\perp v} 1dvdx. \eeno
This yields that $z_2=0$. By \eqref{postivargu1} and the fact that $z_2=0$, we can also get that $z_1=0$. 
Thus we  conclude that if $(z_1^\tau,z_2^\tau)\begin{pmatrix} \mathsf{a}\mathbb{I} & \mathsf{b}\mathbb{I} + \mathcal{R} \\ \mathsf{b}\mathbb{I} - \mathcal{R} & \mathsf{c}\mathbb{I} \end{pmatrix}\begin{pmatrix} z_1\\z_2\end{pmatrix}=0$, then $z_1=z_2=0$. This implies that  $\begin{pmatrix} \mathsf{a}\mathbb{I} & \mathsf{b}\mathbb{I} + \mathcal{R} \\ \mathsf{b}\mathbb{I} - \mathcal{R} & \mathsf{c}\mathbb{I} \end{pmatrix}$ is positive definite and then we complete the proof.
\end{proof}

\begin{prop}\label{properM0100} Suppose $M_0$ has the form \eqref{explicitformM0100} with $(a,b,c,\mathsf{R})\in\mathcal{O}$ and $\mathsf{Q}=(ac-b^2)\mathbb{I}+\mathsf{R}^2$. Then
$\Psi(M_0)=(1,0,0,\mathrm{tr}(\mathsf{Q}^{-1})c,-\mathrm{tr}(\mathsf{Q}^{-1})b, \mathrm{tr}(\mathsf{Q}^{-1})a,-2\mathsf{Q}^{-1}\mathsf{R})$.
 \end{prop}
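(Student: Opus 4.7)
The plan is to recognize that $M_0$ in \eqref{explicitformM0100} is, after setting $\mathsf{z}=(x,v)^\tau$, a centered Gaussian density on $\R^6$ with precision matrix $\mathsf{P}$ introduced in \eqref{DefizP}; the normalization appearing in \eqref{explicitformM0100} is the correct Gaussian one because $\det \mathsf{P}=\det \mathsf{Q}$ by \eqref{abcRdiagonal}. All seven components of $\Psi(M_0)$ (evaluated at $t=0$, where $X(0)=x$ and $V(0)=v$) are zeroth, first, or second moments of this Gaussian. The zeroth moment gives $\Psi_1(M_0)=1$ and the first moments vanish by parity, so $\Psi_2(M_0)=\Psi_3(M_0)=0$. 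All remaining identities will be read off once I know the four $3\times 3$ blocks of the covariance matrix $\mathsf{P}^{-1}$.

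The main step is therefore to invert the block matrix $\mathsf{P}$ via the Schur complement relative to the lower-right block $c\mathbb{I}$. The key algebraic simplification is that, since $\mathsf{R}$ is skew-symmetric,
\[
(b\mathbb{I}+\mathsf{R})(b\mathbb{I}-\mathsf{R}) \;=\; b^2\mathbb{I}-\mathsf{R}^2,
\]
so that the Schur complement equals $c^{-1}\mathsf{Q}$ with inverse $c\mathsf{Q}^{-1}$. A second ingredient, used throughout, is that $\mathsf{R}$ commutes with $\mathsf{Q}=(ac-b^2)\mathbb{I}+\mathsf{R}^2$ and hence with $\mathsf{Q}^{-1}$. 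Simplifying the lower-right block through the identity $(b^2\mathbb{I}-\mathsf{R}^2)\mathsf{Q}^{-1}=ac\mathsf{Q}^{-1}-\mathbb{I}$, one obtains the compact form
\[
\mathsf{P}^{-1} \;=\; \begin{pmatrix} c\mathsf{Q}^{-1} & -\mathsf{Q}^{-1}(b\mathbb{I}+\mathsf{R}) \\ -(b\mathbb{I}-\mathsf{R})\mathsf{Q}^{-1} & a\mathsf{Q}^{-1} \end{pmatrix}.
\]

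From here the remaining components of $\Psi(M_0)$ follow by reading off traces of the appropriate blocks. The diagonal blocks yield $\Psi_4(M_0)=c\,\mathrm{tr}(\mathsf{Q}^{-1})$ and $\Psi_6(M_0)=a\,\mathrm{tr}(\mathsf{Q}^{-1})$. For the scalar cross-moment, $\Psi_5(M_0)=\mathrm{tr}((\mathsf{P}^{-1})_{12})=-b\,\mathrm{tr}(\mathsf{Q}^{-1})-\mathrm{tr}(\mathsf{Q}^{-1}\mathsf{R})$, and the last trace vanishes because $\mathsf{Q}^{-1}\mathsf{R}$ is skew-symmetric (a symmetric matrix times a commuting skew-symmetric matrix). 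Finally, the angular-momentum matrix is
\[
\Psi_7(M_0) \;=\; (\mathsf{P}^{-1})_{12}-(\mathsf{P}^{-1})_{21} \;=\; -\mathsf{Q}^{-1}\mathsf{R}-\mathsf{R}\mathsf{Q}^{-1} \;=\; -2\mathsf{Q}^{-1}\mathsf{R},
\]
where the two terms combine using the same commutativity. I do not foresee any essential obstacle: once the block inversion of $\mathsf{P}$ has been carried out, everything reduces to elementary linear algebra combined with the standard moment formulas for Gaussians.
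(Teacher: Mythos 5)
Your proposal is correct and follows essentially the same route as the paper: both reduce $\Psi(M_0)$ to the moments of the centered Gaussian with precision matrix $\mathsf{P}$ from \eqref{DefizP}, identify the covariance $\mathsf{P}^{-1}=\begin{pmatrix} c\mathsf{Q}^{-1}& -b\mathsf{Q}^{-1}-\mathsf{Q}^{-1}\mathsf{R} \\ -b\mathsf{Q}^{-1}+\mathsf{Q}^{-1}\mathsf{R} & a\mathsf{Q}^{-1}\end{pmatrix}$ in block form, and read off the stated quantities using that $\mathsf{R}$ commutes with $\mathsf{Q}^{-1}$ and that $\mathrm{tr}(\mathsf{Q}^{-1}\mathsf{R})=0$. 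Your use of the Schur complement of the $c\mathbb{I}$ block instead of the paper's congruence \eqref{abcRdiagonal} (Schur complement of the $a\mathbb{I}$ block) is an immaterial variation of the same linear algebra.
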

\begin{proof} Following the notations used in the proof of Theorem \ref{Classificationofeiit}, we may write $M_0=\f{\sqrt{\mathrm{det}(\mathsf{P})}}{(2\pi)^{3}}e^{-\f12z^\tau \mathsf{P}z}$ where $z:=(x,v)$ and $\mathsf{P}=\begin{pmatrix} a\mathbb{I} & b\mathbb{I} + \mathsf{R} \\ b\mathbb{I} - \mathsf{R} & c\mathbb{I} \end{pmatrix}$ is positive definite.  Let $\mathsf{P}=\mathsf{A}^2$ where $\mathsf{A}$ is also positive definite. Then one may check that  
 \beno 
\int_{\R^6} zz^\tau e^{-\f12z^\tau \mathsf{P}z}dz=(\mathrm{det}(\mathsf{A}))^{-1}\mathsf{A}^{-1}\bigg(\int_{\R^6} zz^\tau e^{-\f12|z|^2}dz\bigg)\mathsf{A}^{-1}=\f{(2\pi)^{3}}{\sqrt{\mathrm{det}(\mathsf{P})}}(\mathsf{A}^{-1})^2=\f{(2\pi)^{3}}{\sqrt{\mathrm{det}(\mathsf{P})}}\mathsf{P}^{-1},\eeno 
from which together with \eqref{abcRdiagonal}, we have
\beno 
\int_{\R^6} \begin{pmatrix} x\\v\end{pmatrix}(x^\tau,v^\tau)M_0dxdv=\mathsf{P}^{-1}=\begin{pmatrix} c\mathsf{Q}^{-1}& - b\mathsf{Q}^{-1}- \mathsf{Q}^{-1}\mathsf{R} \\ - b\mathsf{Q}^{-1}+\mathsf{Q}^{-1}\mathsf{R} & a\mathsf{Q}^{-1}  \end{pmatrix}.
\eeno
This is enough to conclude our results.
\end{proof} 

\begin{lem}\label{normalbijection} Let   \ben
 &&\mathscr{M}_0:=\bigg\{ M_0(x,v):=M(0,x,v)\big|M(t,x,v)\in \mathscr{M} \bigg\},  \mathscr{I}_{\mathsf{N}}:=\big\{\mathsf{V}\in\mathscr{I}\big|(\mathsf{V}_1,\mathsf{V}_2,\mathsf{V}_3)=(1,0,0)\big\},\label{SetofM0}
 \\&& \label{NormalizesetofM0O0}
 \mathscr{M}_{0,\mathsf{N}}:=\big\{M_0\in \mathscr{M}_0\big| (\Phi_1(M_0),\Phi_2(M_0),\Phi_3(M_0))=(1,0,0) \big\}.\label{NormalizesetofO0}
\een
There exists a bijection mapping $\mathscr{F}: \mathscr{I}_{\mathsf{N}}\rightarrow \mathscr{M}_{0,\mathsf{N}}$ 
such that  $\Phi(\mathscr{F}(\mathsf{V}))=\mathsf{V}$ for any $\mathsf{V}\in\mathscr{I}_{\mathsf{N}}$.  
\end{lem}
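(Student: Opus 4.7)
The plan is to construct $\mathscr{F}$ by composing two natural bijections. By Theorem \ref{Classificationofeiit} (specialized to $m=1$, $y=z=0$), every $M_0 \in \mathscr{M}_{0,\mathsf{N}}$ is uniquely represented in the form \eqref{explicitformM0100} by some $(a,b,c,\mathsf{R}) \in \mathcal{O}$, giving a bijection $\mathcal{J}: \mathcal{O} \to \mathscr{M}_{0,\mathsf{N}}$. Proposition \ref{properM0100} then yields the explicit evaluation
\begin{equation*}
\Xi(a,b,c,\mathsf{R}) := (1,0,0,\mathrm{tr}(\mathsf{Q}^{-1})c, -\mathrm{tr}(\mathsf{Q}^{-1})b, \mathrm{tr}(\mathsf{Q}^{-1})a, -2\mathsf{Q}^{-1}\mathsf{R}),
\end{equation*}
so that $\Psi \circ \mathcal{J} = \Xi$. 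The lemma reduces to showing that $\Xi: \mathcal{O} \to \mathscr{I}_{\mathsf{N}}$ is a bijection; then $\mathscr{F}:=\mathcal{J}\circ \Xi^{-1}$ automatically satisfies $\Psi(\mathscr{F}(\mathsf{V}))=\mathsf{V}$.

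For the inversion of $\Xi$, the key structural observation is that taking transposes in $\mathcal{R} = -2\mathsf{Q}^{-1}\mathsf{R}$ (using that $\mathsf{Q}$ is symmetric and $\mathcal{R}$ is skew-symmetric) forces $\mathsf{Q}$ and $\mathcal{R}$ to commute. Choosing an orthonormal basis that brings $\mathcal{R}$ to canonical form with nonzero entries $\mathcal{R}_{12}=-\mathcal{R}_{21}=\mathsf{r} := \frac{1}{2}\mathrm{tr}(|\mathcal{R}|)$, both $\mathsf{Q}$ and $\mathsf{R}$ become block diagonal with a $2\times 2$ block and a $1\times 1$ block; writing $\mathsf{Q} = \mathrm{diag}(\alpha,\alpha,\gamma)$ and $\mathsf{R}$ in the same shape as $\mathcal{R}$ with off-diagonal entry $\rho$, the defining relations collapse to the scalar system $\gamma = ac - b^2 =: \delta$, $\alpha = \delta - \rho^2$, and $\rho = -\mathsf{r}\alpha/2$. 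Combining these with the moment identity $\mathsf{ac}-\mathsf{b}^2 = d^2\delta$ for $d:=\mathrm{tr}(\mathsf{Q}^{-1}) = 2/\alpha + 1/\gamma$ (immediate from $a = \mathsf{c}/d$, $b=-\mathsf{b}/d$, $c=\mathsf{a}/d$), and setting $\Delta := \mathsf{ac}-\mathsf{b}^2$, elementary algebra reduces the whole problem to the single quadratic
\begin{equation*}
3d^2 - 4d\Delta + \Delta(\Delta - \mathsf{r}^2) = 0, \qquad d = \frac{2\Delta - \sqrt{\Delta(\Delta + 3\mathsf{r}^2)}}{3}.
\end{equation*}

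To conclude, I would verify that this $d$ is positive precisely under the constraint $\Delta > \mathsf{r}^2$ from Proposition \ref{properV100}, and that $d < \Delta$ holds automatically; together these produce $\alpha,\gamma > 0$, $a,c > 0$, $ac-b^2 > 0$, and $\frac{1}{2}\mathrm{tr}(|\mathsf{R}|) = |\rho| < \sqrt{ac-b^2}$, so that the constructed $(a,b,c,\mathsf{R})$ lies in $\mathcal{O}$. The other root is discarded because it forces $d \ge \Delta$ and hence $\alpha \le 0$. A direct substitution then confirms $\Xi(a,b,c,\mathsf{R}) = \mathsf{V}$, yielding both surjectivity of $\Xi$ onto $\mathscr{I}_{\mathsf{N}}$ and uniqueness of the preimage. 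The main obstacle is the algebraic entanglement between the scalar factor $\mathrm{tr}(\mathsf{Q}^{-1})$ and the matrix equation $\mathcal{R} = -2\mathsf{Q}^{-1}\mathsf{R}$; the decisive step is passing to the common spectral basis of $\mathsf{Q}$ and $\mathcal{R}$, which decouples the system into the scalar quadratic above and exposes the sharp equivalence between the constraint $\Delta > \mathsf{r}^2$ defining $\mathscr{I}_{\mathsf{N}}$ and the positivity of $d$.
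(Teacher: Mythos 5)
Your proposal is correct, and its skeleton is the same as the paper's: both reduce the lemma to inverting the moment identities of Proposition \ref{properM0100} (your map $\Xi$, the paper's \eqref{equalcondition1}) under the admissibility constraints of Proposition \ref{properV100}. The difference is in how the algebra is organized. The paper follows Levermore's ansatz: it writes $(a,b,c,\mathsf{R})=(\nu\mathsf{c}/\mathsf{d}^2,-\nu\mathsf{b}/\mathsf{d}^2,\nu\mathsf{a}/\mathsf{d}^2,-\nu\mathcal{N}/\mathsf{d})$, solves the matrix equation for $\mathcal{N}$ via a square-root formula, and obtains $\nu=\frac{2+(1+3(\beta/\mathsf{d})^2)^{1/2}}{1-(\beta/\mathsf{d})^2}$ for existence; uniqueness is then handled by a separate, more involved argument (introducing $d_i$, $e_i=d_i-\gamma_i^2$, a ratio $k$, the factorization $(k-1)((k-3)e_1+8kd_1)e_1=0$, and a sign contradiction, followed by a spectral argument for $\mathsf{R}_1=\mathsf{R}_2$). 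You instead pass to the common canonical basis of $\mathcal{R}$ and $\mathsf{R}$ and collapse everything to one quadratic in $d=\mathrm{tr}(\mathsf{Q}^{-1})$; your root $d=\frac{2\Delta-\sqrt{\Delta(\Delta+3\mathsf{r}^2)}}{3}$ is exactly the paper's solution via $d=\Delta/\nu$ (I checked your quadratic $3d^2-4d\Delta+\Delta(\Delta-\mathsf{r}^2)=0$ against your scalar system and it is right), and discarding the root with $d\ge\Delta$ by positivity of $\alpha$ simultaneously delivers existence and injectivity, which is a genuine simplification of the paper's uniqueness step. Your positivity checks (that $d>0$ iff $\Delta>\mathsf{r}^2$, $d<\Delta$ automatic, hence $(a,b,c,\mathsf{R})\in\mathcal{O}$) are the precise point where Proposition \ref{properV100} enters, matching the paper. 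Two small points to flesh out when writing this up: justify that any preimage must have $\mathsf{Q}$ and $\mathsf{R}$ block diagonal in the canonical frame of $\mathcal{R}$ (this follows since $\mathsf{Q}=(ac-b^2)\mathbb{I}+\mathsf{R}^2$ commutes with $\mathsf{R}$ and $\mathcal{R}=-2\mathsf{Q}^{-1}\mathsf{R}$ forces $\ker\mathcal{R}=\ker\mathsf{R}$, so $\mathsf{R}=-\frac{\alpha}{2}\mathcal{R}$ once $\alpha$ is fixed), and note that the paper's proof of this lemma also records the Lipschitz-stability bounds \eqref{stabbijection1}--\eqref{stabbijection2} used later in Theorem \ref{biinjectionofMF0}; they are not part of the statement you were asked to prove, but your closed-form root makes them equally immediate if needed.
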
 
\begin{proof} Suppose $\mathsf{V}=\Psi(F_0)=(1,0,0,\mathsf{a},\mathsf{b},\mathsf{c},\mathcal{R})\in  \mathscr{I}_{\mathsf{N}}$. To prove the existence of $M_0\in \mathscr{M}_{0,\mathsf{N}}$ such that $\Psi(M_0)=\mathsf{V}$, we will apply Levermore's argument used in \cite{Levermore1}. 

 Thanks to Proposition \ref{properV100}, if $\mathsf{d}:=\sqrt{\mathsf{a}\mathsf{c}-\mathsf{b}^2}$, then $\beta:=\f12\mathrm{tr}(|\mathcal{R}|)<\mathsf{d}$. Let $M_0$ has the form \eqref{explicitformM0100} with $(a,b,c,\mathsf{R})\in \mathcal{O}$ with $\mathsf{Q}=(ac-b^2)\mathbb{I}+\mathsf{R}^2$. By Proposition \ref{properM0100}, then $\Psi(M_0)=\mathsf{V}$ is reduced to show that 
\ben\label{equalcondition1} (\mathsf{a},\mathsf{b},\mathsf{c},\mathcal{R})=(\mathrm{tr}(\mathsf{Q}^{-1})c,-\mathrm{tr}(\mathsf{Q}^{-1})b, \mathrm{tr}(\mathsf{Q}^{-1})a,-2\mathsf{Q}^{-1}\mathsf{R}). \een  
Assume that \ben\label{abcrassume} c:=\f{\nu \mathsf{a}}{\mathsf{d}^2},\quad  b:=-\f{\nu \mathsf{b}}{\mathsf{d}^2}, \quad a:=\f{\nu \mathsf{c}}{\mathsf{d}^2}, \quad \mathsf{R}:=-\f{\nu}{\mathsf{d}}\mathcal{N}, \een
where $\nu>0$ and the skew-symmetric matrix $\mathcal{N}$ will be determined later.  Then $\mathsf{Q}=\f{\nu^2}{\mathsf{d}^2}(\mathbb{I}+\mathcal{N}^2)$ and \eqref{equalcondition1} is equivalent to 
\beno  1=\nu^{-1}\mathrm{tr}((\mathbb{I}+\mathcal{N}^2)^{-1}), \quad \mathcal{R}=2\f{\mathsf{d}}{\nu}(\mathbb{I}+\mathcal{N}^2)^{-1}\mathcal{N}.\eeno  From the second equation, we have
 \ben\label{formulaofN}\mathcal{N}=\nu\big(\mathsf{d}\mathbb{I}+(\mathsf{d}^2\mathbb{I}-\nu^2\mathcal{R}^2)^{\f12}\big)^{-1}\mathcal{R}.\een It is easy to see that $\mathcal{N}$ is skew-symmetric and  if $\alpha:=\f12\mathrm{tr}(|\mathcal{N}|)$, then 
  $\alpha=\f{\nu\beta}{\mathsf{d}+(\mathsf{d}^2+\nu^2\beta^2)^{\f12}}<1$, 
 which implies that  $\mathbb{I}+\mathcal{N}^2$ is positive definite and moreover,
 \beno \nu=\mathrm{tr}((\mathbb{I}+\mathcal{N}^2)^{-1})=1+\f{2}{1-\alpha^2}=2+(1+(\f{\beta}{\mathsf{d}})^2\nu^2)^{\f12}.\eeno
 From this, we get that 
 \ben\label{formulaofmu}  \nu=\f{2+(1+3(\beta/\mathsf{d})^2)^{\f12}}{1-(\beta/\mathsf{d})^2}.\een 
 Now substituting \eqref{formulaofN} and \eqref{formulaofmu} into \eqref{abcrassume}, we obtain that $\Psi(M_0)=\mathsf{V}$. This proves the existence.

 To justify the mapping from $\mathsf{V}$ to $M_0$, we have to show the uniqueness in the sense that if  $M_{0,1}$ and $M_{0,2}$,  having the same form \eqref{explicitformM0100} but with $(a_1,b_1,c_1,\mathsf{R}_1)\in \eqref{omegadomain}$ and $(a_2,b_2,c_2,\mathsf{R}_2)\in \eqref{omegadomain}$ respectively, satisfy that $\Psi(M_{0,1})=\Psi(M_{0,2})$, then $M_{0,1}=M_{0,2}$.  

 Let $d_i:=a_ic_i-b_i^2$, $\mathsf{Q}_i=d_i\mathbb{I}+\mathsf{R}_i^2$, $\gamma_i:=\f12\mathrm{tr}{|\mathsf{R}_i|}$ and $0<e_i:=d_i-\gamma_i^2$. Then by Proposition \ref{properM0100}, we have
 \ben\label{uniq1}  (\mathrm{tr}(\mathsf{Q}_1^{-1})c_1,-\mathrm{tr}(\mathsf{Q}_1^{-1})b_1, \mathrm{tr}(\mathsf{Q}_1^{-1})a_1,-2\mathsf{Q}_1^{-1}\mathsf{R}_1)= (\mathrm{tr}(\mathsf{Q}_2^{-1})c_2,-\mathrm{tr}(\mathsf{Q}_2^{-1})b_2, \mathrm{tr}(\mathsf{Q}_2^{-1})a_2,-2\mathsf{Q}_2^{-1}\mathsf{R}_2).\een
Since $\mathrm{tr}(\mathsf{Q}_i^{-1})=d_i^{-1}+2e_i^{-1}$, if assume that $k(a_1,b_1,c_1)=(a_2,b_2,c_2)$ with $k>0$, then from \eqref{uniq1}, we get that $(d_1^{-1}+2e_1^{-1})(a_1,b_1,c_1)=(d_2^{-1}+2e_2^{-1})(a_2,b_2,c_2)$ and $d_2=k^2d_1$,
which implies \ben\label{uniq2} 1/d_1+2/e_1=1/(kd_1)+(2k)/e_2.\een
Since $\mathsf{Q}_1^{-1}\mathsf{R}_1=\mathsf{Q}_2^{-1}\mathsf{R}_2$ and $\mathsf{Q}_i\mathsf{R}_i=\mathsf{R}_i\mathsf{Q}_i$, we also have $\mathsf{Q}_1^{-1}-d_1\mathsf{Q}_1^{-2}=\mathsf{Q}_1^{-2}\mathsf{R}_1^2=\mathsf{Q}_2^{-2}\mathsf{R}_2^2=\mathsf{Q}_2^{-1}-d_2\mathsf{Q}_2^{-2}$. Since $\mathrm{tr}(\mathsf{Q}_i^{-1}-d_i\mathsf{Q}_i^{-2})=2/e_i-(2d_i)/e_i^2$, then
\ben\label{uniq3} 2/e_1-(2d_1)/e_1^2= 2/e_2-(2d_2)/e_2^2.\een 
From \eqref{uniq2}, we derive that $e_2=(2k^2d_1e_1)/((k-1)e_1+2kd_1)$. By substituting it and $d_2=k^2d_1$ into \eqref{uniq3}, we finally get that
$(k-1)((k-3)e_1+8kd_1)e_1=0$. We will show that $k=1$ by contradiction argument. Suppose that  $(k-3)e_1+8kd_1=0$, which implies that $k=3e_1/(8d_1+e_1)$. Then we get that 
\beno 0<e_2=(2k^2d_1e_1)/((k-1)e_1+2kd_1)=\f{d_1(3e_1)^2}{(e_1-d_1)(8d_1+e_1)}<0. \eeno
This proves the claim and then $(a_1,b_1,c_1)=(a_2,b_2,c_2)$. From \eqref{uniq2}, we deduce that  $e_1=e_2$, which implies that $\mathrm{tr}(|\mathsf{R}_1|)=\mathrm{tr}(|\mathsf{R}_2|)$. From this together with \eqref{deofU}, there exist unitary matrices $U_1$ and $U_2$ such that 
\beno 
\mathsf{R_1}=U_1^\tau \begin{pmatrix} 0&\gamma_1&0 \\ -\gamma_1&0&0\\ 0&0&0 \end{pmatrix}U_1; \mathsf{R_2}=U_2^\tau \begin{pmatrix} 0&\gamma_1&0 \\ -\gamma_1&0&0\\ 0&0&0 \end{pmatrix}U_2,
\eeno
which yields  that 
\beno 
\mathsf{Q_1}=U_1^\tau \begin{pmatrix} e_1&0&0 \\ 0&e_1&0\\ 0&0&1\end{pmatrix}U_1; \mathsf{Q_2}=U_2^\tau \begin{pmatrix} e_1&0&0 \\ 0&e_1&0\\ 0&0&1 \end{pmatrix}U_2.
\eeno
From the fact that $\mathsf{Q}_1^{-1}\mathsf{R}_1=\mathsf{Q}_2^{-1}\mathsf{R}_2$, we get that 
\beno U_1^\tau \begin{pmatrix} 0&\gamma_1e_1^{-1}&0 \\ -\gamma_1e_1^{-1}&0&0\\ 0&0&0 \end{pmatrix}U_1=U_2^\tau \begin{pmatrix} 0&\gamma_1e_1^{-1}&0 \\ -\gamma_1e_1^{-1}&0&0\\ 0&0&0 \end{pmatrix}U_2. \eeno  
Since $e_1\neq0$, we conclude that $\mathsf{R_1}=\mathsf{R_2}$. This ends the proof of the uniqueness.

We conclude that for any $\mathsf{V}\in \mathscr{I}_{\mathsf{N}}$, there exists a unique $M_0\in \mathscr{M}_{0,\mathsf{N}}$ such that $\Psi(M_0)=\mathsf{V}$. Now we can define a mapping $\mathscr{F}: \mathscr{I}_{\mathsf{N}}\rightarrow \mathscr{M}_{0,\mathsf{N}}$ such that  $\mathscr{F}(\mathsf{V})=M_0$. It is easy to verify that $\mathscr{F}$ is a bijection. Moreover, from the construction in \eqref{equalcondition1}, \eqref{formulaofN} and \eqref{formulaofmu}, it is easy to check the following stability result: Suppose that  $M_{0,i}=\mathscr{F}(\mathsf{V}_i)$, where $\mathsf{V}_i=(1,0,0,\mathsf{a}_i,\mathsf{b}_i,\mathsf{c}_i, \mathcal{R}_i)$ and $M_{0,i}$ is determined by $(a_i,b_i,c_i,\mathsf{R}_i)$, for $i=1,2$, then by \eqref{normofpsif}, if $\|(\mathsf{a}_1-\mathsf{a}_2,\mathsf{b}_1-\mathsf{b}_2,\mathsf{c}_1-\mathsf{c}_2, \mathcal{R}_1-\mathcal{R}_2)\|_2\ll1$,
\ben\label{stabbijection1}
 \|(a_1-a_2, b_1-b_2,c_1-c_2,\mathsf{R}_1-\mathsf{R}_2)\|_2\le C(\mathsf{a}_1,\mathsf{b}_1,\mathsf{c}_1, \mathcal{R}_1)\|(\mathsf{a}_1-\mathsf{a}_2,\mathsf{b}_1-\mathsf{b}_2,\mathsf{c}_1-\mathsf{c}_2, \mathcal{R}_1-\mathcal{R}_2)\|_2.
 \een
\ben\label{stabbijection2}
 \f{M_{0,1}}{M_{0,2}}+\f{M_{0,2}}{M_{0,1}}\le \exp\bigg\{C(\mathsf{a}_1,\mathsf{b}_1,\mathsf{c}_1, \mathcal{R}_1)\|(\mathsf{a}_1-\mathsf{a}_2,\mathsf{b}_1-\mathsf{b}_2,\mathsf{c}_1-\mathsf{c}_2, \mathcal{R}_1-\mathcal{R}_2)\|_2(|x|^2+|v|^2+1)\bigg\}.\een
We end the proof of the lemma. 
\end{proof}

Now we are in a position to prove Theorem \ref{biinjectionofMF0}.
 
 \begin{proof}[Proof of Theorem \ref{biinjectionofMF0}] Before providing the proof for the theorem, we will first introduce two important mappings and then demonstrate the favorable properties of these two mappings.

  \noindent $\bullet$ Let $\mathscr{F}_1:\mathscr{M}_0\rightarrow \mathscr{M}$
 verify that $\mathscr{F}_1(M_0):=M_0(X(t),V(t))$, where $\mathscr{M}_0$ is defined in \eqref{SetofM0}. One may verify that (i). For any $M\in \mathscr{M}$, $\mathscr{F}_1^{-1}(M)=M(0,x,v)$. (ii). $\Psi(\mathscr{F}_1^{-1}(M))=\Psi(M)$. 

 \noindent $\bullet$ Suppose that $\mathsf{V}=\Psi(F_0)=(m,my,mz,\mathsf{a},\mathsf{b},\mathsf{c},\mathcal{R})\in \mathscr{I}$ and let
\beno \mathscr{D}_0:=\big\{\bar{F}_0(x,v)\ge0| 0<\int_{\R^6} \bar{F}_0(1+|x|^2+|v|^2)dxdv<\infty\big\}.\eeno 
 We define  a mapping  $\mathscr{F}_2:\mathscr{D}_0\rightarrow \mathscr{D}_0$, which verifies that
 $\mathscr{F}_2(\bar{F}_0):=m^{-1}\bar{F}_0(x+y,v+z)$. One may verify that (iii).   $\mathscr{F}_2^{-1}(\bar{F}_0)=m\bar{F}_0(x-y,v-z)$ and $\Psi(\mathscr{F}_2(F_0))\in \mathscr{I}_{\mathsf{N}}$(see \eqref{SetofM0} for definition). (iv). Thanks to Lemma \ref{normalbijection}, if $M_{0,\mathsf{N}}:=\mathscr{F}(\Psi(\mathscr{F}_2(F_0)))$, then $\Psi(M_{0,\mathsf{N}})=\Psi(\mathscr{F}_2(F_0))$. (v). Let $M_0=M_0(x,v):=\mathscr{F}_2^{-1}(M_{0,\mathsf{N}})$. Then it is easy to verify that $\Psi(M_0)=\Psi(F_0)=\mathsf{V}$. In fact, by change of variable, we have
 \beno &&\int_{\R^6} M_0(x,v)\begin{bmatrix}  |v|^2 \\ x \cdot v \\ |x|^2 \\ x\wedge v \end{bmatrix}dxdv=m\int_{\R^6} M_{0,\mathsf{N}}(x,v)\begin{bmatrix}  |v+z|^2 \\ (x+y) \cdot (v+z)  \\ |x+y|^2 \\ (x+y)\wedge (v+z) \end{bmatrix}dxdv\\
\mbox{and} &&\int_{\R^6} F_0(x,v)\begin{bmatrix}  |v|^2 \\ x \cdot v \\ |x|^2 \\ x\wedge v \end{bmatrix}dxdv=m\int_{\R^6} \mathscr{F}_2(F_0)(x,v)\begin{bmatrix}  |v+z|^2 \\ (x+y) \cdot (v+z) \\ |x+y|^2 \\ (x+y)\wedge (v+z) \end{bmatrix}dxdv, 
 \eeno 
 which implies that $\Psi(M_0)=\Psi(F_0)=\mathsf{V}$. We emphasize here that $M_0=(\mathscr{F}_2^{-1}\circ\mathscr{F}\circ\Psi\circ\mathscr{F}_2)(F_0)$.

 To prove the main theorem, we first observe that if $M:=(\mathscr{F}_1\circ\mathscr{F}_2^{-1}\circ\mathscr{F}\circ\Psi\circ\mathscr{F}_2)(F_0)=\mathscr{F}_1(M_0)\in \mathscr{M}$, then  $\Psi(M)=\mathsf{V}$ thanks to point (ii) and point (v). 
Suppose that there exists another $\widetilde{M}\in \mathscr{M}$ such that $\Psi(\widetilde{M})=\mathsf{V}=\Psi(F_0)$. Then one may easily prove that $\Psi((\mathscr{F}_2\circ\mathscr{F}_1^{-1})(\widetilde{M}))=\Psi(\mathscr{F}_2(F_0))\in \mathscr{I}_{\mathsf{N}}$ since
\[ \Psi(\mathscr{F}_2(F_0))=m^{-1}\int_{\R^6} F_0(x,v)(1,x-y,v-z,|x-y|^2, (x-y)\cdot (v-z),|v-z|^2, (x-y) \wedge (v-z) )^\tau dxdv. \] 
Thus by Lemma \ref{normalbijection} and point (iv), we have  $(\mathscr{F}_2\circ\mathscr{F}_1^{-1})(\widetilde{M})=M_{0,\mathsf{N}}=(\mathscr{F}\circ\Psi\circ\mathscr{F}_2)(F_0)$. This implies that $\widetilde{M}=(\mathscr{F}_1\circ\mathscr{F}_2^{-1}\circ\mathscr{F}\circ\Psi\circ\mathscr{F}_2)(F_0)=M$. 

 Now we can define a mapping $\widetilde{\mathscr{F}}:\mathscr{I}\rightarrow \mathscr{M}$ such that $\widetilde{\mathscr{F}}(\mathsf{V})=\widetilde{\mathscr{F}}(\Psi(F_0))=(\mathscr{F}_1\circ\mathscr{F}_2^{-1}\circ\mathscr{F}\circ\Psi\circ\mathscr{F}_2)(F_0)\in \mathscr{M}$ if $\mathsf{V}=\Psi(F_0)\in \mathscr{I}$. Then $\widetilde{\mathscr{F}}$ is  bijection. And the stability results \eqref{stabbijection3} and \eqref{stabbijection4} follow from \eqref{psi123}, \eqref{stabbijection1} and \eqref{stabbijection2} by observing that $M_{0,\mathsf{N}}$ is the profile of $M$. We complete the proof.
\end{proof}

\section{Landau collision operator and the weight-transfer lemma}\label{Section Landau}
In this section, we will derive various estimates for the Landau collision operator with exponential weights, including upper and lower bounds, and the weight-transfer lemma,  which will be frequently utilized later on.  We commence with the coercivity estimates:

\begin{lem}\label{coer}
	For any $a\in (\f12,1)$, there exist constants $\lam_a>0$ and $C_a$ such that
	\begin{equation}\label{coer1}
	(Q(\mu,f),fe^{a|v|^2})_{L^2_v}\leq -\lam_a\big(\|fe^{\f a 2|v|^2}\|^2_{H^1_{-\f32}}+\|\mathbb{T}_{\S^2}fe^{\f a 2|v|^2}\|^2_{L^2_{-\f32}}+\|fe^{\f a 2|v|^2}\|^2_{L^2_{-\f12}}\big)+C_a\|f\|^2_{L^2},
	\end{equation}
where $\mathbb{T}_{\S^2}$ is defined by: $\mathbb{T}_{\S^2}f:=v\times\na_v f$. When $a=\f12$, we have 
	\begin{equation}\label{i-p}
	(Lf,f\mu^{-1})_{L^2_v}\leq -\lam\big(\|\mu^{-1/2}(\mathbb{I}-\mathbb{P})f\|^2_{H^1_{-\f32}}+\| \mu^{-1/2} \mathbb{T}_{\S^2}(\mathbb{I}-\mathbb{P})f\|^2_{L^2_{-\f32}}+\|\mu^{-1/2}(\mathbb{I}-\mathbb{P})f\|^2_{L^2_{-\f12}}\big),
  \end{equation}
	where $L:=Q(\mu,\cdot)+Q(\cdot,\mu)$ and $\mathbb{P}$ is defined in \eqref{deofP}.
 
\end{lem}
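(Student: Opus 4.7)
My plan is to reduce both inequalities to the classical weighted coercivity of the Landau dissipation form $D_\mu(g,g):=\int(a_{ij}*\mu)(v)\,\partial_i g\,\partial_j g\,dv$. For \eqref{coer1} I will introduce the twisted variable $g:=f\,e^{\frac{a}{2}|v|^2}$, so that the three target norms on the right read exactly $\|g\|^2_{H^1_{-3/2}}+\|\mathbb{T}_{\S^2}g\|^2_{L^2_{-3/2}}+\|g\|^2_{L^2_{-1/2}}$. For \eqref{i-p} I will pass to the symmetric formulation $(Lf,f\mu^{-1})_{L^2_v}=(\mathcal L h,h)_{L^2_v}$ with $h:=\mu^{-1/2}f$ and $\mathcal L h:=\mu^{-1/2}L(\mu^{1/2}h)$, turning the inequality into the standard coercivity of the self-adjoint operator $\mathcal L$ on $\mathrm{Ker}(\mathcal L)^\perp$.

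\textbf{Main identity for \eqref{coer1}.} Starting from $Q(\mu,f)=\sum_{ij}(a_{ij}*\mu)\partial_{ij}f+8\pi\mu f$ (see \eqref{defibc}), I will integrate once by parts against $fe^{a|v|^2}$, use the identity $\sum_j\partial_j(b_j*\mu)=c*\mu=-8\pi\mu$, and substitute $f=ge^{-\frac{a}{2}|v|^2}$. The two cross-terms of the form $\pm 2a(a_{ij}*\mu)v_ig\,\partial_j g$ cancel because $a_{ij}*\mu$ is symmetric, leaving the clean identity
\begin{equation*}
(Q(\mu,f),fe^{a|v|^2})_{L^2_v}=-D_\mu(g,g)+4\pi\!\int\!\mu g^2\,dv+a\!\int\!(b_j*\mu)v_j\,g^2\,dv+a^2\!\int\!(a_{ij}*\mu)v_iv_j\,g^2\,dv.
\end{equation*}
The dissipative first term is then handled by the classical pointwise lower bound $(a_{ij}*\mu)(v)\xi_i\xi_j\gtrsim\langle v\rangle^{-1}|\xi-(\xi\cdot\hat v)\hat v|^2+\langle v\rangle^{-3}|\xi\cdot\hat v|^2$ (Guo; Strain--Guo), applied to $\xi=\nabla g$; this controls both $\|\nabla g\|^2_{L^2_{-3/2}}$ and $\|\mathbb{T}_{\S^2}g\|^2_{L^2_{-3/2}}$ (the latter via $|\mathbb{T}_{\S^2}g|^2=|v|^2|\nabla g-(\nabla g\cdot\hat v)\hat v|^2$).

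\textbf{Controlling the zeroth-order terms and the case $a=1/2$.} For the three remaining scalar integrals I will perform explicit expansions of the kernels at large $|v|$: straightforward Taylor computations give $(a_{ij}*\mu)(v)v_iv_j=2\langle v\rangle^{-1}+O(\langle v\rangle^{-3})$ and $(b_j*\mu)(v)v_j=-2\langle v\rangle^{-1}+O(\langle v\rangle^{-3})$, so that $a^2(a_{ij}*\mu)v_iv_j+a(b_j*\mu)v_j=-2a(1-a)\langle v\rangle^{-1}+O(\langle v\rangle^{-3})$ is strictly negative outside a sufficiently large ball $\{|v|\le R\}$ for every $a\in(\tfrac{1}{2},1)$ and supplies the missing $-\lambda_a\|g\|^2_{L^2_{-1/2}}$ dissipation. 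Inside the ball, all four zeroth-order integrals (including the positive $4\pi\!\int\!\mu g^2\,dv$ piece) are bounded pointwise by $C\!\int_{|v|\le R}g^2\,dv\le Ce^{aR^2}\|f\|^2_{L^2}$, which is absorbed into $C_a\|f\|^2_{L^2}$. For \eqref{i-p} the analogous computation at $a=\tfrac{1}{2}$ is combined with the fact that $(\mathbb I-\mathbb P)f$ annihilates $\mathrm{Ker}(\mathcal L)$: the symmetric operator $\mathcal L$ is coercive on its orthogonal complement in the anisotropic norm displayed on the right of \eqref{i-p}, a classical theorem of Guo and Strain--Guo that applies without modification.

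\textbf{Main obstacle.} The delicate point is that the coefficient $2a(1-a)$ in the asymptotic expansion degenerates as $a\to 1$, so the coercivity constant $\lambda_a$ in \eqref{coer1} cannot be chosen uniformly in $a$; simultaneously, for $a>\tfrac{1}{2}$ the weight $\mu e^{a|v|^2}=(2\pi)^{-3/2}e^{-(1/2-a)|v|^2}$ is unbounded in $v$, which prevents any naive pointwise control of the positive contribution $4\pi\!\int\!\mu g^2\,dv$ by $\|f\|^2_{L^2}$. Both issues are resolved precisely by the strict bound $a<1$ (which ensures $2a(1-a)>0$ so that the asymptotic sign is negative) and by the split $|v|\lessgtr R$, which confines the uncontrolled positive piece to a compact ball where the $C_a\|f\|^2_{L^2}$ slack on the right of \eqref{coer1} is available to absorb it.
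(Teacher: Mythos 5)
Your proposal is correct and follows essentially the same route as the paper's proof: the same twisted variable $g=fe^{\frac a2|v|^2}$, the same integration-by-parts computation (your ``clean identity'' is exactly the paper's decomposition into $\mathcal{T}_1+\mathcal{T}_2+\mathcal{T}_3$), the same anisotropic spectral bounds on $a*\mu$ (you cite Guo/Strain--Guo where the paper cites \cite{PM}), the same sign argument from the factor $-a(1-a)$ with the compact region absorbed into $C_a\|f\|^2_{L^2}$, and the same reduction of \eqref{i-p} to the classical linearized coercivity via $h=\mu^{-1/2}f$. The only cosmetic difference is that you use large-$|v|$ asymptotics of $(a_{ij}*\mu)v_iv_j$ and $(b_j*\mu)v_j$, whereas the paper uses the exact identities \eqref{abv}, which make the zeroth-order combination $-a(1-a)|v|^2l_1(v)$ nonpositive everywhere rather than only outside a large ball.
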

\begin{proof}
	We first prove \eqref{i-p} by  the linearized theory. Indeed, by \cite{PM}, if $L_\mu g:=\mu^{-\f12}(Q(\mu,\mu^{\f12}g)+Q(\mu^{\f12}g,\mu))$,   we have
	\ben\label{ip}
	(L_\mu g,g)_{L^2_v}\leq -\lam \big(\|(\mathbb{I}-\mP_\mu)g\|^2_{H^1_{-3/2}}+\|\mathbb{T}_{\S^2}(\mathbb{I}-\mP_\mu)g\|^2_{L^2_{-3/2}}+\|(\mathbb{I}-\mP_\mu)g\|^2_{L^2_{-1/2}}\big),
	\een
	where $\lambda>0$ and $\mP_\mu$ is defined by 
	\begin{equation*}
	\mP_{\mu} g=\Big(\int_{\R^3}g\mu^{\f12}dv\Big)\mu^{\f12}+\sum_{i=1}^3\Big(\int_{\R^3}v_i\mu^{\f12}gdv\Big)v_i\mu^{\f12}+\Big(\int_{\R^3}\f{|v|^2-3}6\mu^{\f12}gdv\Big)(|v|^2-3)\mu^{\f12}.
	\end{equation*}
 It is easy to see that \eqref{ip} implies \eqref{i-p} by choosing $g:=\mu^{-\f12}f$.
 
 For \eqref{coer1}, by definition, we have 
 \begin{equation*}
 (Q(\mu,f),fe^{a|v|^2})_{L^2_v}=\int_{\R^3} (a_{ij}*\mu)\pa_{ij} f fe^{a|v|^2}dv+8\pi \int_{\R^3} \mu f^2 e^{a|v|^2}dv.
 \end{equation*}
 Let $g=fe^{\f a 2|v|^2}$,  integration by parts yields that
 \begin{equation*}
 \begin{aligned}
 &(Q(\mu,f),fe^{a|v|^2})_{L^2_v}=-\sum_{i,j=1}^3\int_{\R^3}(a_{ij}*\mu)\pa_i g\pa_j gdv +\Big(8\pi \int_{\R^3} \mu f^2 e^{a|v|^2}dv-\sum_{j=1}^3\int_{\R^6} (b_j*\mu) g \pa_j gdv\Big)\\
 &+\sum_{i,j=1}^3\int_{\R^3}\Big((a_{ij}*\mu)(\pa_i e^{\f a 2|v|^2})(\pa_j e^{\f a 2|v|^2})e^{-a|v|^2}+\sum_{j=1}^3(b_j*\mu)(\pa_j e^{\f a 2|v|^2})e^{-\f a 2|v|^2}\Big)g^2dv
 =\mathcal{T}_1+\mathcal{T}_2+\mathcal{T}_3.
\end{aligned}
 \end{equation*}
 
 By Proposition 2.3 and Proposition 2.4 in \cite{PM}, the matrix $a*\mu$ has a simple eigenvalue $l_1(v)\sim 2\<v\>^{-3}$ associated with the the eigenvector $v$ and a double eigenvalue $l_2(v)\sim \<v\>^{-1}$ associated with eigenspace $v^\perp$, where $l_1(v),l_2(v)$ are defined as follows:
 \begin{equation*}
	\begin{aligned}
 l_1(v)&=\int_{\R^3} \Big(1-\big(\f v{|v|}\cdot\f {v_*}{|v_*|}\big)\Big)|v_*|^{-1}\mu(v-v_*)dv_*;\\
l_2(v)&=\int_{\R^3} \Big(1-\f12\big(\f v{|v|}\times\f {v_*}{|v_*|}\big)\Big)|v_*|^{-1}\mu(v-v_*)dv_*.
\end{aligned}
\end{equation*}
Moreover, one can easily derive from above that
\begin{equation}\label{abv}
\sum_{i,j}a_{ij}*\mu v_iv_j=|v|^2l_1(v),\quad \sum_{j}b_j*\mu v_j=-l_1(v) |v|^2.
\end{equation}

Thus for $\mathcal{T}_1$, if the projection operator $P_v$ defined by $P_v\xi:=(\xi\cdot \f v{|v|})\f v{|v|}$, then there exists a constant $k>0$ such that
\begin{equation*}
\mathcal{T}_1\leq -k(\|\<v\>^{-\f32}P_v\na_v g\|^2_{L^2_v}+\|\<v\>^{-\f12}(I-P_v)\na_vg\|^2_{L^2_v})
\le-k/4(\|\na g\|^2_{L^2_{-\f 32}}+\|\mathbb{T}_{\S^2}g\|^2_{L^2_{-\f32}}).
\end{equation*}
 For $\mathcal{T}_2$ and $\mathcal{T}_3$, thanks to integration by parts, we have 
\begin{equation*}
\mathcal{T}_2=8\pi \int_{\R^3} \mu f^2 e^{a|v|^2}dv+\f12 \int_{\R^3} c*\mu g^2 dv=4\pi \int_{\R^3}\mu g^2dv.
\end{equation*}
By \eqref{abv} and the fact that $a\in (\f12,1)$, one may check that
\begin{equation*}
	\begin{aligned}
\mathcal{T}_2+\mathcal{T}_3&=\int_{\R^3}\big(\sum_{i,j=1}^3a^2(a_{ij}*\mu)v_iv_j+\sum_{j=1}^3a(b_j*\mu)v_j+4\pi\mu  \big) g^2dv\\
&= \int_{\R^3} (a^2|v|^2l_1(v)-al_1(v)|v|^2+4\pi\mu)g^2dv\leq -(a-a^2)\int_{\R^3} \<v\>^{-1}g^2dv+C_a\int_{\R^3} \<v\>^{-3}g^2dv. 
	\end{aligned}
\end{equation*}

Now patching together all the  estimates, we derive  the desired result
by using the fact that $\|g\|^2_{L^2_{-\f32}}\leq \vep \|g\|^2_{L^2_{-\f12}}+C_\vep \|e^{-\f a2 |v|^2}g\|^2_{L^2_v}$. We complete the proof of the lemma.
\end{proof}

\begin{lem}\label{Fff} Let  $0\leq m^2(x,v):=\mathcal{N}e^{a (|x|^2+|v|^2)}+e^{b(|x|^2+|v|^2)}Sx\cdot v$, where $a\in (\f12,1),b\in(0,a)$, $\mathcal{N}>0$ and $S\in\mathbb{M}_3(\R)$.  Then for  $F\geq 0$,  we have 
	\beno
	(Q(F,f),fm^2)_{L^2_{x,v}}\leq C_a\|\mathcal{P}_v^2 F\|_{H^4_{x,v}}\|fm\|^2_{L^2_{x,v}}.
	\eeno
\end{lem}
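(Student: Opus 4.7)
The plan is to combine the divergence formulation of the Landau operator, the substitution $g:=fm$, and the algebraic identity $\sum_j a_{ij}(z)z_j=0$, controlling the resulting weight commutator via the Sobolev embedding $H^4(\mathbb{R}^6)\hookrightarrow L^\infty$ applied to $\mathcal{P}_v^2 F$.

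First I would write $Q(F,f)=\sum_{ij}\partial_{v_j}[(a_{ij}*F)\partial_{v_i}f]-\sum_i(b_i*F)\partial_{v_i}f+8\pi Ff$ with $b_i=\sum_j\partial_j a_{ij}$, pair with $fm^2$, and integrate by parts in $v_j$. Substituting $g=fm$ and $\partial_{v_i}f=\partial_{v_i}g/m-g\partial_{v_i}m/m^2$, the mixed $\partial g\cdot g\,\partial m/m$ terms cancel algebraically and the identity reduces to
\begin{equation*}
(Q(F,f),fm^2)=-\sum_{ij}\int (a_{ij}*F)\partial_{v_i}g\,\partial_{v_j}g\,dxdv+\sum_{ij}\int(a_{ij}*F)\frac{\partial_{v_i}m\,\partial_{v_j}m}{m^2}g^2\,dxdv+\mathcal{R},
\end{equation*}
where $\mathcal{R}:=\sum_i\int(b_i*F)(\partial_{v_i}m/m)g^2\,dxdv+4\pi\int Fg^2\,dxdv$ collects the lower-order contributions produced after reintegrating the $b*F$ piece against $\tfrac12\partial_i(g^2)$ and using $c=\sum_i\partial_ib_i=-8\pi\delta_0$. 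The dissipative term is $\leq 0$ because $a*F$ is pointwise positive semidefinite (since $F\geq 0$ and $a_{ij}(z)$ is proportional to the projection onto $z^\perp$); it may be discarded.

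Next I would analyze $m$. Since $b<a$, the cross term satisfies $|e^{b(|x|^2+|v|^2)}Sx\cdot v|\lesssim e^{a(|x|^2+|v|^2)}$, so $m^2\sim \mathcal{N}e^{a(|x|^2+|v|^2)}$ and a direct computation gives $\partial_{v_i}m/m=av_i+O(1)$ uniformly, with the $O(1)$ correction coming from the cross term and absorbed through the factor $e^{(b-a)(|x|^2+|v|^2)/2}$. The principal task is then to bound $\sum_{ij}\int(a_{ij}*F)(v)v_iv_jg^2\,dxdv$ by $C\|\mathcal{P}_v^2 F\|_{H^4_{x,v}}\|g\|_{L^2_{x,v}}^2$. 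The crucial device is the algebraic relation $\sum_j a_{ij}(z)z_j=0$, which yields the pointwise identity $\sum_{ij}a_{ij}(v-v_*)v_iv_j=\sum_{ij}a_{ij}(v-v_*)v_{*,i}v_{*,j}$, transferring the $|v|^2$-growth on the left to weighted decay of $F$ on the right; hence
\begin{equation*}
\sum_{ij}(a_{ij}*F)(v)\,v_iv_j\leq \int F(v_*)\frac{|v_*|^2}{|v-v_*|}\,dv_*.
\end{equation*}
The triple integral $\iiint F(v_*)|v_*|^2|v-v_*|^{-1}g^2(x,v)\,dxdvdv_*$ is then closed by a Hardy--Littlewood--Sobolev / Young-type estimate on the convolution $|v|^{-1}*g^2$ in $\mathbb{R}^3_v$, combined with the bounds $\|\mathcal{P}_v^2 F\|_{L^\infty_{x,v}}+\|\mathcal{P}_v^2 F\|_{L^\infty_x L^2_v}\lesssim\|\mathcal{P}_v^2 F\|_{H^4_{x,v}}$ supplied by the 6D Sobolev embedding and a 3D Sobolev embedding in $x$. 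The remainder $\mathcal{R}$ is analogous: $|b_i(z)|\leq 2/|z|^2$ is a slightly more singular Riesz-like kernel handled by the same scheme, and $4\pi\int Fg^2$ is immediate from $\|F\|_{L^\infty_{x,v}}\lesssim\|\mathcal{P}_v^2 F\|_{H^4_{x,v}}$.

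The hardest step will be the convolution estimate after applying $a_{ij}(z)z_j=0$: because $|v|^{-1}$ is not integrable at infinity in three dimensions, a naive pointwise bound on $F|v_*|^2$ does not suffice, and one must interpolate the $L^\infty$-Sobolev bound on $\mathcal{P}_v^2 F$ against its $L^2$-content from the $H^4_{x,v}$ norm to obtain a bound that is uniform in $x$ and $v$ and quadratic in $\|fm\|_{L^2_{x,v}}$. Once this convolution estimate is secured, the rest of the argument reduces to routine applications of Cauchy--Schwarz and the aforementioned Sobolev embeddings.
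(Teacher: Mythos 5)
Your proposal follows essentially the same route as the paper's proof: the identical substitution $g=fm$ with one integration by parts (the paper starts from the non-divergence form $\sum_{ij}(a_{ij}*F)\partial_{ij}f+8\pi Ff$ but lands on the same identity, with the same cancellation of the mixed terms), discarding the negative quadratic form since $a*F\ge 0$, writing $\partial_{v_i}m/m=av_i+O(1)$ by comparing $m$ with $\sqrt{\mathcal{N}}e^{\frac a2(|x|^2+|v|^2)}$, and using $\sum_j a_{ij}(z)z_j=0$ to transfer the velocity growth onto $F_*$ before closing with kernel splittings in $|v-v_*|\le 1$ and $|v-v_*|\ge 1$ and the embedding $H^4_{x,v}\hookrightarrow L^\infty$. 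The final convolution estimate that you single out as the hardest step is asserted in the paper with no more detail than you provide, so your attempt matches both the approach and the level of rigor of the paper's argument.
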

\begin{proof}
	It is easy to see that 
\begin{equation*}
	(Q(F,f),fm^2)_{L^2_{x,v}}=\sum_{i,j=1}^3\int_{\R^6}(a_{ij}*F)\pa_{ij}f fm^2dvdx+8\pi\int_{\R^6}Ff^2m^2dvdx.
\end{equation*}
	We only need to handle  the first term in the r.h.s. since the second term can be bounded as
	\beno
	8\pi\int_{\R^6}Ff^2m^2dvdx\ls \|F\|_{H^4_{x,v}}\|fm\|^2_{L^2_{x,v}}.
	\eeno
 Let $g=mf$ and this implies that $\pa_{ij}ffm^2=\pa_{ij}(m^{-1}g)gm$. By integration by parts, we get that
\beno 
	&&\int_{\R^6}(a_{ij}*F)\pa_{ij}f fm^2dvdx=-\int_{\R^6}\Big((b_j*F)gm+(a_{ij}*F)\pa_i gm+(a_{ij}*F)g\pa_im\Big)\pa_j(m^{-1}g)dvdx\\
&&=-\int_{\R^6}(a_{ij}*F)\pa_i g\pa_j gdvdx -\int_{\R^6} (b_j*F) g \pa_j gdvdx +\int_{\R^6}\Big((a_{ij}*F)\f{\pa_i m}m\f{\pa_jm}m+(b_j*F)\f{\pa_j m}m\Big)g^2dvdx,
\eeno
where we use the fact that $\pa_j(m^{-1}g)=m^{-1}\pa_j g-m^{-2}\pa_jmg$. Let us denote the four terms in r.h.s by  $\mathcal{I}_1,\mathcal{I}_2,\mathcal{I}_3,\mathcal{I}_4$. Obviously $\mathcal{I}_1$  is negative since $a_{ij}*F$ is positive. For $\mathcal{I}_2$, integration by parts implies that 
\begin{equation*}
	\sum_{j=1}^3\int_{\R^6} (b_j*F) g \pa_j gdvdx=4\pi\int_{\R^6} F g^2dvdx\ls \|F\|_{H^4_{x,v}}\|g\|^2_{L^2_{x,v}}.
\end{equation*}
 For $\mathcal{I}_3$, we observe that $m=\sqrt{\mathcal{N}}e^{\f a2 (|x|^2+|v|^2)}+(m-\sqrt{\mathcal{N}}e^{\f a2 (|x|^2+|v|^2)})$ and thus
\begin{equation*}
    \f{\pa_i m}m=\pa_i(e^{\f a 2|v|^2})e^{-\f a2|v|^2}+\tilde{m}_i
\end{equation*}
    with $|\tilde{m}_i|\ls 1$. For the reminder term $\tilde{m}_i$, it is easy to see that
\begin{align*}
    \sum_{i,j=1}^3\int_{\R^6}(a_{ij}*F)(\tilde{m}_i\tilde{m}_j+a\tilde{m}_iv_j+a\tilde{m}_jv_i) g^2dvdx
    \ls \|\mathcal{P}_vF\|_{H^4_{x,v}}\|g\|^2_{L^2_{x,v}},
\end{align*}
where we use the fact that $\sum_{i=1}^3a_{ij}(v-v_*)(v_i-(v_*)_i)=0$. For the main term $\pa_i(e^{\f a 2|v|^2})e^{-\f a2|v|^2}$,  we have
\beno
	 &&\sum_{i,j=1}^3\int_{\R^6}\Big((a_{ij}*F)\pa_i(e^{\f a 2|v|^2})\pa_j(e^{\f a 2|v|^2})e^{- a |v|^2} g^2dvdx\\
	 &=&\sum_{i,j=1}^3a^2\int_{\R^6}\Big((a_{ij}*F)v_iv_j g^2dvdx
	 =\sum_{i,j=1}^3a^2\int_{\R^6}\Big((a_{ij}*F)(v_*)_i(v_*)_j g^2dvdx.
\eeno	
	It can be bounded by 
	$\int_{\R^9}|v-v_*|^{-1}\<v_*\>^2F_*g^2 dvdv_*dx\ls \|\mathcal{P}_v^2 F\|_{H^4_{x,v}}\|g\|^2_{L^2_{x,v}}$.

	For $\mathcal{I}_4$, similar argument can be applied to get that $\mathcal{I}_4\ls\|\mathcal{P}_v^2F\|_{H^4_{x,v}}\|g\|^2_{L^2_{x,v}}$.  We prove the desired result by putting together all the estimates and then complete the proof of this lemma.
\end{proof}

\begin{lem}\label{ghf}
	  For functions $g,h$ and $f$, it holds that 
\begin{equation*}
	\begin{aligned}
	&|(Q(g, h),f)_{L^2_v}| 
	\ls\min\Big\{ \|g\|_{L^2_5}\|h \|_{H^{a_3}_{\om_3}}\|f \|_{H^{a_4}_{\om_4}},\, \|g\|_{L^2_5}\big(\|\mathbb{T}_{\S^2}h \|_{L^2_{-\f32}}+\|h \|_{H^1_{-\f 32}}\big)\\&\times\big(\|\mathbb{T}_{\S^2}f \|_{L^2_{-\f32}}+\|f\|_{H^1_{-\f 32}}\big) +\|g\|_{L^2_5}\|h \|_{H^{a_1}_{\om_1}}\|f \|_{H^{a_2}_{\om_2}}\Big\},
	\end{aligned}
\end{equation*}
where $a_i\geq0,i=1,\cdots,4$ satisfy that $a_1+a_2=1,a_3+a_4=2$ and $\om_1+\om_2=-2,\om_3+\om_4=-1$.
\end{lem}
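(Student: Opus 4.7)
The plan is to reduce the estimate to weighted convolution bounds on the Landau kernels $a_{ij}*g$ and $b_j*g$ and, for the $\mathbb{T}_{\S^2}$ part, to exploit the orthogonal projection structure of the matrix $a_{ij}$. Using $Q(g,h)=\sum_{i,j}(a_{ij}*g)\pa_{ij}h+8\pi g h$, one or two integrations by parts yield the two weak forms
\beno
(Q(g,h),f)_{L^2_v}= 2\int_{\R^3} h\sum_j(b_j*g)\pa_j f\, dv+\int_{\R^3}h\sum_{i,j}(a_{ij}*g)\pa_{ij}f\, dv,
\eeno
\beno
(Q(g,h),f)_{L^2_v}=-\int_{\R^3}\sum_{i,j}(a_{ij}*g)\pa_i f\,\pa_j h\, dv-\int_{\R^3}\sum_j(b_j*g)f\,\pa_j h\, dv+8\pi\int_{\R^3} g f h\, dv,
\eeno
which distribute the two derivatives of $Q$ freely between $h$ and $f$.

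My first step would be to prove the pointwise convolution bounds $|(a_{ij}*g)(v)|\ls \|g\|_{L^2_5}\lr{v}^{-1}$ and $|(b_j*g)(v)|\ls \|g\|_{L^2_5}\lr{v}^{-2}$ by splitting the integration into $\{|v-v_*|\le\lr{v}/2\}$ (Cauchy--Schwarz against the local $L^2$ singularity $|z|^{-1}$, with the excess weight $\lr{v_*}^5\sim\lr{v}^5$ converted into decay in $v$) and $\{|v-v_*|\ge\lr{v}/2\}$ (where $|v-v_*|^{-1}\ls\lr{v}^{-1}$ and the $L^2_5$ norm dominates the $L^1$ norm of $g$). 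Inserting these into the first weak form and applying Cauchy--Schwarz, with the weight $\lr{v}^{-1}$ for the $a_{ij}*g$ term (carrying two derivatives on $f$) or $\lr{v}^{-2}$ for the $b_j*g$ term (carrying one derivative on $f$), distributed arbitrarily between the remaining factors of $h$ and $f$, directly produces the first bound (with $a_3+a_4=2$, $\om_3+\om_4=-1$) and the third bound (with $a_1+a_2=1$, $\om_1+\om_2=-2$).

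For the estimate involving $\mathbb{T}_{\S^2}$ I would use the second weak form together with the identity $a_{ij}(z)=|z|^{-1}\Pi_{ij}(z)$, where $\Pi(z)$ is the orthogonal projection onto $z^\perp$, so that
\beno
\sum_{i,j}(a_{ij}*g)(v)\pa_i f(v)\,\pa_j h(v)=\int_{\R^3}\f{g(v_*)}{|v-v_*|}\,[\Pi(v-v_*)\na f(v)]\cdot\na h(v)\, dv_*
\eeno
only senses the components of $\na f,\na h$ orthogonal to $v-v_*$. Writing $\Pi(v-v_*)=\Pi(v)+O(|v_*|/|v|)$ for $|v|\ge 2|v_*|$ (and treating the complementary region via the $L^2_5$ weight on $g$), the tangential gradients are identified with $|v|^{-1}\mathbb{T}_{\S^2}f$ and $|v|^{-1}\mathbb{T}_{\S^2}h$. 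The two extra factors $|v|^{-1}$ upgrade the $\lr{v}^{-1}$ decay of $a*g$ to a pointwise kernel of order $\lr{v}^{-3}$, which Cauchy--Schwarz splits symmetrically as $\lr{v}^{-3/2}\cdot\lr{v}^{-3/2}$; the radial parts of $\na f,\na h$ and the residual terms $(b_j*g)f\,\pa_j h$ and $gfh$ have respective kernels with even faster decay and are absorbed into the $H^1_{-3/2}$ norms on $f$ and $h$.

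The main technical obstacle will be the clean identification of the spherical derivative through the replacement $\Pi(v-v_*)\rightsquigarrow\Pi(v)$: one must verify, uniformly in $v_*$, that the error terms generated by this replacement are dominated either by the $L^2_5$ weight on $g$ or by the surplus decay over the prescribed weight $\lr{v}^{-3/2}$ on each factor, so that the operator $\mathbb{T}_{\S^2}$ appears exactly with the weight $-3/2$ stated in the lemma.
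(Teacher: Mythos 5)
Your overall architecture (the two weak formulations obtained by integration by parts, pointwise decay of $a_{ij}*g$, and the use of $a_{ij}(z)=|z|^{-1}\Pi_{ij}(z)$ to extract the spherical derivative) is the natural route, and it is the same spirit as what the paper actually carries out for the neighbouring Lemma \ref{expQ}; for Lemma \ref{ghf} itself the paper gives no proof and cites Theorem 1.5 of \cite{HL}. However, there is a genuine gap at your first step: the claimed pointwise bound $|(b_j*g)(v)|\ls \|g\|_{L^2_5}\lr{v}^{-2}$ is false. Since $|b_j(z)|\sim|z|^{-2}$, the near-region Cauchy--Schwarz you invoke would need $|z|^{-2}\in L^2_{loc}(\R^3)$, which fails; indeed $(|z|^{-2}*|g|)(v_0)=+\infty$ for the $L^2_5$ function $g(v_*)=|v_*-v_0|^{-1}$ supported near $v_0$, so no pointwise bound of this type can hold. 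The admissible substitute -- exactly what the paper uses for this kernel in the proof of Lemma \ref{expQ} -- is the Hardy--Littlewood--Sobolev inequality on $\{|v-v_*|\le1\}$, and it necessarily spends Sobolev regularity of $h$ and $f$ (of total order $3/2$, e.g.\ $\|g\|_{L^2}\|h\|_{L^3}\|\na_v f\|_{L^2}$ with the weights recovered from $\lr{v_*}^5$ because $\lr{v_*}\sim\lr{v}$ there). This is harmless for the first bound, where $a_3+a_4=2$, but it invalidates the way you produce the remainder $\|g\|_{L^2_5}\|h\|_{H^{a_1}_{\om_1}}\|f\|_{H^{a_2}_{\om_2}}$: with $a_1+a_2=1$ this term cannot absorb the near-singularity part of the $b_j*g$ contribution (nor the term $8\pi\int ghf\,dv$, which also has no decaying kernel and needs a Sobolev embedding); those pieces must be charged to the product $(\|\mathbb{T}_{\S^2}h\|_{L^2_{-\f32}}+\|h\|_{H^1_{-\f32}})(\|\mathbb{T}_{\S^2}f\|_{L^2_{-\f32}}+\|f\|_{H^1_{-\f32}})$, while only the far region $\{|v-v_*|\gs\lr{v}\}$, where the kernels genuinely decay, can go into the one-derivative, weight $-2$ remainder.

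A related point concerns the step you flag as the "main technical obstacle". The real difficulty there is not only the replacement error $\Pi(v-v_*)\to\Pi(v)$: the claimed $\lr{v}^{-3}$ kernel for the non-tangential part of $\int(a*g)\na f\cdot\na h$ corresponds to $\int |v-v_*|^{-3}|\hat v\times v_*|^2|g(v_*)|dv_*$, whose near-region singularity $|z|^{-3}$ is not locally integrable; one must use the cancellation $\hat v\times v_*=\hat v\times(v_*-v)$ to reduce it to $|z|^{-1}$ before any Cauchy--Schwarz, and treat the $b_j*g$ residual by HLS as above. In other words, the $|z|^{-2}$ and $|z|^{-3}$ singularities are where the work lies, and they are not tamed by the pointwise-kernel-plus-weight bookkeeping you propose. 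Finally, non-integer $a_i$ require an interpolation argument you do not mention, though that part is routine.
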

\begin{proof} One may prove it by following the proof of  Theorem 1.5 in \cite{HL} and we  omit   the details here.
\end{proof}

\begin{lem}\label{expQ}
	Let $\aa\in(0,\f12)$. For function $g,h$ and $f$, it holds that
\begin{equation*}
	\begin{aligned}
	 |( e^{\aa|v|^2}Q(g,h)-Q(g, e^{\aa|v|^2}h),f)_{L^2_v}|\ls&\min\Big\{\|g\|_{L^2_6}\|e^{\aa|v|^2}h\|_{L^2}\|f\|_{L^2}+\|g\|_{L^2_6}\|e^{\aa|v|^2}\na_v h\|_{L^2}\|f\|_{L^2},\\
	&\|g\|_{L^2_6}\|e^{\aa|v|^2}h\|_{L^2_{-\f12}}\big(\|\mathbb{T}_{\S^2}f\|_{L^2_{-\f32}}+\|f\|_{H^1_{-\f 32}}+\|f\|_{L^2_{-\f12}}\big)\Big\}.
	\end{aligned}
\end{equation*}
\end{lem}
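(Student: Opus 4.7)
The plan is to compute the commutator explicitly, estimate each resulting piece pointwise for the first bound, and then integrate by parts (combined with an angular--radial decomposition of $\na f$) for the second bound. Starting from $Q(g,h)=\sum_{ij}(a_{ij}*g)\pa_{ij}h+8\pi gh$ and expanding $\pa_{ij}(e^{\aa|v|^2}h)$, the $8\pi gh$ contributions cancel and symmetrization in $i,j$ gives
\begin{equation*}
e^{\aa|v|^2}Q(g,h)-Q(g,e^{\aa|v|^2}h)=-e^{\aa|v|^2}\bigl[4\aa^2 (a*g)v\cdot v\,h+4\aa (a*g)v\cdot\na h+2\aa\,\mathrm{tr}(a*g)\,h\bigr].
\end{equation*}

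For the first bound I would estimate the three pieces directly. Using $a(z)z=0$ (so that $v^{T}a(v-v_*)v=v_*^{T}a(v-v_*)v_*$ and $a(v-v_*)v=a(v-v_*)v_*$) together with a Hardy-type splitting of the kernel $|v-v_*|^{-1}$ into the regions $|v-v_*|\le 1$ and $|v-v_*|>1$, one obtains the uniform-in-$v$ pointwise bounds $|(a*g)v\cdot v|,\,|(a*g)v|,\,|\mathrm{tr}(a*g)|\ls\|g\|_{L^2_4}$. Cauchy--Schwarz against the exponential weight $e^{\aa|v|^2}$ then closes the first estimate, with $\|g\|_{L^2_6}$ providing ample margin.

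For the second bound, $\|e^{\aa|v|^2}\na h\|$ is disallowed on the right, so I would integrate by parts to move $\pa_j$ off $h$. Setting $H:=e^{\aa|v|^2}h$, using $e^{\aa|v|^2}\pa_j h=\pa_j H-2\aa v_j H$ and $\sum_j\pa_j(a_{ij}*g)=b_i*g$, the full pairing rewrites as
\begin{equation*}
4\aa^2\!\int\!Hf(a*g)v\cdot v\,dv+2\aa\!\int\!Hf\,\mathrm{tr}(a*g)\,dv+4\aa\!\int\!Hf(b*g)\cdot v\,dv+4\aa\!\int\!H(a*g)v\cdot\na f\,dv.
\end{equation*}
The first three are straightforward: redoing the Hardy splitting with one extra weight—and for the $(b*g)\cdot v$ piece using the identity $\sum_j v_j b_j(v-v_*)=-2|v-v_*|^{-1}+v_*\cdot b(v-v_*)$ to reduce it to $\mathrm{tr}(a*g)$ plus a harmless $|v-v_*|^{-2}|v_*|$-remainder—upgrades the three kernels to $\ls\lr{v}^{-1}\|g\|_{L^2_5}$, yielding control by $\|g\|_{L^2_6}\|H\|_{L^2_{-1/2}}\|f\|_{L^2_{-1/2}}$. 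The main obstacle is the fourth integral $\int H(a*g)v\cdot\na f\,dv$, which must produce $\|\mathbb{T}_{\S^2}f\|_{L^2_{-3/2}}+\|f\|_{H^1_{-3/2}}$ rather than the easier $\|\na f\|_{L^2_{-1/2}}$; the plan is to decompose $\na f=\hat v(\hat v\cdot\na f)+(\na f)_\perp$ (so that $|(\na f)_\perp|=|v|^{-1}|\mathbb{T}_{\S^2}f|$) and to exploit the tangential representation $(a*g)v=\int|v-v_*|^{-1}\Pi(v-v_*)v_*\,g(v_*)\,dv_*$: the radial component of $\na f$ pairs into $\|f\|_{H^1_{-3/2}}$, the tangential one into $\|\mathbb{T}_{\S^2}f\|_{L^2_{-3/2}}$, and the residual kernel factor $\int|v-v_*|^{-1}|v_*||g|\,dv_*$ then furnishes $\|g\|_{L^2_6}\|H\|_{L^2_{-1/2}}$ exactly as in the proof of Lemma~\ref{ghf}.
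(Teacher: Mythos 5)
Your commutator identity is correct, and after the integration by parts your four terms are exactly the paper's decomposition $\mathcal{I}_1+\mathcal{I}_2+\mathcal{I}_3$ (your $(b*g)\cdot v$ piece is $\mathcal{I}_1$, the $(a*g)v\cdot v$ and trace pieces form $\mathcal{I}_2$, and the $(a*g)v\cdot\na f$ piece is $\mathcal{I}_3$); your radial/tangential splitting of $\na f$, resting on $a(z)z=0$ so that $(a*g)v=(a*g)$ acting on $v_*$, is a legitimate variant of the paper's three-region treatment of $\mathcal{I}_3$, and the first upper bound via uniform pointwise kernel estimates is fine.

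There is, however, one step that fails as stated: the claim that the $(b*g)\cdot v$ term is ``straightforward'' and controlled by $\|g\|_{L^2_6}\|e^{\aa|v|^2}h\|_{L^2_{-1/2}}\|f\|_{L^2_{-1/2}}$ alone. Your own identity leaves the remainder kernel $v_*\cdot b(v-v_*)=-2\,\f{(v-v_*)\cdot v_*}{|v-v_*|^3}$, whose modulus is only bounded by $|v-v_*|^{-2}|v_*|$, and $|z|^{-2}$ is \emph{not} locally square integrable in $\R^3$. Hence no pointwise bound of the form $\int |v-v_*|^{-2}|v_*||g_*|\,dv_*\ls \lr{v}^{-1}\|g\|_{L^2_5}$ can hold for $g$ merely in a weighted $L^2$ space, and the asserted trilinear bound is actually false: taking $g$, $f$ and $e^{\aa|v|^2}h$ to be $L^2$-normalized bumps of width $\eps$ concentrated near a fixed point (with supports arranged so the odd factor $(v-v_*)\cdot v_*$ does not cancel), the pairing grows like $\eps^{-1/2}$ while the claimed right-hand side stays of order one. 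This singular region $\{|v-v_*|\le 1\}$ must instead be handled by Hardy--Littlewood--Sobolev together with the Sobolev integrability of $f$, i.e.\ it is absorbed by the $\|f\|_{H^1_{-\f32}}$ term on the right-hand side — which is precisely how the paper estimates $\mathcal{I}_1$, and is the reason that norm appears in the second bound at all. With that repair (and the trivial remark that for $|v|\le 1$ the tangential part of $\na f$ should simply be dominated by $|\na f|$ rather than $|v|^{-1}|\mathbb{T}_{\S^2}f|$), your argument goes through.
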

\begin{proof} We split the proof into two steps. 

$\bullet$ \underline{Step 1: The second upper bound.} By integration by parts, we derive that 
\begin{equation*}
	\begin{aligned}
	&( e^{\aa|v|^2}Q(g,h)-Q(g, e^{\aa|v|^2}h),f)=2\sum_{i=1}^3\int_{\R^3}(b_i*g)\pa_i( e^{\aa|v|^2}) hfdv\\
	&+\sum_{i,j=1}^3\int_{\R^3} (a_{ij}*g) \pa_{ij}(e^{\aa|v|^2}) hfdv+2\sum_{i,j=1}^3\int_{\R^3}(a_{ij}*g) \pa_i ( e^{\aa|v|^2}) \pa_j fhdv\eqdefa\sum_{k=1}^3\mathcal{I}_k.
	\end{aligned}
\end{equation*}

\noindent We first have
	\[\ba 
	&|\mathcal{I}_1| =\bigg|-4\aa \iint\f{(v-v_*)\cdot v}{|v-v_*|^3}g_*he^{\aa\wei^2}f(\mathrm{1}_{|v-v_*|\le1}+\mathrm{1}_{|v-v_*|\ge1})dv_*dv\bigg|\\
	&\ls\|g\|_{L^{2}_6}\|he^{\aa\wei^2}\|_{L^2_{-1/2}}\|f\|_{H^1_{-3/2}}+\|g\|_{L^2_6}\|he^{\aa\wei^2}\|_{L^2_{-1/2}}\|f\|_{L^2_{-1/2}}.
    \ea\]
	Here we use Hardy-Littlewood-Sobolev inequality and the facts that $\<v_*\>\sim\wei$ in the regime $\{|v-v_*|\leq 1\}$ and $|v-v_*|\geq \<v_*\>^{-1}\<v\>$ in the regime $\{|v-v_*|\geq 1\}$.  For $\mathcal{I}_2$, we observe that 
	\[\ba
	&|\mathcal{I}_2|=\bigg|4\aa^2\iint\f{|(v-v_*)\times v_*|^2}{|v-v_*|^3}g_*he^{\aa\wei^2} fdv_*dv+4\aa\iint|v-v_*|^{-1}g_*he^{\aa\wei^2}f dv_*dv\bigg|\ls \iint|v-v_*|^{-1}\\
	&\times(\vv{1}_{|v-v_*|\le1}+\vv{1}_{|v-v_*|\geq1})\lr{v_*}^2|g_*||he^{\aa\wei^2}||f|dv_*dv \ls \|g\|_{L^2_5}\|he^{\aa\wei^2}\|_{L^2_{-1/2}}\|f\|_{L^2_{-1/2}}.
	\ea\]

	\noindent For $\mathcal{I}_3$, direct computation will give that
	\[\ba
	&\mathcal{I}_3=4\aa\bigg[\iint\f{\rr{(v-v_*)\times \na f}\cdot\rr{(v-v_*)\times v_*}}{|v-v_*|^3}\vv{1}_{|v-v_*|<1}g_*he^{\aa\wei^2}dv_*dv+  \iint\f{\rr{(v-v_*)\times\na f}\cdot\rr{(v-v_*)\times v_*}}{|v-v_*|^3}\\
	&\times\vv{1}_{\substack{|v-v_*|\geq1\\|v-v_*|\leq\f{1}{2}|v|} }g_*he^{\aa\wei^2} dv_*dv +   \iint\f{\rr{(v-v_*)\times\na f}\cdot\rr{(v-v_*)\times v_*}}{|v-v_*|^3}\vv{1}_{\substack{|v-v_*|\geq1\\|v-v_*|>\f{1}{2}|v|} }g_*he^{\aa\wei^2}dv_*dv\bigg]\\
	&:=\mathcal{I}_{3,1}+\mathcal{I}_{3,2}+\mathcal{I}_{3,3}.
	\ea\]
	Let us give the estimates term by term. One may easily check that 
	
	\[
    |\mathcal{I}_{3,1}| \ls \iint|v-v_*|^{-1}\vv{1}_{|v-v_*|< 1}|v_*||g_*||\na f||he^{\aa\wei^2}| dv_*dv
    \ls  \|g\|_{L^2_5}\|he^{\aa\wei^2}\|_{L^2_{-1/2}}\|\na f\|_{L^2_{-3/2}}.
	\]
   In the regime $\{|v-v_*|\leq\f{1}{2}|v|\}$, we have  $|v|\sim |v_*|$, which implies that
    \[
    |\mathcal{I}_{3,2}|\ls\iint|v-v_*|^{-1}\vv{1}_{\substack{|v-v_*|\geq1\\|v-v_*|\leq\f{1}{2}|v|}}|v_*|g_*|\na f||he^{\aa\wei^2}|dv_*dv
    \ls \|g\|_{L^2_5}\|he^{\aa\wei^2}\|_{L^2_{-1/2}}\|\na f\|_{L^2_{-3/2}}.
    \]
    While in the regime $\{|v-v_*|\geq1, |v-v_*|> \f{1}{2}|v|\}$, we have  $|v-v_*|^{-1}\ls\wei^{-1}$. Then we get that
    \[\ba
    &|\mathcal{I}_{3,3}| \ls\iint\f{|v\times\na f|}{\wei^2}|v_*||g_*||he^{\aa\wei^2}|dv_*dv+\iint\f{|\na f||v_*|^2}{\wei^2}g_*|he^{\aa\wei^2}|dv_*dv\\&
\ls\|g\|_{L^2_5}\|he^{\aa\wei^2}\|_{L^2_{-1/2}}\|\mathbb{T}_{\S^2}f\|_{L^2_{-3/2}}+\|g\|_{L^2_5}\|he^{\aa\wei^2}\|_{L^2_{-1/2}}\|\na f\|_{L^2_{-3/2}}.
    \ea\]

$\bullet$ \underline{Step 2: The first upper bound.}
By direct computation, we derive that
\begin{equation*}
	\begin{aligned}
	&( e^{\aa|v|^2}Q(g,h)-Q(g, e^{\aa|v|^2}h),f)=
	-\sum_{i,j=1}^3\int_{\R^3} (a_{ij}*g) \pa_{ij}(e^{\aa|v|^2}) hfdv\\
	&-2\sum_{i,j=1}^3\int_{\R^3}(a_{ij}*g) \pa_i ( e^{\aa|v|^2}) \pa_j hfdv\eqdefa\sum_{k=4}^5\mathcal{I}_k.
	\end{aligned}
\end{equation*}
	Similar to the estimates of $\mathcal{I}_2$ and  $\mathcal{I}_3$, we  get that
$|\mathcal{I}_4|+|\mathcal{I}_5|\ls \|g\|_{L^2_6}(\| e^{\aa|v|^2}h\|_{L^2}+\|e^{\aa|v|^2}\na h\|_{L^2})\|f\|_{L^2}.$

     We end the proof by patching together all the estimates.
\end{proof}

\begin{lem}\label{IBP}
	Let $\pa=\pa_x$ or $\pa_v$ and $\aa\in (\f12,1),\bb=0$ or $\bb=x$, then
	\begin{multline*}
		|\big((Q(g, f),(1+\bb\cdot v)e^{\aa(|x|^2+|v|^2)}\pa f\big)_{L^2_{x,v}}|\ls (\|\mathcal{P}_{x,v}^3g\|_{H^3_{x,v}}+\|\mathcal{P}_{v}^3g\|_{H^4_{x,v}})\big(\|(1+|\bb|+|\bb \cdot v|)^{\f12}e^{\f \aa 2(|x|^2+|v|^2)}\na_v f\|^2_{L^2_{x,v}}\\
		+\|(1+|\bb|+|\bb \cdot v|)^{\f12}e^{\f \aa 2(|x|^2+|v|^2)} f\|^2_{L^2_{x,v}}+\|(1+|\bb|+|\bb\cdot v|)^{\f12}e^{\f \aa 2(|x|^2+|v|^2)}\pa f\|^2_{L^2_{x,v}}\big).
	\end{multline*}
\end{lem}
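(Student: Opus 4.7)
The plan is a two-step integration by parts that removes the second-order $v$-derivatives from $Q(g,f)$, followed by a weighted Cauchy-Schwarz splitting that distributes the exponential weight $W := (1+\beta\cdot v)e^{\alpha(|x|^2+|v|^2)}$ symmetrically onto the two copies of first-order derivatives of $f$. Decomposing $Q(g,f) = \sum_{i,j}(a_{ij}*g)\partial_{v_iv_j}f + 8\pi gf$, I first handle the scalar piece: $8\pi\int gf\,W\partial f = 4\pi\int gW\,\partial(f^2)$, and one integration by parts in the $\partial$-direction gives $-2\pi\int(\partial g)W f^2 - 2\pi\int g(\partial W)f^2$, both directly controlled by the stated polynomial-weighted Sobolev norm of $g$ and the weighted $L^2$-terms of $f$ on the right-hand side (noting $|\partial W| \lesssim (1+|x|+|v|)W + (1+|x|+|v|)e^{\alpha(|x|^2+|v|^2)}$).

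For the diffusive piece $\sum_{i,j}\int(a_{ij}*g)\partial_{v_iv_j}f\cdot W\partial f$, integrating by parts once in $v_j$ produces the three terms
\[
-\sum_i\int(b_i*g)\partial_{v_i}f\,W\partial f, \quad -\sum_{i,j}\int(a_{ij}*g)(\partial_{v_j}W)\partial_{v_i}f\,\partial f, \quad -\sum_{i,j}\int(a_{ij}*g)W\partial_{v_i}f\,\partial_{v_j}\partial f.
\]
The last integral still carries two derivatives on $f$, but the symmetry $a_{ij}=a_{ji}$ together with the commutativity $[\partial,\partial_{v_j}]=0$ allows rewriting it as $-\tfrac12\sum_{i,j}\int(a_{ij}*g)W\,\partial(\partial_{v_i}f\,\partial_{v_j}f)$. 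A second integration by parts in the $\partial$-direction, using the identity $\partial(a_{ij}*g) = a_{ij}*\partial g$ (valid for both $\partial = \partial_{v_k}$ and $\partial = \partial_{x_k}$, in the first case after moving the velocity derivative from $a_{ij}$ onto $g$), turns this into $\tfrac12\int(a_{ij}*\partial g)W\partial_{v_i}f\,\partial_{v_j}f + \tfrac12\int(a_{ij}*g)(\partial W)\partial_{v_i}f\,\partial_{v_j}f$, so all second-order derivatives of $f$ have been eliminated.

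The four residual integrals are then bounded by splitting $W = \sqrt{W}\cdot\sqrt{W}$ and assigning one factor to each of the two $f$-derivatives present. Writing $\partial_{v_j}W$ and $\partial_{x_k}W$ explicitly yields polynomial factors $|x|+|v|$ multiplying $W$ or $e^{\alpha(|x|^2+|v|^2)}$, so the excess polynomial growth may be absorbed by the weights on $g$. The convolutions $a_{ij}*g$, $b_i*g$, and $a_{ij}*\partial g$ are controlled in $L^\infty_{x,v}$ through Hardy--Littlewood--Sobolev combined with the Sobolev embedding $H^2_{x,v}\hookrightarrow L^\infty_{x,v}$; accounting for the required $v$- and $x$-moments against the $|v|^{-1}$ singularity of $a_{ij}$ yields the norms $\|\mathcal{P}_{x,v}^3 g\|_{H^3_{x,v}}$ (for terms without a derivative on $g$) and $\|\mathcal{P}_v^3 g\|_{H^4_{x,v}}$ (for the term involving $a_{ij}*\partial g$, where one additional derivative is traded against the polynomial $x$-weight).

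The principal technical obstacle is the weight bookkeeping when $\beta = x$. In that case, differentiating the $(1+x\cdot v)$ prefactor of $W$ produces stray $x_j$ or $v_k$ factors with no accompanying exponential, and these must be absorbed into the $(1+|\beta|+|\beta\cdot v|)^{1/2}$ factors appearing on the right-hand side rather than into the polynomial weight on $g$, since the latter would spoil the stated estimate. Matching $(1+|x\cdot v|)^{1/2}$ evenly onto both $f$-derivatives while simultaneously handling the mixed $x_j v_k$ terms from $\partial W$ is the delicate step; the cube $\langle x\rangle^3\langle v\rangle^3$ in the polynomial weight on $g$ is precisely what is required to close the final Cauchy-Schwarz against the three weighted $L^2$-norms of $f$ appearing in the statement.
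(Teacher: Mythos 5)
Your architecture is the same as the paper's: pass to (essentially) the divergence form of $Q$, integrate by parts once in $v$, use the symmetry $a_{ij}=a_{ji}$ to rewrite the remaining second-order block as $\f12\sum_{i,j}(a_{ij}*g)\,\pa(\pa_{v_i}f\,\pa_{v_j}f)$, integrate by parts a second time in the $\pa$-direction using $\pa(a_{ij}*g)=a_{ij}*\pa g$, and close by weighted Cauchy--Schwarz with the convolutions $a_{ij}*g$, $b*g$, $a_{ij}*\pa g$ put in $L^\infty_{x,v}$ via moments and Sobolev embedding. Up to this point the proposal reproduces the paper's proof (the treatment of the $8\pi gf$ term by an extra integration by parts, and the factor $2\pi$ versus $4\pi$, are immaterial).

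The prescription you give for the step you yourself call delicate is, however, backwards, and followed literally it would not close. After the second integration by parts one faces terms such as $\sum_{i,j}(a_{ij}*g)\,v_k\,e^{\aa(|x|^2+|v|^2)}\pa_{v_i}f\,\pa_{v_j}f$ (e.g.\ $\pa=\pa_{x_k}$ with $\bb=x$ hitting $x\cdot v$, or $\pa=\pa_{v_k}$ hitting the exponential), where the index $k$ is \emph{not} contracted with $a_{ij}$. You assert that such stray factors ``must be absorbed into the $(1+|\bb|+|\bb\cdot v|)^{1/2}$ factors \ldots\ rather than into the polynomial weight on $g$''. But pointwise $|v|\not\ls 1+|x|+|x\cdot v|$ (and when $\bb=0$ the right-hand $f$-weights are just $e^{\f{\aa}2(|x|^2+|v|^2)}$), so the $f$-weights cannot swallow a loose $|v|$; the only way to close is the opposite one: push the $v$-growth onto $g$ through the $v$-convolution, e.g.\ via $|a_{ij}(v-v_*)|\,\langle v\rangle\ls 1+|v-v_*|^{-1}\langle v_*\rangle$ (equivalently, the decay $|a_{ij}*g|(x,v)\ls \langle v\rangle^{-1}\sup_x\|\mathcal{P}_v^3g(x,\cdot)\|_{L^2_v}$), or, for the contracted factors coming from $\pa_{v_j}W$, via the cancellation $\sum_j a_{ij}(z)z_j=0$. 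This spends $v$-moments on $g$, which is precisely what the $\mathcal{P}^3$ weights in the stated norms are there for and does not ``spoil'' the estimate; similarly, in the case $\bb=0$ the stray $x_k$ produced by $\pa=\pa_{x_k}$ must also land on the $\mathcal{P}_x$-weight of $g$ (there is no $(1+|\bb|)^{1/2}$ factor to receive it), which is harmless since $g$ is not convolved in $x$. With the bookkeeping corrected in this way --- $x$- and $x\cdot v$-factors to the $(1+|\bb|+|\bb\cdot v|)^{1/2}$ weights on $f$ or to $\mathcal{P}_x$ on $g$, $v$-factors to $g$ through the convolution --- your argument closes exactly as in the paper.
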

\begin{proof}
	By definition, we have  
	\ben\label{IBPine1}
		(Q(g, f),(1+\bb\cdot v)e^{\aa(|x|^2+|v|^2)}\pa f)_{L^2_{x,v}}=-\int_{\R^6} (a*g)\na f \na ((1+\bb\cdot v)e^{\aa(|x|^2+|v|^2)}\pa f)dvdx\notag\\
		-\int_{\R^6}(b*g) \na f (1+\bb\cdot v)e^{\aa(|x|^2+|v|^2)}\pa f dvdx+8\pi\int_{\R^6}gf(1+\bb\cdot v)e^{\aa(|x|^2+|v|^2)}\pa fdvdx.
	\een
	For the first term in r.h.s. of \eqref{IBPine1}, we have  
	\begin{equation*}
		\begin{aligned}
			&\sum_{i,j=1}^3\int_{\R^6} (a_{ij}*g)\pa_i f \pa_j ((1+\bb\cdot v)e^{\aa(|x|^2+|v|^2)}\pa f)dvdx=\sum_{i,j=1}^3\int_{\R^6} (a_{ij}*g)\pa_i f \pa_j ((1+\bb\cdot v)e^{\aa(|x|^2+|v|^2)})\pa fdvdx\\
			&+\f12\sum_{i,j=1}^3\int_{\R^6} (a_{ij}*g)\pa(\pa_i f\pa_j f) (1+\bb\cdot v)e^{\aa(|x|^2+|v|^2)}dvdx
			= \sum_{i,j=1}^3\int_{\R^6} (a_{ij}*g)\pa_i f \pa_j ((1+\bb\cdot v)e^{\aa(|x|^2+|v|^2)})\pa fdvdx\\
			&-\f12\sum_{i,j=1}^3\int_{\R^6}\big(( a_{ij}*\pa g)(\pa_i f\pa_j f)(1+\bb\cdot v)e^{\aa(|x|^2+|v|^2)}+(a_{ij}*g)(\pa_i f\pa_j f) \pa((1+\bb\cdot v)e^{\aa(|x|^2+|v|^2)}) \big)dvdx,
		\end{aligned}
	\end{equation*}	
	where   integration by parts is used. From this, we can derive that
	\beno
	&&\big|\int_{\R^6} (a*g)\na f \na (e^{\aa(|x|^2+|v|^2)}\pa f)dvdx\big|\\
	&\ls& \|\mathcal{P}_{x,v}^3g\|_{H^3_{x,v}}(\|(1+|\bb|+|\bb \cdot v|)^{\f12}e^{\f \aa 2(|x|^2+|v|^2)}\na_v f\|^2_{L^2_{x,v}}+\|(1+|\bb|+|\bb \cdot v|)^{\f12}e^{\f \aa 2(|x|^2+|v|^2)}\pa f\|^2_{L^2_{x,v}}).
	\eeno
	The second and the third terms in r.h.s. of \eqref{IBPine1} can be bounded similarly as follows:
	\beno
	&&\big|\int_{\R^6}(b*g) \na f (1+\bb\cdot v)e^{\aa(|x|^2+|v|^2)}\pa f dvdx\big|+\big|8\pi\int_{\R^6}gf(1+\bb\cdot v)e^{\aa(|x|^2+|v|^2)}\pa fdvdx\big|\\
	&\ls& \|\mathcal{P}_{v}^3g\|_{H^4_{x,v}}(\|(1+|\bb|+|\bb \cdot v|)^{\f12}e^{\f \aa 2(|x|^2+|v|^2)}|\na_v f|+|f|\|^2_{L^2_{x,v}}+\|(1+|\bb|+|\bb \cdot v|)^{\f12}e^{\f \aa 2(|x|^2+|v|^2)}\pa f\|^2_{L^2_{x,v}}).
	\eeno
	We end the proof of this lemma.
\end{proof}

Next we give a proof for the weight-transfer lemma which is crucial to prove the decay estimate.
\begin{lem}\label{mixturegainweight} Let $\mfR$ and $\mfS$ be defined in \eqref{deofR} and \eqref{deofS} with $r\in[0,1)$ and $\c=\f{1+r}4$. Then for any $\a\in(\f12,1)$ and  $0<\vartheta< \min\{\f1 8,\f12-\c\}$, there exist  large constants $\mathcal{N}$ and $C_\vth$ such that
\beno
	&&\mathcal{N}e^{ (\a-\frac 1 2) |x|^2+ \a |v|^2  }\<v\>^{-1} + (|x|^2-|v|^2-2(\mfR\mfS^{-1}x,v)) e^{(\a-\c-\vth)  (|x|^2+ |v|^2)   } \\ &&\ge C_\vth (|x|^2+ |v|^2)e^{(\a-\c-\vth) (|x|^2+ |v|^2)   } 
	\ge C_\vth (\langle x\rangle^2+\langle v\rangle^2) e^{(\a-\c-\vth)  (|x|^2+ |v|^2)   } -C_\vth.
\eeno
\end{lem}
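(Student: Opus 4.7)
The plan is to divide the main inequality by $e^{(\a-\c-\vth)(|x|^2+|v|^2)}$ and then use the polar decomposition $|x|^2 = (1-s)\rho^2$, $|v|^2 = s\rho^2$ with $s \in [0,1]$ and $\rho^2 := |x|^2+|v|^2$. Writing $p := \c + \vth \in (0,1/2)$ and $\tau := 1/2 - p > 0$, so that $p + \tau = 1/2$, the first target inequality becomes
\[
\mathcal{N}\, e^{(s/2 - \tau)\rho^2}\langle v\rangle^{-1} + Q(x,v) \ge C_\vth \rho^2,
\]
where $Q(x,v) := |x|^2 - |v|^2 - 2(\mfR\mfS^{-1}x,v)$. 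From the explicit form of $\mfR\mfS^{-1}$ given by \eqref{deofR}--\eqref{deofS}, the cross term equals $\frac{r}{\sqrt{1-r^2}}(x_2v_1 - x_1v_2)$, so Cauchy--Schwarz yields $Q \ge f(s)\rho^2$ with $f(s) := 1 - 2s - \frac{2r}{\sqrt{1-r^2}}\sqrt{s(1-s)}$. A direct computation shows the unique zero of $f$ in $[0,1/2]$ lies at $s_\star := (1-r)/2$, and the key algebraic identity is $s_\star/2 - \tau = \frac{1-r}{4} - \bigl(\frac{1-r}{4} - \vth\bigr) = \vth$, aligning the change-of-sign of $f$ with the change-of-sign of the exponent in the first term.

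The analysis then splits at $s = s_\star - \vth$. On $[0, s_\star - \vth]$, differentiating gives $f'(s) \le -2$ on $(0,1/2)$, so by monotonicity $f(s) \ge f(s_\star - \vth)$, and applying the mean value theorem to $f(s_\star - \vth) - f(s_\star) = -\int_{s_\star - \vth}^{s_\star} f'(\xi)\,d\xi$ delivers $f(s_\star - \vth) \ge 2\vth$. Hence $Q$ alone dominates $C_\vth \rho^2$ for $C_\vth \le 2\vth$. On $[s_\star - \vth, 1]$, $s/2 - \tau \ge \vth/2$ and $|Q| \le K_r\rho^2$ with $K_r := 1 + \frac{r}{\sqrt{1-r^2}}$ bounded, so the first term is at least $\mathcal{N}\, e^{(\vth/2)\rho^2}(1+\rho)^{-1}$. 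Since $\rho \mapsto e^{(\vth/2)\rho^2}/\bigl((1+\rho)\rho^2\bigr)$ on $(0,\infty)$ tends to $+\infty$ at both endpoints, it attains a strictly positive infimum $c_\vth > 0$; choosing $\mathcal{N}$ sufficiently large (depending on $\vth, r, C_\vth$) makes $\mathcal{N} c_\vth \ge K_r + C_\vth$ and closes the estimate.

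For the second inequality, set $B := \a - \c - \vth > 0$ and use $|x|^2+|v|^2 = \langle x\rangle^2 + \langle v\rangle^2 - 2$. Applying the first inequality with coefficient $2 C_\vth$ in place of $C_\vth$ (at the cost of enlarging $\mathcal{N}$), one splits
\[
2 C_\vth \rho^2 e^{B\rho^2} = C_\vth (\langle x\rangle^2 + \langle v\rangle^2) e^{B\rho^2} + C_\vth (\rho^2 - 2) e^{B\rho^2},
\]
and observes that $(\rho^2 - 2)e^{B\rho^2} \ge 0$ on $\{\rho^2 \ge 2\}$ while $|(\rho^2 - 2)e^{B\rho^2}| \le 2 e^{2B}$ on $\{\rho^2 \le 2\}$, yielding $\text{LHS} \ge C_\vth (\langle x\rangle^2 + \langle v\rangle^2) e^{B\rho^2} - 2 C_\vth e^{2B}$. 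Absorbing the $(\vth,r)$-dependent constant into the generic symbol $C_\vth$ delivers the stated bound. The main obstacle I anticipate is matching the exponential scales of the two terms across the transitional region $s \approx s_\star$; it is exactly the identity $s_\star/2 - \tau = \vth$, built into the hypothesis $\vth < 1/2 - \c$, that arranges this alignment cleanly.
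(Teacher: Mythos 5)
Your proof is correct and is essentially the paper's own argument in a normalized variable: your split at $s=s_\star-\vartheta$ with $s_\star=(1-r)/2$ is exactly the paper's case distinction $(\tfrac12-\mathfrak{c}-\tfrac{\vartheta}{2})|x|^2\lessgtr(\mathfrak{c}+\tfrac{\vartheta}{2})|v|^2$, your Cauchy--Schwarz bound on the rotational cross term together with $f(s_\star)=0$ reproduces the paper's cancellation $1-\tfrac{1-2\mathfrak{c}}{2\mathfrak{c}}-\tfrac{2r}{\sqrt{1-r^2}}\sqrt{\tfrac{1-2\mathfrak{c}}{2\mathfrak{c}}}=0$ at $\mathfrak{c}=\tfrac{1+r}{4}$, and the complementary region is closed by the same large-$\mathcal{N}$ absorption of the exponential gain $e^{\vartheta\rho^2/2}$. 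Your explicit handling of the final elementary inequality (doubling the constant and splitting at $\rho^2=2$, so the right-hand constants differ from the middle one) is the correct reading of that step, which the paper states with generic constants and leaves implicit.
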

\begin{proof}
By \eqref{deofR} and \eqref{deofS}, we only need to show 
\beno
\mathcal{N}e^{ (\a-\frac 1 2) |x|^2+ \a |v|^2  }\<v\>^{-1} + (|x|^2-|v|^2-\f{2r}{\sqrt{1-r^2}}(x_2v_1-x_1v_2)) e^{(\a-\c-\vth)  (|x|^2+ |v|^2)} 
 \ge C_\vth (|x|^2+ |v|^2)e^{(\a-\c-\vth) (|x|^2+ |v|^2)}.
\eeno

$\bullet$ \underline{Case 1: $(\f12-\c-\f\vth 2)|x|^2\leq (\c+\f\vth2)|v|^2$}. In this case, $(\a-\f12)|x|^2+\a|v|^2\geq (\a-\c-\f\vth 2)(|x|^2+|v|^2)$,  thus
\beno
\mathcal{N} e^{(\a-\c-\f \vth 2)(|x|^2+|v|^2)}\<v\>^{-1}-2(|x|^2+|v|^2)e^{(\a-\c-\vth )(|x|^2+|v|^2)}\geq C_\vth (|x|^2+ |v|^2)e^{(\a-\c-\vth) (|x|^2+ |v|^2) },
\eeno
since $\mathcal{N}$ is large. We prove the desired result.

$\bullet$ \underline{Case 2: $(\f12-\c-\f\vth 2)|x|^2> (\c+\f\vth 2)|v|^2$}.  In this situation, we easily verify that 
\beno 
&&|x|^2-|v|^2-2\f r{\sqrt{1-r^2}}(x_2v_1-x_1v_2)\geq |x|^2-|v|^2-2\f{r}{\sqrt{1-r^2}}|x||v|\\
&&\geq \Big(1-\f{1-2\c-\vth}{2\c+\vth}-2\f{r}{\sqrt{1-r^2}}\sqrt{\f{1-2\c-\vth}{2\c+\vth}}\Big)|x|^2\geq C_\vth(|x|^2+|v|^2),
\eeno
where  
$1-\f{1-2\c}{2\c}-2\f{r}{\sqrt{1-r^2}}\sqrt{\f{1-2\c}{2\c}}=0$ since $\c=\f{1+r}4$.
We complete the proof of this lemma. \end{proof}
	

Before concluding this section, we will demonstrate the communication between the transport operator and the derivatives with respect to the variables $x$ and $v$. This is essential for the energy method.
\begin{lem}\label{trans}
Let $N\in \N$ and $m$ be a weight function depending on $(x,v)$ variable. If $T:=Sv\cdot \na_x-Sx\cdot \na_v+Rx\cdot \na_x-Rv\cdot \na_v$, where $S$ and $R$ are square matrices that satisfy the properties of being symmetric and skew-symmetric, respectively. Then
\begin{equation*}
\sum_{|\al|+|\be|= N}(\pa^\al_\be(Tf),\pa^\al_\be f m)_{L^2_{x,v}}=\sum_{|\al|+|\be|=N}(T(\pa^\al_\be f),\pa^\al_\be f m)_{L^2_{x,v}}.
\end{equation*}
\end{lem}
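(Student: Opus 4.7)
The identity is equivalent to showing that the commutator sum
\[
\sum_{|\alpha|+|\beta|=N}([\partial^\alpha_\beta,T]f,\partial^\alpha_\beta f\cdot m)_{L^2_{x,v}}=0.
\]
My plan is to compute $[\partial^\alpha_\beta,T]$ explicitly, exploiting the fact that $T$ has coefficients linear in $(x,v)$. By the Leibniz rule,
\[
\partial^\alpha_\beta(Tf)=T\,\partial^\alpha_\beta f+\sum_{i=1}^3\alpha_i(\partial_{x_i}T)\partial^{\alpha-e_i}_\beta f+\sum_{i=1}^3\beta_i(\partial_{v_i}T)\partial^\alpha_{\beta-e_i}f,
\]
where $\partial_{x_i}T=\sum_jR_{ji}\partial_{x_j}-\sum_jS_{ji}\partial_{v_j}$ and $\partial_{v_i}T=\sum_jS_{ji}\partial_{x_j}-\sum_jR_{ji}\partial_{v_j}$ are constant-coefficient first-order operators. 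Substituting, the commutator sum becomes a bilinear expression in derivatives of $f$ of total order $N$, with coefficients built from the entries of $S$ and $R$.

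The cancellation then proceeds by pairing: for each pair of multi-indices differing by a single elementary shift $e_i\leftrightarrow e_j$, one groups the contribution from $(\alpha,\beta,i,j)$ with the one from the reverse shift. The skew-symmetry $R_{ji}=-R_{ij}$ forces cancellation among terms in which the shift occurs within the $x$-derivatives alone (or within the $v$-derivatives alone), since pairs such as $R_{ji}\bigl(\partial^{\alpha+e_j-e_i}_\beta f\bigr)\bigl(\partial^\alpha_\beta f\bigr)$ annihilate their reindexed partners $R_{ij}\bigl(\partial^{\alpha+e_i-e_j}_\beta f\bigr)\bigl(\partial^\alpha_\beta f\bigr)$. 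Symmetrically, the relation $S_{ij}=S_{ji}$ cancels the mixed contributions which exchange one $x$-derivative for one $v$-derivative; conveniently, the quadratic form with matrix $\mathcal{A}=\bigl(\begin{smallmatrix}R&S\\-S&-R\end{smallmatrix}\bigr)$ is skew as a $6\times6$ matrix, so the $N=1$ cancellation is a direct consequence of $\sum_{i,j}\mathcal{A}_{ji}(\partial_{z_i}f)(\partial_{z_j}f)=0$.

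The main obstacle is the combinatorial bookkeeping for general $N$: the factors $\alpha_i$, $\beta_i$ produced by Leibniz interact with the index shifts $\alpha\mapsto\alpha\pm e_i$, so one must check that the multiplicities coming from the two orderings of each pair truly match. I plan to handle this by induction on $N$, writing $\partial^\alpha_\beta=\partial_{z_k}\partial^{\alpha',\beta'}$ with $|\alpha'|+|\beta'|=N-1$ and applying the inductive hypothesis to $\partial^{\alpha',\beta'}$ together with one additional invocation of the $N=1$ cancellation produced by $[\partial_{z_k},T]$; the remaining boundary terms from integration by parts can be absorbed by the antisymmetry of $T$ with respect to Lebesgue measure (which uses $\mathrm{tr}\,\mathcal{A}=\mathrm{tr}\,R-\mathrm{tr}\,R=0$, guaranteeing $T$ is divergence-free as a vector field on $\R^6$).
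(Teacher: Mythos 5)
Your reduction to the vanishing of the commutator sum, your Leibniz formula $\partial^\alpha_\beta(Tf)=T\partial^\alpha_\beta f+\sum_i\alpha_i[\partial_{x_i},T]\partial^{\alpha-e_i}_\beta f+\sum_i\beta_i[\partial_{v_i},T]\partial^\alpha_{\beta-e_i}f$, and your $N=1$ argument via the skew $6\times6$ matrix are all correct, and this is also the paper's route (reindex the shifted multi-indices, then use $S=S^\tau$ and $R=-R^\tau$). The genuine gap is exactly the point you flag and then defer to an unexecuted induction: for $N\ge 2$ the Leibniz factors do \emph{not} match under the reindexing, and no induction can repair this, because the flat sum $\sum_{|\alpha|+|\beta|=N}([\partial^\alpha_\beta,T]f,\partial^\alpha_\beta f\,m)_{L^2_{x,v}}$ is in general nonzero. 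Carrying out your own pairing with the factors kept, the $S$-cross terms combine to $\sum_{|\alpha|+|\beta|=N}\sum_{i,j}S_{ij}\big[(\beta_j+1)-\alpha_i\big]\,(\partial^\alpha_\beta f,\partial^{\alpha-e_i}_{\beta+e_j}f\,m)_{L^2_{x,v}}$, which does not vanish identically. Concretely, take $S=\mathbb{I}$, $R=0$, $N=2$, $f=\tfrac12 x_1v_1^2$: the commutator sum equals $\int f_{x_1v_1}(f_{v_1v_1}-f_{x_1x_1})\,m\,dxdv=\int x_1v_1\,m\,dxdv$, nonzero whenever $\int x_1v_1\,m\,dxdv\neq0$ (and the weights $(W^{\alpha,\beta}_{\mathfrak a})^2$ with which this lemma is applied contain the odd factor $\mathfrak{S}^{-1}x\cdot v$, so evenness cannot be invoked). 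The cancellation you are after is exact only if each term carries the multinomial weight $N!/(\alpha!\,\beta!)$, i.e. if one pairs the full derivative tensors $\nabla^N_{x,v}(Tf)$ and $\nabla^N_{x,v}f$; then the reindexed coefficient satisfies $\tfrac{N!}{(\alpha-e_i)!(\beta+e_j)!}(\beta_j+1)=\tfrac{N!}{\alpha!\,\beta!}\alpha_i$ and your pairwise annihilation closes verbatim (likewise for the $R$-terms).

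Two further comments. The paper's own proof performs exactly your change of variables but writes $\partial^\alpha_\beta(Tf)$ \emph{without} the factors $\alpha_j,\beta_j$, which is why its cancellation goes through formally; your version keeps the correct factors and thereby exposes the real difficulty, but the proposed remedy (pairwise annihilation as stated, plus induction on $N$) would be proving a statement that fails for $N\ge2$ unless the multinomial weights are inserted into the sums (for $N\le5$ they are bounded above and below, so the weighted formulation still serves the energy estimates). Finally, the last ingredient of your sketch, absorbing boundary terms via the divergence-free character of $T$, is not relevant here: the inner product carries the weight $m$, so $(Tg,g\,m)_{L^2_{x,v}}=-\tfrac12\int g^2\,Tm\,dxdv$ does not vanish and is in any case the term the lemma deliberately keeps on the right-hand side; the lemma concerns only commuting $\partial^\alpha_\beta$ past $T$, not any antisymmetry of $T$ itself.
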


\begin{proof} Let $\om_i\in\Z_+^3$ be a vector such that its $i$-th component is $1$ and the other components are zero. Let $\alpha=(\alpha_1,\alpha_2,\alpha_3)\in \Z^3$. In what follows, we will define $\alpha<0$ to mean that there exists $1\le i\le 3$ such that $\alpha_i<0$. In this way, we will slightly abuse the notation by stating that
$\pa^\alpha_\beta:=0$ if $\al<0$ or $\beta<0$.  Based on these notations, we easily deduce that  
\begin{equation*}
 \pa^\al_\be (Tf)=\sum_{i,j=1}^3(S_{ij}\pa^{\al+\om_i}_{\be-\om_j}f-S_{ij}\pa^{\al-\om_j}_{\be+\om_i}f+R_{ij}\pa^{\al-\om_j+\om_i}_\be f-R_{ij}\pa^\al_{\be-\om_j+\om_i}f)+T(\pa^\al_\be f).
\end{equation*}
By taking inner product, we have
\begin{multline*}
\sum_{|\al|+|\be|= N}(\pa^\al_\be(Tf),\pa^\al_\be f m)_{L^2_{x,v}}=\sum_{|\al|+|\be|= N}(T(\pa^\al_\be f),\pa^\al_\be f m)_{L^2_{x,v}}+\sum_{|\al|+|\be|=N}\Big(\sum_{i,j=1}^3(S_{ij}\pa^{\al+\om_i}_{\be-\om_j}f\\
-S_{ij}\pa^{\al-
\om_j}_{\be+\om_i}f,\pa^\al_\be fm)_{L^2_{x,v}}-\sum_{i,j=1}^3(R_{ij}\pa^{\al-\om_j+\om_i}_\be f-R_{ij}\pa^\al_{\be-\om_j+\om_i}f),\pa^\al_\be fm)_{L^2_{x,v}}\Big).
\end{multline*}

$\bullet$ Since $S_{ij}=S_{ji}$, by change of variable from $(\al-\om_i,\be+\om_i)$ to $(\al,\be)$, we derive that
\beno
\sum_{i,j=1}^3(S_{ij}\pa^{\al+\om_i}_{\be-\om_j}f,\pa^\al_\be fm)=\sum_{i,j=1}^3(S_{ij}\pa^{\al}_{\be}f,\pa^{\al-\om_i}_{\be+\om_j} fm)=\sum_{i,j=1}^3(S_{ij}\pa^{\al-\om_j}_{\be+\om_i}f,\pa^{\al}_{\be}fm),
\eeno
which leads to 
$\sum_{|\al|+|\be|=N}\sum_{i,j=1}^3(S_{ij}\pa^{\al+\om_i}_{\be-\om_j}f-S_{ij}\pa^{\al-\om_j}_{\be+\om_i}f,\pa^\al_\be fm)_{L^2_{x,v}}=0.
 $

$\bullet$ Similarly, by change the variable  from $(\al-\om_j+\om_i,\be-\om_j+\om_i)$ to $(\al,\be)$ and using the fact that $R_{ij}=-R_{ji}$, we obtain that
\beno
&\sum_{i,j=1}^3(R_{ij}\pa^{\al-\om_j+\om_i}_\be f,\pa^\al_\be f m)=\sum_{i,j=1}^3(R_{ij}\pa^{\al}_\be f,\pa^{\al-\om_i+\om_j}_\be f m)=-\sum_{i,j=1}^3(R_{ij}\pa^{\al-\om_j+\om_i}_\be f,\pa^\al_\be f m);\\
&\sum_{i,j=1}^3(R_{ij}\pa^{\al}_{\be-\om_j+\om_i} f,\pa^\al_\be f m)=\sum_{i,j=1}^3(R_{ij}\pa^{\al}_\be f,\pa^{\al}_{\be-\om_i+\om_j} f m)=-\sum_{i,j=1}^3(R_{ij}\pa^{\al}_{\be-\om_j+\om_i} f,\pa^\al_\be f m),
\eeno
 which are enough to conclude the result. This ends the proof of the lemma.
\end{proof}

\section{Variant Poincar\'{e}-Korn inequality and estimates of the macroscopic quantities}\label{Section macroscopic}
In this section, we will establish a significant result that demonstrates the controllability of the macroscopic part of the solution $\mathbb{P}f$ (defined by \eqref{deofP}) through the microscopic part $(\mathbb{I}-\mathbb{P})f$, with the inclusion of necessary damping. To accomplish this, we will start by introducing   variant   Poincar\'{e}  and Poincar\'{e}-Korn inequalities.

\subsection{Variant Poincar\'{e} and Poincar\'{e}-Korn inequalities}
Consider a vector-valued function $h=(h_1,h_2,h_3)$  and an invertible and symmetric matrix $S$, we define symmetric and skew-symmetric derivative w.r.t $S$ as follows:
\begin{equation}\label{denaS}
(\na_S^{sym} h)_{ij}:=\f 12 ((S\na_x)_jh_i+(S\na_x)_i h_j),\quad (\na_S^{skew}h)_{ij}:=\f12((S\na_x)_jh_i-(S\na_x)_i h_j).
\end{equation} 
It is easy to see that
$|\na_x h|^2\sim_S |\na_S^{sym} h|^2+|\na_S^{skew}h|^2.$ In particular, $S=\mathbb{I}$, define
\begin{equation*}
	(\na_x^{sym} h)_{ij}:=\f 12 (\pa_{x_j}h_i+\pa_{x_i} h_j),\quad (\na_x^{skew}h)_{ij}:=\f12(\pa_{x_j}h_i-\pa_{x_i} h_j).
\end{equation*} 
  
\begin{defi}
 We say that $W=W(x)$ satisfies a local Poincar\'{e} inequality on a bounded open set $\Om$ if for any smooth function $h:\R^3\rightarrow \R$, there exists some constant $k_\Om$ such that 
\beno
\int_{\Om} h^2 W dx\leq k_\Om \int_{\Om} |\na h|^2Wdx+\f1{W(\Om)}\Big(\int_{\Om}hWdx\Big)^2,~~\mbox{where}~~W(\Om):=\int_{\Om}Wdx.
\eeno
\end{defi}

It follows easily from Section 12.2 in \cite{Leoni Giovanni (2017)} that
\begin{lem}\label{LP}
	If $W,W^{-1}\in L^\infty_{loc}(\R^3)$, then  $W$ satisfies the local Poincar\'{e} inequality for any ball $\Om\subset \R^3$.
\end{lem}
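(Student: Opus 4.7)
The plan is to reduce the weighted statement to the classical (unweighted) Poincar\'e inequality on balls, using the hypothesis that $W,W^{-1}\in L^\infty_{\mathrm{loc}}$ to freely pass the weight in and out of the integrals at the cost of constants depending on $\Omega$. The key observation is that the right-hand side of the claimed inequality has the structure of a variance computed with respect to the weighted measure $W\,dx$: if one sets $h_W := \frac{1}{W(\Omega)}\int_\Omega h\,W\,dx$, a direct expansion gives
\begin{equation*}
\int_\Omega h^2 W\,dx \;=\; \int_\Omega (h-h_W)^2\,W\,dx \;+\; \frac{1}{W(\Omega)}\Bigl(\int_\Omega h\,W\,dx\Bigr)^2.
\end{equation*}
So the lemma is equivalent to proving the weighted Poincar\'e bound $\int_\Omega (h-h_W)^2 W\,dx \le k_\Omega \int_\Omega |\nabla h|^2 W\,dx$.

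To establish this, I would first exploit the variational characterization of the weighted mean: the map $c\mapsto \int_\Omega (h-c)^2 W\,dx$ is minimized at $c=h_W$, hence in particular it is no larger than its value at the unweighted mean $\bar h := |\Omega|^{-1}\int_\Omega h\,dx$. This gives
\begin{equation*}
\int_\Omega (h-h_W)^2\,W\,dx \;\le\; \int_\Omega (h-\bar h)^2\,W\,dx \;\le\; \|W\|_{L^\infty(\Omega)}\int_\Omega (h-\bar h)^2\,dx.
\end{equation*}
Since $\Omega$ is a ball, the standard (unweighted) Poincar\'e inequality applies and yields $\int_\Omega (h-\bar h)^2\,dx \le C_\Omega \int_\Omega |\nabla h|^2\,dx$ with a constant $C_\Omega$ depending only on the radius.

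Finally I would reinstate the weight on the gradient side by using $W^{-1}\in L^\infty_{\mathrm{loc}}$: $\int_\Omega |\nabla h|^2\,dx \le \|W^{-1}\|_{L^\infty(\Omega)}\int_\Omega |\nabla h|^2 W\,dx$. Chaining the three inequalities produces the desired bound with $k_\Omega := C_\Omega\,\|W\|_{L^\infty(\Omega)}\|W^{-1}\|_{L^\infty(\Omega)}$, and combining with the variance identity above completes the proof. There is no genuine obstacle here since the two-sided local boundedness of $W$ trivializes the passage between the weighted and unweighted settings; the only subtle point is remembering to use the minimization property of $h_W$ (or, equivalently, $h_W-\bar h$ orthogonality under $W\,dx$) so that the standard Poincar\'e constant is available without tracking how $h_W$ differs from $\bar h$.
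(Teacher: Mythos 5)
Your proof is correct, and it is essentially the argument the paper has in mind: the paper gives no proof but states the lemma "follows easily from Section 12.2 in Leoni," i.e. from the classical unweighted Poincar\'e inequality on a ball, which is exactly your reduction via the variance identity, the minimizing property of the weighted mean $h_W$, and the two-sided bounds $\|W\|_{L^\infty(\Omega)}$, $\|W^{-1}\|_{L^\infty(\Omega)}$ supplied by $W,W^{-1}\in L^\infty_{loc}$. Your write-up simply makes explicit the details the paper delegates to the citation.
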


Now we give a proof to variant Poincar\'{e} and Poincar\'{e}-Korn inequalities.
\begin{lem}[Variant Poincar\'{e} and Poincar\'{e}-Korn inequalities]\label{lemKPI}
	Let $\de\in(0,\f12)$.

\noindent\underline{(1).} If $\int_{\R^3}he^{-\f12|x|^2}dx=0$, then
	\ben\label{PI}
	\int_{\R^3} \<x\>^2|h|^2e^{-(1-\de)|x|^2}dx\leq C_\de \int_{\R^3} |\na_x h|^2e^{-(1-\de)|x|^2}dx.
	\een
 \noindent\underline{(2).}  If  
 $\int_{\R^3}\na_S^{skew}he^{-\f12|x|^2}dx=0$, where $S$ is an invertible and symmetric matrix, then 
	\begin{equation}\label{PKI}
	  \int_{\R^3} |\na_xh|^2 e^{-(1-\de)|x|^2}dx\leq C_{S,\de} \int_{\R^3}|\na^{sym}_Sh|^2e^{-(1-\de)|x|^2}dx.
	\end{equation}
\end{lem}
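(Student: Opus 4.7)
My plan for both parts is to combine a Gaussian integration-by-parts identity with a compactness/contradiction argument, using the orthogonality condition to annihilate the constant or rigid-motion part of the weak limit. Throughout, set $W(x):=e^{-(1-\de)|x|^2}$ and $\mu(x):=e^{-|x|^2/2}$.

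For Part~(1), the divergence identity $\na_x\cdot(xW)=3W-2(1-\de)|x|^2W$ combined with one IBP and Young's inequality immediately yields
\[
\int_{\R^3}\<x\>^2 h^2 W\,dx\le C_\de\int_{\R^3} h^2 W\,dx+C_\de\int_{\R^3}|\na_x h|^2 W\,dx.
\]
Hence \eqref{PI} reduces to $\int h^2 W\le C\int|\na_x h|^2 W$ under $\int h\mu\,dx=0$, which I will prove by contradiction. Given a sequence $(h_n)$ with $\int h_n\mu=0$, $\int h_n^2 W=1$ and $\int|\na_x h_n|^2 W\to 0$, the IBP estimate above provides a uniform bound on $\int\<x\>^2 h_n^2 W$, so the tails $\int_{|x|\ge R}h_n^2 W\le CR^{-2}$ are uniformly small. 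Rellich--Kondrachov on balls, invoking Lemma~\ref{LP} for local Poincar\'e with weight $W$, extracts a subsequence $h_n\to h$ in $L^2(W)$. Weak lower semicontinuity of the Dirichlet form forces $\na h=0$, so $h$ is constant; since $\mu^2/W=e^{-\de|x|^2}\in L^1$, Cauchy--Schwarz ensures $\int h_n\mu\to\int h\mu$, and the orthogonality then gives $h\equiv 0$, contradicting $\int h^2 W=1$.

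For Part~(2), invertibility of $S$ gives $|\na_x h|^2\sim_S|\na_S^{\mathrm{sym}}h|^2+|\na_S^{\mathrm{skew}}h|^2$, so \eqref{PKI} reduces to $\int|\na_S^{\mathrm{skew}}h|^2 W\le C\int|\na_S^{\mathrm{sym}}h|^2 W$ under $\int\na_S^{\mathrm{skew}}h\,\mu\,dx=0$; an IBP against $\mu$ rewrites this orthogonality as the symmetry condition $\int(h\otimes Sx-Sx\otimes h)\mu\,dx=0$, which is exactly the obstruction to an $S$-rigid rotational component. I proceed again by contradiction with a sequence $(h_n)$ satisfying the orthogonality, $\int|\na_S^{\mathrm{sym}}h_n|^2 W\to 0$, and $\int|\na_x h_n|^2 W=1$. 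Since neither the norms nor the orthogonality depend on a constant shift, I replace $h_n$ by $h_n-c_n$ with $c_n:=\int h_n\mu/\int\mu$, which additionally enforces $\int(h_n-c_n)\mu=0$. Part~(1) applied componentwise then bounds $\int\<x\>^2|h_n-c_n|^2 W$ uniformly, and the compactness argument of Part~(1) extracts a limit $h_\infty$ with $\na_S^{\mathrm{sym}}h_\infty=0$; rigidity of the $S$-symmetric gradient forces $h_\infty(x)=Ax+b$ with $AS$ skew-symmetric, and passing to the limit in both orthogonality conditions gives $AS=0$ (hence $A=0$) and $b=0$, so $h_\infty\equiv 0$.

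The main obstacle is closing the contradiction in Part~(2), because strong $L^2(W)$-convergence $h_n-c_n\to 0$ does not a priori imply $\int|\na_x h_n|^2 W\to 0$. I plan to bridge this by establishing the quantitative weighted Korn inequality
\[
\int|\na_x h|^2 W\,dx\le C_{S,\de}\int|\na_S^{\mathrm{sym}}h|^2 W\,dx+C_{S,\de}\int\<x\>^2|h|^2 W\,dx,
\]
derived from the algebraic identity $|\na h|^2=2|\na_I^{\mathrm{sym}}h|^2-\sum_{i,j}\pa_ih_j\pa_jh_i$ by IBP'ing the cross term twice (producing curvature contributions from $\na W=-2(1-\de)xW$) and absorbing an $\eps$-fraction of $\int|\na h|^2 W$ via Young. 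Applied to $h_n-c_n$, this combines with the H\"older interpolation $\|h_n-c_n\|_{L^2(\<x\>^{2\al} W)}\le\|h_n-c_n\|_{L^2(W)}^{1-\al}\|h_n-c_n\|_{L^2(\<x\>^2 W)}^{\al}$ for $\al<1$ (giving vanishing $L^2(\<x\>^{2\al} W)$-norms), together with a tail-truncation argument based on the uniform $L^2(\<x\>^2 W)$-bound, to yield $\int\<x\>^2|h_n-c_n|^2 W\to 0$ and hence $\int|\na_x h_n|^2 W\to 0$, producing the desired contradiction.
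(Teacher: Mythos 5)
Your Part (1) is correct, though by a different route than the paper: you use a compactness/contradiction argument (local Rellich plus the tail bound from the IBP identity), whereas the paper's proof is quantitative — it expands $\int|\nabla(he^{-V/2})|^2\ge 0$, localizes the bad term to a ball, and absorbs it using the local Poincar\'e inequality and the smallness of $\varepsilon_R/Z_R$. Your version yields a non-constructive constant but is sound for the statement as written.

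Part (2), however, has a genuine gap at exactly the step you flag as the ``main obstacle''. Your quantitative Korn inequality is fine as far as it goes, but after the double integration by parts the error terms contain $\int |h|\,|x|\,|\nabla^{skew}_S h|\,W\,dx$ (because $\nabla W\sim xW$ and the contraction $\sum_i \partial_j h_i\,\partial_i W$ retains the skew part of $\nabla h$), so the error term is forced to carry the full weight $\int \langle x\rangle^2|h|^2W\,dx$ — the loss is critical. Your plan to show $\int\langle x\rangle^2|h_n-c_n|^2W\to 0$ does not work: interpolation only gives vanishing of $\int\langle x\rangle^{2\alpha}|h_n-c_n|^2W$ for $\alpha<1$, and the ``tail-truncation'' step fails because the only information at the top weight is the uniform bound $\int\langle x\rangle^2|h_n-c_n|^2W\le C$ coming from Part (1); a uniform bound in a weighted space gives no uniform smallness of the tails $\int_{|x|>R}\langle x\rangle^2|h_n-c_n|^2W$ in that same space (mass of size $O(1)$ can sit where $\langle x\rangle^2\sim R^2$), and one cannot upgrade Part (1) to a heavier weight $\langle x\rangle^{2+\epsilon}$ using only $\int|\nabla h|^2W$ (Hermite-type functions already saturate the $\langle x\rangle^2$ weight). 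Consequently the contradiction cannot be closed: chaining your Korn inequality with Part (1) just reproduces $\int|\nabla h|^2W\le C\int|\nabla^{sym}_S h|^2W + C\int|\nabla h|^2W$ with a non-small constant, and the compactness argument never rules out escape of the $\|\nabla^{skew}_S h_n\|_{L^2(W)}$ mass to spatial infinity, which is precisely the scenario the inequality is supposed to exclude. The paper's proof supplies the missing mechanism in a different way: it proves a negative-order Poincar\'e inequality $\|h\|^2_{L^2(V)}\lesssim \|\Lambda^{-1/2}\nabla h\|^2_{L^2(V)}$ with $\Lambda=-\Delta_V+\mathbb{I}$ (valid for Gaussian-mean-zero $h$), applies it to $\nabla^{skew}_x h$, and then uses the Schwarz identity $\partial_k(\nabla^{skew}_x h)_{ij}=\partial_j(\nabla^{sym}_x h)_{ik}-\partial_i(\nabla^{sym}_x h)_{jk}$ together with the weighted boundedness of $\Lambda^{-1/2}\nabla$ and $\Lambda^{-1/2}(|\nabla V|+1)$ to bound $\|\Lambda^{-1/2}\nabla(\nabla^{skew}_x h)\|_{L^2(V)}\lesssim\|\nabla^{sym}_x h\|_{L^2(V)}$; this ``one derivative down, one weight down'' gain is what compensates the Gaussian weight loss, and your scheme has no substitute for it. To repair your argument you would need such an ingredient (or the weighted Korn machinery of \cite{KJFSC}); the rigidity and orthogonality bookkeeping in your limit identification (affine $h_\infty$ with $AS$ skew, then $A=0$, $b=0$) is correct but is not where the difficulty lies.
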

 
\begin{proof}
To prove \eqref{PI}, we set $V(x):=(1-\de)|x|^2$ and $g:=he^{-\f12 V}$. Then one may check that    $\na g=\na h e^{-\f12 V}-\f12 \na Vh e^{-\f12 V}$, which implies that
	\beno
	0\leq \int_{\R^3} |\na g|^2dx&=&\int_{\R^3} |\na h|^2 e^{-V}dx+\int_{\R^3} h^2 \f14 |\na V|^2 e^{-V}dx-\int_{\R^3} \f12 \na (h^2)\cdot \na V e^{-V}dx\\
	&=& \int_{\R^3} |\na h|^2e^{-V}dx+\int_{\R^3} h^2\big(\f12 \D V-\f14 |\na V|^2   \big)e^{-V}dx.
	\eeno
	Since  $\f 14|\na V|^2-\f 12 \D V\geq(1-\de)^2\<x\>^2-4\geq \f1{8}\<x\>^2-4\times\mathbf{1}_{B_R}$ for $R>6$,
	where $B_R$ is a ball centered at $0$ with radius $R$,  and $e^{-\f12|x|^2}\geq e^{-(1-\de)|x|^2}$, we deduce that
	\ben\label{VPPKinq1}
	\int_{\R^3} |\na h|^2e^{-V}dx\geq \f 1{8}\int_{\R^3}  h^2\<x\>^2 e^{-V}dx-4\int_{B_R}h^2e^{-V}dx\geq \f 1{8}\int_{\R^3}  h^2\<x\>^2 e^{-V}dx-4\int_{B_R}h^2e^{-\f12|x|^2}dx.
	\een 
	Define $\vep_R:=\int _{B_R^c} e^{-\de|x|^2} dx,\quad Z_R:=\int_{B_R}e^{-\f12|x|^2}dx$. Then $\vep_R\rightarrow0$ and $Z_R\rightarrow Z_\infty$ as  $R\rightarrow \infty$. Denote $B_R^c := \R^3   \setminus B_R$, 
	using the condition that $\int_{\R^3}he^{-\f12|x|^2}dx=0$ and H\"{o}lder inequality, we get  that
	\beno
	\Big(\int_{B_R}he^{-\f12|x|^2}dx\Big)^2=\Big(\int_{B^c_R}he^{-\f12|x|^2}dx\Big)^2\leq \int_{B_R^c} h^2 e^{-(1-\de)|x|^2}dx\int_{B_R^c} e^{-\de|x|^2}dx\leq \vep_R \int_{B_R^c} h^2 e^{-V(x)}dx.
	\eeno
	Also, thanks to  Lemma \ref{LP}, we deduce that
	\beno
	\int_{B_R}h^2 e^{-\f12|x|^2}dx&\leq& C_R \int_{B_R} |\na h|^2 e^{-\f12|x|^2}dx+\f 1{Z_R}\Big(\int_{B_R} he^{-\f12|x|^2}dx\Big)^2\\
	&\leq& C_R \int_{B_R} |\na h|^2 e^{-V}dx+\f{\vep_R}{Z_R} \int_{B_R^c} h^2 e^{-V}dx.
	\eeno

	Putting together all the inequalities, we finally get that
	\beno
	\int_{\R^3} \<x\>^2 h^2e^{-V}dx&\leq&8 \int_{\R^3} |\na h|^2 e^{-V}dx+32\int_{B_R} h^2 e^{ - \f12|x|^2}dx\\
	&\leq& 8(1+4C_R) \int_{\R^3}|\na h|^2 e^{-V}dx+32\f{\vep_R}{Z_R} \int_{B_R^c} h^2e^{-V}dx.
	\eeno
	We  establish the desired result by selecting a sufficiently large value for $R$, such that $32\vep_R/Z_R<\frac{1}{2}$.
	\smallskip

	We turn to prove \eqref{PKI}.   We begin with the simple case that $S=\mathbb{I}$. Suppose that 
	\beno
	-\D_Vf:=-\D f+\na V\cdot \na f\quad\mbox{and}\quad \Lam:=-\D_V+\mathbb{I}.
	\eeno
	It is easy to check that $-\D_V$ is nonnegative and symmetric in Hilbert space $L^2(V)$, defined by 
	\beno
	(f,g)_{L^2(V)}:=\int_{\R^3} f\bar{g}e^{-V(x)}dx.
	\eeno
   
    $\bullet$ By \eqref{VPPKinq1}, for any $h\in L^2(V)$, we have $\big((|\na V|+1)^2h,h\big)_{L^2(V)}+(-\D_Vh,h)_{L^2(V)}\ls(\Lam h,h)_{L^2(V)}$ which implies that $\|(|\na V|+1)h\|_{L^2(V)}+\|\na h \|_{L^2(V)}\ls \|\Lam^{\f12}h\|_{L^2(V)}$. Thus for $h\in L^2(V)$, it holds that
\ben\label{VPPKinq2}\|(|\na V|+1)\Lam^{-\f12}h\|_{L^2(V)}+\|\na \Lam^{-\f12}h \|_{L^2(V)}\ls \|h\|_{L^2(V)}.\een
 Observing that $(\Lam ^{-\f12}\pa h,g)_{L^2(V)}=(h,(-\pa+\pa V) \Lam ^{-\f12}g)_{L^2(V)}$, by \eqref{VPPKinq2} and duality, we get that 
 \ben\label{VPPKinq3}\|\Lam^{-\f12}(|\na V|+1)h\|_{L^2(V)}+\|\Lam^{-\f12}\na  h \|_{L^2(V)}\ls \|h\|_{L^2(V)}.\een

	$\bullet$ We claim that if $\int_{\R^3}he^{-\f12|x|^2}dx=0$, then 
	\ben\label{PL1}
	 \|h\|^2_{L^2(V)}\leq C_{V}	\|\Lam ^{-\f12}\na h\|^2_{L^2(V)}.
	\een
   From \eqref{PI} and $\int_{\R^3}he^{-\f12|x|^2}dx=0$,  we have   $(-\D_Vh,h)_{L^2(V)}  \sim (\Lam h,h)_{L^2(V)}$. This in particular implies that  $\|\na \Lam^{-\f12}h\|_{L^2(V)}\sim \|h\|_{L^2(V)}$. From this together with the fact $[\D_V,\Lam]=0$, one may obtain that
 \beno&& 
 \|h\|_{L^2(V)}^2\sim \|\na \Lam^{-\f12}h\|_{L^2(V)}^2=(-\D_V\Lam^{-1}h, h)_{L^2(V)}     =(\Lam^{\f12}\na \Lam^{-1}h, \Lam^{-\f12}\na h)_{L^2(V)}
 \\&& \le \|\Lam^{\f12}\na \Lam^{-1}h\|_{L^2(V)}\|\Lam^{-\f12}\na h\|_{L^2(V)}. \eeno 
 Note that 
 \beno
  &&\|\Lam^{\f12}\na f\|^2_{L^2(V)}    =  \sum_{j  =1}^3((-\sum_{i  =1}^3\pa_i^2+\sum_{k  =1}^3\pa_kV\pa_k+1)\pa_jf,\pa_jf)_{L^2(V)}=-\sum_{j=1}^3\big[(\sum_{k  =1}^3(\pa_j\pa_kV)\pa_kf,\pa_jf)_{L^2(V)}
  \\
  &&+((-\sum_{i  =1}^3\pa_i^2+\sum_{k  =1}^3\pa_kV\pa_k+1)f,(-\pa_j+\pa_jV)\pa_jf)_{L^2(V)}\big]\le
\|\Lam f\|_{L^2(V)}^2+\|\Lam^{\f12} f\|_{L^2(V)}^2.
 \eeno 
  These two estimates imply that 
  \beno  \|h\|_{L^2(V)}^2\sim \|\na \Lam^{-\f12}h\|_{L^2(V)}^2\ls \|h\|_{L^2(V)}\|\Lam^{-\f12}\na h\|_{L^2(V)}. \eeno 
	We conclude the claim \eqref{PL1}.

	$\bullet$ Thanks to \eqref{PL1} and the condition $\int_{\R^3} (\na^{skew}_xh) e^{-\f12|x|^2}dx=0$, we  first have
	\begin{equation*}
	\begin{aligned}
	\|\na_xh\|^2_{L^2(V)}&=\|\na^{sym}_xh\|^2_{L^2(V)}+\|\na^{skew}_xh\|^2_{L^2(V)} 
	\leq \|\na^{sym}_xh\|^2_{L^2(V)}+C_V \|\Lam^{-\f12}\na (\na^{skew}_xh)\|^2_{L^2(V)},
	\end{aligned}
\end{equation*}
	where $\|\Lam^{-\f12}\na (\na^{skew}_xh)\|^2_{L^2(V)}=\sum_{i,j=1}^3 \|\Lam^{-\f12}\na (\na^{skew}_xh)_{ij}\|^2_{L^2(V)}$. The  Schwarz Theorem implies that
	\beno
	\pa_k(\na_x^{skew}h)_{ij}=\pa_j(\na^{sym}_xh)_{ik}-\pa_i(\na^{sym}_xh)_{jk},\quad \forall i,j,k\in \{1,2,3\},
	\eeno
	which leads to that $\|\Lam^{-\f12}\na (\na^{skew}_xh)\|^2_{L^2(V)}\le 4\sum_{i,j}^3 \|\Lam^{-\f12}\na (\na^{sym}_xh)_{ij}\|^2_{L^2(V)}$. 
	Then \eqref{VPPKinq3} yields that
	\beno
	\sum_{i,j}^3 \|\Lam^{-\f12}\na (\na^{sym}_xh)_{ij}\|^2_{L^2(V)}\leq \sum_{i,j}^3\| (\na^{sym}_xh)_{ij}\|^2_{L^2(V)}=\|\na^{sym}_xh\|^2_{L^2(V)}.
	\eeno
	Thus we obtain that $\|\na_xh\|_{L^2(V)}\leq C_V\|\na^{sym}_x h\|_{L^2(V)}$. It ends the proof for \eqref{PKI} in the case $S=\mathbb{I}$.
	\smallskip

	For general case,  we observe that $\na_S^{skew}h=S\na^{skew}_x(S^{-1}h)S$ and $\na_S^{sym}h=S\na^{sym}_x(S^{-1}h)S$, thus
	\beno
    \int_{\R^3}\na^{skew}_x(S^{-1}h)e^{-\f12|x|^2}dx=0\,\,\mbox{if}\quad\int_{\R^3}\na_S^{skew}he^{-\f12|x|^2}dx=0. 
	\eeno
Now we are in a position to complete the proof. We have
\begin{equation*}
	\begin{aligned} &\int_{\R^3} |\na_xh|^2 e^{-(1-\de)|x|^2}dx\le C_S\int_{\R^3} |\na_x(S^{-1}h)|^2 e^{-(1-\de)|x|^2}dx\\& 
 \leq C_{S,\de} \int_{\R^3}|\na^{sym}_x(S^{-1}h)|^2e^{-(1-\de)|x|^2}dx\leq C_{S,\de} \int_{\R^3}|\na^{sym}_Sh|^2e^{-(1-\de)|x|^2}dx.
	\end{aligned}
\end{equation*}
This ends the proof of the lemma.
\end{proof}

\subsection{Estimates of the macroscopic quantities} This subsection is devoted to the proof that  the macroscopic part of the solution $\mathbb{P}f=\mathbf{a}(t,x)\mu+\mathbf{b}(t,x)\cdot v\mu+\mathbf{c}(t,x)(|v|^2-3)\mu$ (see \eqref{deofP}) can be controlled through the microscopic part $(\mathbb{I}-\mathbb{P})f$, with the inclusion of necessary damping. We present the macroscopic equation here:
\begin{equation}\label{macroeq}  \left\{ 
	\begin{aligned}
		&\pa_t(\ma e^{\f12|x|^2})=-\mathscr{C}_l(t)(\mfS\na_x\cdot \mb)e^{\f12|x|^2}-\mathscr{C}_l(t)\mfR x\cdot\na_x(\ma e^{\f12|x|^2})+\sfT_{11};\\
		&\pa_t(\mb e^{\f12|x|^2})=-\mathscr{C}_l(t)\mfS\na_x(\ma e^{\f12|x|^2})-2\mathscr{C}_l(t)(\mfS\na_x\mc)e^{\f12|x|^2}-\mathscr{C}_l(t)\mfR\mb e^{\f12|x|^2}-\mathscr{C}_l(t)(\mfR x\cdot\na_x)(\mb e^{\f12|x|^2})+\sfT_{21};\\
		&\pa_t(\mc e^{\f12|x|^2})=-\f13\mathscr{C}_l(t) \mfS\na_x\cdot (\mb e^{\f12|x|^2})-\mathscr{C}_l(t)\mfR x\cdot\na_x (\mc e^{\f12|x|^2})+\sfT_{31};\\
		&\pa_t(\mc e^{\f12|x|^2}) \mathbb{I}_{3\times 3}=-\mathscr{C}_l(t)\na^{sym}_\mfS(\mb e^{\f12|x|^2})-\mathscr{C}_l(t)\mfR x\cdot\na_x (\mc e^{\f12|x|^2})\mathbb{I}_{3\times 3}+\sfT_{41}+\pa_t \sfT_{42};\\
		&\mathscr{C}_l(t)\mfS\na_x (\mc e^{\f12|x|^2})=\sfT_{51}+\pa_t\sfT_{52},
	\end{aligned}\right.
\end{equation}
where $\sfT_{k1}$ and $\sfT_{l2}$, with $1\le k\le 5$ and $l=4,5$, are defined as follows: for $i,j=1,2,3$, $i\neq j$ and $g:=\mathscr{C}_2Q(f,f)$,
\begin{equation}\label{defofT1}
	\left\{ \begin{aligned}
		&\sfT_{11}=e^{\f12|x|^2}(g,1)_{L^2_v},~\sfT_{31}=-\mathscr{C}_l(t)\f{e^{\f12|x|^2}}6(\mfS v\cdot\na_x((\mathbb{I-P})f),|v|^2)_{L^2_v}+ e^{\f12|x|^2}\Big(g,\f{|v|^2-3}6\Big)_{L^2_v};\\
		&\sfT_{21}=e^{\f12|x|^2}(-\mathscr{C}_l(t)\mfS v\cdot\na_x((\mathbb {I-P})f)+g,v)_{L^2_v},~\sfT_{52}=-e^{\f12|x|^2}\Big((\mathbb{I-P})f,\f{v(|v|^2-5)}{10}\Big)_{L^2_v};\\
		&\sfT_{51}=-e^{\f12|x|^2}\Big(\mathscr{C}_l(t)(\mfS v\cdot\na_x-\mfS x\cdot\na_v+\mfR x\cdot\na_x-\mfR v\cdot\na_v)((\mathbb{I-P})f)\\&\qquad\qquad\qquad+\mathscr{C}_1e^{-\f12|x|^2}L((\mathbb{I-P})f)+g,\f{v(|v|^2-5)}{10}\Big)_{L^2_v};  \end{aligned}\right.
\end{equation} \begin{equation}\label{defofT2}
	\left\{ \begin{aligned}
		&(\sfT_{41})_{ij}=-e^{\f12|x|^2}(\mathscr{C}_l(t)(\mfS v\cdot\na_x-\mfS x\cdot\na_v+\mfR x\cdot\na_x-\mfR v\cdot\na_v)((\mathbb{I-P})f)\\&\qquad\qquad\qquad+\mathscr{C}_1e^{-\f12|x|^2}L((\mathbb{I-P})f)+g,v_iv_j/2)_{L^2_v};\\
		&(\sfT_{41})_{ii}=-e^{\f12|x|^2}\Big(\mathscr{C}_l(t)(\mfS v\cdot\na_x-\mfS x\cdot\na_v+\mfR x\cdot\na_x-\mfR v\cdot\na_v)((\mathbb{I-P})f)\\&\qquad\qquad\qquad+\mathscr{C}_1e^{-\f12|x|^2}L((\mathbb{I-P})f)+g,\f{v_i^2-1}2\Big)_{L^2_v};\\
		&(\sfT_{42})_{ij}=e^{\f12|x|^2}(-(\mathbb{I-P})f,v_iv_j/2)_{L^2_v},~~~(T_{42})_{ii}=e^{\f12|x|^2}\Big(-(\mathbb{I-P})f,\f{v_i^2-1}2\Big)_{L^2_v}.
	\end{aligned}\right.
\end{equation}
The proof of (\ref{macroeq}-\ref{defofT2}) will be given in Proposition \ref{abcequa}.

\smallskip

Now, we address some facts which will be used frequently in what follows.

\underline{(1).}  $\ma(t,x),\mb(t,x)$ and $\mc(t,x)$  satisfy the conservation laws:
\begin{equation}\label{conmabc}
	\begin{aligned}
		\int_{\R^3}(\mathbf{a}, \mfS^{-1}x\ma,\mathbf{b},|\mfS^{-1}x|^2\ma,\mfS^{-1}x\cdot \mb,6\mc+\mb\cdot \mfR\mfS^{-1}x,\mfS^{-1}x\wedge \mb+\mfS^{-1}x\wedge \mfR\mfS^{-1}x\ma)dx=0,
	\end{aligned}
\end{equation}
which are derived from   \eqref{conservationofNSlandau} and Remark \ref{v2Rx} since $G=\mathcal{M}+f$.

\underline{(2).} $\ma(t,x),\mb(t,x)$ and $\mc(t,x)$ will be split into two parts respectively, i.e.,
\ben \label{412}\ma(t,x):=\ma_{0}(t,x)+\ma_{\mathsf{n}}(t,x);\,\, \mb(t,x):=\mb_{0}(t,x)+\mb_{\mathsf{n}}(t,x);\,\, \mc(t,x):=\mc_{0}(t,x)+\mc_{\mathsf{n}}(t,x),   \een 
where $(\ma_{0} ,\mb_{0}, \mc_{0})$ and  $(\ma_{\mathsf{n}}, \mb_{\mathsf{n}},\mc_{\mathsf{n}})$ are  denoted by the zero-mode part and the non zero-mode part respectively. In fact,  $(\ma_{\mathsf{n}}, \mb_{\mathsf{n}},\mc_{\mathsf{n}})$ are defined by 
\begin{equation}\label{deofabcs}
	\begin{aligned}
		&\mc_{\mathsf{n}} \mathcal{Z}_x^{\f12}:=\mc \mathcal{Z}_x^{\f12}-\msA(\mc \mathcal{Z}_x^{\f12});\\
		&\mb_{\mathsf{n}}\mathcal{Z}_x^{\f12}:=\mb \mathcal{Z}_x^{\f12}-\msA(\na^{skew}_\mfS(\mb \mathcal{Z}_x^{\f12})) \mfS^{-1}x-\f13 \msA(\mfS\na_x\cdot(\mb \mathcal{Z}_x^{\f12}))\mfS^{-1}x-   \msA(\mb \mathcal{Z}_x^{\f12});\\
		&\ma_{\mathsf{n}}\mathcal{Z}_x^{\f12}:=\ma \mathcal{Z}_x^{\f12}+\f12\left(\Big(\mfR\msA(\na^{skew}_\mfS(\mb \mathcal{Z}_x^{\f12}))+\msA(\na^{skew}_\mfS(\mb \mathcal{Z}_x^{\f12}))\mfR\Big)\mfS^{-1}x,\mfS^{-1}x\right)_{\R^3}\\
		&-\msA\left(\f12\left(\Big(\mfR\msA(\na^{skew}_\mfS(\mb \mathcal{Z}_x^{\f12}))+\msA(\na^{skew}_\mfS(\mb \mathcal{Z}_x^{\f12}))\mfR\Big)\mfS^{-1}x,\mfS^{-1}x\right)_{\R^3}\right)-\msA(\mc \mathcal{Z}_x^{\f12})(|x|^2-\msA(|x|^2))\\
		&+\f16\Big(-\msA(\D_{\mfS x}(\ma \mathcal{Z}_x^{\f12}))+2\sum_{i=1}^3(\mfS^2)_{ii}\msA(\mc \mathcal{Z}_x^{\f12})-2\msA( \mfR\mb \mathcal{Z}_x^{\f12}\cdot \mfS x)\Big)(|\mfS^{-1}x|^2-\msA(|\mfS^{-1}x|^2)),
	\end{aligned}
\end{equation}
where we recall the definition of $\mathcal{Z}_x^\a$ in \eqref{exponentialfunction},
\begin{equation}\label{RSr<1}
	\mfR=
	\begin{pmatrix}
		0 & r & 0\\
		-r & 0& 0\\
		0 & 0 & 0
	\end{pmatrix}
	\quad\mbox{and}\quad
	\mfS=
	\begin{pmatrix}
		\sqrt{1-r^2} & 0 & 0\\
		0 & \sqrt{1-r^2} & 0\\
		0 & 0 & 1
	\end{pmatrix}
\end{equation}
with $0\le r<1$,  $(\vec{u},\vec{v})_{\R^3}:=\vec{u}^\tau \vec{v}$ for any $\vec{u},\vec{v}\in\R^3$, and
\ben\D_{\mfS x}:=\sum_{i=1}^3(\mfS\na_x)_i^2,\quad \mathscr{A}(h):=\f1{(2\pi)^{3/2}}\int_{\R^3} h(x)\mathcal{Z}_x^{-\f12}dx. \een

\underline{(3).} We list some facts that can be easily verified and will be frequently referenced in the later.  

\noindent$\bullet$ It holds that  \ben\label{msA}
\msA(1)=\msA(x_i^2)=\msA(x_i^2x_j^2)=1,~~\msA(x_i^4)=3,~~\msA(|x|^2)=3,~~\msA(|x|^4)=15,~~i,j=1,2,3,  ~~i\neq j.
\een

\noindent$\bullet$ For any skew-symmetric $A$  and any smooth function $f$, the following facts hold:
\begin{equation}
\label{equation for A G -1 x and A A G^-1 x}
\msA(\mfS^{-1}x ) =\msA(A\mfS^{-1} x ) = 0, \quad \msA ( A x \cdot \nabla_x f )=0,
\end{equation}
and
\[
\nabla^{skew}_\mfS (\mfS^{-1} x)  = 0 ,\quad  \nabla^{skew}_\mfS  (A \mfS^{-1}  x) =  A,\quad \nabla^{sym}_\mfS  (\mfS^{-1} x) = \mathbb{I}_{3 \times 3},\quad \nabla^{sym}_\mfS  (A \mfS^{-1}  x) =  0.
\]
From these   together with \eqref{deofabcs},  we get that
\begin{equation}
\label{skew and symmetric of b n  Z x 1 2}
\nabla^{skew}_\mfS  (\mb_n \mathcal{Z}_{x}^{1/2})=0,\quad \nabla^{sym}_\mfS (\mb_n \mathcal{Z}_x^{1/2})  =  \nabla^{sym}_\mfS (\mb     \mathcal{Z}_x^{1/2}      )        -\frac 1 3 \msA(\mfS \na_x\cdot(\mb \mathcal{Z}^{1/2}_x)  ) \mathbb{I}_{3\times 3}  .
\end{equation}

\noindent$\bullet$ It is easy to check that
\begin{equation}
\label{equality R G -1 x G x}
\mfS\mfR=\mfR\mfS, \quad \mfR \mfS^{-1}=\mfS^{-1}\mfR,\quad  (\mfR\mfS^{-1}x,  \mfS x)_{\R^3} = (\mfR x, x)_{\R^3}=0,\quad (\mfS\na_x) \cdot(\mfR\mfS^{-1}x)=0,
\end{equation}
and
\begin{equation}
\label{equality R x nabla x G -1 x}
\mfR x\cdot \nabla_x (\mfS^{-1} x) =\mfR\mfS^{-1} x,\quad \mfR x \cdot \nabla_x ( A\mfS^{-1}x) = A \mfR \mfS^{-1}x.
\end{equation}
Thanks to  integration by parts, for any $i, j =1, 2, 3$, we may have
\begin{equation}
\label{A integral for G nabla x R b}
\msA( (\mfS\nabla_x )_j   (\mfR \mb)_i \mathcal{Z}^{1/2}_x ) = \msA ((\mfR\mb)_i  (\mfS x)_j   ),\quad \msA( (\mfS\nabla_x )_j   (\mfR x \cdot \nabla_x) (\mb \mathcal{Z}^{1/2}_x) _i) =  - \msA (  \mb_i (\mfR \mfS x )_j \mathcal{Z}^{1/2}_x ),
\end{equation}
\begin{equation}
\label{A integral for G nabla j G nabla i f}
\msA \Big ( (\mfS \nabla_x)_j ((\mfS\nabla_x   f)_i\mathcal{Z}_x^{\frac 12}    )\Big) =  - (\mfS^2)_{ij}  \msA (f \mathcal{Z}_x^{\frac 12}  ).
\end{equation}

\noindent$\bullet$ One may easily verify that  
\ben
\label{equality for R G -1 x}
&& \big( (\mfS^{-1}x \wedge \mfR \mfS^{-1} x )_{12}, (\mfS^{-1}x \wedge \mfR \mfS^{-1} x )_{13},(\mfS^{-1}x \wedge \mfR \mfS^{-1} x )_{23}\big)=(-\frac {r} {1-r^2}  (x_1^2+x_2^2), \\&&  - \frac {r} {\sqrt{1-r^2} } x_2x_3, \frac {r} {\sqrt{1-r^2}} x_1x_3),\quad
\mfR\mfS^{-1} x =(\frac r {\sqrt{1-r^2} } x_2,   -\frac r {\sqrt{1-r^2} } x_1, 0).\label{equality for G -1 x R G -1 x}
\een

\underline{(4).} Note that $\ma_{\mathsf{n}},\mb_{\mathsf{n}}$ and $\mc_{\mathsf{n}}$ have vanishing properties, that is 
\begin{equation}\label{abcspk}
	\int_{\R^3} (\mc_{\mathsf{n}}\mathcal{Z}_x^{\f12},\mb_{\mathsf{n}}\mathcal{Z}_x^{\f12},\ma_{\mathsf{n}}\mathcal{Z}_x^{\f12})\mathcal{Z}_x^{-\f12}dx
	=0,\quad\int_{\R^3}\na^{skew}_\mfS(\mb_{\mathsf{n}}\mathcal{Z}_x^{\f12})\mathcal{Z}_x^{-\f12}dx=0,
\end{equation}
which can be directly verified from  \eqref{conmabc} and  \eqref{deofabcs} by using \eqref{equation for A G -1 x and A A G^-1 x} and \eqref{skew and symmetric of b n  Z x 1 2}.  Since $|\na_x f|\leq |\na_x f+x f|+|xf|$,   by Lemma \ref{lemKPI}, for any $\de\in (0,\f12)$, we have
\begin{equation}\label{PKIforabcs}
	\begin{aligned}
		\int_{\R^3}(\mathcal{P}_x^2|\ma_{\mathsf{n}}|^2+|\na_x\ma_{\mathsf{n}}|^2)\mathcal{Z}_x^{\de}dx&\leq C_\de \int_{\R^3} |\na_x(\ma_{\mathsf{n}}\mathcal{Z}_x^{\f12})|^2 \mathcal{Z}_x^{-(1-\de)}dx,\\
		\int_{\R^3}(\mathcal{P}_x^2|\mc_{\mathsf{n}}|^2+|\na_x\mc_{\mathsf{n}}|^2)\mathcal{Z}_x^{\de}dx&\leq C_\de \int_{\R^3} |\na_x(\mc_{\mathsf{n}}\mathcal{Z}_x^{\f12})|^2 \mathcal{Z}_x^{-(1-\de)}dx,\\
		\int_{\R^3}(\mathcal{P}_x^2|\mb_{\mathsf{n}}|^2+|\na_x\mb_{\mathsf{n}}|^2)\mathcal{Z}_x^{\de}dx&\leq C_\de \int_{\R^3} |\na_x(\mb_{\mathsf{n}}\mathcal{Z}_x^{\f12})|^2 \mathcal{Z}_x^{-(1-\de)}dx\\
		&\leq C_{r,\de} \int_{\R^3}|\na^{sym}_\mfS(\mb_{\mathsf{n}}\mathcal{Z}_x^{\f12})|^2\mathcal{Z}_x^{-(1-\de)}dx.
	\end{aligned}
\end{equation}

\subsubsection{Control of the zero mode} We want to prove that

\begin{lem}\label{abceqtoabcs}
	Under the condition \eqref{conmabc} and the notation \eqref{exponentialfunction}, for any $0<\de_3<\de_2<\de_1<\f18$ and $N\geq0$, there exists a constant $C_{r,\de,N}$ depending of $r,N$ and $\de_i,i=1,2,3$ such that
	\begin{multline}\label{scontrol}
		\sum_{|\al|\leq N}(\|(\pa^\al_x\mc) \mathcal{Z}_x^{\f{\de_1}2}\|_{L^2_x}+\|(\pa^\al_x\mb) \mathcal{Z}_x^{\f{\de_2}2}\|_{L^2_x}+\|(\pa^\al_x\ma) \mathcal{Z}_x^{\f{\de_3}2}\|_{L^2_x})\\
		\leq C_{r,\de,N}\sum_{|\al|\leq N} (\|(\pa^\al_x\mc_{\mathsf{n}})\mathcal{Z}_x^{\f{\de_1}2}\|_{L^2_x}+\|(\pa^\al_x\mb_{\mathsf{n}})\mathcal{Z}_x^{\f{\de_2}2}\|_{L^2_x}+\|(\pa^\al_x\ma_{\mathsf{n}})\mathcal{Z}_x^{\f{\de_3}2}\|_{L^2_x}).
	\end{multline}
\end{lem}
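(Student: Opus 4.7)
The plan is to exploit the finite-dimensional structure of the zero-mode subspace defined by \eqref{412}--\eqref{deofabcs}, combined with the thirteen conservation laws \eqref{conmabc}. Observe first that $\mc_0\mathcal{Z}_x^{1/2}$ is a constant, $\mb_0\mathcal{Z}_x^{1/2}$ is affine in $x$, and $\ma_0\mathcal{Z}_x^{1/2}$ is a quadratic polynomial in $x$; their coefficients are precisely the thirteen scalar ``moments''
\begin{equation*}
\Theta := \Big(\msA(\mc\mathcal{Z}_x^{1/2}),\ \msA(\mb\mathcal{Z}_x^{1/2}),\ \msA(\mfS\na_x\!\cdot\!(\mb\mathcal{Z}_x^{1/2})),\ \msA(\na^{skew}_{\mfS}(\mb\mathcal{Z}_x^{1/2})),\ \msA(\D_{\mfS x}(\ma\mathcal{Z}_x^{1/2})),\ \msA(\mfR\mb\mathcal{Z}_x^{1/2}\!\cdot\!\mfS x)\Big),
\end{equation*}
whose total dimension matches the number of conservation laws. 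This is not an accident: the point is that the non-zero modes $(\ma_{\mathsf{n}},\mb_{\mathsf{n}},\mc_{\mathsf{n}})$ were designed in \eqref{deofabcs} so that Poincar\'e--Korn \eqref{PKIforabcs} applies, and the lost information sits exactly in $\Theta$, to be recovered through \eqref{conmabc}.

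Next, substituting $\ma=\ma_0+\ma_{\mathsf{n}}$, $\mb=\mb_0+\mb_{\mathsf{n}}$, $\mc=\mc_0+\mc_{\mathsf{n}}$ into \eqref{conmabc} and evaluating the zero-mode integrals using the explicit Gaussian moments \eqref{msA} together with \eqref{equation for A G -1 x and A A G^-1 x}, \eqref{equality R G -1 x G x}, and \eqref{equality for R G -1 x}--\eqref{equality for G -1 x R G -1 x}, I would obtain a linear system of the form
\begin{equation*}
\mathbb{M}(r)\,\Theta=\vec{R},
\end{equation*}
where $\mathbb{M}(r)$ depends only on $r\in[0,1)$ via $\mfS,\mfR$ and universal Gaussian moments, while $\vec{R}$ consists of integrals of $(\ma_{\mathsf{n}},\mb_{\mathsf{n}},\mc_{\mathsf{n}})$ against the conservation weights $1,\mfS^{-1}x,|\mfS^{-1}x|^2$, etc.

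The main obstacle is showing that $\mathbb{M}(r)$ is invertible with a bound depending only on $r$. I would exploit the near block structure induced by the orthogonality of the monomials $1,x_i,x_ix_j$ under $\mathcal{Z}_x^{-1/2}\,dx$: the scalar laws ($\int\ma$, $\int|\mfS^{-1}x|^2\ma$, $\int(6\mc+\mb\cdot\mfR\mfS^{-1}x)$) decouple into a small triangular block controlling $\msA(\mc\mathcal{Z}_x^{1/2})$, $\msA(\D_{\mfS x}(\ma\mathcal{Z}_x^{1/2}))$ and their linear combinations; the vector laws ($\int\mb$, $\int\mfS^{-1}x\ma$) control $\msA(\mb\mathcal{Z}_x^{1/2})$ and $\msA(\mfS\na_x\!\cdot\!(\mb\mathcal{Z}_x^{1/2}))$; and the skew-matrix law $\int(\mfS^{-1}x\wedge\mb+\mfS^{-1}x\wedge\mfR\mfS^{-1}x\,\ma)=0$ couples $\msA(\na^{skew}_{\mfS}(\mb\mathcal{Z}_x^{1/2}))$ and $\msA(\mfR\mb\mathcal{Z}_x^{1/2}\!\cdot\!\mfS x)$ into a block whose determinant is a rational function of $r$ that stays away from zero on $[0,1)$, thanks to the strict inequality $r<1$ (which keeps $\mfS^{-1}$ bounded). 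This yields $\|\Theta\|\le C_r\|\vec{R}\|$.

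Finally, each component of $\vec{R}$ is bounded by Cauchy--Schwarz: quadratic moments of $\ma_{\mathsf{n}}$ against $\mathcal{Z}_x^{-\delta_3}$, linear moments of $\mb_{\mathsf{n}}$ against $\mathcal{Z}_x^{-\delta_2}$, and zeroth moments of $\mc_{\mathsf{n}}$ against $\mathcal{Z}_x^{-\delta_1}$, which explains the hierarchy $\delta_3<\delta_2<\delta_1<1/8$ (the constraint $\delta_1<1/8$ guarantees the Gaussian factor dominates the polynomial weights in all such $L^2_x$ integrals). Since $\ma_0\mathcal{Z}_x^{1/2},\mb_0\mathcal{Z}_x^{1/2},\mc_0\mathcal{Z}_x^{1/2}$ are polynomials of uniformly bounded degree with coefficients in $\Theta$, a direct computation gives
\begin{equation*}
\sum_{|\al|\le N}\big(\|(\pa^\al_x\ma_0)\mathcal{Z}_x^{\delta_3/2}\|_{L^2_x}+\|(\pa^\al_x\mb_0)\mathcal{Z}_x^{\delta_2/2}\|_{L^2_x}+\|(\pa^\al_x\mc_0)\mathcal{Z}_x^{\delta_1/2}\|_{L^2_x}\big)\lesssim \|\Theta\|_2,
\end{equation*}
and combining the three paragraphs with $\ma=\ma_0+\ma_{\mathsf{n}}$ (similarly for $\mb,\mc$) and the triangle inequality closes the argument, yielding \eqref{scontrol}. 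The $N$-derivative version presents no additional difficulty since the derivatives fall on polynomials of bounded degree.
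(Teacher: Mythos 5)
Your overall strategy is the same as the paper's: reduce the claim to controlling the finitely many zero-mode coefficients, feed the decomposition \eqref{412}--\eqref{deofabcs} into the conservation laws \eqref{conmabc}, compute Gaussian moments via \eqref{msA}, solve the resulting linear system for the coefficients, and bound the right-hand sides by Cauchy--Schwarz against the weighted norms of $(\ma_{\mathsf{n}},\mb_{\mathsf{n}},\mc_{\mathsf{n}})$. However, the decisive step is asserted on the basis of a structural claim that is not correct, and this is where the gap lies. The system does \emph{not} decouple into the triangular blocks you describe: because $\mfR$ appears both in the law $\int(6\mc+\mb\cdot\mfR\mfS^{-1}x)\,dx=0$ and in the quadratic part of $\ma_0$ (through the symmetric matrix built from $\msA(\na^{skew}_\mfS(\mb\mathcal{Z}_x^{1/2}))$), the unknowns $(\mathcal{B})_{12}$, $\msA(\mc\mathcal{Z}_x^{1/2})$ and the combination $\mathcal{D}$ are genuinely coupled through the three laws $\int|\mfS^{-1}x|^2\ma\,dx=0$, $\int(6\mc+\mb\cdot\mfR\mfS^{-1}x)\,dx=0$ and the $12$-component of the angular-momentum law; neither your ``scalar block'' nor your ``skew block'' is self-contained. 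Consequently the invertibility you need cannot be read off from orthogonality of monomials or from the boundedness of $\mfS^{-1}$ for $r<1$: one must actually compute the coupled $3\times3$ coefficient matrix and check its determinant, which in the paper comes out as $8(9-r^4)/(1-r^2)^3>0$ for $r\in[0,1)$ --- a nontrivial cancellation that your proposal leaves unverified and whose claimed justification (the block decoupling) would fail if followed literally.

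Two secondary inaccuracies are worth noting, though they do not by themselves break the argument. First, the moment vector $\Theta$ you introduce has $9$--$10$ independent scalar components (and the paper works with the reduced combination $\mathcal{D}$ rather than $\msA(\D_{\mfS x}(\ma\mathcal{Z}_x^{1/2}))$ and $\msA(\mfR\mb\mathcal{Z}_x^{1/2}\cdot\mfS x)$ separately), so it does not ``match'' the thirteen conservation laws; the system is overdetermined, and two of the laws ($\int\ma\,dx=0$ and $\int\mfS^{-1}x\,\ma\,dx=0$) are in fact not needed here --- they serve the vanishing properties \eqref{abcspk}. Second, the constraint $\de_1<\tfrac18$ is not what makes the Cauchy--Schwarz step work (any $\de_i>0$ suffices, since the test weights are polynomials times $\mathcal{Z}_x^{-1/2}$); the hierarchy $\de_3<\de_2<\de_1<\tfrac18$ is inherited from the later macroscopic estimates. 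To complete your proof you would need to carry out the explicit moment computations of the paper's Steps 1--3 (or an equivalent verification that the coupled system is nonsingular for every $r\in[0,1)$), after which the final polynomial-times-Gaussian bound on the zero modes and the triangle inequality close the argument exactly as you indicate.
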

\begin{proof}
	For simplicity, we set
	\begin{equation*}
		\begin{aligned}
			&\mathcal{B}_1:=\msA(\mb \mathcal{Z}_x^{\f12}),~\mathcal{B}_2:=\msA(\mfS\na_x\cdot(\mb \mathcal{Z}_x^{\f12})),~\mathcal{B}:=\msA(\na^{skew}_\mfS(\mb \mathcal{Z}_x^{\f12})),\\
			&\mathcal{C}:=\msA(\mc \mathcal{Z}_x^{\f12}),~\mathcal{R}:=\Big(\mfR\msA(\na^{skew}_\mfS(\mb \mathcal{Z}_x^{\f12}))+\msA(\na^{skew}_\mfS(\mb \mathcal{Z}_x^{\f12}))\mfR\Big),\\
			& \mathcal{D}:=\f16\Big(-\msA(\D_{\mfS x}(\ma \mathcal{Z}_x^{\f12}))+2\sum_{i=1}^3(\mfS^2)_{ii}\msA(\mc \mathcal{Z}_x^{\f12})-2\msA( \mfR\mb \mathcal{Z}_x^{\f12}\cdot \mfS x)\Big).
		\end{aligned}
	\end{equation*}
	Here $\mathcal{B}$ is  skew-symmetric and $\mathcal{R}$ is symmetric. By direct computation, if we  set
	\begin{equation*}
		\mathcal{B}=\begin{pmatrix}
			0 & (\mathcal{B})_{12} & (\mathcal{B})_{13}\\
			-(\mathcal{B})_{12} & 0& (\mathcal{B})_{23}\\
			-(\mathcal{B})_{13}& -(\mathcal{B})_{23} & 0
		\end{pmatrix},\quad\mbox{then}\quad\mathcal{R}=\begin{pmatrix}
			-2r (\mathcal{B})_{12} & 0 & r (\mathcal{B})_{23}\\
			0 & -2r(\mathcal{B})_{12}& -r(\mathcal{B})_{13}\\
			r (\mathcal{B})_{23} & -r(\mathcal{B})_{13} & 0
		\end{pmatrix}.
	\end{equation*}
For $\mathcal{B}$, we compute
\begin{equation}
\label{equality for B G - 1 x}
\mathcal{B} \mfS^{-1} x =   (\frac 1 {\sqrt{1-r^2} }\mathcal{B}_{12} x_2 + \mathcal{B}_{13} x_3, - \frac 1 {\sqrt{1-r^2} } \mathcal{B}_{12} x_1 + \mathcal{B}_{23} x_3,  -\frac 1 {\sqrt{1-r^2} } \mathcal{B}_{13} x_1 - \frac 1 {\sqrt{1-r^2} } \mathcal{B}_{23}   x_2).
\end{equation}
	Since $\mathcal{B}_1,\mathcal{B}_2,\mathcal{B},\mathcal{C},\mathcal{R}$ and $\mathcal{D}$  do not depend on  $x$ variable, one may easily check   that
	\beno
			&&\sum_{|\al|\leq N}(\|(\pa^\al_x\mc) \mathcal{Z}_x^{\f{\de_1}2}\|_{L^2_x}+\|(\pa^\al_x\mb) \mathcal{Z}_x^{\f{\de_2}2}\|_{L^2_x}+\|(\pa^\al_x\ma) \mathcal{Z}_x^{\f{\de_3}2}\|_{L^2_x})
			\leq C_{r,\de,N}\sum_{|\al|\leq N} (\|(\pa^\al_x\mc_{\mathsf{n}})\mathcal{Z}_x^{\f{\de_1}2}\|_{L^2_x}\\&&+\|(\pa^\al_x\mb_{\mathsf{n}})\mathcal{Z}_x^{\f{\de_2}2}\|_{L^2_x}+\|(\pa^\al_x\ma_{\mathsf{n}})\mathcal{Z}_x^{\f{\de_3}2}\|_{L^2_x}) +|\mathcal{B}_1|+|\mathcal{B}_2|+\|\mathcal{B}\|_2+|\mathcal{C}|+\|\mathcal{R}\|_2+|\mathcal{D}|.
	\eeno
	Thus to prove \eqref{scontrol}, it is sufficient to obtain that
	\begin{equation}\label{finitetermcontrol}
		|\mathcal{B}_1|+|\mathcal{B}_2|+|(\mathcal{B})_{12}|+|(\mathcal{B})_{13}|+|(\mathcal{B})_{23}|+|\mathcal{C}|+|\mathcal{D}|\leq C_{r,\de} (\|\mc_{\mathsf{n}}\mathcal{Z}_x^{\f{\de_1}2}\|_{L^2_x}+\|\mb_{\mathsf{n}}\mathcal{Z}_x^{\f{\de_2}2}\|_{L^2_x}+\|\ma_{\mathsf{n}}\mathcal{Z}_x^{\f{\de_3}2}\|_{L^2_x}).
	\end{equation}
	
	\smallskip
	
	We  first observe that if $r=0$, then  $\mfR=0$ and $\mfS=\mathbb{I}$. One may check that in this situation 
	 $\mathcal{B}_1=\mathcal{B}_2=\mathcal{B}=\mathcal{C}=\mathcal{R}=\mathcal{D}=0$ from \eqref{conmabc}. Thus \eqref{scontrol} holds true. In what follows, we assume that   $r\in (0,1)$.
	\medskip
	
	\underline{Step 1: \textit{Estimates of $\mathcal{B}_1$ and $\mathcal{B}_2$.}} 
Thanks to \eqref{conmabc}, we have  $\int_{\R^3}\mb dx=0$ and $\int_{\R^3}\mfS^{-1}x\cdot\mb dx=0$. These imply that   $\mathcal{B}_1=0$ and 
	\beno
	\int_{\R^3} \mfS^{-1}x\cdot \mb_{\mathsf{n}}dx=-\f{1}3 \mathcal{B}_2\int_{\R^3} |\mfS^{-1}x|^2\mathcal{Z}_x^{-\f12}dx=-\f{(2\pi)^{\f32}}3(1+\f2{1-r^2})\mathcal{B}_2,
	\eeno
	where we use \eqref{msA} in the last step. Thus we get that 
	\begin{equation}\label{controlB1B2}
		|\mathcal{B}_1|+|\mathcal{B}_2|\leq \f{3(1-r^2)}{(2\pi)^{\f32}(3-r^2)}\Big|\int_{\R^3} \mfS^{-1}x\cdot \mb_{\mathsf{n}}dx\Big|\leq C_{r,\de} \|\mb_{\mathsf{n}}\mathcal{Z}_x^{\f{\de_2}2}\|_{L^2_x}.
	\end{equation}
	
	\medskip

	  \underline{Step 2: \textit{Estimates of $(\mathcal{B})_{13}$ and $(\mathcal{B})_{23}$.}}
	We begin with the estimate of $(\mathcal{B})_{13}$. Again by \eqref{conmabc}, in particular, $\int_{\R^3}(\mfS^{-1}x\wedge\mb+\mfS^{-1}x\wedge \mfR\mfS^{-1}x\ma)_{13}dx=0$ and using  \eqref{equality for G -1 x R G -1 x}, we have
\[ 
\int_{\R^3}(\f1{\sqrt{1-r^2}}x_1\mb_3-x_3\mb_1-\f r{\sqrt{1-r^2}}x_2x_3\ma)dx=0,
\]
and by\eqref{deofabcs}, using \eqref{equality for B G - 1 x} and the form of $\mathcal{R}$ we obtain that
	\beno
	&&\int_{\R^3}(\f1{\sqrt{1-r^2}}x_1(\mb_{\mathsf{n}})_3-x_3(\mb_{\mathsf{n}})_1)dx=\int_{\R^3}(\f1{\sqrt{1-r^2}}x_1\mb_3-x_3\mb_1)dx+(\mathcal{B})_{13}\int_{\R^3}(\f1{1-r^2}x_1^2+x_3^2)\mathcal{Z}_x^{-\f12}dx\\
	&&\int_{\R^3} \f r{\sqrt{1-r^2}}x_2x_3\ma_{\mathsf{n}}dx=\int_{\R^3} \f r{\sqrt{1-r^2}}x_2x_3\ma dx-\int_{\R^3}\f{r^2}{1-r^2}(\mathcal{B})_{13}x_2^2x_3^2\mathcal{Z}_x^{-\f12}dx.
	\eeno
	Using \eqref{msA}, these facts enable us to get that
	\beno
	\f{2(2\pi)^{\f32}}{1-r^2}(\mathcal{B})_{13}=\int_{\R^3}(\f1{\sqrt{1-r^2}}x_1(\mb_{\mathsf{n}})_3-x_3(\mb_{\mathsf{n}})_1)dx-\int_{\R^3} \f r{\sqrt{1-r^2}}x_2x_3\ma_{\mathsf{n}}dx.
	\eeno
	 Similarly, utilizing the condition that $\int_{\R^3}(\mfS^{-1}x\wedge\mb+\mfS^{-1}x\wedge \mfR\mfS^{-1}x\ma)_{23}dx=0$ and \eqref{equality for G -1 x R G -1 x}, we   obtain that
	\beno
	\f{2(2\pi)^{\f32}}{1-r^2}(\mathcal{B})_{23}=\int_{\R^3}(\f1{\sqrt{1-r^2}}x_2(\mb_{\mathsf{n}})_3-x_3(\mb_{\mathsf{n}})_2)dx+\int_{\R^3} \f r{\sqrt{1-r^2}}x_1x_3\ma_{\mathsf{n}}dx.
	\eeno
	We are led to that
	\begin{equation}\label{controlB1223}
		|(\mathcal{B})_{13}|+|(\mathcal{B})_{23}|\leq C_{r,\de}(\|\mb_{\mathsf{n}} \mathcal{Z}_x^{\f{\de_2}2}\|_{L^2_x}+\|\ma_{\mathsf{n}} \mathcal{Z}_x^{\f{\de_3}2}\|_{L^2_x}).
	\end{equation}

	\medskip

	\underline{Step 3: \textit{Estimates of $(\mathcal{B})_{12},\mathcal{C}$ and $\mathcal{D}$.}}  By \eqref{conmabc}, we have
	\begin{equation}\label{BCD3}
		\begin{aligned}
			\int_{\R^3}|\mfS^{-1}x|^2\ma dx=0,\quad \int_{\R^3}(6\mc+\mb\cdot \mfR\mfS^{-1}x)dx=0,\quad \int_{\R^3}(\mfS^{-1}x\wedge\mb+\mfS^{-1}x\wedge \mfR\mfS^{-1}x\ma)_{12}dx=0.
		\end{aligned}
	\end{equation}
	
	\noindent$\bullet$ From the first equation, due to \eqref{deofabcs}, we can derive that
	\begin{equation}\label{as1}
		\begin{aligned}
			&\int_{\R^3}|\mfS^{-1}x|^2\ma_{\mathsf{n}}dx=\f12\int_{\R^3}|\mfS^{-1}x|^2(\mathcal{R}\mfS^{-1}x,\mfS^{-1}x)\mathcal{Z}_x^{-\f12}dx-\f1{2(2\pi)^{\f32}}\int_{\R^3}(\mathcal{R}\mfS^{-1}x,\mfS^{-1}x)\mathcal{Z}_x^{-\f12}dx\\&\times\int_{\R^3}|\mfS^{-1}x|^2\mathcal{Z}_x^{-\f12}dx
			-\mathcal{C}\Big(\int_{\R^3}|x|^2|\mfS^{-1}x|^2\mathcal{Z}_x^{-\f12}dx-\f1{(2\pi)^{\f32}}\int_{\R^3}|x|^2\mathcal{Z}_x^{-\f12}dx\int_{\R^3}|\mfS^{-1}x|^2\mathcal{Z}_x^{-\f12}dx\Big)\\
			&+\mathcal{D} \Big(\int_{\R^3}|\mfS^{-1}x|^4\mathcal{Z}_x^{-\f12}dx-\f1{(2\pi)^{\f32}}(\int_{\R^3}|\mfS^{-1}x|^2\mathcal{Z}_x^{-\f12}dx)^2\Big).
		\end{aligned}
	\end{equation}
	Basic computation  together with \eqref{msA} imply that
	\beno
	&&\f1{2(2\pi)^{\f32}}\int_{\R^3}|\mfS^{-1}x|^2(\mathcal{R}\mfS^{-1}x,\mfS^{-1}x)\mathcal{Z}_x^{-\f12}dx=\f{-r(10-2r^2)}{(1-r^2)^2}(\mathcal{B})_{12},\\
	&&\f1{2(2\pi)^{\f32}}\int_{\R^3}(\mathcal{R}\mfS^{-1}x,\mfS^{-1}x)\mathcal{Z}_x^{-\f12}dx=\f{-2r}{1-r^2}(\mathcal{B})_{12},\quad \f1{(2\pi)^{\f32}}\int_{\R^3}|\mfS^{-1}x|^4\mathcal{Z}_x^{-\f12}dx=\f{12-4r^2}{(1-r^2)^2}+3,\\
	&& \f1{(2\pi)^{\f32}}\int_{\R^3}|\mfS^{-1}x|^2\mathcal{Z}_x^{-\f12}dx=\f{3-r^2}{1-r^2},\quad \f1{(2\pi)^{\f32}}\int_{\R^3}|x|^2|\mfS^{-1}x|^2\mathcal{Z}_x^{-\f12}dx=\f{15-5r^2}{1-r^2}.
	\eeno
	Substituting them into \eqref{as1}, we get that
	\begin{equation}\label{S-1x}
		 \f1{(2\pi)^{\f32}}\int_{R^3}|\mfS^{-1}x|^2\ma_{\mathsf{n}}dx=\f{-4r}{(1-r^2)^2}(\mathcal{B})_{12}-\f{2(3-r^2)}{1-r^2}\mathcal{C}+ \f{2(r^4-2r^2+3)}{(1-r^2)^2}\mathcal{D}.
	\end{equation}
	
	\noindent$\bullet$ By the second equation in \eqref{BCD3},  we have $(2\pi)^{-\f32}\int_{\R^3}\mb\cdot \mfR\mfS^{-1}xdx=-6\mathcal{C}$, by \eqref{deofabcs} and \eqref{equality for R G -1 x}, we have
	\beno
	\f1{(2\pi)^{\f32}}\int_{\R^3}\mb_{\mathsf{n}}\cdot \mfR\mfS^{-1}xdx=\f1{(2\pi)^{\f32}}\int_{\R^3}\mb\cdot \mfR\mfS^{-1}xdx-\f{(\mathcal{B})_{12}}{(2\pi)^{\f32}}\f r{1-r^2}\int_{\R^3}(x_1^2+x_2^2)\mathcal{Z}_x^{-\f12}dx.
	\eeno
From this, we deduce that
	\begin{equation}\label{v2RS}
	\f1{(2\pi)^{\f32}}\int_{\R^3}\mb_{\mathsf{n}}\cdot \mfR\mfS^{-1}xdx=-6\mathcal{C}-\f{2r}{1-r^2}(\mathcal{B})_{12}.
	\end{equation}
	
	\noindent$\bullet$ From the third equation in \eqref{BCD3} and  \eqref{deofabcs}, using  \eqref{equality for G -1 x R G -1 x} and   \eqref{equality for B G - 1 x}, we derive that 
	\ben\label{ba0}
		&&\f1{(2\pi)^{\f32}}\int_{\R^3}(\f1{\sqrt{1-r^2}}(x_1\mb_2-x_2\mb_1)-\f r{1-r^2}(x_1^2+x_2^2)\ma)dx=0,\\
	  &&\notag
		\f1{(2\pi)^{\f32}}\int_{\R^3}(\f1{\sqrt{1-r^2}}(x_1(\mb_{\mathsf{n}})_2-x_2(\mb_{\mathsf{n}})_1)dx=\f1{(2\pi)^{\f32}}\int_{\R^3}(\f1{\sqrt{1-r^2}}(x_1\mb_2-x_2\mb_1)dx+\f{2}{1-r^2}(\mathcal{B})_{12},\\
		&&\label{ba1}
	\een
	and by  \eqref{deofabcs}  
	\begin{equation}\label{ba2}
		\begin{aligned}
			&\int_{\R^3}\f r{1-r^2}(x_1^2+x_2^2)\ma_{\mathsf{n}}dx=\int_{\R^3}\f r{1-r^2}(x_1^2+x_2^2)\ma dx+\f12\int_{\R^3}\f r{1-r^2}(x_1^2+x_2^2)(\mathcal{R}\mfS^{-1}x,\mfS^{-1}x)\mathcal{Z}_x^{-\f12}dx\\
			&-\f1{2(2\pi)^{\f32}}\int_{\R^3}(\mathcal{R}\mfS^{-1}x,\mfS^{-1}x)\mathcal{Z}_x^{-\f12}dx\int_{\R^3}\f r{1-r^2}(x_1^2+x_2^2)\mathcal{Z}_x^{-\f12}dx\\
			&-\mathcal{C}\Big(\int_{\R^3_x}|x|^2\f r{1-r^2}(x_1^2+x_2^2)\mathcal{Z}_x^{-\f12}dx-\f1{(2\pi)^{\f32}}\int_{\R^3_x}|x|^2\mathcal{Z}_x^{-\f12}dx\int_{\R^3_x}\f r{1-r^2}(x_1^2+x_2^2)\mathcal{Z}_x^{-\f12}dx\Big)\\
			&+\mathcal{D} \Big(\int_{\R^3_x}\f r{1-r^2}(x_1^2+x_2^2)|\mfS^{-1}x|^2\mathcal{Z}_x^{-\f12}dx-\f1{(2\pi)^{\f32}}\int_{\R^3_x}|\mfS^{-1}x|^2\mathcal{Z}_x^{-\f12}dx\int_{\R^3_x}\f r{1-r^2}(x_1^2+x_2^2)\mathcal{Z}_x^{-\f12}dx\Big).
		\end{aligned}
	\end{equation}
In virtue of \eqref{msA}, it is not difficult to check that
	\beno
	&&\f1{2(2\pi)^{\f32}}\int_{\R^3}\f r{1-r^2}(x_1^2+x_2^2)(\mathcal{R}\mfS^{-1}x,\mfS^{-1}x)\mathcal{Z}_x^{-\f12}dx=\f{-8r^2}{(1-r^2)^2}(\mathcal{B})_{12},\\
	&&\f1{(2\pi)^{\f32}}\int_{\R^3}\f r{1-r^2}(x_1^2+x_2^2)|\mfS^{-1}x|^2\mathcal{Z}_x^{-\f12}dx=\f{r(10-2r^2)}{(1-r^2)^2}.
	\eeno
	Plugging them into \eqref{ba2} will give that
	\begin{equation}\label{ba3}
		\f1{(2\pi)^{\f32}}\int_{\R^3}\f r{1-r^2}(x_1^2+x_2^2)\ma_{\mathsf{n}}dx=\f1{(2\pi)^{\f32}}\int_{\R^3}\f r{1-r^2}(x_1^2+x_2^2)\ma dx-\f{4r^2}{(1-r^2)^2}(\mathcal{B})_{12}-\f{4r}{1-r^2}\mathcal{C}+\f{4r}{(1-r^2)^2}\mathcal{D}.
	\end{equation}	
	Patching together  \eqref{ba0}, \eqref{ba1} and \eqref{ba3}, we  have
	\ben\label{xv12}
		&&\f1{(2\pi)^{\f32}}\int_{\R^3}(\f1{\sqrt{1-r^2}}(x_1(\mb_{\mathsf{n}})_2-x_2(\mb_{\mathsf{n}})_1)dx-\f1{(2\pi)^{\f32}}\int_{\R^3}\f r{1-r^2}(x_1^2+x_2^2)\ma_{\mathsf{n}}dx\nonumber\\
		&&=\f{2r^2+2}{(1-r^2)^2}(\mathcal{B})_{12}+\f{4r}{1-r^2}\mathcal{C}-\f{4r}{(1-r^2)^2}\mathcal{D}.
	\een
	
	From \eqref{S-1x}, \eqref{v2RS} and \eqref{xv12}, we get the following system:
	\begin{equation*}
		\underbrace{\begin{pmatrix}
			\f{-4r}{(1-r^2)^2} & -\f{2(3-r^2)}{1-r^2} & \f{2(r^4-2r^2+3)}{(1-r^2)^2}\\
			-\f{2r}{1-r^2} & -6& 0\\
			\f{2r^2+2}{(1-r^2)^2} & \f{4r}{1-r^2} & -\f{4r}{(1-r^2)^2}
		\end{pmatrix}}_{:=\mathsf{A}}\begin{pmatrix}(\mathcal{B})_{12}\\ \mathcal{C}\\\mathcal{D}\end{pmatrix}=R.H.S.,
	\end{equation*}
	where $R.H.S.$ is bounded by $C_{r,\de}(\|\mb_{\mathsf{n}}\mathcal{Z}_x^{\f{\de_2}2}\|_{L^2_x}+\|\ma_{\mathsf{n}}\mathcal{Z}_x^{\f{\de_3}2}\|_{L^2_x})$. It is not difficult to verify that $\det(\mathsf{A})=\f{8(9-r^4)}{(1-r^2)^3}>0$. Since $r\in(0,1)$, this yields that
	\begin{equation}\label{controlB12CD}
		|(\mathcal{B})_{12}|+|\mathcal{C}|+|\mathcal{D}|\leq C_{r,\de}(\|\mb_{\mathsf{n}}\mathcal{Z}_x^{\f{\de_2}2}\|_{L^2_x}+\|\ma_{\mathsf{n}}\mathcal{Z}_x^{\f{\de_3}2}\|_{L^2_x}).
	\end{equation}	
	We complete the proof of the lemma thanks to  \eqref{controlB1B2}, \eqref{controlB1223} and \eqref{controlB12CD}. 
\end{proof}

\subsubsection{Control of non-zero mode}  To control the non-zero mode, we first rewrite the equation \eqref{pertubNSlandaucauchy} as
\ben\label{nonli}
&&\partial_t f + \mathscr{C}_l(t)\left( \mfS v \cdot \nabla_x -  \mfS x \cdot \nabla_v + \mfR x \cdot \nabla_x -\mfR v \cdot \nabla_v\right)f=  \mathscr{C}_1e^{-\f12|x|^2}L(f)+g,
\een
where  $L(f):=Q(\mu, f)+Q(f,\mu)$ with $\mu=(2\pi)^{-\f32}e^{-\f12|v|^2}$ and $g:=\mathscr{C}_2Q(f,f)$. We recall that   $f$ satisfies the conservation laws 
\begin{equation}\label{conlaw2}
	\int_{\R^3_x\times\R^3_v}(1,\mfS^{-1}x,v,|\mfS^{-1}x|^2,\mfS^{-1}x\cdot v,|v+\mfR \mfS^{-1}x|^2,\mfS^{-1}x\wedge (v+\mfR\mfS^{-1}x))^\tau f(t,x,v)dvdx=0.
\end{equation}

Recalling the function space $\mathcal{H}^{N,\de}_{x}$ and $\mathcal{H}^{N,\de}_{x}L^2_v$ defined in \eqref{xepwspace} and \eqref{Pfspace}, we want to prove
\begin{thm}\label{macroestimate}
Let $f=f(t,x,v)$ be a solution to \eqref{nonli}. Then  for any $N\in \N^+$ and $0<\de<\f1{16}$,  there exists a energy functional $\mathcal{F}_{N,\de}$ verifying  $|\mathcal{F}_{N,\delta}|\leq C_{\de,N}\|\mathcal{P}_v^{10}f\|^2_{\mathcal{H}^{N,2\de}_{x}L^2_v}$. Moreover,  it holds that
\begin{equation}
	\f d{dt} \mathcal{F}_{N,\delta}+\|\mP f\|^2_{\mathcal{H}^{N,\de/2}_{x}L^2_v}\leq C_{\de,N}\|\mathcal{P}_v^{10}\mathbb{(I-P)}f\|^2_{\mathcal{H}^{N,2\de}_{x}L^2_v}+C_{\de,N}\sum_{|\al|\leq N-1}\sum_{i=1}^{13}\int_{\R^3_x}\mathcal{Z}_x^{4\de}(\pa^\al_xg,e_i)^2_{L^2_v}dx,
\end{equation}
where $\mathcal{Z}_x^\a$ is defined in \eqref{exponentialfunction} and
\begin{equation}\label{ei}
	\begin{aligned}
&e_1=1,e_2=v_1,e_3=v_2,e_4=v_3,e_5=v_1^2,e_6=v_2^2,e_7=v_3^2,\\
&e_8=v_1v_2,e_9=v_1v_3,e_{10}=v_2v_3,e_{11}=v_1|v|^2,e_{12}=v_2|v|^2,e_{13}=v_3|v|^2.
	\end{aligned}
\end{equation}
\end{thm}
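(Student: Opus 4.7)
The plan is to follow a weighted micro-macro energy method tailored to the operator $\mathcal{T}_R := \mathscr{C}_l(t)(\mfS v\cdot\na_x-\mfS x\cdot\na_v+\mfR x\cdot\na_x-\mfR v\cdot\na_v)$ and to the zero-mode/non-zero-mode splitting in \eqref{412}--\eqref{deofabcs}. First I will derive the macroscopic system \eqref{macroeq}--\eqref{defofT2} by projecting \eqref{nonli} on the thirteen test functions $(1, v_i, v_iv_j, v_i|v|^2)$ against $\mu$ and then multiplying through by $e^{|x|^2/2}$. The key structural observation is that the skew-symmetric transport piece $\mfR x\cdot\na_x$ is antisymmetric in the $L^2_x$ inner product and hence cancels in energy pairings, while $\mfR\mb$ and the $v$-derivative rotations are lower-order. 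After this, the system for $(\ma e^{|x|^2/2}, \mb e^{|x|^2/2}, \mc e^{|x|^2/2})$ has essentially the shape of a compressible Euler-type linearization whose dissipation one recovers through carefully chosen test functions, exactly as in the Guo--Strain style micro-macro program, but carried out in the weighted space with measure $\mathcal{Z}_x^{-(1-\de)}\,dx$.

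Next I will build $\mathcal{F}_{N,\delta}$ as a sum over $|\alpha|\le N-1$ of three natural cross-type functionals, one adapted to each of $\ma_{\mathsf n},\mb_{\mathsf n},\mc_{\mathsf n}$. The $\mc$-estimate is the easiest: the fifth equation of \eqref{macroeq} already gives $\mathscr{C}_l(t)\mfS\na_x(\mc e^{|x|^2/2}) = \sfT_{51}+\pa_t\sfT_{52}$, so pairing $\pa^\al_x$ of this identity with $\pa^\al_x\mfS\na_x(\mc_{\mathsf n}\mathcal{Z}_x^{1/2})\mathcal{Z}_x^{-(1-\de/2)}$ produces $\|\pa^\al_x\na_x(\mc_{\mathsf n}\mathcal{Z}_x^{1/2})\|^2$ modulo a time derivative, which by the Poincar\'e inequality \eqref{PI} controls $\|\pa^\al_x\mc_{\mathsf n}\|_{L^2(\mathcal{Z}_x^{\de/2})}$ with the desired $\<x\>^2$ gain. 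For $\mb_{\mathsf n}$ I use the fourth equation, subtracting an appropriate multiple of the third to isolate the traceless symmetric part $\na_\mfS^{sym}(\mb_{\mathsf n}\mathcal{Z}_x^{1/2})$ (in view of the identity \eqref{skew and symmetric of b n  Z x 1 2}); pairing against a suitable $\na_\mfS^{sym}$-type test function and invoking the vanishing condition $\int\na_\mfS^{skew}(\mb_{\mathsf n}\mathcal{Z}_x^{1/2})\mathcal{Z}_x^{-1/2}=0$ from \eqref{abcspk} allows application of the Poincar\'e--Korn inequality \eqref{PKI}. Finally the $\ma_{\mathsf n}$-bound is extracted from the second equation $\pa_t(\mb e^{|x|^2/2})=-\mathscr{C}_l(t)\mfS\na_x(\ma e^{|x|^2/2})-2\mathscr{C}_l(t)(\mfS\na_x\mc)e^{|x|^2/2}-\cdots+\sfT_{21}$ by testing against $\mfS\na_x(\ma_{\mathsf n}\mathcal{Z}_x^{1/2})\mathcal{Z}_x^{-(1-\de_3/2)}$, substituting the already-bounded $\mc$-gradient terms, and applying \eqref{PI} once more to $\ma_{\mathsf n}$.

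The cross-term time derivatives arising from inserting $\pa_t(\mb e^{|x|^2/2})$ etc.\ are the source of the functional $\mathcal{F}_{N,\delta}$; its size bound $|\mathcal{F}_{N,\delta}|\le C_{\delta,N}\|\mathcal{P}_v^{10}f\|^2_{\mathcal{H}^{N,2\de}_x L^2_v}$ follows since each $\sfT_{k1},\sfT_{l2}$ is a $v$-moment of $(\mathbb{I}-\mP)f$ or of $g$, with polynomial $v$-weight at most $|v|^3$, giving room to absorb the factor $\mathcal{P}_v^{10}$. Commuting $\pa^\al_x$ with $\mathcal{T}_R$ is handled via Lemma \ref{trans}, and the exchange between $L^2_x$-derivatives and the weight $\mathcal{Z}_x^\sigma$ produces only good terms thanks to $\pa_x\mathcal{Z}_x^\sigma=2\sigma x\mathcal{Z}_x^\sigma$ and the $\<x\>^2$ gain in Poincar\'e.

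Once dissipation of the non-zero modes $(\ma_{\mathsf n},\mb_{\mathsf n},\mc_{\mathsf n})$ in $\mathcal{H}^{N,\de/2}_x$ is obtained (with the $v$-contribution encoded in $\|\mP f\|_{\mathcal{H}^{N,\de/2}_x L^2_v}\sim\|(\ma,\mb,\mc)\|_{\mathcal{H}^{N,\de/2}_x}$), Lemma \ref{abceqtoabcs} promotes this to full control of $(\ma,\mb,\mc)$ by the non-zero modes at no extra cost. The main technical obstacle I foresee is the careful bookkeeping in the $\ma$-equation: because several of the cross terms involve compositions like $\mfR x\cdot\na_x(\ma_{\mathsf n}\mathcal{Z}_x^{1/2})$ which are only skew in $L^2(dx)$, not in the weighted $L^2(\mathcal{Z}_x^{-(1-\de)}dx)$, one picks up commutator terms proportional to $\de|x|^2$ that must be absorbed either by choosing $\de<1/16$ small enough or by using the $\<x\>^2$ gain from Poincar\'e; the hierarchy $\de_3<\de_2<\de_1$ in Lemma \ref{abceqtoabcs} is exactly designed to let these be absorbed stepwise when estimating $\ma,\mb,\mc$ in that order.
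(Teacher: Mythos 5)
Your overall route is the same as the paper's: project \eqref{nonli} onto the thirteen moments to get \eqref{macroeq}--\eqref{defofT2}, run cross-functional estimates for $\mc_{\mathsf n}$, $\mb_{\mathsf n}$, $\ma_{\mathsf n}$ in that order with the weight hierarchy $\de_3<\de_2<\de_1$, use the variant Poincar\'e and Poincar\'e--Korn inequalities on the non-zero modes, and invoke Lemma \ref{abceqtoabcs} for the zero modes. There is, however, a genuine gap at the $\ma_{\mathsf n}$ step when $r\neq 0$. Testing the second equation of \eqref{macroeq} against $\mfS\na_x(\ma_{\mathsf n}\mathcal{Z}_x^{1/2})\mathcal{Z}_x^{-(1-\de_3)}$ unavoidably brings in $\pa_t(\mb\,\mathcal{Z}_x^{1/2})$, and its zero-mode part ($\pa_t\msA(\na^{skew}_\mfS(\mb\,\mathcal{Z}_x^{1/2}))$ and $\pa_t\msA(\mfS\na_x\cdot(\mb\,\mathcal{Z}_x^{1/2}))$) must itself be rewritten through the macroscopic equations; after that rewriting one is left with a rotational term (the term $\mathcal{A}_{2,2}$ in the paper's proof of Lemma \ref{as}) in which $\ma_{\mathsf n}\mathcal{Z}_x^{1/2}$ is paired against a quantity quadratic in $x$ multiplied by the zero mode of $\na^{sym}_\mfS(\mb\,\mathcal{Z}_x^{1/2})$. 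This term is absorbed neither by \eqref{PI}, which gains only one power of $\<x\>$, nor by the final $\|\mP f\|$-absorption, since the $\ma$-functional enters with the smallest coefficient $\vep_2$ and a contribution of size $\|\ma_{\mathsf n}\|\,\|\mP f\|$ without a small prefactor breaks the closure. The paper's fix, absent from your sketch, is to use the fourth equation of \eqref{macroeq} to replace $\msA(\na^{sym}_\mfS(\mb\,\mathcal{Z}_x^{1/2}))_{31},\ (\cdot)_{32}$ by $\msA(\sfT_{41}+\pa_t\sfT_{42})$ and to move the $\pa_t\sfT_{42}$ part into the energy functional; this is precisely the $r(\sqrt{1-r^2}-1/\sqrt{1-r^2})$-correction in Lemma \ref{as} and \eqref{F1abc}. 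Note also that the obstacle you do flag is a non-issue: $\mfR x\cdot\na_x$ remains skew-adjoint on $L^2(\mathcal{Z}_x^{-(1-\de)}dx)$ because the weight is radial, $(\mfR x,x)_{\R^3}=0$ and $\mathrm{tr}\,\mfR=0$; the relevant difficulty is the zero-mode coupling just described, not a loss of antisymmetry.

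Two smaller corrections. Lemma \ref{trans} handles the kinetic transport summed over all derivatives of a fixed total order; it does not dispose of the commutators in the \emph{macroscopic} system. For $N\ge 2$ the differentiated equations \eqref{macroeqNgeq2a}--\eqref{macroeqNgeq2c3} contain explicit lower-order terms such as $(\mfS\pa_x^{\al-\om}\mc)\mathcal{Z}_x^{1/2}$ and $(\mathrm{tr}(\mfR\pa_x^{\al\pm\om})\mc)\mathcal{Z}_x^{1/2}$, which the paper controls by an induction on $N$ through the bound $\mathcal{I}_N\lesssim\sum_{k<N}\mathcal{D}_k$; your proposal should make this absorption explicit. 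On the other hand, for $|\al|\ge1$ the vanishing conditions \eqref{vanishpaabc} hold for $\pa^\al_x(\ma,\mb,\mc)$ directly, so the zero/non-zero splitting \eqref{deofabcs} and Lemma \ref{abceqtoabcs} are only needed at $|\al|=0$, which simplifies the higher-order step relative to what you describe.
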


In the sequel, we will provide a detailed proof of Theorem  \ref{macroestimate} in the case of $N=1$. We begin with
 the upper bound for $\sfT_{k1}$ and $\sfT_{l2}$, where $1\le k\le 5$ and $l=4,5$(see the definitions in 
\eqref{defofT1} and \eqref{defofT2}). 
\begin{lem}\label{estimateofT}
For any $0<\de<\f18$ and $\al\in\Z^3_+$, we have 
\begin{equation*}
\begin{aligned}
&\sum_{k=1}^5\|\pa^\al_x(\sfT_{k1}\mathcal{Z}_x^{-\f12})\mathcal{Z}_x^{\f{\de}2}\|^2_{L^2_x}\leq C_{\de,\al}\Big(\|\mathcal{P}_v^{10}(\mathbb{I-P})f\|^2_{\mathcal{H}^{|\al|+1,3\de/5}_{x}L^2_v}+\sum_{i=1}^{13}\int_{\R^3_x}\mathcal{Z}_x^{\de}(\pa^\al_xg,e_i)^2_{L^2_v}dx\Big),\\
&\sum_{l=4,5}\|\pa^\al_x(\sfT_{l2}\mathcal{Z}_x^{-\f12})\mathcal{Z}_x^{\f\de2}\|^2_{L^2_x}\leq C_{\de,\al}\|\mathcal{P}_v^{10}(\mathbb{I-P})f\|^2_{\mathcal{H}^{|\al|,3\de/5}_{x}L^2_v}.
\end{aligned}
\end{equation*}
\end{lem}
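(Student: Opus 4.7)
The plan is to reduce every $\sfT_{k1}$ and $\sfT_{l2}$, after multiplying by $\mathcal{Z}_x^{-1/2}$ so as to cancel the $e^{\frac12|x|^2}$ prefactor in their definitions, to a finite sum of $v$-moments of $(\mathbb{I}-\mathbb{P})f$, of $L((\mathbb{I}-\mathbb{P})f)$, and of $g = \mathscr{C}_2 Q(f,f)$, indexed by test polynomials drawn from the thirteen functions $e_i$ of \eqref{ei}. First, I will commute $\pa_x^\alpha$ through the $v$-integrals and then integrate by parts in $v$ to move every $\nabla_v$ inside the streaming operator $\mathcal{T}= \mfS v\cdot\nabla_x - \mfS x\cdot \nabla_v + \mfR x\cdot\nabla_x -\mfR v\cdot\nabla_v$ off $(\mathbb{I}-\mathbb{P})f$ onto the test function. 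The term $-\mfS x\cdot\nabla_v$ leaves a factor $\mfS x$ and a gradient of a degree-$\le 3$ polynomial; the term $-\mfR v\cdot \nabla_v$ after IBP yields (since $\mathrm{tr}\,\mfR = 0$) a polynomial moment of degree $\le 4$ in $v$; the $\mfR x\cdot\nabla_x$ term produces a first-order $x$-derivative multiplied by $\mfR x$; and $\mfS v\cdot \nabla_x$ gives a single $\nabla_x$ landing either inside or outside the $v$-integral. Hence each $\pa_x^\alpha(\sfT_{k1}\mathcal{Z}_x^{-1/2})$ becomes a linear combination, with polynomial-in-$x$ coefficients of bounded degree, of $(\pa_x^{\alpha+\gamma}(\mathbb{I}-\mathbb{P})f, p_i(v))_{L^2_v}$ with $|\gamma|\le 1$ and $\deg p_i\le 4$, of $(\pa_x^\alpha L((\mathbb{I}-\mathbb{P})f), p_i(v))_{L^2_v}$, and of $(\pa_x^\alpha g, e_i)_{L^2_v}$.

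For $\sfT_{l2}$, $l=4,5$, no streaming or collision operator is present, so the reduction yields only moments of $\pa_x^\alpha (\mathbb{I}-\mathbb{P})f$ against polynomials of degree $\le 2$. Applying Cauchy--Schwarz in $v$ against the weight $\mathcal{P}_v^{-10}$ (so that the $v$-polynomial factor is absorbed into $\int \mathcal{P}_v^{-2(10-\deg p)} dv < \infty$) bounds each such moment by $\|\mathcal{P}_v^{10}\pa_x^\alpha (\mathbb{I}-\mathbb{P})f\|_{L^2_v}$, and the $L^2_x$ integration with weight $\mathcal{Z}_x^\de$ gives the second claimed inequality once one observes that $\de < 6\de/5$ so that $\mathcal{Z}_x^{\de} \le \mathcal{Z}_x^{6\de/5}$.

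For $\sfT_{k1}$ the analogous Cauchy--Schwarz argument applies to the $(\mathbb{I}-\mathbb{P})f$-moments; the polynomial-in-$x$ coefficients produced by the streaming operator (i.e.\ factors of $\mfS x$ and $\mfR x$ of total degree $\le 1$) are absorbed by the gap in the exponential weight: for any fixed power $k$, $\langle x\rangle^{2k}\mathcal{Z}_x^{\de} \le C_\de \mathcal{Z}_x^{6\de/5}$. For the Landau piece $(\pa_x^\alpha L((\mathbb{I}-\mathbb{P})f), p_i(v))_{L^2_v}$, I apply Lemma~\ref{ghf} with $g=\mu$, $h=\pa_x^\alpha(\mathbb{I}-\mathbb{P})f$, and $f=p_i(v)\chi(v)$ localised by a Gaussian cutoff (or equivalently, bound by duality against the Gaussian-weighted test) to get a pointwise-in-$x$ bound by $\|\mathcal{P}_v^{10}\pa_x^\alpha(\mathbb{I}-\mathbb{P})f\|_{L^2_v}^2$, which again fits into the $\mathcal{H}^{|\al|+1,3\de/5}_xL^2_v$-norm of the right-hand side. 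The $g$-contribution $(\pa_x^\alpha g,e_i)_{L^2_v}$ is placed directly into the second term on the right-hand side.

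The main obstacle is bookkeeping: one must verify that, after all the IBPs performed on the streaming operator and after $\pa_x^\alpha$ has been moved inside, the test-polynomial degree in $v$ never exceeds a fixed integer (so that $\mathcal{P}_v^{10}$ is large enough), and that the polynomial-in-$x$ growth produced by $\mfS x\cdot \nabla_v$ and $\mfR x\cdot \nabla_x$ is of bounded order, allowing it to be uniformly absorbed into the gap between $\mathcal{Z}_x^\de$ on the left and $\mathcal{Z}_x^{6\de/5}$ on the right for any $\de<1/8$. Once this is verified, summing over all $k=1,\dots,5$ and $l=4,5$ yields both estimates claimed in the lemma.
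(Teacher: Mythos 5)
Your proposal is essentially the paper's own argument: cancel the $e^{\frac12|x|^2}$ prefactor, integrate by parts in $v$ so that the streaming terms become finitely many polynomial moments of $(\mathbb{I}-\mathbb{P})f$ (with $\mfS v\cdot\nabla_x$ and $\mfR x\cdot\nabla_x$ responsible for the single extra $x$-derivative, hence the $\mathcal{H}^{|\al|+1,3\de/5}_x L^2_v$ norm), absorb the polynomial-in-$x$ factors into the gap between $\mathcal{Z}_x^{\de}$ and $\mathcal{Z}_x^{6\de/5}$, bound each moment by Cauchy--Schwarz against $\mathcal{P}_v^{10}$, and place the $(\pa^\al_x g,e_i)$ moments directly into the second term on the right. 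The one step that does not stand as written is your treatment of the Landau piece: replacing the test polynomial $v(|v|^2-5)$ by a Gaussian-localised $p_i\chi$ is not an equivalent reformulation (it changes the moment being estimated), and with $g=\mu$ you only cover the $Q(\mu,\cdot)$ half of $L$. The paper handles this by a direct integration-by-parts moment bound, $|(L(h),\mathcal{P}_v^l)_{L^2_v}|+|(L(h),v\mathcal{P}_v^l)_{L^2_v}|\ls\|\mathcal{P}_v^{l+7}h\|_{L^2_v}+\|\mathcal{P}_v^{10}h\|_{L^2_v}$; alternatively, your route through Lemma~\ref{ghf} works without any cutoff by taking the polynomial weight on the untruncated test function sufficiently negative (e.g.\ $a_3=0$, $a_4=2$, $\om_4=-6$, $\om_3=5$), and by applying the lemma with the two arguments exchanged for the $Q(\cdot,\mu)$ half; since no $v$-derivatives of $(\mathbb{I}-\mathbb{P})f$ then appear, the result still fits the right-hand side of the lemma. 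Either fix is local, so the rest of your bookkeeping goes through unchanged.
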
	
\begin{proof} We only give the proof to $\sfT_{51}$ with $\al\in \Z^3_+$ since the similar argument can be applied for $\sfT_{k1}$ and $\sfT_{l2}$. For $\sfT_{51}$, on one hand, by integration by parts, we have 
\begin{equation*}
	\begin{aligned}
		&\mathscr{C}_l(t)\pa^\al_x\big((\mfS x\cdot\na_v )\big((\mathbb{I}-\mP)f\big),v_i(|v|^2-5)\big)_{L^2_v}\\
		=&-\mathscr{C}_l(t)\big(\sum_{j=1}^3\pa^{\om_j}_x(\mfS x)\pa^{\al-\om_j}_x\big((\mathbb{I}-\mP)f\big)+\mfS x\pa^\al_x\big((\mathbb{I}-\mP)f\big),\na_v(v_i(|v|^2-5))\big)_{L^2_v},\\
		&\mathscr{C}_l(t)\pa^\al_x\big((\mfR v\cdot\na_v )\big((\mathbb{I}-\mP)f\big),v_i(|v|^2-5)\big)_{L^2_v}=-\mathscr{C}_l(t)\sum_{j=1}^3\big(\pa^\al_x\big((\mathbb{I}-\mP)f\big),\pa_{v_j}((\mfR v)_jv_i(|v|^2-5))\big)_{L^2_v},
	\end{aligned}
\end{equation*}
where $i=1,2,3$, $\om_1=(1,0,0),\om_2=(0,1,0)$ and $\om_3=(0,0,1)$.
Recall that $\mathscr{C}_l(t)$ is bounded from above and below. It yields that
\begin{equation}\label{T511}
	\begin{aligned}
		\|\mathscr{C}_l(t)\pa^\al_x((\mfS x\cdot\na_v+\mfR v\cdot\na_v)((\mathbb{I}-\mP)f),v(|v|^2-5)_{L^2_v} \mathcal{Z}_x^{\f\de 2} \|_{L^2_x}\leq C_{\de,\al}\|\mathcal{P}_v^{10}(\mathbb{I-P})f\|^2_{\mathcal{H}^{|\al|,3\de/5}_{x}L^2_v}.
	\end{aligned}
\end{equation}

On the other hand, note that
\begin{equation*}
	\begin{aligned}
&(\mathscr{C}_1\pa^\al_x\big(L((\mathbb{I}-\mP)f)\big),v(|v|^2-5))_{L^2_v}\ls \mathscr{C}_1\|\mathcal{P}_v^{10}\pa^\al_x((\mathbb{I}-\mP)f)\|_{L^2_v};\\
&\mathscr{C}_l(t)\pa^\al_x((\mfS v\cdot\na_x+\mfR x\cdot\na_x)((\mathbb{I}-\mP)f),v(|v|^2-5))_{L^2_v}\leq \mathscr{C}_l(t)\|\mathcal{P}_x\mathcal{P}_v^{6}\pa^\al_x\na_x((\mathbb{I}-\mP)f)\|_{L^2_v},
	\end{aligned}
\end{equation*}
where we use fact that by integration by part, for $l\in\R$, it holds that \beno	|(L(f),\mathcal{P}_v^l)_{L^2_v}|+|(L(f),v\mathcal{P}_v^l)_{L^2_v}|\ls\|\mathcal{P}_v^{l+7}f\|_{L^2_v}+\|\mathcal{P}_v^{10}f\|_{L^2_v}.	\eeno  These imply that
\begin{equation}\label{T512}
	\begin{aligned}
&\|\mathscr{C}_1\pa^\al_x(L((\mathbb{I}-\mP)f),v(|v|^2-5))_{L^2_v}\mathcal{Z}_x^{\f\de 2}\|_{L^2_x}\leq C_{\de,\al} \|\mathcal{P}_v^{10}(\mathbb{I}-\mP)f\|_{\mathcal{H}^{|\al|,3\de/5}_{x}L^2_v};\\	&\|\mathscr{C}_l(t)\pa^\al_x((\mfS v\cdot\na_x+\mfR x\cdot\na_x)((\mathbb{I}-\mP)f),v(|v|^2-5)_{L^2_v} \mathcal{Z}_x^{\f\de 2} \|_{L^2_x}\leq C_{\de,\al} \|\mathcal{P}_v^{10}(\mathbb{I}-\mP)f\|_{\mathcal{H}^{|\al|+1,3\de/5}_{x}L^2_v}.
	\end{aligned}
\end{equation}

It is obvious that
$\|(\pa^\al_xg,v(|v|^2-5))_{L^2_v}\mathcal{Z}_x^{\f\de 2}\|^2_{L^2_x}\leq 5\sum_{i=1}^{13}\int_{\R^3_x}\mathcal{Z}_x^{\de }(\pa^\al_xg,e_i)^2_{L^2_v}dx$. From this together with  \eqref{T511} and \eqref{T512}, we get the desired result for $\sfT_{51}$. It ends  the proof.
\end{proof}

\begin{lem}\label{mapf}
	Recall that $\mathbb{P}f$ and $(\ma,\mb,\mc)$ are defined in  \eqref{deofP}. For any $\de>0$ and $N\in \Z_+$, it holds that
	\beno
	\|\mathbb{P}f\|_{\mathcal{H}^{N,\de}_xL^2_v}\sim 	\|(\ma,\mb,\mc)\|_{\mathcal{H}^{N,\de}_x}\ls\|\mathcal{P}_v^5f\|_{\mathcal{H}^{N,\de}_xL^2_v}.
	\eeno
\end{lem}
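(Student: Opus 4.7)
The plan is to handle the equivalence and the upper bound separately; both rely on elementary observations and no subtle analysis is required.

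\textbf{Equivalence part.} For each fixed $(t,x)$, the function $\mathbb{P}f(t,x,\cdot)$ lives in the finite-dimensional subspace
\[
V:=\mathrm{span}\{\mu,\ v_1\mu,\ v_2\mu,\ v_3\mu,\ (|v|^2-3)\mu\}\subset L^2_v,
\]
and the five generators are linearly independent (they are a Gaussian times distinct polynomials). Since $V$ is five-dimensional, the norm $\|\cdot\|_{L^2_v}$ restricted to $V$ is equivalent to the coordinate norm, so there exist universal constants $c_1,c_2>0$ with
\[
c_1\big(|\ma(t,x)|^2+|\mb(t,x)|^2+|\mc(t,x)|^2\big)\le \|\mathbb{P}f(t,x,\cdot)\|_{L^2_v}^2\le c_2\big(|\ma(t,x)|^2+|\mb(t,x)|^2+|\mc(t,x)|^2\big).
\]
Because $x$ and $v$ are independent variables, $\pa^\al_x$ commutes with the velocity integrations defining $(\ma,\mb,\mc)$, giving $\pa^\al_x(\mathbb{P}f)=(\pa^\al_x\ma)\mu+(\pa^\al_x\mb)\cdot v\mu+(\pa^\al_x\mc)(|v|^2-3)\mu$, so the same pointwise-in-$x$ equivalence holds for each $\pa^\al_x \mathbb{P}f$. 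Multiplying by $\mathcal{Z}^{2\de}_x$, integrating over $x$, and summing over $|\al|\le N$ yields $\|\mathbb{P}f\|_{\mathcal{H}^{N,\de}_xL^2_v}\sim\|(\ma,\mb,\mc)\|_{\mathcal{H}^{N,\de}_x}$.

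\textbf{Upper bound.} For any fixed $(t,x)$, Cauchy--Schwarz in $v$ with the weight $\lr{v}^5$ gives
\[
|\ma(t,x)|^2\le \Big(\int_{\R^3}\lr{v}^{-10}dv\Big)\,\|\mathcal{P}_v^5 f(t,x,\cdot)\|_{L^2_v}^2,
\]
and the $v$-integral converges in three dimensions. The same argument bounds $|\mb(t,x)|^2$ using $|v_i|\le\lr{v}$ (so the weight becomes $\lr{v}^{-8}$) and $|\mc(t,x)|^2$ using $|(|v|^2-3)/6|\le C\lr{v}^2$ (weight $\lr{v}^{-6}$); all three velocity integrals are finite in $\R^3$. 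Hence
\[
|\ma(t,x)|^2+|\mb(t,x)|^2+|\mc(t,x)|^2\lesssim \|\mathcal{P}_v^5 f(t,x,\cdot)\|_{L^2_v}^2.
\]
Applying $\pa^\al_x$ and again exchanging $\pa^\al_x$ with the velocity integrations gives the same estimate with $f$ replaced by $\pa^\al_x f$. Multiplying by $\mathcal{Z}^{2\de}_x$, integrating over $x$, and summing over $|\al|\le N$ produces $\|(\ma,\mb,\mc)\|_{\mathcal{H}^{N,\de}_x}\lesssim \|\mathcal{P}_v^5 f\|_{\mathcal{H}^{N,\de}_xL^2_v}$.

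\textbf{Main obstacle.} There is essentially no obstacle here; the only point that requires any care is verifying that the weights $\lr{v}^{-k}$ arising from the Cauchy--Schwarz step are integrable in $\R^3$, which holds for $k>3$ and in particular for the cases $k=6,8,10$ used above (this is why the exponent $5$ appears in $\mathcal{P}_v^5$ rather than a smaller value).
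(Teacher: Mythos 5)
Your proof is correct, and it reaches the same elementary conclusion by slightly different means than the paper. For the equivalence $\|\mathbb{P}f\|_{\mathcal{H}^{N,\de}_xL^2_v}\sim\|(\ma,\mb,\mc)\|_{\mathcal{H}^{N,\de}_x}$, the paper expands the square in $v$, computes the three Gaussian moments $\int\mu^2dv$, $\int(3-|v|^2)\mu^2dv$, $\int(3-|v|^2)^2\mu^2dv$ explicitly, and checks that the only cross term (between $\ma$ and $\mc$) is dominated with factor $(3/5)^{1/2}<1$, so the quadratic form is coercive; you instead invoke the abstract fact that on the fixed five-dimensional space $\mathrm{span}\{\mu,v_i\mu,(|v|^2-3)\mu\}$ the $L^2_v$ norm is equivalent to the coefficient norm, which is the same positive-definiteness of the Gram matrix phrased without computing it. Your version is shorter and makes clear the constants are universal; the paper's version exhibits the constants explicitly. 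For the upper bound, the paper applies Minkowski's inequality to pull the $v$-integration outside the $L^2_x$ norm before introducing the polynomial weight, whereas you apply Cauchy--Schwarz in $v$ pointwise in $x$ with the weights $\lr{v}^{-10},\lr{v}^{-8},\lr{v}^{-6}$ and then integrate in $x$; both are valid, and your route is arguably the more direct one since it bypasses Minkowski entirely. In both parts you correctly use that $\pa^\al_x$ commutes with the velocity integrations, so the argument passes to derivatives without change.
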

\begin{proof} Direct computation yields that
	\begin{equation}\label{pf}
		\begin{aligned}
			\|\mathbb{P}f\|^2_{\mathcal{H}^{N,\de}_xL^2_v}&=\sum_{|\al|\leq N}\int_{\R^3_x\times \R^3_v}(\pa^\al_x\ma+\sum_{i=1}^3\pa^\al_x\mb_iv_i+\pa^\al_x\mc(|v|^2-3))^2\mu^2\mathcal{Z}_x^{2\de}dvdx\\
			&=\sum_{|\al|\leq N}\Big(\int_{\R^3_x}|\pa^\al_x\ma|^2\mathcal{Z}_x^{2\de}dx\int_{\R^3_v}\mu^2dv+\sum_{i=1}^3\int_{\R^3_x}|\pa^\al_x\mb_i|^2\mathcal{Z}_x^{2\de}dx\int_{\R^3_v}v_i^2\mu^2dv\\
			+&\int_{\R^3_x}|\pa^\al_x\mc|^2\mathcal{Z}_x^{2\de}dx\int_{\R^3_v}(|v|^2-3)^2\mu^2dv+2\int_{\R^3_x}(\pa^\al_x\ma) (\pa^\al_x \mc)\mathcal{Z}_x^{2\de}dx\int_{\R^3_v}(|v|^2-3)\mu^2dv\Big).
		\end{aligned}
	\end{equation}
	By \eqref{msA}, we can compute that 
	\beno
	\int_{\R^3_v}\mu^2 dv=\f{\sqrt{2}}{4(2\pi)^{\f32}},~~\int_{\R^3_v}(3-|v|^2)\mu^2 dv=\f{3\sqrt{2}}{8(2\pi)^{\f32}},~~\int_{\R^3_v}(3-|v|^2)^2\mu^2 dv=\f{15\sqrt{2}}{16(2\pi)^{\f32}},
	\eeno
	which imply that 
	\begin{multline*}
		2\Big|\int_{\R^3_x}(\pa^\al_x\ma) (\pa^\al_x \mc)\mathcal{Z}_x^{2\de}dx\int_{\R^3_v}(|v|^2-3)\mu^2dv\Big|\\
		\leq (3/5)^{\f12}\Big(\int_{\R^3_x}|\pa^\al_x\ma|^2\mathcal{Z}_x^{2\de}dx\int_{\R^3_v}\mu^2dv+\int_{\R^3_x}|\pa^\al_x\mc|^2\mathcal{Z}_x^{2\de}dx\int_{\R^3_v}(|v|^2-3)^2\mu^2dv\Big).
	\end{multline*}
	From this together with \eqref{pf}, we can deduce that $\|\mathbb{P}f\|^2_{\mathcal{H}^{N,\de}_xL^2_v}\sim \|(\ma,\mb,\mc)\|^2_{\mathcal{H}^{N,\de}_x}.$
	
	On the other hand, by definition \eqref{deofP} and Minkowski inequality, we have  
	\beno
	\|\ma\|^2_{\mathcal{H}^{N,\de}_x}=\sum_{|\al|\leq N}\int_{\R^3_x}\Big(\int_{\R^3_v} \pa^\al_x fdv\Big)^2\mathcal{Z}_x^{2\de}dx\leq\sum_{|\al|\leq N}\Big( \int_{\R^3_v}\Big(\int_{\R^3_x}|\pa^\al_xf|^2\mathcal{Z}_x^{2\de}dx\Big)^{\f12}dv\Big)^2
	\ls \|\mathcal{P}_v^5\pa^\al_x f\|^2_{\mathcal{H}^{N,\de}_xL^2_v}.
	\eeno
	It is easy to see that the same bound also holds for $\mb$ and $\mc$ and we complete the proof.
\end{proof}

Next we consider the estimates of $(\ma_{\mathsf{n}},\mb_{\mathsf{n}},\mc_{\mathsf{n}})$. We first have 
\begin{lem}\label{cs}
	  Let $\de_1=2\de_3,\de_2=\f54\de_3$ with $0<\de_3<\f18$. Then there exists a positive constant  $C_{\de_3}$ such that
\beno 
	 &&\f d{dt}(\mfS\na_x(\mc_{\mathsf{n}}\mathcal{Z}_x^{\f12}),-\sfT_{52}\mathcal{Z}_x^{-(1-\de_1)})_{L^2_x}\leq -\f{\mathscr{C}_l(t)}2\|\mfS\na_x(\mc_{\mathsf{n}}\mathcal{Z}_x^{\f1 2 })\mathcal{Z}_x^{-\f{1-\de_1}2}\|^2_{L^2_x}\\ &&+C_{\de_3}\Big(\sum_{i=1}^{13}\int_{\R^3_x}\mathcal{Z}_x^{2\de_1}(g,e_i)^2_{L^2_v}dx +\|\mathcal{P}_v^{10}(\mathbb{I}-\mP)f\|^2_{\mathcal{H}^{1,\de_1}_{x}L^2_v}\Big)+C_{\de_3}\|\mathcal{P}_v^{10}(\mathbb{I}-\mP)f\|_{\mathcal{H}^{1,\de_1}_{x}L^2_v} \|\mathbb{P}f\|_{\mathcal{H}^{1,\de_3/2}_{x}L^2_v}.
			\eeno
\end{lem}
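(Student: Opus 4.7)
The plan is to differentiate in time, use the fifth equation of \eqref{macroeq} to generate the coercive term, and then control the cross terms via Lemmas \ref{estimateofT}, \ref{mapf}, \ref{abceqtoabcs} together with the variant Poincar\'{e}--Korn inequalities \eqref{PKIforabcs}.

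First, I would apply the Leibniz rule to write
\begin{equation*}
\frac{d}{dt}(\mfS\na_x(\mc_{\mathsf{n}}\mathcal{Z}_x^{1/2}),-\sfT_{52}\mathcal{Z}_x^{-(1-\de_1)})_{L^2_x} = \mathrm{I} + \mathrm{II},
\end{equation*}
where $\mathrm{I}:=(\mfS\na_x\pa_t(\mc_{\mathsf{n}}\mathcal{Z}_x^{1/2}),-\sfT_{52}\mathcal{Z}_x^{-(1-\de_1)})_{L^2_x}$ and $\mathrm{II}:=(\mfS\na_x(\mc_{\mathsf{n}}\mathcal{Z}_x^{1/2}),-\pa_t\sfT_{52}\mathcal{Z}_x^{-(1-\de_1)})_{L^2_x}$. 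The key observation is that by \eqref{deofabcs}, $\mc\mathcal{Z}_x^{1/2}-\mc_{\mathsf{n}}\mathcal{Z}_x^{1/2} = \msA(\mc\mathcal{Z}_x^{1/2})$ is constant in $x$, hence $\mfS\na_x(\mc_{\mathsf{n}}\mathcal{Z}_x^{1/2}) = \mfS\na_x(\mc\mathcal{Z}_x^{1/2})$. The fifth equation in \eqref{macroeq} then gives $\pa_t\sfT_{52} = \mathscr{C}_l(t)\mfS\na_x(\mc_{\mathsf{n}}\mathcal{Z}_x^{1/2}) - \sfT_{51}$, so
\begin{equation*}
\mathrm{II} = -\mathscr{C}_l(t)\|\mfS\na_x(\mc_{\mathsf{n}}\mathcal{Z}_x^{1/2})\mathcal{Z}_x^{-(1-\de_1)/2}\|_{L^2_x}^2 + (\mfS\na_x(\mc_{\mathsf{n}}\mathcal{Z}_x^{1/2}),\sfT_{51}\mathcal{Z}_x^{-(1-\de_1)})_{L^2_x},
\end{equation*}
which produces the coercive term; the $\sfT_{51}$ remainder is handled by Cauchy--Schwarz and Lemma \ref{estimateofT} with $\de=2\de_1<1/4$ and a small fraction of the coercive term.

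Next, for $\mathrm{I}$, I would compute $\pa_t(\mc_{\mathsf{n}}\mathcal{Z}_x^{1/2}) = \pa_t(\mc\mathcal{Z}_x^{1/2}) - \msA(\pa_t(\mc\mathcal{Z}_x^{1/2}))$ and substitute the third equation of \eqref{macroeq}; upon applying $\mfS\na_x$ the $\msA$-part is annihilated, leaving only derivatives of $\mb\mathcal{Z}_x^{1/2}$, $\mc\mathcal{Z}_x^{1/2}$ and $\sfT_{31}$. Pairing against $-\sfT_{52}\mathcal{Z}_x^{-(1-\de_1)}$ and integrating by parts in $x$ to shift derivatives onto $\sfT_{52}\mathcal{Z}_x^{-(1-\de_1)}$ produces terms of the schematic form
\begin{equation*}
\int_{\R^3}(\mb,\mc)\cdot\mathcal{W}(x)\,dx,\qquad \mathcal{W}(x)\ \text{supported in weight}\ \lr{x}\mathcal{Z}_x^{-(1-\de_1)},
\end{equation*}
plus a contribution from $\sfT_{31}$. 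I would then split the weight $\mathcal{Z}_x^{-(1-\de_1)}=\mathcal{Z}_x^{-(1-\de_2)/2-\de_2/2-\de_1}$ by Cauchy--Schwarz and estimate $\mb,\mc$ in $\mathcal{H}^{0,\de_2/2}_x,\mathcal{H}^{0,\de_1/2}_x$ respectively by $\mb_{\mathsf{n}},\mc_{\mathsf{n}}$ through Lemma \ref{abceqtoabcs} (with $\de_3<\de_2<\de_1$) and the Poincar\'e inequalities \eqref{PKIforabcs}. Using Lemma \ref{mapf} to write $\|(\mb_{\mathsf{n}},\mc_{\mathsf{n}})\|\lesssim \|\mathbb{P}f\|_{\mathcal{H}^{1,\de_3/2}_xL^2_v}$ and Lemma \ref{estimateofT} with $\de=2\de_1$ to control $\sfT_{31},\sfT_{52}$ by microscopic and nonlinear contributions, the cross term becomes $\le C_{\de_3}\|\mathcal{P}_v^{10}(\mathbb{I}-\mP)f\|_{\mathcal{H}^{1,\de_1}_xL^2_v}\|\mathbb{P}f\|_{\mathcal{H}^{1,\de_3/2}_xL^2_v}$ plus a term bounded by $\sum_i\int\mathcal{Z}_x^{2\de_1}(g,e_i)_{L^2_v}^2dx$.

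The main obstacle is the delicate weight bookkeeping, because every integration by parts on $\mc_{\mathsf{n}}\mathcal{Z}_x^{1/2}$ brings down a factor of $\mfS x\mathcal{Z}_x^{1/2}$ which needs to be reabsorbed into the exponential weight without overshooting the $\mathcal{Z}_x^{-(1-\de_1)}$ budget, while simultaneously leaving enough room so that Lemma \ref{abceqtoabcs}, which requires the strict ordering $\de_3<\de_2<\de_1$, can convert $(\mb,\mc)$ back to $(\mb_{\mathsf{n}},\mc_{\mathsf{n}})$. The calibration $\de_1=2\de_3$, $\de_2=\tfrac{5}{4}\de_3$ with $\de_3<1/8$ is tailored precisely for these absorptions and for the final use of \eqref{PKIforabcs}. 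Gathering everything and absorbing a fraction of the coercive term into the error yields the stated inequality.
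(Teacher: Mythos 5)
Your skeleton is the same as the paper's: the coercive term comes from the identity $\mathscr{C}_l(t)\mfS\na_x(\mc_{\mathsf{n}}\mathcal{Z}_x^{1/2})=\mathscr{C}_l(t)\mfS\na_x(\mc\,\mathcal{Z}_x^{1/2})=\sfT_{51}+\pa_t\sfT_{52}$ (fifth macroscopic equation), the remaining time derivative is rewritten through the third macroscopic equation, and everything is then treated by integration by parts, Cauchy--Schwarz weight splitting, Lemma \ref{estimateofT} and Lemma \ref{mapf}; your treatment of the coercive term and of the $\sfT_{51}$ remainder is correct. The gap is in the weight bookkeeping of the remaining cross term. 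First, the split you write, $\mathcal{Z}_x^{-(1-\de_1)}=\mathcal{Z}_x^{-(1-\de_2)/2-\de_2/2-\de_1}$, does not add up ($-\f12-\de_1\neq -1+\de_1$). More seriously, you propose to estimate $\mb$ in $\mathcal{H}^{0,\de_2/2}_x$ and $\mc$ in $\mathcal{H}^{0,\de_1/2}_x$ and then, via Lemma \ref{abceqtoabcs}, \eqref{PKIforabcs} and Lemma \ref{mapf}, to conclude a bound by $\|\mathbb{P}f\|_{\mathcal{H}^{1,\de_3/2}_{x}L^2_v}$. That last step fails: since $\de_3<\de_2<\de_1$, the $\de_3/2$-weighted norm is the weakest of the three, and none of those lemmas lowers the weight index (Lemma \ref{abceqtoabcs} and \eqref{PKIforabcs} preserve the exponent $\de$, and Lemma \ref{mapf} converts $(\ma,\mb,\mc)$ to $\mathbb{P}f$ at the same weight). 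As written, your argument only yields a cross term of the form $\|\mathcal{P}_v^{10}(\mathbb{I}-\mP)f\|_{\mathcal{H}^{1,\de_1}_{x}L^2_v}\|\mathbb{P}f\|_{\mathcal{H}^{1,\de_1/2}_{x}L^2_v}$, which is strictly weaker than the stated lemma and would break the absorption step in the proof of Theorem \ref{macroestimate}, since the dissipation \eqref{fabc} controls the $\ma$-component only with weight $\de_3/2$.

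The correct allocation (the paper's) is the opposite of yours: give the macroscopic factor only the weight $\mathcal{Z}_x^{\de_3/2}$, i.e. bound $\|\mathcal{P}_x^{-1}\mfS\na_x(\mb\,\mathcal{Z}_x^{1/2})\mathcal{Z}_x^{-\f{1-\de_3}{2}}\|_{L^2_x}+\|\mathcal{P}_x^{-2}\mfR x\cdot\na_x(\mc\,\mathcal{Z}_x^{1/2})\mathcal{Z}_x^{-\f{1-\de_3}{2}}\|_{L^2_x}\le C_{\de_3}\|\mathbb{P}f\|_{\mathcal{H}^{1,\de_3/2}_{x}L^2_v}$ directly from Lemma \ref{mapf} (no detour through Lemma \ref{abceqtoabcs} or \eqref{PKIforabcs} is needed in this lemma), and push the surplus exponential weight $\mathcal{Z}_x^{\f{2\de_1-\de_3}{2}}$ together with the polynomial factors $\mathcal{P}_x^2,\mathcal{P}_x^3$ produced by the integration by parts onto the $\sfT_{52}$ factor. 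This is harmless precisely because of the calibration $\de_1=2\de_3$: one has $\f35(2\de_1-\de_3)=\f{9}{10}\de_1<\de_1$, so Lemma \ref{estimateofT} with $\de=2\de_1-\de_3$ still controls that factor by $\|\mathcal{P}_v^{10}(\mathbb{I}-\mP)f\|_{\mathcal{H}^{1,\de_1}_{x}L^2_v}$, and the $\sfT_{31}$ contribution is handled the same way; with this repair your argument closes and coincides with the paper's proof.
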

\begin{proof} Thanks to \eqref{macroeq} and the definition of $\mc_{\mathsf{n}}$, we deduce that
	\begin{equation*}
		\begin{aligned} 
	&\mathscr{C}_l(t)\mfS\na_x(\mc_{\mathsf{n}}\mathcal{Z}_x^{\f12})=\mathscr{C}_l(t)\mfS\na_x(\mc \mathcal{Z}_x^{\f12})=\sfT_{51}+\pa_t \sfT_{52},\\
	&\pa_t(\mc \mathcal{Z}_x^{\f12})=-\f13 \mathscr{C}_l(t)\mfS\na_x\cdot (\mb \mathcal{Z}_x^{\f12})-\mathscr{C}_l(t)\mfR x\cdot\na_x (\mc \mathcal{Z}_x^{\f12})+\sfT_{31},
		\end{aligned}
	\end{equation*}
which implies that
	\begin{equation*}
		\begin{aligned}
	&\f d{dt}(\mfS\na_x(\mc_{\mathsf{n}}\mathcal{Z}_x^{\f12}),-\sfT_{52}\mathcal{Z}_x^{-(1-\de_1)})_{L^2_x} 
=-\mathscr{C}_l(t)\|\mfS\na_x(\mc_{\mathsf{n}}\mathcal{Z}_x^{\f12})\mathcal{Z}_x^{-\f{1-\de_1}2}\|^2_{L^2_x}+(\mfS\na_x(\mc_{\mathsf{n}}\mathcal{Z}_x^{\f12}),\sfT_{51}\mathcal{Z}_x^{-(1-\de_1)})_{L^2_x}\\
&+\Big(\mfS\na_x\Big(-\f13 \mathscr{C}_l(t)\mfS\na_x\cdot (\mb \mathcal{Z}_x^{\f12})-\mathscr{C}_l(t)\mfR x\cdot\na_x (\mc \mathcal{Z}_x^{\f12})+\sfT_{31}\Big),-\sfT_{52}\mathcal{Z}_x^{-(1-\de_1)}\Big)_{L^2_x}.
\end{aligned}
\end{equation*}

 For the second term in the r.h.s., by Cauchy-Schwarz inequality and Lemma \ref{estimateofT}, we   derive that  
\begin{equation*}
\begin{aligned}
(\mfS\na_x(\mc_{\mathsf{n}}\mathcal{Z}_x^{\f12}),\sfT_{51}\mathcal{Z}_x^{-(1-\de_1)})_{L^2_x}&\leq \f{\mathscr{C}_l(t)}2 \|\mfS\na_x(\mc_{\mathsf{n}}\mathcal{Z}_x^{\f1 2 })\mathcal{Z}_x^{-\f{1-\de_1}2}\|^2_{L^2_x}\\&+C_{\de}\Big(\|\mathcal{P}_v^{10}(\mathbb{I}-\mP)f\|^2_{\mathcal{H}_{x}^{1,\de_1}L^2_v}+\sum_{i=1}^{13}\int_{\R^3_x}\mathcal{Z}_x^{\de_1}(g,e_i)^2_{L^2_v}dx\Big).
\end{aligned}
\end{equation*}
For the remainder term, using integration by parts, we have
\beno 
&&\Big(\mfS\na_x\Big(-\f13 \mathscr{C}_l(t)\mfS\na_x\cdot (\mb \mathcal{Z}_x^{\f12})-\mathscr{C}_l(t)\mfR x\cdot\na_x (\mc \mathcal{Z}_x^{\f12})+\sfT_{31}\Big),-\sfT_{52}\mathcal{Z}_x^{-(1-\de_1)}\Big)_{L^2_x}\\
&&=(-\f13\mathscr{C}_l(t)\mfS\na_x\cdot (\mb \mathcal{Z}_x^{\f12})-\mathscr{C}_l(t)\mfR x\cdot\na_x (\mc \mathcal{Z}_x^{\f12})+\sfT_{31},(\mfS\na_x-2(1-\de_1)\mfS x)\sfT_{52}\mathcal{Z}_x^{-(1-\de_1)})_{L^2_x}\\
&&\leq~C_{\de_1}(\|\mathcal{P}_x^{-1}\mfS\na_x(\mb \mathcal{Z}_x^{\f12})\mathcal{Z}_x^{-\f{1-\de_3}2}\|_{L^2_x}+\|\mathcal{P}_x^{-2}\mfR x\cdot\na_x (\mc \mathcal{Z}_x^{\f12})\mathcal{Z}_x^{-\f{1-\de_3}2} \|_{L^2_x})\\
&&\times(\|\mathcal{P}_x^2(\na_x\sfT_{52})\mathcal{Z}_x^{-\f{1-2\de_1+\de_3}2}\|_{L^2_x}
+\|\mathcal{P}_x^3\sfT_{52}\mathcal{Z}_x^{-\f{1-2\de_1+\de_3}2}\|_{L^2_x})+\|\sfT_{31}\mathcal{Z}_x^{-\f{1-\de_1}2}\|_{L^2_x}(\|\mathcal{P}_x\sfT_{52}\mathcal{Z}_x^{-\f{1-\de_1}2}\|_{L^2_x}\\
&&+\|(\na_x \sfT_{52})\mathcal{Z}_x^{-\f{1-\de_1}2}\|_{L^2_x})
\leq ~C_{\de_1}\Big(\|\mathcal{P}_v^{10}(\mathbb{I}-\mP)f\|^2_{\mathcal{H}^{1,\de_1}_{x}L^2_v}+\sum_{i=1}^{13}\int_{\R^3_x}\mathcal{Z}_x^{2\de_1}(g,e_i)^2_{L^2_v}dx\Big)\\
&&+C_{\de_1} \|\mathcal{P}_v^{10}(\mathbb{I}-\mP)f\|_{\mathcal{H}^{1,\de_1}_{x}L^2_v}\|\mathbb{P}f\|_{\mathcal{H}^{1,\de_3/2}_{x}L^2_v},
\eeno
where we use the facts that $\f35(2\de_1-\de_3)=\f9{10}\de_1<\de_1$, $2\de_1-\de_3<2\de_1$ and 
\[\|\mathcal{P}_x^{-1}\mfS\na_x(\mb \mathcal{Z}_x^{\f12})\mathcal{Z}_x^{-\f{1-\de_3}2}\|_{L^2_x}+\|\mathcal{P}_x^{-2}\mfR x\cdot\na_x (\mc \mathcal{Z}_x^{\f12})\mathcal{Z}_x^{-\f{1-\de_3}2} \|_{L^2_x} 
\leq C_{\de_3} \|\mathbb{P}f\|_{\mathcal{H}^{1,\de_3/2}_{x}L^2_v},\] thanks to Lemma \ref{mapf}. We complete the proof of this lemma.
\end{proof}

  Next we give the  estimate to $\mb_{\mathsf{n}}$.
\begin{lem}\label{bs}
	  Let $\de_1=2\de_3,\de_2=\f54\de_3$ with $0<\de_3<\f18$. Then there exists a positive constant  $C_{\de_3}$ such that
\begin{equation*}
	\begin{aligned}
	&\f d{dt}(\na^{sym}_\mfS(\mb_{\mathsf{n}}\mathcal{Z}_x^{\f12}),(-T_{42}+\mc_{\mathsf{n}} \mathcal{Z}_x^{\f12})\mathcal{Z}_x^{-(1-\de_2)})_{L^2_x}\leq -\f{\mathscr{C}_l(t)}2\|\na^{sym}_\mfS(\mb_{\mathsf{n}}\mathcal{Z}_x^{\f12})\mathcal{Z}_x^{-\f{1-\de_2}2}\|^2_{L^2_x}\\
	+&C_{\de_3}
	\|\mfS\na_x(\mc_{\mathsf{n}}\mathcal{Z}_x^{\f12})\mathcal{Z}_x^{-\f{1-\de_1}2}\|^2_{L^2_x}+C_{\de_3}(\|\mathcal{P}_v^{10}(\mathbb{I}-\mP)f\|^2_{\mathcal{H}^{1,\de_1}_{x}L^2_v}+\sum_{i=1}^{13}\int_{\R^3_x}\mathcal{Z}_x^{2\de_1}(g,e_i)^2_{L^2_v}dx)\\
	+&C_{\de_3}(\|\mfS\na_x(\mc_{\mathsf{n}}\mathcal{Z}_x^{\f12})\mathcal{Z}_x^{-\f{1-\de_1}2}\|_{L^2_x}+\|\mathcal{P}_v^{10}(\mathbb{I}-\mP)f\|_{\mathcal{H}^{1,\de_1}_{x}L^2_v})\|\mathbb{P}f\|_{\mathcal{H}^{1,\de_3/2}_{x}L^2_v}.
	\end{aligned}
\end{equation*}
\end{lem}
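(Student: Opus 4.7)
The plan mirrors the strategy of Lemma 4.5 but is more delicate because the principal dissipation must be extracted from the \emph{matrix-valued} fourth equation of \eqref{macroeq}. First I would split
$$\f d{dt}(\na^{sym}_\mfS(\mb_{\mathsf{n}}\mathcal{Z}_x^{\f12}),(-\sfT_{42}+\mc_{\mathsf{n}}\mathcal{Z}_x^{\f12})\mathcal{Z}_x^{-(1-\de_2)})_{L^2_x} = I_1 + I_2,$$
where $I_1$ puts the time derivative on the first factor and $I_2$ on the second. For $I_2$, the substitution is the fourth equation of \eqref{macroeq}:
$$\pa_t(-\sfT_{42}+\mc\mathcal{Z}_x^{\f12}\mathbb{I}_{3\times 3}) = -\mathscr{C}_l(t)\na^{sym}_\mfS(\mb\mathcal{Z}_x^{\f12}) - \mathscr{C}_l(t)(\mfR x\cdot\na_x)(\mc\mathcal{Z}_x^{\f12})\mathbb{I}_{3\times 3} + \sfT_{41},$$
supplemented by $\pa_t(\mc_{\mathsf{n}}\mathcal{Z}_x^{1/2}-\mc\mathcal{Z}_x^{1/2}) = -\pa_t\msA(\mc\mathcal{Z}_x^{1/2})$, which is a constant in $x$. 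Using \eqref{skew and symmetric of b n  Z x 1 2}, the coupling of $\na^{sym}_\mfS(\mb_{\mathsf{n}}\mathcal{Z}_x^{1/2})$ with $-\mathscr{C}_l(t)\na^{sym}_\mfS(\mb\mathcal{Z}_x^{1/2})$ produces the target negative term $-\mathscr{C}_l(t)\|\na^{sym}_\mfS(\mb_{\mathsf{n}}\mathcal{Z}_x^{1/2})\mathcal{Z}_x^{-(1-\de_2)/2}\|_{L^2_x}^2$ plus a trace correction of the form $c\,\msA(\mfS\na_x\cdot(\mb\mathcal{Z}_x^{1/2}))\int \mfS\na_x\cdot(\mb_{\mathsf{n}}\mathcal{Z}_x^{1/2})\mathcal{Z}_x^{-(1-\de_2)}dx$; an integration by parts on the $x$-integral turns it into a boundary moment of $\mb_{\mathsf{n}}$ against $\mfS x\,\mathcal{Z}_x^{-(1-\de_2)}$, bounded via Lemma \ref{mapf} by $\|\mathbb{P}f\|_{\mathcal{H}^{1,\de_3/2}_x L^2_v}^2$. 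The constant-in-$x$ correction from $-\pa_t\msA(\mc\mathcal{Z}_x^{1/2})$ reduces similarly after integration by parts. The residual $\mfR x$ transport and $\sfT_{41}$ are controlled by Cauchy--Schwarz with a small parameter (absorbing half of the dissipation) together with Lemma \ref{estimateofT}, producing exactly the $\|\mfS\na_x(\mc_{\mathsf{n}}\mathcal{Z}_x^{1/2})\mathcal{Z}_x^{-(1-\de_1)/2}\|^2_{L^2_x}$ and $\|\mathcal{P}_v^{10}(\mathbb{I}-\mP)f\|^2_{\mathcal{H}^{1,\de_1}_x L^2_v}$ contributions in the stated bound.

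For $I_1$, I would substitute the second equation of \eqref{macroeq} inside $\na^{sym}_\mfS$. The critical contribution is $-\mathscr{C}_l(t)\na^{sym}_\mfS\mfS\na_x(\ma\mathcal{Z}_x^{1/2})$, which is second order in $x$. To avoid a derivative loss I would integrate by parts twice, transferring both $x$-derivatives onto the bounded second factor $(-\sfT_{42}+\mc_{\mathsf{n}}\mathcal{Z}_x^{1/2})\mathcal{Z}_x^{-(1-\de_2)}$; Lemma \ref{estimateofT} then controls the derivatives of $\sfT_{42}$ in terms of $\|\mathcal{P}_v^{10}(\mathbb{I}-\mP)f\|_{\mathcal{H}^{1,\de_1}_x L^2_v}$, while Lemma \ref{mapf} turns the $\na^2_x(\mc_{\mathsf{n}}\mathcal{Z}_x^{1/2})$ contribution into $\|\mathbb{P}f\|_{\mathcal{H}^{1,\de_3/2}_x L^2_v}$ (since $\de_3/2<\de_1$). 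The other first-order pieces $-\mathscr{C}_l(t)\mfR\mb\mathcal{Z}_x^{1/2}$, $-\mathscr{C}_l(t)(\mfR x\cdot\na_x)(\mb\mathcal{Z}_x^{1/2})$, $-2\mathscr{C}_l(t)(\mfS\na_x\mc)\mathcal{Z}_x^{1/2}$, and $\sfT_{21}$ are treated directly by Cauchy--Schwarz, giving cross terms of the product form $\|\mfS\na_x(\mc_{\mathsf{n}}\mathcal{Z}_x^{1/2})\mathcal{Z}_x^{-(1-\de_1)/2}\|_{L^2_x}\|\mathbb{P}f\|_{\mathcal{H}^{1,\de_3/2}_x L^2_v}$ and the nonlinear source term involving $(g,e_i)_{L^2_v}$ at weight $\mathcal{Z}_x^{2\de_1}$.

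The main obstacle is the $\mfR x$ factor appearing in the transport parts of both the $\mb$-- and $\mc$--equations: each such occurrence costs one power of $\mathcal{P}_x$, so every Cauchy--Schwarz must be calibrated so that the surplus weight lands on the right side of the split between the low--weight macroscopic norm $\mathcal{H}^{1,\de_3/2}_x$ and the high--weight microscopic norm $\mathcal{H}^{1,\de_1}_x$. This is exactly why the hierarchy $\de_3<\de_2=\tfrac{5}{4}\de_3<\de_1=2\de_3<\tfrac{1}{8}$ is imposed: the gap $2\de_2-\de_1=\tfrac{\de_3}{2}>0$ produces just enough exponential slack to absorb every $\mathcal{P}_x$ loss from $\mfR x$, and the gap $\de_1-\de_2=\tfrac{3}{4}\de_3>0$ lets one trade $\mathcal{Z}_x^{-(1-\de_2)}$ against $\mathcal{Z}_x^{-(1-\de_1)}$ whenever a second derivative needs to be integrated by parts. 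Once this calibration is observed, every remaining estimate follows the templates already established in Lemma \ref{cs}.
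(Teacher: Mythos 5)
Your plan contains two genuine gaps, both at the points where the paper's proof is most delicate.

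First, in your $I_2$ step you estimate the ``trace correction'' $-\tfrac{\mathscr{C}_l(t)}{3}\msA(\mfS\na_x\cdot(\mb\mathcal{Z}_x^{1/2}))\int\mfS\na_x\cdot(\mb_{\mathsf{n}}\mathcal{Z}_x^{1/2})\mathcal{Z}_x^{-(1-\de_2)}dx$ and the constant-in-$x$ correction $-\pa_t\msA(\mc\mathcal{Z}_x^{1/2})\int\mfS\na_x\cdot(\mb_{\mathsf{n}}\mathcal{Z}_x^{1/2})\mathcal{Z}_x^{-(1-\de_2)}dx$ separately, bounding the former by $C_{\de_3}\|\mathbb{P}f\|^2_{\mathcal{H}^{1,\de_3/2}_xL^2_v}$. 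But these two terms must not be estimated: since $\pa_t\msA(\mc\mathcal{Z}_x^{1/2})=-\tfrac13\mathscr{C}_l(t)\msA(\mfS\na_x\cdot(\mb\mathcal{Z}_x^{1/2}))+\msA(\sfT_{31})$ (the $\msA$-average of the third equation of \eqref{macroeq}), they cancel \emph{exactly}, leaving only $-\msA(\sfT_{31})\int\cdots$, which Lemma \ref{estimateofT} handles. This cancellation is precisely what the $-\tfrac13\msA(\mfS\na_x\cdot(\mb\mathcal{Z}_x^{1/2}))\mfS^{-1}x$ term in the definition \eqref{deofabcs} of $\mb_{\mathsf{n}}$ is built for; equivalently, $\na^{sym}_\mfS(\mb_{\mathsf{n}}\mathcal{Z}_x^{1/2})=\na^{sym}_\mfS(\mb\mathcal{Z}_x^{1/2})-\tfrac13\msA(\mfS\na_x\cdot(\mb\mathcal{Z}_x^{1/2}))\mathbb{I}_{3\times3}$, which is how the paper obtains the clean negative square in \eqref{bs2} with no remainder. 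Your bound of the trace correction is true, but it yields an inequality with an extra $C_{\de_3}\|\mathbb{P}f\|^2_{\mathcal{H}^{1,\de_3/2}_xL^2_v}$ (or, via Poincar\'e--Korn, an $O(1)$ multiple of the $\mb_{\mathsf{n}}$-dissipation), neither of which appears in the statement; and in the proof of Theorem \ref{macroestimate} this lemma is weighted by $\vep_1$ while $\|\mathbb{P}f\|^2$ can only be absorbed with a prefactor of order $\vep_2=\vep_1^{3/2}\ll\vep_1$, so the weakened inequality does not serve the purpose of the lemma.

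Second, your $I_1$ step loses a derivative. Substituting the $\mb$-equation inside $\na^{sym}_\mfS$ and integrating by parts \emph{twice} puts two $x$-derivatives on $(-\sfT_{42}+\mc_{\mathsf{n}}\mathcal{Z}_x^{1/2})\mathcal{Z}_x^{-(1-\de_2)}$, producing $\na^2_x\sfT_{42}$ and $\na^2_x(\mc_{\mathsf{n}}\mathcal{Z}_x^{1/2})$. The former requires $\|\mathcal{P}_v^{10}(\mathbb{I}-\mathbb{P})f\|_{\mathcal{H}^{2,\cdot}_xL^2_v}$ (Lemma \ref{estimateofT} with $|\al|=2$), while the stated bound only allows $\mathcal{H}^{1,\de_1}_x$; the latter is a genuine second derivative of $\mc$, and your claim that it is controlled by $\|\mathbb{P}f\|_{\mathcal{H}^{1,\de_3/2}_xL^2_v}$ (an $H^1_x$-type norm) is a derivative-count error --- it is not controlled by anything on the right-hand side, including the first-order quantity $\|\mfS\na_x(\mc_{\mathsf{n}}\mathcal{Z}_x^{1/2})\mathcal{Z}_x^{-(1-\de_1)/2}\|_{L^2_x}$. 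The paper avoids this by integrating by parts only \emph{once}, moving the single explicit $\na^{sym}_\mfS$ off $\pa_t(\mb_{\mathsf{n}}\mathcal{Z}_x^{1/2})$, and then bounding $\|\mathcal{P}_x^{-1}\pa_t(\mb_{\mathsf{n}}\mathcal{Z}_x^{1/2})\mathcal{Z}_x^{-(1-\de_3)/2}\|_{L^2_x}$ directly from the $\mb$-equation (estimates \eqref{bs4}--\eqref{bs5}), which involves only first derivatives of $(\ma,\mb,\mc)$ and of $\sfT_{42}$; this is the only way the right-hand side of the lemma, with its one-derivative budget, can be respected.
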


\begin{proof}
	By \eqref{deofabcs}, we first recall that $\pa_t(\mc \mathcal{Z}_x^{\f12})=\pa_t(\mc_{\mathsf{n}}\mathcal{Z}_x^{\f12})+\pa_t\msA(\mc \mathcal{Z}_x^{\f12})$. Thus by \eqref{macroeq} and \eqref{equation for A G -1 x and A A G^-1 x}, we have
	\[
	\pa_t \msA(\mc \mathcal{Z}_x^{\f12})=-\f13\mathscr{C}_l(t) \msA(\mfS\na_x \cdot (\mb \mathcal{Z}_x^{\f12}))+\msA(\sfT_{31}).
	\]
	From these together with \eqref{macroeq}, we have 
	\beno 
	\mathscr{C}_l(t)\na^{sym}_\mfS(\mb \mathcal{Z}_x^{\f12})
	&=&-\mathscr{C}_l(t)\mfR x\cdot\na_x (\mc \mathcal{Z}_x^{\f12})\mathbb{I}_{3\times 3}+\sfT_{41}+\pa_t \sfT_{42}-\pa_t(\mc_{\mathsf{n}}\mathcal{Z}_x^{\f12})\mathbb{I}_{3\times 3}\\
	&&+\f13\mathscr{C}_l(t)\msA(\mfS\na_x \cdot (\mb \mathcal{Z}_x^{\f12}))\mathbb{I}_{3\times 3}-\msA(\sfT_{31})\mathbb{I}_{3\times 3},
	\eeno
which implies that 
\ben
&&\f d{dt}(\na^{sym}_\mfS(\mb_{\mathsf{n}}\mathcal{Z}_x^{\f12}),(-\sfT_{42}+\mc_{\mathsf{n}} \mathcal{Z}_x^{\f12}\mathbb{I}_{3\times 3})\mathcal{Z}_x^{-(1-\de_2)})_{L^2_x} 
=\big(\na^{sym}_\mfS(\mb_{\mathsf{n}}\mathcal{Z}_x^{\f12}),(-\mathscr{C}_l(t)\na^{sym}_\mfS(\mb \mathcal{Z}_x^{\f12})\nonumber\\
&&+\f13\mathscr{C}_l(t)\msA( \mfS\na_x \cdot (\mb \mathcal{Z}_x^{\f12}))\mathbb{I}_{3\times 3})\mathcal{Z}_x^{-(1-\de_2)}\big)_{L^2_x}+\big(\na^{sym}_\mfS(\mb_{\mathsf{n}}\mathcal{Z}_x^{\f12}),(-\mathscr{C}_l(t)\mfR x\cdot \na_x(\mc_{\mathsf{n}}\mathcal{Z}_x^{\f12})\mathbb{I}_{3\times 3}\nonumber\\
&&+\sfT_{41}-\msA(\sfT_{31})\mathbb{I}_{3\times 3})\mathcal{Z}_x^{-(1-\de_2)}\big)_{L^2_x}+(\na^{sym}_\mfS(\pa_t\mb_{\mathsf{n}}\mathcal{Z}_x^{\f12}),(-\sfT_{42}+\mc_{\mathsf{n}}\mathcal{Z}_x^{\f12}\mathbb{I}_{3\times 3})\mathcal{Z}_x^{-(1-\de_2)})_{L^2_x}.\label{bs1}
\een
For the first term in  the right-hand side, by \eqref{skew and symmetric of b n  Z x 1 2}, we can easily derive that
\begin{equation}\label{bs2}
	\begin{aligned}
	&\big(\na^{sym}_\mfS(\mb_{\mathsf{n}}\mathcal{Z}_x^{\f12}),(-\mathscr{C}_l(t)\na^{sym}_\mfS(\mb \mathcal{Z}_x^{\f12})+\f13\mathscr{C}_l(t)\msA( \mfS\na_x \cdot (\mb \mathcal{Z}_x^{\f12}))\mathbb{I}_{3\times 3})\mathcal{Z}_x^{-(1-\de_2)}\big)_{L^2_x}\\
	=&-\mathscr{C}_l(t)\|\na^{sym}_\mfS(\mb_{\mathsf{n}}\mathcal{Z}_x^{\f12})\mathcal{Z}_x^{-\f{1-\de_2}2}\|^2_{L^2_x}.
\end{aligned}
\end{equation}
	For the second term, notice that $|\msA(\sfT_{31})|\ls \int_{\R^3}|\sfT_{31}|\mathcal{Z}_x^{-\f12}dx\leq C_{\delta_3}\|\sfT_{31}\mathcal{Z}_x^{-\f{1-\de_2}2}\|_{L^2_x}$, 
	 then Cauchy-Schwarz inequality and Lemma \ref{estimateofT} will yield that
\begin{equation}\label{bs3}
\begin{aligned}
&\Big|\big(\na^{sym}_\mfS(\mb_{\mathsf{n}}\mathcal{Z}_x^{\f12}),(-\mathscr{C}_l(t)\mfR x\cdot \na_x(\mc_{\mathsf{n}}\mathcal{Z}_x^{\f12})\mathbb{I}_{3\times 3}+\sfT_{41}-\msA(\sfT_{31})\mathbb{I}_{3\times 3})\mathcal{Z}_x^{-(1-\de_2)}\big)_{L^2_x}\Big|\\
\leq&\f{\mathscr{C}_l(t)}2\|\na^{sym}_\mfS(\mb_{\mathsf{n}}\mathcal{Z}_x^{\f12})\mathcal{Z}_x^{-\f{1-\de_2}2}\|^2_{L^2_x}+C_{\delta_3}\|\mathcal{P}_x\mfS\na_x(\mc_{\mathsf{n}}\mathcal{Z}_x^{\f12})\mathcal{Z}_x^{-\f{1-\de_2}2}\|^2_{L^2_x}\\
&+C_{\delta_3}(\|\mathcal{P}_v^{10}(\mathbb{I}-\mP)f\|^2_{\mathcal{H}^{1,\de_2}_{x}L^2_v}+\sum_{i=1}^{13}\int_{\R^3_x}\mathcal{Z}_x^{\de_2}(g,e_i)^2_{L^2_v}dx).
\end{aligned}
\end{equation}
	For the third term, by integration by parts, it holds that
\begin{equation}\label{bs4}
	\begin{aligned}
	&(\na^{sym}_\mfS(\pa_t\mb_{\mathsf{n}}\mathcal{Z}_x^{\f12}),(-\sfT_{42}+\mc_{\mathsf{n}}\mathcal{Z}_x^{\f12}\mathbb{I}_{3\times 3})\mathcal{Z}_x^{-(1-\de_2)})_{L^2_x}
	\leq \|\mathcal{P}_x^{-1}\pa_t(\mb_{\mathsf{n}}\mathcal{Z}_x^{\f12})\mathcal{Z}_x^{-\f{1-\de_3}2}\|_{L^2_x}\\
	&\times(\|\mathcal{P}^2_x\sfT_{42}\mathcal{Z}_x^{-\f{1-2\de_2+\de_3}2}\|_{L^2_x}+\|\mathcal{P}_x(\mfS\na_x\sfT_{42})\mathcal{Z}_x^{-\f{1-2\de_2+\de_3}2}\|_{L^2_x}+\|\mathcal{P}_x\mfS\na_x(\mc_{\mathsf{n}}\mathcal{Z}_x^{\f12})\mathcal{Z}_x^{-\f{1-2\de_2+\de_3}2}\|_{L^2_x}\\
	&+\|\mathcal{P}^2_x\mc_{\mathsf{n}}\mathcal{Z}_x^{\f{2\de_2-\de_3}2}\|_{L^2_x})
	\leq C_{\delta_3}\|\mathcal{P}_x^{-1}\pa_t(\mb_{\mathsf{n}}\mathcal{Z}_x^{\f12})\mathcal{Z}_x^{-\f{1-\de_3}2}\|_{L^2_x}\\
	&\times(\|\mfS\na_x(\mc_{\mathsf{n}}\mathcal{Z}_x^{\f12})\mathcal{Z}_x^{-\f{1-\de_1}2}\|_{L^2_x}+\|\mathcal{P}_v^{10}(\mathbb{I}-\mP)f\|_{\mathcal{H}^{1,\de_1}_{x}L^2_v}),
\end{aligned}
\end{equation}
where we use Lemma \ref{estimateofT}, the fact that $2\de_2-\de_3<\de_1$ and Poincar\'{e} inequality \eqref{PKIforabcs} for $\mc_{\mathsf{n}}$ in the final step. Finally,  following  \eqref{deofabcs}  and \eqref{macroeq}, we are led to that  
\begin{equation}\label{bs5}
	\begin{aligned}
	&\|\mathcal{P}_x^{-1}\pa_t(\mb_{\mathsf{n}}\mathcal{Z}_x^{\f12})\mathcal{Z}_x^{-\f{1-\de_3}2}\|^2_{L^2_x}\leq C_{\de_3}(\|(\ma,\mb,\mc)\|^2_{\mathcal{H}_x^{1,\de_3/2}}+\|\sfT_{21}\mathcal{Z}_x^{-\f{1-\de_3}2}\|^2_{L^2_x})\\
	\leq&C_{\delta_3}\|\mathbb{P}f\|^2_{\mathcal{H}^{1,\de_3/2}_{x}L^2_v}+C_{\de_3}(\|\mathcal{P}_v^{10}(\mathbb{I}-\mP)f\|^2_{\mathcal{H}^{1,\de_1}_{x}L^2_v}+\sum_{i=1}^{13}\int_{\R^3_x}\mathcal{Z}_x^{2\de_1}(g,e_i)^2_{L^2_v}dx).
\end{aligned}
\end{equation}
	
Putting together \eqref{bs1},\eqref{bs2},\eqref{bs3},\eqref{bs4} and \eqref{bs5}, we can conclude the desired result  and then end the proof of this lemma.
\end{proof}

 Finally, we give the estimate for $\ma_{\mathsf{n}}$.
\begin{lem}\label{as}
	Let $\de_1=2\de_3,\de_2=\f54\de_3$ with $0<\de_3<\f18$. Then there exists a positive constant  $C_{\de_3}$ such that
\beno
	&&\f d{dt}\bigg[\big(\mfS\na_x(\ma_{\mathsf{n}}\mathcal{Z}_x^{\f12}),(\sfT_{52}+\mb_{\mathsf{n}}\mathcal{Z}_x^{\f12})\mathcal{Z}_x^{-(1-\de_3)}\big)_{L^2_x} 
	 -\big(\ma_{\mathsf{n}} \mathcal{Z}_x^{\f12},((\msA(\sfT_{42}))_{32}x_1x_3-(\msA(\sfT_{42}))_{31}x_2x_3)\mathcal{Z}_x^{-(1-\de_3)}\big)_{L^2_x}\\
	&&\times2(1-\de_3) r(\sqrt{1-r^2}-\f1{\sqrt{1-r^2}}) \bigg]\leq -\f{\mathscr{C}_l(t)}2\|\mfS\na_x(\ma_{\mathsf{n}}\mathcal{Z}_x^{\f12})\mathcal{Z}_x^{-\f{1-\de_3}2}\|^2_{L^2_x}+C_{\delta_3}\big(\|\mfS\na_x(\mc_{\mathsf{n}}\mathcal{Z}_x^{\f1 2 })\mathcal{Z}_x^{-\f{1-\de_1}2}\|^2_{L^2_x}\\
	&&+\|\na^{sym}_\mfS(\mb_{\mathsf{n}}\mathcal{Z}_x^{\f12})\mathcal{Z}_x^{-\f{1-\de_2}2}\|^2_{L^2_x}\big)+C_{\delta_3}\Big(\|\mathcal{P}_v^{10}(\mathbb{I}-\mP)f\|^2_{\mathcal{H}^{1,\de_1}_{x}L^2_v}+\sum_{i=1}^{13}\int_{\R^3_x}\mathcal{Z}_x^{2\de_1}(g,e_i)^2_{L^2_v}dx\Big)+C_{\delta_3}\|\mathbb{P}f\|_{\mathcal{H}^{1,\de_3/2}_{x}L^2_v}\\
	&&\times \big(\|\na^{sym}_\mfS(\mb_{\mathsf{n}}\mathcal{Z}_x^{\f12})\mathcal{Z}_x^{-\f{1-\de_2}2}\|_{L^2_x}+\|\mathcal{P}_v^{10}(\mathbb{I}-\mP)f\|_{\mathcal{H}^{1,\de_1}_{x}L^2_v}\big).
\eeno

\end{lem}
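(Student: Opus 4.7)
The strategy mirrors the templates of Lemma \ref{cs} and Lemma \ref{bs}, but with the additional complication that $\ma$ does not satisfy a first-order flux identity analogous to $\mathscr{C}_l(t)\mfS\na_x(\mc \mathcal{Z}_x^{1/2}) = \sfT_{51} + \pa_t\sfT_{52}$. Instead, $\mathscr{C}_l(t)\mfS\na_x(\ma \mathcal{Z}_x^{1/2})$ must be read off from the second line of \eqref{macroeq}, which contains not only $\pa_t(\mb \mathcal{Z}_x^{1/2})$ but also the genuinely new rotation terms $-\mathscr{C}_l(t)\mfR\mb \mathcal{Z}_x^{1/2}$ and $-\mathscr{C}_l(t)(\mfR x\cdot\na_x)(\mb \mathcal{Z}_x^{1/2})$. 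I plan to solve that equation for $\mathscr{C}_l(t)\mfS\na_x(\ma \mathcal{Z}_x^{1/2})$, then subtract off the polynomial-in-$x$ correction coming from the definition \eqref{deofabcs} of $\ma_{\mathsf{n}}$ (the $(\mathcal{R}\mfS^{-1}x,\mfS^{-1}x)$ and $|x|^2$, $|\mfS^{-1}x|^2$ pieces), so as to obtain an identity
\[
\mathscr{C}_l(t)\mfS\na_x(\ma_{\mathsf{n}}\mathcal{Z}_x^{1/2}) = -\pa_t(\mb_{\mathsf{n}}\mathcal{Z}_x^{1/2}) - \mathscr{C}_l(t)\mfR(\mb_{\mathsf{n}}\mathcal{Z}_x^{1/2}) + \sfT_{21} + \mathcal{R}_{\mathrm{low}},
\]
where $\mathcal{R}_{\mathrm{low}}$ collects terms of the form $\mfS\na_x\mc$, $(\mfR x\cdot\na_x)(\mb\mathcal{Z}_x^{1/2})$, and constants linear in $x$ with coefficients $\mathcal{B}_j,\mathcal{C},\mathcal{D}$ which are already controlled by Lemma \ref{abceqtoabcs} and the $\mc_{\mathsf{n}}$, $\mb_{\mathsf{n}}$ contributions.

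Next I would test this identity in $L^2_x$ against $(\sfT_{52}+\mb_{\mathsf{n}}\mathcal{Z}_x^{1/2})\mathcal{Z}_x^{-(1-\de_3)}$, exactly as in Lemma \ref{bs}. The coercivity $-\f{\mathscr{C}_l(t)}{2}\|\mfS\na_x(\ma_{\mathsf{n}}\mathcal{Z}_x^{1/2})\mathcal{Z}_x^{-(1-\de_3)/2}\|_{L^2_x}^2$ is produced from pairing $\mathscr{C}_l(t)\mfS\na_x(\ma_{\mathsf{n}}\mathcal{Z}_x^{1/2})$ with $\pa_t(\mb_{\mathsf{n}}\mathcal{Z}_x^{1/2})\mathcal{Z}_x^{-(1-\de_3)}$ after moving $\pa_t$ onto the left factor and then using integration by parts $\mfS\na_x\to-\mfS\na_x$ on the first slot. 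The $\pa_t$ hitting the gradient factor becomes a total time derivative, giving the leading term in the bracket on the left of the statement. The remaining $\sfT_{52}$ and $\mb_{\mathsf{n}}$ terms that are time-differentiated are handled by substituting the $\mb$-equation for $\pa_t(\mb\mathcal{Z}_x^{1/2})$ and invoking the macroscopic bounds of Lemma \ref{estimateofT}, Lemma \ref{mapf}, and the Poincar\'e--Korn inequality \eqref{PKIforabcs}.

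The main obstacle is the rotation term $-\mathscr{C}_l(t)\mfR(\mb_{\mathsf{n}}\mathcal{Z}_x^{1/2})$ paired with $\mb_{\mathsf{n}}\mathcal{Z}_x^{1/2}\mathcal{Z}_x^{-(1-\de_3)}$: since $\mfR$ is skew-symmetric, the naive pairing vanishes, but the cross pairing with $\sfT_{52}$ does not. This is where the curious coefficient $2(1-\de_3)r(\sqrt{1-r^2}-1/\sqrt{1-r^2})$ enters the statement. To uncover it, I will use the explicit forms \eqref{deofR}, \eqref{deofS} and the identities \eqref{equality R G -1 x G x}, \eqref{equality R x nabla x G -1 x}, together with \eqref{equality for R G -1 x}, to compute the contribution of $\mfR(\mb_{\mathsf{n}}\mathcal{Z}_x^{1/2})$ and of $(\mfR x\cdot\na_x)(\mb_{\mathsf{n}}\mathcal{Z}_x^{1/2})$ tested against $\sfT_{52}\mathcal{Z}_x^{-(1-\de_3)}$. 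After an integration by parts in $x$ that transfers a derivative from $\sfT_{52}$ (whose leading structure is $\msA(\sfT_{42})\cdot(\text{quadratic in }x)$) to the exponential weight, the combination
\[
\big(\ma_{\mathsf{n}}\mathcal{Z}_x^{1/2},\bigl((\msA(\sfT_{42}))_{32}x_1x_3-(\msA(\sfT_{42}))_{31}x_2x_3\bigr)\mathcal{Z}_x^{-(1-\de_3)}\big)_{L^2_x}
\]
appears as the natural antiderivative of the troublesome cross term with prefactor $2(1-\de_3)r(\sqrt{1-r^2}-1/\sqrt{1-r^2})$; absorbing it into a total time derivative is what lets us close the estimate.

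Once these two time-derivative pieces are grouped on the left-hand side, the remaining terms decompose as: (i) the positive dissipation $-\f{\mathscr{C}_l(t)}{2}\|\mfS\na_x(\ma_{\mathsf{n}}\mathcal{Z}_x^{1/2})\mathcal{Z}_x^{-(1-\de_3)/2}\|^2$; (ii) Cauchy--Schwarz absorptions into the $\mc_{\mathsf{n}}$ and $\mb_{\mathsf{n}}$ dissipations (giving the $\|\mfS\na_x(\mc_{\mathsf{n}}\mathcal{Z}_x^{1/2})\mathcal{Z}_x^{-(1-\de_1)/2}\|^2$ and $\|\na^{sym}_\mfS(\mb_{\mathsf{n}}\mathcal{Z}_x^{1/2})\mathcal{Z}_x^{-(1-\de_2)/2}\|^2$ terms on the right); (iii) quadratic remainders in $\mathbb{P}f$ and $(\mathbb{I}-\mathbb{P})f$ bounded by Lemmas \ref{estimateofT} and \ref{mapf}; and (iv) contributions of $g$ handled by the test-function identities $(g,e_i)_{L^2_v}$. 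The weight-ordering $\de_1=2\de_3>\de_2=\f54\de_3>\de_3$ is used to absorb all weight losses occurring at each integration by parts (as in Lemma \ref{bs}), provided $0<\de_3<1/8$, which matches the hypothesis. This finishes the proof.
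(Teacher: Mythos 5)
Your skeleton matches the paper's proof: you differentiate the same functional $\big(\mfS\na_x(\ma_{\mathsf{n}}\mathcal{Z}_x^{\f12}),(\sfT_{52}+\mb_{\mathsf{n}}\mathcal{Z}_x^{\f12})\mathcal{Z}_x^{-(1-\de_3)}\big)_{L^2_x}$, use the second and fifth equations of \eqref{macroeq} to substitute for the time derivatives, extract the dissipation $-\f{\mathscr{C}_l(t)}2\|\mfS\na_x(\ma_{\mathsf{n}}\mathcal{Z}_x^{\f12})\mathcal{Z}_x^{-\f{1-\de_3}2}\|^2_{L^2_x}$, and close with Lemma \ref{estimateofT}, Lemma \ref{mapf} and \eqref{PKIforabcs}. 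However, there is a genuine gap exactly at the crux of the lemma: the origin of the second total-derivative term in the bracket, with coefficient $2(1-\de_3)r(\sqrt{1-r^2}-\f1{\sqrt{1-r^2}})$. You attribute it to the pairing of $\mfR(\mb_{\mathsf{n}}\mathcal{Z}_x^{\f12})$ and $(\mfR x\cdot\na_x)(\mb_{\mathsf{n}}\mathcal{Z}_x^{\f12})$ against $\sfT_{52}\mathcal{Z}_x^{-(1-\de_3)}$, and to a supposed leading structure ``$\msA(\sfT_{42})\times$(quadratic in $x$)'' of $\sfT_{52}$. Neither is correct: those two $\mb_{\mathsf{n}}$-terms are paired with $\mfS\na_x(\ma_{\mathsf{n}}\mathcal{Z}_x^{\f12})$ and are harmless (Cauchy--Schwarz plus \eqref{PKIforabcs}), while $\sfT_{52}=-\mathcal{Z}_x^{\f12}\big((\mathbb{I}-\mP)f,\f{v(|v|^2-5)}{10}\big)_{L^2_v}$ is a purely microscopic moment with no relation to $\msA(\sfT_{42})$.

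The real obstruction, which your remainder $\mathcal{R}_{\mathrm{low}}$ sweeps away, appears when $\pa_t(\mb\,\mathcal{Z}_x^{\f12})$ is rewritten as $\pa_t(\mb_{\mathsf{n}}\mathcal{Z}_x^{\f12})+\pa_t\msA(\na^{skew}_\mfS(\mb\,\mathcal{Z}_x^{\f12}))\mfS^{-1}x+\f13\pa_t\msA(\mfS\na_x\cdot(\mb\,\mathcal{Z}_x^{\f12}))\mfS^{-1}x$: computing these time derivatives from \eqref{macroeq} (as in \eqref{as4}--\eqref{as5}) produces vectors linear in $x$ whose coefficients are the moments $\msA(\mfS x\wedge\mfR\mb\,\mathcal{Z}_x^{\f12})$ and $\msA(\mfR\mfS x\wedge\mb\,\mathcal{Z}_x^{\f12})$, i.e.\ components of $\msA(\na^{sym}_\mfS(\mb\,\mathcal{Z}_x^{\f12}))$. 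These are \emph{not} among the quantities $\mathcal{B}_1,\mathcal{B}_2,\mathcal{B},\mathcal{C},\mathcal{D}$ controlled by Lemma \ref{abceqtoabcs} — the conservation laws give no handle on the symmetric first moments of $\mb$ — and bounding them crudely through $\|\mb\,\mathcal{Z}_x^{\de_2/2}\|_{L^2_x}$ reintroduces $\|\ma_{\mathsf{n}}\mathcal{Z}_x^{\de_3/2}\|_{L^2_x}$ with an $O(1)$ constant that cannot be absorbed by the $\ma_{\mathsf{n}}$ dissipation you are creating. The paper's resolution is: pair these linear-in-$x$ vectors with $\mfS\na_x(\ma_{\mathsf{n}}\mathcal{Z}_x^{\f12})\mathcal{Z}_x^{-(1-\de_3)}$, integrate by parts (the divergence part vanishes by \eqref{equality R G -1 x G x} and the weight produces the factor $2(1-\de_3)$), and then use the fourth equation of \eqref{macroeq} to replace the surviving moments $\msA(\na^{sym}_\mfS(\mb\,\mathcal{Z}_x^{\f12}))_{3i}$ by $\mathscr{C}_l(t)^{-1}\msA(\sfT_{41}+\pa_t\sfT_{42})_{3i}$; the $\pa_t\sfT_{42}$ piece is precisely what must be pulled out as the second total-time-derivative term in the bracket, the leftover $\pa_t(\ma_{\mathsf{n}}\mathcal{Z}_x^{\f12})$ being estimated as in \eqref{as7}. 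Without this conversion your argument does not close, so the proposal as written has a genuine gap even though the functional and the peripheral estimates coincide with the paper's.
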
	 

\begin{proof}
	Thanks to \eqref{macroeq}, we first have 
	\ben\label{aas1}
	&&\mathscr{C}_l(t)\mfS\na_x(\ma \mathcal{Z}_x^{\f12})-2\mathscr{C}_l(t)\mfS x\mc \mathcal{Z}_x^{\f12}+\mathscr{C}_l(t)\mfR\mb \mathcal{Z}_x^{\f12}+\mathscr{C}_l(t)(\mfR x\cdot\na_x)(\mb \mathcal{Z}_x^{\f12})+\pa_t(\mb \mathcal{Z}_x^{\f12})\nonumber\\&&=\sfT_{21}-2\sfT_{51}-2\pa_t \sfT_{52}.
	\een
	Using \eqref{deofabcs}, the fact that $\msA(\mb \mathcal{Z}_x^{\f12}) =0$ and \eqref{equality R x nabla x G -1 x}, we rewrite  \eqref{aas1} by
	\ben\label{aas}
		 &&\mathscr{C}_l(t)\mfS\na_x(\ma \mathcal{Z}_x^{\f12})-2\mathscr{C}_l(t)\mfS x \mc_{\mathsf{n}}\mathcal{Z}_x^{\f12}-2\mathscr{C}_l(t)\msA(\mc \mathcal{Z}_x^{\f12})\mfS x+\mathscr{C}_l(t)\mfR\mb_{\mathsf{n}}\mathcal{Z}_x^{\f12}+\mathscr{C}_l(t)(\mfR x\cdot\na_x)(\mb_{\mathsf{n}}\mathcal{Z}_x^{\f12})\nonumber\\
&&+\mathscr{C}_l(t)(\mfR\msA(\na^{skew}_\mfS(\mb \mathcal{Z}_x^{\f12}))+\msA(\na^{skew}_\mfS(\mb \mathcal{Z}_x^{\f12})) \mfR)\mfS^{-1}x+\f23\mathscr{C}_l(t) \msA(\mfS\na_x\cdot(\mb \mathcal{Z}_x^{\f12}))\mfR\mfS^{-1}x\nonumber\\
&&+\pa_t\mb \mathcal{Z}_x^{\f12}=\sfT_{21}-2\sfT_{51}-2\pa_t \sfT_{52}.
		\een
	To reduce the term $\pa_t\mb \mathcal{Z}_x^{\f12}$ in the left-hand side, by \eqref{deofabcs}, we obtain that
	\begin{equation}\label{as2}
	\begin{aligned}
	\pa_t\mb \mathcal{Z}_x^{\f12}=\pa_t\mb_{\mathsf{n}}\mathcal{Z}_x^{\f12}+\pa_t\msA(\na^{skew}_\mfS(\mb \mathcal{Z}_x^{\f12}))\mfS^{-1}x+\f13 \pa_t\msA(\mfS\na_x\cdot(\mb \mathcal{Z}_x^{\f12}))\mfS^{-1}x.
		\end{aligned}
\end{equation}
By \eqref{macroeq}, it is not difficult to check that
	\ben\label{as3}
		&&\pa_t (\mfS\na_x)_j(\mb \mathcal{Z}_x^{\f12})_i=-\mathscr{C}_l(t)(\mfS\na_x)_j(\mfS\na_x)_i(\ma \mathcal{Z}_x^{\f12})-2\mathscr{C}_l(t)  (\mfS\na_x)_j\Big((\mfS\na_x\mc)_i\mathcal{Z}_x^{\f12})\Big)\nonumber\\&&\qquad\qquad-\mathscr{C}_l(t)(\mfS\na_x)_j((\mfR\mb)_i\mathcal{Z}_x^{\f12}) 
		-\mathscr{C}_l(t)(\mfS\na_x)_j(\mfR x\cdot\na_x)(\mb \mathcal{Z}_x^{\f12})_i+(\mfS\na_x)_j(\sfT_{21})_i.\nonumber
\een
From this together with the fact that $\msA(\ma \mathcal{Z}_x^{\f12})=0$, \eqref{A integral for G nabla x R b} and \eqref{A integral for G nabla j G nabla i f} imply that 
	\ben\label{as4}
	&&\pa_t\msA(\mfS\na_x\cdot(\mb \mathcal{Z}_x^{\f12}))=\sum_{i=1}^3\pa_t\msA( (\mfS\na_x)_i(\mb \mathcal{Z}_x^{\f12})_i) =-\mathscr{C}_l(t)\msA(\D_{\mfS x}(\ma \mathcal{Z}_x^{\f12}))\\&&\qquad\qquad+2\mathscr{C}_l(t)\sum_{i =1}^3(\mfS^2)_{ii}\msA(\mc \mathcal{Z}_x^{\f12})-2\mathscr{C}_l(t)\msA(\mfR\mb \mathcal{Z}_x^{\f12}\cdot \mfS x)+\msA(\mfS\na_{x}\cdot \sfT_{21}),\nonumber\\
	\label{as5}
	 &&\pa_t\msA(\na^{skew}_\mfS(\mb \mathcal{Z}_x^{\f12}))_{ij}=\f12\pa_t\msA\Big((\mfS\na_x)_j (\mb \mathcal{Z}_x^{\f12})_i-(\mfS\na_x)_i (\mb \mathcal{Z}_x^{\f12})_j\Big)\nonumber\\
	&&=\f12\Big(\mathscr{C}_l(t)\msA( \mfS x\wedge \mfR\mb \mathcal{Z}_x^{\f12}) -\mathscr{C}_l(t)\msA( \mfR\mfS x\wedge \mb \mathcal{Z}_x^{\f12}) +2\msA(\na^{skew}_\mfS \sfT_{21})\Big)_{ij}.
\een
	Here $\D_{\mfS x}:=\sum_{k=1}^3(\mfS\na_x)_k^2$.   Plugging \eqref{as4} and \eqref{as5} into \eqref{as2}, together with \eqref{aas}, we further derive that
	\beno 
	&&\mathscr{C}_l(t)\mfS\na_x(\ma \mathcal{Z}_x^{\f12})-2\mathscr{C}_l(t)\mfS x \mc_{\mathsf{n}}\mathcal{Z}_x^{\f12}+\mathscr{C}_l(t)\mfR\mb_{\mathsf{n}}\mathcal{Z}_x^{\f12}+\mathscr{C}_l(t)(\mfR x\cdot\na_x)(\mb_{\mathsf{n}}\mathcal{Z}_x^{\f12})-2\mathscr{C}_l(t)\mfS x\msA(\mc \mathcal{Z}_x^{\f12})\\
&&+\mathscr{C}_l(t)(\mfR\msA(\na^{skew}_\mfS(\mb \mathcal{Z}_x^{\f12})) +\msA(\na^{skew}_\mfS(\mb \mathcal{Z}_x^{\f12})) \mfR)\mfS^{-1}x+\pa_t(\mb_{\mathsf{n}}\mathcal{Z}_x^{\f12})+\f13\mathscr{C}_l(t)\Big(-\msA(\D_{\mfS x}(\ma \mathcal{Z}_x^{\f12}))\\
	&&+2\sum_{i=1}^3(\mfS^2)_{ii}\msA(\mc \mathcal{Z}_x^{\f12})-2\msA(\mfR\mb \mathcal{Z}_x^{\f12}\cdot \mfS x)\Big)\mfS^{-1}x+\f23 \mathscr{C}_l(t)\msA(\mfS\na_x\cdot(\mb \mathcal{Z}_x^{\f12}))\mfR\mfS^{-1}x+\f12\mathscr{C}_l(t)\\
&&\times\Big(\msA( \mfS x\wedge \mfR\mb \mathcal{Z}_x^{\f12}) -\msA( \mfR \mfS x\wedge \mb \mathcal{Z}_x^{\f12})\Big)\mfS^{-1}x=\sfT_{21}-2\sfT_{51}-2\pa_t \sfT_{52}
	- \msA(\na^{skew}_\mfS \sfT_{21})\mfS^{-1}x\\
	&&-\f13\msA(\mfS\na_{x}\cdot \sfT_{21})\mfS^{-1}x.
\eeno	
Thanks to this reduction, we may arrive at 
\beno
	 &&\f d{dt}\big(\mfS\na_x(\ma_{\mathsf{n}}\mathcal{Z}_x^{\f12}),(2\sfT_{52}+\mb_{\mathsf{n}}e^{\f12|x|^2})\mathcal{Z}_x^{-(1-\de_3)}\big)_{L^2_x}
	=\big(\mfS\na_x(\ma_{\mathsf{n}}\mathcal{Z}_x^{\f12}),\pa_t(2\sfT_{52}+\mb_{\mathsf{n}}\mathcal{Z}_x^{\f12})\mathcal{Z}_x^{-(1-\de_3)}\big)_{L^2_x}
	\\&&+\big(\mfS\na_x(\pa_t\ma_{\mathsf{n}}\mathcal{Z}_x^{\f12}),(2\sfT_{52}+\mb_{\mathsf{n}}\mathcal{Z}_x^{\f12})\mathcal{Z}_x^{-(1-\de_3)}\big)_{L^2_x}=\mathcal{A}_1+\mathcal{A}_2+\mathcal{A}_3,\eeno where 
	\beno  \mathcal{A}_1&:=&\Big(\mfS\na_x(\ma_{\mathsf{n}}\mathcal{Z}_x^{\f12}),\mathscr{C}_l(t)\Big[-\mfS\na_x(\ma\mathcal{Z}_x^{\f12}) 
	+2\msA(\mc \mathcal{Z}_x^{\f12})\mfS x-\big(\mfR\msA(\na^{skew}_\mfS(\mb \mathcal{Z}_x^{\f12}))\\
	&&+\msA(\na^{skew}_\mfS(\mb \mathcal{Z}_x^{\f12}))\mfR\big)\mfS^{-1}x-\f{1}3\big(-\msA(\D_{\mfS x}(\ma \mathcal{Z}_x^{\f12}))+2\sum_{i=1}^3(\mfS^2)_{ii}\msA(\mc \mathcal{Z}_x^{\f12})\\
	&&-2\msA( \mfR\mb \mathcal{Z}_x^{\f12}\cdot \mfS x)\big)\mfS^{-1}x\Big]\mathcal{Z}_x^{-(1-\de_3)}\Big)_{L^2_x}, \\
   \mathcal{A}_2&:=&\Big(\mfS\na_x(\ma_{\mathsf{n}}\mathcal{Z}_x^{\f12}),\mathscr{C}_l(t)\Big[2(\mfS x \mc_{\mathsf{n}}) 
	  \mathcal{Z}_x^{\f12}-\mfR\mb_{\mathsf{n}}\mathcal{Z}_x^{\f12}-(\mfR x\cdot\na_x)(\mb_{\mathsf{n}}\mathcal{Z}_x^{\f12})-\f23 \msA(\mfS\na_x\cdot(\mb \mathcal{Z}_x^{\f12}))\\
	&&\mfR\mfS^{-1}x-\f12\big(\msA( \mfS x\wedge \mfR\mb \mathcal{Z}_x^{\f12}) -\msA( \mfR \mfS x\wedge \mb \mathcal{Z}_x^{\f12}))\mfS^{-1}x+\sfT_{21}-2\sfT_{51}-\msA(\na^{skew}_\mfS \sfT_{21})\mfS^{-1}x\\
	&&-\f13\msA(\mfS\na_{x}\cdot \sfT_{21})\mfS^{-1}x \Big]\mathcal{Z}_x^{-(1-\de_3)}\Big)_{L^2_x},\\
 \mathcal{A}_3&:=&(\mfS\na_x(\pa_t\ma_{\mathsf{n}}\mathcal{Z}_x^{\f12}),(2\sfT_{52}+\mb_{\mathsf{n}}\mathcal{Z}_x^{\f12})\mathcal{Z}_x^{-(1-\de_3)})_{L^2_x}.
\eeno

 \noindent\underline{Step 1: Estimate of $\mathcal{A}_1$.} Following \eqref{deofabcs}, we easily get that  
\begin{equation}\label{as6}
	\mathcal{A}_1=-\mathscr{C}_l(t)\|\mfS\na_x(\ma_{\mathsf{n}}\mathcal{Z}_x^{\f12})\mathcal{Z}_x^{-\f{1-\de_3}2}\|^2_{L^2_x}.
\end{equation}

  \noindent\underline{Step 2: Estimate of $\mathcal{A}_2$.} We split it into three parts: $\mathcal{A}_2=\mathcal{A}_{2,1}+\mathcal{A}_{2,2}+\mathcal{A}_{2,3}$ with
\begin{equation*}
\begin{aligned}
		\mathcal{A}_{2,1}=&\Big(\mfS\na_x(\ma_{\mathsf{n}}\mathcal{Z}_x^{\f12}),\mathscr{C}_l(t)\big(2(\mfS x \mc_{\mathsf{n}})\mathcal{Z}_x^{\f12}-\mfR\mb_{\mathsf{n}}\mathcal{Z}_x^{\f12}-(\mfR x\cdot\na_x)(\mb_{\mathsf{n}}\mathcal{Z}_x^{\f12})\big)\mathcal{Z}_x^{-(1-\de_3)}\Big)_{L^2_x},\\
		\mathcal{A}_{2,2}=&\Big(\mfS\na_x(\ma_{\mathsf{n}}\mathcal{Z}_x^{\f12})\mathcal{Z}_x^{-(1-\de_3)},-\f23\mathscr{C}_l(t) \msA(\mfS\na_x\cdot(\mb \mathcal{Z}_x^{\f12}))\mfR\mfS^{-1}x\\
		&-\f12\mathscr{C}_l(t)(\msA(\mfS x\wedge \mfR\mb \mathcal{Z}_x^{\f12})-\msA(\mfR\mfS x\wedge \mb \mathcal{Z}_x^{\f12}) )\mfS^{-1}x\Big)_{L^2_x},\\
		\mathcal{A}_{2,3}=&\Big(\mfS\na_x(\ma_{\mathsf{n}}\mathcal{Z}_x^{\f12})\mathcal{Z}_x^{-(1-\de_3)},\sfT_{21}-2\sfT_{51}-\msA(\na^{skew}_\mfS \sfT_{21})\mfS^{-1}x-\f13\msA(\mfS\na_{x}\cdot \sfT_{21})\mfS^{-1}x\Big)_{L^2_x}.
\end{aligned}
\end{equation*}

\noindent$\bullet$ For $\mathcal{A}_{2,1}$ and $\mathcal{A}_{2,3}$, by Cauchy-Schwarz inequality and   Lemma \ref{estimateofT}, we have 
\begin{equation*}
\begin{aligned}
&\mathcal{A}_{2,1}\leq \f{\mathscr{C}_l(t)}8 \|\mfS\na_x(\ma_{\mathsf{n}}\mathcal{Z}_x^{\f12})\mathcal{Z}_x^{-\f{1-\de_3}2}\|^2_{L^2_x}+C_{\de_3}(\|\mathcal{P}_x\mc_{\mathsf{n}}\mathcal{Z}_x^{\f{\de_3}2}\|^2_{L^2_x}+\|\mathcal{P}_x\na_x\mb_{\mathsf{n}}\mathcal{Z}_x^{\f{\de_3}2}\|^2_{L^2_x}),\\
		&\mathcal{A}_{2,3}\leq \f{\mathscr{C}_l(t)}8 \|\mfS\na_x(\ma_{\mathsf{n}}\mathcal{Z}_x^{\f12})\mathcal{Z}_x^{-\f{1-\de_3}2}\|^2_{L^2_x}+C_{\de_3}\big(\|\mathcal{P}_v^{10}(\mathbb{I}-\mP)f\|^2_{\mathcal{H}^{1,\de_3}_{x}L^2_v}+\sum_{i=1}^{13}\int_{\R^3_x}\mathcal{Z}_x^{\de_3}(g,e_i)^2_{L^2_v}dx\big).
\end{aligned}
\end{equation*}

\noindent$\bullet$ For  $\mathcal{A}_{2,2}$,  using integration by parts, we have 
\beno
 \Big(\mfS\na_x(\ma_{\mathsf{n}}\mathcal{Z}_x^{\f12})\mathcal{Z}_x^{-(1-\de_3)},\msA(\mfS\na_x\cdot(\mb \mathcal{Z}_x^{\f12}))\mfR\mfS^{-1}x\Big)_{L^2_x}=\Big(2(1-\de_3)(\ma_{\mathsf{n}}\mathcal{Z}_x^{\f12})\mathcal{Z}_x^{-(1-\de_3)}\\
 \msA(\mfS\na_x\cdot(\mb \mathcal{Z}_x^{\f12})),(\mfR\mfS^{-1}x,\mfS x)_{\R^3}\Big)-\Big((\ma_{\mathsf{n}}\mathcal{Z}_x^{\f12})\mathcal{Z}_x^{-(1-\de_3)}\msA(\mfS\na_x\cdot(\mb \mathcal{Z}_x^{\f12})),(\mfS\na_x)\cdot(\mfR\mfS^{-1}x)\Big)_{L^2_x}=0,
\eeno
where   we use \eqref{equality R G -1 x G x}.  Thus we are led to that  
\begin{equation}\label{A22}
	\begin{aligned}
\mathcal{A}_{2,2}=&\Big(\mfS\na_x(\ma_{\mathsf{n}}\mathcal{Z}_x^{\f12})\mathcal{Z}_x^{-(1-\de_3)},\f{\mathscr{C}_l(t)}2(-\msA (\mfS x\wedge \mfR\mb \mathcal{Z}_x^{\f12}) +\msA( \mfR\mfS x\wedge \mb \mathcal{Z}_x^{\f12}) )\mfS^{-1}x\Big)_{L^2_x}\\
 =& \Big(2(1-\de_3)(\ma_{\mathsf{n}} \mathcal{Z}_x^{\f12})\mathcal{Z}_x^{-(1-\de_3)},\f{\mathscr{C}_l(t)}2\big((-\msA (\mfS x\wedge \mfR\mb \mathcal{Z}_x^{\f12}) +\msA( \mfR\mfS x\wedge \mb \mathcal{Z}_x^{\f12}) )\mfS^{-1}x,\mfS x\big)_{\R^3}\Big)_{L^2_x}.
	\end{aligned}
\end{equation}
It is not difficult to see that 
\begin{equation*}
	\begin{aligned}
	&\big((-\msA (\mfS x\wedge \mfR\mb \mathcal{Z}_x^{\f12}) +\msA( \mfR\mfS x\wedge \mb \mathcal{Z}_x^{\f12}) )\mfS^{-1}x,\mfS x\big)_{\R^3}\\
	=&\f r{(2\pi)^{\f32}}(\sqrt{1-r^2}-\f1{\sqrt{1-r^2}})(\int_{\R^3_x}(\mb_2x_3+\sqrt{1-r^2}x_2\mb_3)dxx_1x_3-\int_{\R^3_x}(\mb_1x_3+\sqrt{1-r^2}x_1\mb_3)dxx_2x_3),
	\end{aligned}
\end{equation*}
and by \eqref{macroeq},
\begin{equation*}
	\begin{aligned}
	&\f{\mathscr{C}_l(t)}{2(2\pi)^{\f32}}\int_{\R^3_x}(\mb_2x_3+\sqrt{1-r^2}x_2\mb_3)dx=\mathscr{C}_l(t)\big(\msA(\na^{sym}_{\mfS}(\mb \mathcal{Z}_x^{\f12}))\big)_{32}=\big(\msA(\sfT_{41}+\pa_t \sfT_{42})\big)_{32},\\
	&\f{\mathscr{C}_l(t)}{2(2\pi)^{\f32}}\int_{\R^3_x}(\mb_1x_3+\sqrt{1-r^2}x_1\mb_3)dx=\mathscr{C}_l(t)\big(\msA(\na^{sym}_{\mfS}(\mb \mathcal{Z}_x^{\f12}))\big)_{31}=\big(\msA(\sfT_{41}+\pa_t \sfT_{42})\big)_{31}.
	\end{aligned}
\end{equation*}
These yield  that
\begin{equation*}
	\begin{aligned}
\mathcal{A}_{2,2}&=r(\sqrt{1-r^2}-\f1{\sqrt{1-r^2}})\Big(2(1-\de_3)\ma_{\mathsf{n}}\mathcal{Z}_x^{\f12},\Big(\big(\msA(\sfT_{41}+\pa_t \sfT_{42})\big)_{32}x_1x_3-\big(\msA(\sfT_{41}+\pa_t \sfT_{42})\big)_{31}x_2x_3\Big)\\&\times\mathcal{Z}_x^{-(1-\de_3)}\Big)=r(\sqrt{1-r^2}-\f1{\sqrt{1-r^2}})\Big(2(1-\de_3)\ma_{\mathsf{n}} \mathcal{Z}_x^{\f12},((\msA(\sfT_{41}))_{32}x_1x_3-(\msA(\sfT_{41}))_{31}x_2x_3)\mathcal{Z}_x^{-(1-\de_3)}\Big)\\
+&r(\sqrt{1-r^2}-\f1{\sqrt{1-r^2}})\f d{dt}\Big(2(1-\de_3)\ma_{\mathsf{n}} \mathcal{Z}_x^{\f12},((\msA(\sfT_{42}))_{32}x_1x_3-(\msA(\sfT_{42}))_{31}x_2x_3)\mathcal{Z}_x^{-(1-\de_3)}\Big)\\
-&r(\sqrt{1-r^2}-\f1{\sqrt{1-r^2}})\Big(2(1-\de_3)\pa_t(\ma_{\mathsf{n}} \mathcal{Z}_x^{\f12}),((\msA(T_{42}))_{32}x_1x_3-(\msA(T_{42}))_{31}x_2x_3)\mathcal{Z}_x^{-(1-\de_3)}\Big).
	\end{aligned}
\end{equation*}

We claim that for any sufficiently small $\vep$, it holds that
\begin{equation*}
	\begin{aligned}
		&\mathcal{A}_{2,2}-r(\sqrt{1-r^2}-\f1{\sqrt{1-r^2}})\f d{dt}\Big(2(1-\de_3)\ma_{\mathsf{n}} \mathcal{Z}_x^{\f12},((\msA(\sfT_{42}))_{32}x_1x_3-(\msA(\sfT_{42}))_{31}x_2x_3)\mathcal{Z}_x^{-(1-\de_3)}\Big)\\
		&\leq \vep\|\ma_{\mathsf{n}} \mathcal{Z}_x^{\f{\de_3}2}\|^2_{L^2_x}+C_\vep (\|\mathcal{P}_v^{10}(\mathbb{I}-\mP)f\|^2_{\mathcal{H}^{1,\de_1}_{x}L^2_v}+\sum_{i=1}^{13}\int_{\R^3_x}\mathcal{Z}_x^{2\de_1}(g,e_i)^2_{L^2_v}dx)\\
		&+C_{\de_3} \|\mathcal{P}_v^{10}(\mathbb{I}-\mP)f\|_{\mathcal{H}^{1,\de_1}_{x}L^2_v}\|\mathbb{P}f\|_{\mathcal{H}^{1,\de_3/2}_{x}L^2_v}.
	\end{aligned}
\end{equation*}
In fact, by Cauchy-Schwarz inequality, Lemma \ref{estimateofT} and the argument used for \eqref{bs5}, we get that

\begin{equation}\label{as7}
	\begin{aligned}
	&\|\mathcal{P}_x^{-1}\pa_t(\ma_{\mathsf{n}}\mathcal{Z}_x^{\f12})\mathcal{Z}_x^{-\f{1-\de_3}2}\|^2_{L^2_x}
	\leq C_{\de_3}\|\mathbb{P}f\|^2_{\mathcal{H}^{1,\de_3/2}_{x}L^2_v}
	+C_{\de_3}(\|\mathcal{P}_v^{10}(\mathbb{I}-\mP)f\|^2_{\mathcal{H}^{1,\de_1}_{x}L^2_v}+\sum_{i=1}^{13}\int_{\R^3_x}\mathcal{Z}_x^{2\de_1}(g,e_i)^2_{L^2_v}dx).
	\end{aligned}
\end{equation}
This is enough to conclude the claim.
By Poincar\'{e}-Korn inequality \eqref{PKIforabcs},  we arrive at that
\begin{equation}\label{as8}
	\begin{aligned}
		&\mathcal{A}_{2}-r(\sqrt{1-r^2}-\f1{\sqrt{1-r^2}})\f d{dt}\Big(2(1-\de_3)\ma_{\mathsf{n}} \mathcal{Z}_x^{\f12},(\msA(T_{42})_{32}x_1x_3-\msA(T_{42})_{31}x_2x_3)\mathcal{Z}_x^{-(1-\de_3)}\Big)\\
		&\leq \f{\mathscr{C}_l(t)}4\|\mfS\na_x(\ma_{\mathsf{n}}\mathcal{Z}_x^{\f12})\mathcal{Z}_x^{-\f{1-\de_3}2}\|^2_{L^2_x}+ C_{\de_3}
		\Big[ \|\mfS\na_x(\mc_{\mathsf{n}}\mathcal{Z}_x^{\f1 2 })\mathcal{Z}_x^{-\f{1-\de_1}2}\|^2_{L^2_x}+\|\na^{sym}_\mfS(\mb_{\mathsf{n}}\mathcal{Z}_x^{\f12})\mathcal{Z}_x^{-\f{1-\de_2}2}\|^2_{L^2_x} \\
		&+  \|\mathcal{P}_v^{10}(\mathbb{I}-\mP)f\|^2_{\mathcal{H}^{1,\de_1}_{x}L^2_v}+\sum_{i=1}^{13}\int_{\R^3_x}\mathcal{Z}_x^{2\de_1}(g,e_i)^2_{L^2_v}dx+ \|\mathcal{P}_v^{10}(\mathbb{I}-\mP)f\|_{\mathcal{H}^{1,\de_1}_{x}L^2_v}\|\mathbb{P}f\|_{\mathcal{H}^{1,\de_3/2}_{x}L^2_v}\Big].
	\end{aligned}
\end{equation}

\noindent\underline{Step 3: Estimate of $\mathcal{A}_3$.}  
 Using integration by parts, \eqref{as7},\eqref{PKIforabcs} and Lemma \ref{estimateofT}, we can deduce that
\begin{equation}\label{as9}
	\begin{aligned}
		\mathcal{A}_3&\leq C_{\de_3} (\|\na^{sym}_\mfS(\mb_{\mathsf{n}}\mathcal{Z}_x^{\f12})\mathcal{Z}_x^{-\f{1-\de_2}2}\|_{L^2_x}+\|\mathcal{P}_v^{10}(\mathbb{I}-\mP)f\|_{\mathcal{H}^{1,\de_1}_{x}L^2_v})\\
		&\times\big(\|\mathbb{P}f\|_{\mathcal{H}^{1,\de_3/2}_{x}L^2_v}
		+\|\mathcal{P}_v^{10}(\mathbb{I}-\mP)f\|_{\mathcal{H}^{1,\de_1}_{x}L^2_v}+\sum_{i=1}^{13}\big(\int_{\R^3_x}\mathcal{Z}_x^{2\de_1}(g,e_i)^2_{L^2_v}dx\big)^{\f12}\big).
	\end{aligned}
\end{equation}

Patching together the estimates   \eqref{as6}, \eqref{as8} and \eqref{as9}, we can get the desired results and then complete the proof of this lemma.
\end{proof}

Now, we are ready to give the proof of Theorem \ref{macroestimate} with $N=1$.
\begin{proof}[Proof of Theorem \ref{macroestimate}: $N=1$.]
Thanks to Lemma \ref{cs}, Lemma \ref{bs}, Lemma \ref{as} and the facts that $0<\de_3<\f18, \de_1=2\de_3,\de_2=\f54\de_3$, 
if 
\begin{equation}\label{F1abc}
	\begin{aligned}
\mathcal{F}_{1,\mc_{\mathsf{n}}}&:=(\mfS\na_x(\mc_{\mathsf{n}}\mathcal{Z}_x^{\f12}),-\sfT_{52}\mathcal{Z}_x^{-(1-\de_1)})_{L^2_x},\quad\mathcal{F}_{1,\mb_{\mathsf{n}}}:=(\na^{sym}_\mfS(\mb_{\mathsf{n}}\mathcal{Z}_x^{\f12}),(-\sfT_{42}+\mc_{\mathsf{n}} \mathcal{Z}_x^{\f12})\mathcal{Z}_x^{-(1-\de_2)})_{L^2_x},\\
\mathcal{F}_{1,\ma_{\mathsf{n}}}&:=(\mfS\na_x(\ma_{\mathsf{n}}\mathcal{Z}_x^{\f12}),(\sfT_{52}+\mb_{\mathsf{n}}\mathcal{Z}_x^{\f12})\mathcal{Z}_x^{-(1-\de_3)})_{L^2_x}-r(\f1{\sqrt{1-r^2}}-\sqrt{1-r^2})\\
&\qquad\times(2(1-\de_3)\ma_{\mathsf{n}} \mathcal{Z}_x^{\f12},((\msA(\sfT_{42}))_{32}x_1x_3-(\msA(\sfT_{42}))_{31}x_2x_3)\mathcal{Z}_x^{-(1-\de_3)})_{L^2_x},
	\end{aligned}
\end{equation}
then for any $\vep_1,\vep_2\in(0,1)$ with $\vep_2<\vep_1$, it holds that
\begin{equation}\label{thmeq1}
	\begin{aligned}
&\f d{dt}( \mathcal{F}_{1,\mc_{\mathsf{n}}}+ \vep_1\mathcal{F}_{1,\mb_{\mathsf{n}}}+\vep_2 \mathcal{F}_{1,\ma_{\mathsf{n}}})+\f{\mathscr{C}_l(t)}2\Big(\|\mfS\na_x(\mc_{\mathsf{n}}\mathcal{Z}_x^{\f1 2 })\mathcal{Z}_x^{-\f{1-\de_1}2}\|^2_{L^2_x}+\vep_1\|\na^{sym}_\mfS(\mb_{\mathsf{n}}\mathcal{Z}_x^{\f12})\mathcal{Z}_x^{-\f{1-\de_2}2}\|^2_{L^2_x}\\
+&\vep_2\|\mfS\na_x(\ma_{\mathsf{n}}\mathcal{Z}_x^{\f12})\mathcal{Z}_x^{-\f{1-\de_3}2}\|^2_{L^2_x}\Big)\leq C_{\de_3}\Big(\|\mathcal{P}_v^{10}(\mathbb{I}-\mP)f\|^2_{\mathcal{H}^{1,\de_1}_{x}L^2_v}+\sum_{i=1}^{13}\int_{\R^3_x}\mathcal{Z}_x^{2\de_1}(g,e_i)^2_{L^2_v}dx\Big)\\
+&C_{\de_3}(\vep_1+\vep_2) \|\mfS\na_x(\mc_{\mathsf{n}}\mathcal{Z}_x^{\f12})\mathcal{Z}_x^{-\f{1-\de_1}2}\|^2_{L^2_x}+C_{\de_3}\vep_2 \|\na^{sym}_\mfS(\mb_{\mathsf{n}}\mathcal{Z}_x^{\f12})\mathcal{Z}_x^{-\f{1-\de_2}2}\|^2_{L^2_x}+C_{\de_3}\sum_{i=1}^4\mathcal{G}_{i},
	\end{aligned}
\end{equation}
where  \[\mathcal{G}_1:=\|\mathcal{P}_v^{10}(\mathbb{I}-\mP)f\|_{\mathcal{H}^{1,\de_1}_{x}L^2_v} \|\mathbb{P}f\|_{\mathcal{H}^{1,\de_3/2}_{x}L^2_v},\] \[\mathcal{G}_2:=\vep_1(\|\mfS\na_x(\mc_{\mathsf{n}}\mathcal{Z}_x^{\f12})\mathcal{Z}_x^{-\f{1-\de_1}2}\|_{L^2_x}+\|\mathcal{P}_v^{10}(\mathbb{I}-\mP)f\|_{\mathcal{H}^{1,\de_1}_{x}L^2_v})\|\mathbb{P}f\|_{\mathcal{H}^{1,\de_3/2}_{x}L^2_v},\] \[\mathcal{G}_3:=\vep_2 (\|\na^{sym}_\mfS(\mb_{\mathsf{n}}\mathcal{Z}_x^{\f12})\mathcal{Z}_x^{-\f{1-\de_2}2}\|_{L^2_x}+\|\mathcal{P}_v^{10}(\mathbb{I}-\mP)f\|_{\mathcal{H}^{1,\de_1}_{x}L^2_v})\|\mathbb{P}f\|_{\mathcal{H}^{1,\de_3/2}_{x}L^2_v}.\]

Thanks to Poincar\'{e}-Korn inequality, Lemma \ref{abceqtoabcs} and the fact that $\mathscr{C}_l(t)$ has a positive lower bound, there exists a constant  $\ka>0$ dependent of $\de_i(i=1,2,3)$ and $r$ such that 
\begin{multline}\label{fabc}
		\f{\mathscr{C}_l(t)}2\Big(\|\mfS\na_x(\mc_{\mathsf{n}}\mathcal{Z}_x^{\f1 2 })\mathcal{Z}_x^{-\f{1-\de_1}2}\|^2_{L^2_x}+\vep_1\|\na^{sym}_\mfS(\mb_{\mathsf{n}}\mathcal{Z}_x^{\f1 2 })\mathcal{Z}_x^{-\f{1-\de_2}2}\|^2_{L^2_x}+\vep_2\|\mfS\na_x(\ma_{\mathsf{n}}\mathcal{Z}_x^{\f1 2 })\mathcal{Z}_x^{-\f{1-\de_3}2}\|^2_{L^2_x}\Big) \\
		\geq \vep_2\ka (\|\mc \mathcal{Z}_x^{\f{\de_1}2}\|^2_{L^2_x}+\|\na_x\mc \mathcal{Z}_x^{\f{\de_1}2}\|^2_{L^2_x}+\|\mb \mathcal{Z}_x^{\f{\de_2}2}\|^2_{L^2_x}+\|\na_x\mb \mathcal{Z}_x^{\f{\de_2}2}\|^2_{L^2_x}+\|\ma \mathcal{Z}_x^{\f{\de_3}2}\|^2_{L^2_x}+\|\na_x\ma \mathcal{Z}_x^{\f{\de_3}2}\|^2_{L^2_x}).
\end{multline}
Using Young's inequality repeatedly, we have 
\begin{equation*}
	\begin{aligned}
\mathcal{G}_1&\leq \f{\vep_2\ka}{8C_{\de_3}}\|\mathbb{P}f\|^2_{\mathcal{H}^{1,\de_3/2}_{x}L^2_v}+C_{\de_3} \|\mathcal{P}_v^{10}(\mathbb{I}-\mP)f\|^2_{\mathcal{H}^{1,\de_1}_{x}L^2_v},\\
\mathcal{G}_2&\leq \f{\vep_2\ka}{8C_{\de_3}}\|\mathbb{P}f\|^2_{\mathcal{H}^{1,\de_3/2}_{x}L^2_v}+C_{\de_3} \f{8\vep_1^2}{\ka\vep_2}\|\mfS\na_x(\mc_{\mathsf{n}}\mathcal{Z}_x^{\f12})\mathcal{Z}_x^{-\f{1-\de_1}2}\|^2_{L^2_x}+ C_{\de_3}\|\mathcal{P}_v^{10}(\mathbb{I}-\mP)f\|^2_{\mathcal{H}^{1,\de_1}_{x}L^2_v},\\
\mathcal{G}_3&\leq \f{\vep_2\ka}{8C_{\de_3}}\|\mathbb{P}f\|^2_{\mathcal{H}^{1,\de_3/2}_{x}L^2_v}+C_{\de_3} \f{8\vep_2}{\ka}\|\na^{sym}_\mfS(\mb_{\mathsf{n}}\mathcal{Z}_x^{\f12})\mathcal{Z}_x^{-\f{1-\de_2}2}\|^2_{L^2_x}+C_{\de_3}\|\mathcal{P}_v^{10}(\mathbb{I}-\mP)f\|^2_{\mathcal{H}^{1,\de_1}_{x}L^2_v}.
\end{aligned}
\end{equation*}

Plug them into \eqref{thmeq1}, we can derive that
\begin{equation*}
	\begin{aligned}
		&\f d{dt}( \mathcal{F}_{1,\mc_{\mathsf{n}}}+ \vep_1\mathcal{F}_{1,\mb_{\mathsf{n}}}+\vep_2 \mathcal{F}_{1,\ma_{\mathsf{n}}})+(\f{\mathscr{C}_l(t)}2-(\vep_1+\vep_2)C_{\de_3}- \f{8\vep_1^2}{\ka\vep_2}C_{\de_3})\|\mfS\na_x(\mc_{\mathsf{n}}\mathcal{Z}_x^{\f1 2 })\mathcal{Z}_x^{-\f{1-\de_1}2}\|^2_{L^2_x}\\
		&+(\f{\mathscr{C}_l(t)}2\vep_1-\vep_2C_{\de_3}-\f{8\vep_2}\ka C_{\de_3})\|\na^{sym}_\mfS(\mb_{\mathsf{n}}\mathcal{Z}_x^{\f12})\mathcal{Z}_x^{-\f{1-\de_2}2}\|^2_{L^2_x}+\f{\mathscr{C}_l(t)}2\vep_2\|\mfS\na_x(\ma_{\mathsf{n}}\mathcal{Z}_x^{\f12})\mathcal{Z}_x^{-\f{1-\de_3}2}\|^2_{L^2_x}\\
		&-\f{3\vep_2\ka}8\|\mathbb{P}f\|^2_{\mathcal{H}^{1,\de_3/2}_{x}L^2_v}\leq C_{\de_3}\Big(\|\mathcal{P}_v^{10}(\mathbb{I}-\mP)f\|^2_{\mathcal{H}^{1,\de_1}_{x}L^2_v}+\sum_{i=1}^{13}\int_{\R^3_x}\mathcal{Z}_x^{2\de_1}(g,e_i)^2_{L^2_v}dx\Big) .
	\end{aligned}
\end{equation*}
 Let $\vep_2=\vep_1^{3/2}<\f2{\mathscr{C}_l(t)}\min\{(\f{\ka}{192C_{\de_3}})^3,(\f1{24C_{\de_3}})^3,1\}$. Then it holds that 
 \begin{equation*}
 	\begin{aligned}
 		\f{\mathscr{C}_l(t)}2-\vep_1C_{\de_3}-\vep_2C_{\de_3}- \f{8\vep_1^2}{\ka\vep_2}C_{\de_3}>\f{7\mathscr{C}_l(t)}{16},~\f{\mathscr{C}_l(t)}2\vep_1-\vep_2C_{\de_3}-\f{8\vep_2}\ka C_{\de_3}>\f{7\mathscr{C}_l(t)}{16} \vep_1.
	\end{aligned}
\end{equation*}
  Additionally, in virtue of \eqref{fabc}, we have 
 \begin{equation*}
	\begin{aligned}
&\f {3\vep_2\ka}8\|\mathbb{P}f\|^2_{\mathcal{H}^{1,\de_3/2}_{x}L^2_v}
\leq  \f 38 \f{\mathscr{C}_l(t)}2 (\|\mfS\na_x(\mc_{\mathsf{n}}\mathcal{Z}_x^{\f1 2 })\mathcal{Z}_x^{-\f{1-\de_1}2}\|^2_{L^2_x}\\&+\vep_1\|\na^{sym}_\mfS(\mb_{\mathsf{n}}\mathcal{Z}_x^{\f1 2 })\mathcal{Z}_x^{-\f{1-\de_2}2}\|^2_{L^2_x}+\vep_2\|\mfS\na_x(\ma_{\mathsf{n}}\mathcal{Z}_x^{\f1 2 })\mathcal{Z}_x^{-\f{1-\de_3}2}\|^2_{L^2_x}).
	\end{aligned}
\end{equation*}

Finally  we are led to that
\begin{equation}\label{0order}
	\begin{aligned}
		&\f d{dt} \mathcal{F}_{1,\de_3}+	\f{\mathscr{C}_l(t)}4\Big(\underbrace{\|\mfS\na_x(\mc_{\mathsf{n}}\mathcal{Z}_x^{\f1 2 })\mathcal{Z}_x^{-\f{1-\de_1}2}\|^2_{L^2_x}+\vep_1\|\na^{sym}_\mfS(\mb_{\mathsf{n}}\mathcal{Z}_x^{\f12})\mathcal{Z}_x^{-\f{1-\de_2}2}\|^2_{L^2_x}+\vep_2\|\mfS\na_x(\ma_{\mathsf{n}}\mathcal{Z}_x^{\f12})\mathcal{Z}_x^{-\f{1-\de_3}2}\|^2_{L^2_x}}_{:=\mathcal{D}_{1}}\Big)\\
		&\leq C_{\de_3}\Big(\|\mathcal{P}_v^{10}(\mathbb{I}-\mP)f\|^2_{\mathcal{H}^{1,\de_1}_{x}L^2_v}+\sum_{i=1}^{13}\int_{\R^3_x}\mathcal{Z}_x^{2\de_1}(g,e_i)^2_{L^2_v}dx\Big),
	\end{aligned}
\end{equation}
where $\mathcal{F}_{1,\de_3}:=\mathcal{F}_{1,\mc_{\mathsf{n}}}+ \vep_1\mathcal{F}_{1,\mb_{\mathsf{n}}}+\vep_2 \mathcal{F}_{1,\ma_{\mathsf{n}}}$.  
On one hand,   Poincar\'{e} inequality \eqref{PKIforabcs}, Lemma \ref{abceqtoabcs} and Lemma \ref{mapf} imply that $\mathcal{D}_{1}\gs \|\mathbb{P}f\|^2_{\mathcal{H}^{1,\de_3/2}_xL^2_v}$. On the other hand,
from \eqref{F1abc} and Lemma \ref{mapf}, one may easily check that $|\mathcal{F}_{1,\de_3}|\leq C_{\de_3}\|\mathcal{P}_v^{10}f\|^2_{\mathcal{H}^{1,\de_1}_{x}L^2_v}$. Then we complete the proof of Theorem \ref{macroestimate} with $N=1$.
\end{proof}

Next we will provide a  proof for Theorem \ref{macroestimate} in general case. We begin with the derivation of the equation for the derivatives of $(\mathbf{a}, \mathbf{b}, \mathbf{c})$. To do that,  we introduce some special notations.

$\bullet$ Let $\al=(\al_1,\al_2,\al_3)$. We  define $\alpha<0$ to mean that there exists $1\le i\le 3$ such that $\alpha_i<0$. In this way, we will slightly abuse the notation by stating that
$\pa^\alpha_x:=0$ if $\al<0$.     

$\bullet$ A vector derivative and a matrix derivative are defined by $\pa_x^{\al-\om}:=(\pa_x^{\al-\om_1},\pa_x^{\al-\om_2},\pa_x^{\al-\om_3})^\tau$ and  $(\pa_x^{\al\pm\om})_{ij}:=\pa_x^{\al-\om_i+\om_j}$ respectively.

$\bullet$  $(\mfS\pa_x^{\al-\om})_i:=\sum_{j=1}^3\mfS_{ij}\pa^{\al-\om_j}_x$,  $\mathrm{tr}(\mfR\pa_x^{\al\pm\om}):=\sum_{i,j=1}^3\mfR_{ij}\pa_x^{\al-\om_j+\om_i}$ and $\Big((\mfS\pa_x^{\al-\om})h^\tau-((\mfS\pa_x^{\al-\om})h^\tau)^\tau\Big)_{ij}\\:=(\mfS\pa_x^{\al-\om})_jh_i-(\mfS\pa_x^{\al-\om})_ih_j$ for any vector function $h=(h_1,h_2,h_3)$.

Now, we can derive the macroscopic equation for $(\partial^\alpha_x\mathbf{a}, \partial^\alpha_x\mathbf{b}, \partial^\alpha_x\mathbf{c})$ with $\alpha \in \mathbb{Z}^3_+$. In fact, we have
\ben 
&& \bullet\quad \pa_t((\pa^\al_x\ma) \mathcal{Z}_x^{\f12})=-\mathscr{C}_l(t)(\mfS\na_x\cdot \pa^\al_x\mb)\mathcal{Z}_x^{\f12}-\mathscr{C}_l(t)\mfR x\cdot\na_x((\pa^\al_x\ma) \mathcal{Z}_x^{\f12})+\pa^\al_x(\sfT_{11}\mathcal{Z}_x^{-\f12})\mathcal{Z}_x^{\f12}\label{macroeqNgeq2a}\\
&&\qquad\qquad\qquad\qquad-\mathscr{C}_l(t)(\mathrm{tr}(\mfR \pa_x^{\al\pm\om})\ma) \mathcal{Z}_x^{\f12};\notag\\
&& \bullet\quad\pa_t((\pa^\al_x\mb) \mathcal{Z}_x^{\f12})=-\mathscr{C}_l(t)\mfS\na_x((\pa^\al_x\ma) \mathcal{Z}_x^{\f12})-2\mathscr{C}_l(t)(\mfS\na_x\pa^\al_x\mc)\mathcal{Z}_x^{\f12}-\mathscr{C}_l(t)(\mfR\pa^\al_x\mb) \mathcal{Z}_x^{\f12}\label{macroeqNgeq2b}\\
&&\qquad\quad-\mathscr{C}_l(t)(\mfR x\cdot\na_x)((\pa^\al_x\mb) \mathcal{Z}_x^{\f12})+\pa^\al_x(\sfT_{21}\mathcal{Z}_x^{-\f12})\mathcal{Z}_x^{\f12}-\mathscr{C}_l(t)(\mfS\pa^{\al-\om}_x\ma) \mathcal{Z}_x^{\f12}-\mathscr{C}_l(t)(\mathrm{tr}(\mfR\pa_x^{\al\pm\om})\mb) \mathcal{Z}_x^{\f12};\notag\\
&& \bullet\quad\pa_t((\pa^\al_x\mc) \mathcal{Z}_x^{\f12})=-\f13 \mathscr{C}_l(t)\mfS\na_x\cdot ((\pa^\al_x\mb) \mathcal{Z}_x^{\f12})-\mathscr{C}_l(t)\mfR x\cdot\na_x ((\pa^\al_x\mc) \mathcal{Z}_x^{\f12})+\pa^\al_x(\sfT_{31}\mathcal{Z}_x^{-\f12})\mathcal{Z}_x^{\f12}\label{macroeqNgeq2c1}\\
&&\qquad\qquad\qquad-\f13\mathscr{C}_l(t)(\mfS\pa^{\al-\om}_x\cdot\mb) \mathcal{Z}_x^{\f12}-\mathscr{C}_l(t)(\mathrm{tr}(\mfR\pa_x^{\al\pm\om})\mc) \mathcal{Z}_x^{\f12}\notag ;\\
&& \bullet\quad\pa_t((\pa^\al_x\mc) \mathcal{Z}_x^{\f12}) \mathbb{I}_{3\times 3}=-\mathscr{C}_l(t)\na^{sym}_\mfS((\pa^\al_x\mb) \mathcal{Z}_x^{\f12})-\mathscr{C}_l(t)\mfR x\cdot\na_x ((\pa^\al_x\mc) \mathcal{Z}_x^{\f12})\mathbb{I}_{3\times 3}+\pa^\al_x(\sfT_{41}\mathcal{Z}_x^{-\f12})\mathcal{Z}_x^{\f12}\label{macroeqNgeq2c2}\\
&&\quad+\pa_t \pa^\al_x(\sfT_{42}\mathcal{Z}_x^{-\f12})\mathcal{Z}_x^{\f12}-\f12\mathscr{C}_l(t)\Big((\mfS\pa_x^{\al-\om})(\mb \mathcal{Z}_x^{\f12})^\tau -\big((\mfS\pa_x^{\al-\om})(\mb \mathcal{Z}_x^{\f12})^\tau\big)^\tau\Big)-\mathscr{C}_l(t)(\mathrm{tr}(\mfR\pa_x^{\al\pm\om})\mc) \mathcal{Z}_x^{\f12}\mathbb{I}_{3\times 3};\notag
\\
&& \bullet\quad\mathscr{C}_l(t)\mfS\na_x ((\pa^\al_x\mc) \mathcal{Z}_x^{\f12})=\pa^\al_{x}(\sfT_{51}\mathcal{Z}_x^{-\f12})\mathcal{Z}_x^{\f12}+\pa_t\pa^\al_x(\sfT_{52}\mathcal{Z}_x^{-\f12})\mathcal{Z}_x^{\f12}-\mathscr{C}_l(t)(\mfS\pa^{\al-\om}_x\mc) \mathcal{Z}_x^{\f12}.\label{macroeqNgeq2c3}
\een

\begin{proof}[Proof of Theorem \ref{macroestimate}: $N\geq2$.] It is easy to verify that for any $|\al|=N-1\geq1$,
	\begin{equation}\label{vanishpaabc}
		\int_{\R^3} ((\pa^\al_x\mc)\mathcal{Z}_x^{\f12},(\pa^\al_x\mb)\mathcal{Z}_x^{\f12},(\pa^\al_x\ma)\mathcal{Z}_x^{\f12})\mathcal{Z}_x^{-\f12}dx
		=0,\quad\int_{\R^3}\na^{skew}_\mfS((\pa^\al_x\mb)\mathcal{Z}_x^{\f12})\mathcal{Z}_x^{-\f12}dx=0.
	\end{equation}
 Similarly to the case $|\al|=0$ ($N=1$), we will estimate $(\pa^\al_x\ma,\pa^\al_x\mb,\pa^\al_x\mc)$ by two steps.

\underline{Step 1: Estimate of $\pa^\al_x\mc$.} Thanks to \eqref{macroeqNgeq2c3}, we first have
\begin{equation*}
	\begin{aligned}
		&\f d{dt}(\mfS\na_x ((\pa^\al_x\mc) \mathcal{Z}_x^{\f12}),-\pa^\al_x(\sfT_{52}\mathcal{Z}_x^{-\f12})\mathcal{Z}_x^{\f12}\mathcal{Z}_x^{-(1-\de_1)})_{L^2_x}\\
		=&-\mathscr{C}_l(t) \|\mfS\na_x((\pa^\al_x\mc) \mathcal{Z}_x^{\f12})\mathcal{Z}_x^{-\f{1-\de_1}2}\|^2_{L^2_{x,v}}+(\mfS\na_x ((\pa^\al_x\mc) \mathcal{Z}_x^{\f12})\mathcal{Z}_x^{-(1-\de_1)},\pa^\al_{x}(\sfT_{51}\mathcal{Z}_x^{-\f12})\mathcal{Z}_x^{\f12}\\
		&-\mathscr{C}_l(t)(\mfS\pa^{\al-\om}_x\mc) \mathcal{Z}_x^{\f12})+\Big(\mfS\na_x\big(-\f13 \mathscr{C}_l(t)\mfS\na_x\cdot ((\pa^\al_x\mb) \mathcal{Z}_x^{\f12})-\mathscr{C}_l(t)\mfR x\cdot\na_x ((\pa^\al_x\mc) \mathcal{Z}_x^{\f12})+\pa^\al_x(\sfT_{31}\mathcal{Z}_x^{-\f12})\\
		&\times \mathcal{Z}_x^{\f12}-\f13(\mathscr{C}_l(t)\mfS\pa^{\al-\om}_x\cdot\mb) \mathcal{Z}_x^{\f12}-\mathscr{C}_l(t)(\mathrm{tr}(\mfR\pa_x^{\al\pm\om})\mc) \mathcal{Z}_x^{\f12}\big),-\pa^\al_x(\sfT_{52}\mathcal{Z}_x^{-\f12})\mathcal{Z}_x^{\f12}\mathcal{Z}_x^{-(1-\de_1)}\Big).
	\end{aligned}
\end{equation*}
Comparing to the case $|\al|=0$, we have three new terms which contain \[(\mfS\pa^{\al-\om}_x\mc) \mathcal{Z}_x^{\f12},~~(\mfS\pa^{\al-\om}_x\cdot\mb) \mathcal{Z}_x^{\f12}\quad \mbox{and} \quad (\mathrm{tr}(\mfR\pa_x^{\al\pm\om})\mc) \mathcal{Z}_x^{\f12}.\]
 Using Cauchy-Schwarz inequality and integration by parts, we can derive that
\ben
	 &&\f d{dt}(\mfS\na_x ((\pa^\al_x\mc) \mathcal{Z}_x^{\f12}),-\pa^\al_x(\sfT_{52}\mathcal{Z}_x^{-\f12})\mathcal{Z}_x^{\f12}\mathcal{Z}_x^{-(1-\de_1)})_{L^2_x}\ls -\f{\mathscr{C}_l(t)} 2\|\mfS\na_x((\pa^\al_x\mc) \mathcal{Z}_x^{\f12})\mathcal{Z}_x^{-\f{1-\de_1}2}\|^2_{L^2_{x,v}}\nonumber\\
		\notag&&+ \|\pa^\al_x(\sfT_{51}\mathcal{Z}_x^{-\f12})\mathcal{Z}_x^{\f{\de_1}2}\|^2_{L^2_x}+\|\mathcal{P}_x\na_x\pa^\al_x(\sfT_{52}e^{-\f12|x|^2})e^{\f{2\de_1-\de_3}2|x|^2}\|_{L^2_x}(\|\mathbb{P}f\|_{\mathcal{H}^{|\al|+1}_{\de_3/2,x}L^2_v}+\|\pa^\al_x(\sfT_{31}\mathcal{Z}_x^{-\f12})\mathcal{Z}_x^{\de_3}\|_{L^2_x})\\
		\label{pac}\notag&&+ \|\mathcal{P}_x\na_x\pa^\al_x(\sfT_{52}\mathcal{Z}_x^{-\f12})\mathcal{Z}_x^{\f{2\de_1-\de_3}2}\|^2_{L^2_x}+(\|(\mfS\pa_x^{\al-\om}\mc) \mathcal{Z}_x^{\f{\de_1}2}\|^2_{L^2_x}+\|(\mfS\pa_x^{\al-\om}\cdot \mb) \mathcal{Z}_x^{\f{\de_2}2}\|^2_{L^2_x}\nonumber\\&&+\|(\mathrm{tr}(\mfR\pa_x^{\al\pm\om})\mc) \mathcal{Z}_x^{\f{\de_1}2}\|^2_{L^2_x}) 
		\ls -\f{\mathscr{C}_l(t)} 2\|\mfS\na_x(\pa^\al_x\mc \mathcal{Z}_x^{\f12})\mathcal{Z}_x^{-\f{1-\de_1}2}\|^2_{L^2_{x,v}}+ \Big(\|\mathcal{P}_v^{10}(\mathbb{I}-\mP)f\|^2_{\mathcal{H}^{|\al|+1,\de_1}_{x}L^2_v}\nonumber\een\ben&&+\sum_{i=1}^{13}\int_{\R^3_x}\mathcal{Z}_x^{2\de_1}(\pa^\al_xg,e_i)^2_{L^2_v}dx\Big) +\|\mathcal{P}_v^{10}(\mathbb{I}-\mP)f\|_{\mathcal{H}^{|\al|+1,\de_1}_{x}L^2_v}\|\mathbb{P}f\|_{\mathcal{H}^{|\al|+1,\de_3/2}_{x}L^2_v}\nonumber\\
		&&+  (\|(\mfS\pa_x^{\al-\om}\mc) \mathcal{Z}_x^{\f{\de_1}2}\|^2_{L^2_x}+\|(\mfS\pa_x^{\al-\om}\cdot \mb) \mathcal{Z}_x^{\f{\de_2}2}\|^2_{L^2_x}+\|(\mathrm{tr}(\mfR\pa_x^{\al\pm\om})\mc )\mathcal{Z}_x^{\f{\de_1}2}\|^2_{L^2_x}),
\een
where we use Lemma \ref{estimateofT} in the final step.

\underline{Step 2: Estimates of $\pa^\al_x\mb$ and $\pa^\al_x\ma$.} By the similar argument, one may check that
\begin{equation}\label{pab}
	\begin{aligned}
		&\f d{dt}(\na^{sym}_\mfS((\pa^\al_x\mb) \mathcal{Z}_x^{\f12}),(-\pa^\al_x(\sfT_{42}\mathcal{Z}_x^{-\f12})\mathcal{Z}_x^{\f12}+(\pa^\al_x\mc) \mathcal{Z}_x^{\f12})\mathcal{Z}_x^{-(1-\de_2)})_{L^2_x}\ls  -\f{\mathscr{C}_l(t)} 2\\
		&\times\|\na^{sym}_\mfS((\pa^\al_x\mb) \mathcal{Z}_x^{\f12}) \mathcal{Z}_x^{-\f{1-\de_2}2}\|^2_{L^2_x}+ \|\mfS\na_x((\pa^\al_x\mc) \mathcal{Z}_x^{\f12})\mathcal{Z}_x^{-\f{1-\de_1}2}\|^2_{L^2_x}+\big(\|\mathcal{P}_v^{10}(\mathbb{I}-\mP)\|^2_{\mathcal{H}^{|\al|+1,\de_1}_{x}L^2_v}\\
		&+\sum_{i=1}^{13}\int_{\R^3_x}\mathcal{Z}_x^{2\de_1}(\pa^\al_xg,e_i)^2_{L^2_v}dx\big)+ \big(\|\mfS\na_x((\pa^\al_x\mc) \mathcal{Z}_x^{\f12})\mathcal{Z}_x^{-\f{1-\de_1}2}\|_{L^2_x}+\|\mathcal{P}_v^{10}(\mathbb{I}-\mP)f\|_{\mathcal{H}^{|\al|+1,\de_1}_{x}L^2_v}\big)\\
		&\times\|\mathbb{P}f\|_{\mathcal{H}^{|\al|+1,\de_3/2}_{x}L^2_v}+(\|(\pa_x^{\al-\om}\mb) \mathcal{Z}_x^{\f{\de_2}2}\|^2_{L^2_x}+\|(\mathrm{tr}(\mfR\pa_x^{\al\pm\om})\mc) \mathcal{Z}_x^{\f{\de_2}2}\|^2_{L^2_x}),
	\end{aligned}
\end{equation}
and
\begin{equation}\label{paa}
	\begin{aligned}
		&\f d{dt}(\mfS\na_x((\pa^\al_x\ma) \mathcal{Z}_x^{\f12}),(\pa^\al_x\mb) \mathcal{Z}_x^{\f12}\mathcal{Z}_x^{-(1-\de_3)})_{L^2_x}\ls  -\f{\mathscr{C}_l(t)} 2\|\mfS\na_x((\pa^\al_x\ma) \mathcal{Z}_x^{\f12})\mathcal{Z}_x^{-\f{1-\de_3}2}\|^2_{L^2_x}+ \big(\|\na^{sym}_\mfS((\pa^\al_x\mb) \mathcal{Z}_x^{\f12})\\
		&\times  \mathcal{Z}_x^{-\f{1-\de_2}2}\|^2_{L^2_x}+ \|\mfS\na_x((\pa^\al_x\mc) \mathcal{Z}_x^{\f12})\mathcal{Z}_x^{-\f{1-\de_1}2}\|^2_{L^2_x}\big)
		+ (\|\mathcal{P}_v^{10}(\mathbb{I}-\mP)f\|^2_{\mathcal{H}^{|\al|+1,\de_1}_{x}L^2_v}+\sum_{i=1}^{13}\int_{\R^3_x}\mathcal{Z}_x^{2\de_1}(g,e_i)^2_{L^2_v}dx)\\
		&+ \big(\|\na^{sym}_\mfS((\pa^\al_x\mb) \mathcal{Z}_x^{\f12}) \mathcal{Z}_x^{-\f{1-\de_2}2}\|_{L^2_x}+\|\mathcal{P}_v^{10}(\mathbb{I}-\mP)f\|_{\mathcal{H}^{|\al|+1,\de_1}_{x}L^2_v}\big) \|\mathbb{P}f\|_{\mathcal{H}^{|\al|+1,\de_3/2}_{x}L^2_v}\\ 
		&+ (\|(\pa_x^{\al-\om}\ma) \mathcal{Z}_x^{\f{\de_3}2}\|^2_{L^2_x}+\|(\mathrm{tr}(\mfR\pa_x^{\al\pm\om})\ma) \mathcal{Z}_x^{\f{\de_3}2}\|^2_{L^2_x}+\|(\mathrm{tr}(\mfR\pa_x^{\al\pm\om})\mb) \mathcal{Z}_x^{\f{\de_2}2}\|^2_{L^2_x}).
	\end{aligned}
\end{equation}

Patching together \eqref{pac},\eqref{pab} and \eqref{paa} and choosing suitable constants $\vep_{N,1}$ and $\vep_{N,2}$, we can obtain a new energy inequality similar to \eqref{0order}, i.e., if 
\beno   \widetilde{\mathcal{F}}_{N,\de_3}&:=&(\mfS\na_x ((\pa^\al_x\mc) \mathcal{Z}_x^{\f12}),-\pa^\al_x(\sfT_{52}\mathcal{Z}_x^{-\f12})\mathcal{Z}_x^{\f12}\mathcal{Z}_x^{-(1-\de_1)})_{L^2_x}+\vep_{N,1}(\na^{sym}_\mfS((\pa^\al_x\mb) \mathcal{Z}_x^{\f12}),(-\pa^\al_x(\sfT_{42}\mathcal{Z}_x^{-\f12})\\&&\times \mathcal{Z}_x^{\f12}
+(\pa^\al_x\mc) \mathcal{Z}_x^{\f12})\mathcal{Z}_x^{-(1-\de_2)})_{L^2_x}+\vep_{N,2}(\mfS\na_x((\pa^\al_x\ma) \mathcal{Z}_x^{\f12}),(\pa^\al_x\mb) \mathcal{Z}_x^{\f12}\mathcal{Z}_x^{-(1-\de_3)})_{L^2_x},\\
 \mathcal{D}_N&:=&\sum_{|\al|=N-1}(\|\mfS\na_x((\pa^\al_x\mc) \mathcal{Z}_x^{\f1 2 })\mathcal{Z}_x^{-\f{1-\de_1}2}\|^2_{L^2_x}+\vep_{N,1}\|\na^{sym}_\mfS((\pa^\al_x\mb) \mathcal{Z}_x^{\f12})\mathcal{Z}_x^{-\f{1-\de_2}2}\|^2_{L^2_x}\\
		&&+\vep_{N,2}\|\mfS\na_x((\pa^\al_x\ma) \mathcal{Z}_x^{\f12})\mathcal{Z}_x^{-\f{1-\de_3}2}\|^2_{L^2_x}),
\eeno
then it holds that
\begin{equation}\label{Norder}
	\begin{aligned}
		&\f d{dt} \widetilde{\mathcal{F}}_{N,\de_3}+\f{\mathscr{C}_l(t)} 4\mathcal{D}_N\ls\Big(\|\mathbb{P}f\|^2_{\mathcal{H}^{0,\de_3/2}_{x}L^2_v}+\|\mathcal{P}_v^{10}(\mathbb{I}-\mP)f\|^2_{\mathcal{H}^{N,\de_1}_{x}L^2_v}+\sum_{i=1}^{13}\int_{\R^3_x}\mathcal{Z}_x^{2\de_1}(\pa^\al_xg,e_i)^2_{L^2_v}dx\Big)\\
		&+ \sum_{|\al|=N-1}(\|(\mfS\pa_x^{\al-\om}\mc) \mathcal{Z}_x^{\f{\de_1}2}\|^2_{L^2_x}+\|(\mathrm{tr}(\mfR\pa_x^{\al\pm\om})\mc) \mathcal{Z}_x^{\f{\de_1}2}\|^2_{L^2_x}+\|(\mfS\pa_x^{\al-\om}\mb) \mathcal{Z}_x^{\f{\de_2}2}\|^2_{L^2_x}\\
		&+\|(\mathrm{tr}(\mfR\pa_x^{\al\pm\om})\mb) \mathcal{Z}_x^{\f{\de_2}2}\|^2_{L^2_x}+\|(\pa_x^{\al-\om}\ma) \mathcal{Z}_x^{\f{\de_3}2}\|^2_{L^2_x}+\|(\mathrm{tr}(\mfR\pa_x^{\al\pm\om})\ma) \mathcal{Z}_x^{\f{\de_3}2}\|^2_{L^2_x}).
	\end{aligned}
\end{equation}
Here we have $|\widetilde{\mathcal{F}}_{N,\de_3}|\leq C_{\de_3,N}\|\mathcal{P}_v^{10}f\|^2_{\mathcal{H}^{N,\de_1}_{x}L^2_v}$.
Comparing to the case $\al=0$, we have to control the additional terms  in the right hand side of \eqref{Norder}. Let
\begin{equation*}
	\begin{aligned}
\mathcal{I}_N:=& \|\mathbb{P}f\|^2_{\mathcal{H}^{0,\de_3/2}_{x}L^2_v}+\sum_{|\al|=N-1}(\|(\mfS\pa_x^{\al-\om}\mc) \mathcal{Z}_x^{\f{\de_1}2}\|^2_{L^2_x}+\|(\mathrm{tr}(\mfR\pa_x^{\al\pm\om})\mc) \mathcal{Z}_x^{\f{\de_1}2}\|^2_{L^2_x}+\|(\mfS\pa_x^{\al-\om}\mb) \mathcal{Z}_x^{\f{\de_2}2}\|^2_{L^2_x}\\&+\|(\mathrm{tr}(\mfR\pa_x^{\al\pm\om})\mb) \mathcal{Z}_x^{\f{\de_2}2}\|^2_{L^2_x}+\|(\pa_x^{\al-\om}\ma) \mathcal{Z}_x^{\f{\de_3}2}\|^2_{L^2_x}+\|(\mathrm{tr}(\mfR\pa_x^{\al\pm\om})\ma) \mathcal{Z}_x^{\f{\de_3}2}\|^2_{L^2_x}).
	\end{aligned}
\end{equation*}
Thanks to Lemma \ref{lemKPI} and the condition \eqref{vanishpaabc}, we have 
\ben\label{lowbdDN} \mathcal{D}_N&\gs& \sum_{|\al|=N-1}(\|(|\mathcal{P}_x (\pa^\al_x\mc)|+| \na_x(\pa^\al_x\mc)| )\mathcal{Z}_x^{ \f{\de_1}2}\|^2_{L^2_x}+ \|(|\mathcal{P}_x (\pa^\al_x\mb)|+| \na_x(\pa^\al_x\mb)| )\mathcal{Z}_x^{ \f{\de_2}2}\|^2_{L^2_x}\notag\\
		&&+ \|(|\mathcal{P}_x (\pa^\al_x\ma)|+| \na_x(\pa^\al_x\ma)| )\mathcal{Z}_x^{ \f{\de_3}2}\|^2_{L^2_x}),  \een 
which implies that
\begin{equation}\label{I2N}
	\begin{aligned}
\mathcal{I}_N&\leq C_{\de_3,N}\sum_{|\al|\leq N-1} (\|(\pa^\al_x\mc)\mathcal{Z}_x^{\f{\de_1}2}\|^2_{L^2_x}+\|(\pa^\al_x\mb)\mathcal{Z}_x^{\f{\de_2}2}\|^2_{L^2_x}+\|(\pa^\al_x\ma)\mathcal{Z}_x^{\f{\de_3}2}\|^2_{L^2_x})\ls \sum_{k=1}^{N-1}\mathcal{D}_{k}.
	\end{aligned}
\end{equation}

We can complete the proof by induction argument.  Suppose that there exists a energy functional $\mathcal{F}_{N-1}$ verifies all the properties stated in Theorem \ref{macroestimate}. Let $\mathcal{F}_{N}:=\mathcal{F}_{N-1}+\vep_N\widetilde{\mathcal{F}}_N$. Then by \eqref{lowbdDN} and \eqref{I2N}, it is easy to check that the new energy functional   $\mathcal{F}_{N}$ satisfies the properties in Theorem \ref{macroestimate} if $\vep_N$ is sufficiently small. This ends the proof.
\end{proof}

\section{Proof of Theorem  \ref{Thmnonstability}}\label{Section nonlinear stability}
To prove our main results, we first recall that \eqref{pertubNSlandaucauchy} can be rewritten as 
\begin{equation}\label{co=1}
\partial_t f + Tf =\mathscr{C}\big(Q(G, f)+Q(f,\mathcal{M})\big),
\end{equation}
where  $T:=\mathscr{C}_l(t)(\mfS v \cdot \nabla_x - \mfS x \cdot \nabla_v + \mfR x \cdot \nabla_x - \mfR v \cdot \nabla_v)$ and $\mathcal{M}=\f1{(2\pi)^3}e^{-\f12(|x|^2+|v|^2)}$.
Taking derivative $\pa^\al_\be:=\pa^\al_x\pa^\be_v$ on \eqref{co=1}($|\al|+|\be|= N$) with $1\le N\le5$, we get that
\begin{equation}\label{albeEq}
\pa_t \pa^\al_\be f+\pa^\al_\be (Tf)=\mathscr{C}\bigg[Q(G,\pa^\al_\be f)+\sum_{|\al_1|+|\be_1|\geq1}C_\al^{\al_1}C_{\be}^{\be_1}Q(\pa^{\al_1}_{\be_1}G,\pa^{\al_2}_{\be_2}f)+\sum_{\substack{\al_1+\al_2=\al\\ \be_1+\be_2=\be}}C_\al^{\al_1}C_{\be}^{\be_1} Q(\pa^{\al_1}_{\be_1}f,\pa^{\al_2}_{\be_2}\mathcal{M})\bigg].
\end{equation}
 
 \subsection{Energy space and  dissipation functional} To familiarize ourselves with the energy space and its corresponding dissipation functional, we incorporate the use of a space-time cutoff function and space-velocity weights as follows:
\smallskip

 \noindent$\bullet$  Let $\widetilde{\phi}:\R\rightarrow [0,1]$ is a non-increasing and smooth function satisfying that $\widetilde{\phi}(z)=1$ if $z\leq0$, $ \widetilde{\phi}(z)\in (0,1)$ if $z\in (0,1)$ and $\widetilde{\phi}(z)=0$ if $z\geq1$. Moreover, $|(\widetilde{\phi})'|\leq 10$. The space-time cutoff function is defined by 
\ben\label{depsi}
\psi(t,x):=\widetilde{\phi}^2\Big(\mathcal{Z}_x^{\f14}-q_0\ln(t+e)\Big),\quad \widetilde{\psi}_{\pm}(t,x):=\widetilde{\phi}\Big(\mathcal{Z}_x^{\f14}-q_0\ln(t+e)\mp 2\Big)
\een 
with $q_0=q_0(r)\in(\f12,1)$, to be chosen in the later. It is easy to see that $\psi \mathcal{Z}_x^{\f14}\ls \<t\>^{q_0}$ and $(1-\psi)\mathcal{Z}_x^{-\f14}\ls \<t\>^{-q_0}$. Now we may  split the term $Q(G,\pa^\al_\be f)$  by 
\ben\label{befo}
Q(G,\pa^\al_\be f)&=&Q(\psi G,\pa^\al_\be f)+Q((1-\psi) G,\pa^\al_\be f)\\
\notag&=&(2\pi)^{-\f32}\psi \mathcal{Z}_x^{-\f12}Q(\mu,\pa^\al_\be f)+\psi Q(f,\pa^\al_\be f)+Q((1-\psi) G,\pa^\al_\be f).
\een

\noindent$\bullet$ Let $\al,\be\in\Z_+^3, \a\in \R$.  Recalling \eqref{exponentialfunction} and \eqref{exponentialfunctionxv}, 
we define the special exponential space-velocity weight as follows:
\begin{equation}\label{deW}
	\begin{aligned}
(W^{\al,\be}_\a)^2(x,v):=\mathcal{N}\mathcal{Z}_{x,v}^{\a-\eta (|\al|+|\be|)}+\mathcal{Z}_{x,v}^{\a-\c-\vth-\eta (|\al|+|\be|)} \mfS^{-1} x\cdot v,
	\end{aligned}
\end{equation}
where  $\mathcal{N},\c,\vth$ are defined in Lemma \ref{mixturegainweight} and $\eta\ll (|\al|+|\be|)^{-1}(\a-\f12)$. One may easily check that
\[0\leq (W^{\al,\be}_\a)^2\sim \mathcal{Z}_{x,v}^{\a-\eta (|\al|+|\be|)}.\]

\noindent$\bullet$ Let $\al,\be\in\Z_+^3, \a\in (\f12,1)$. We define 
\ben\label{DissLan} &&\|f\|_{\mathcal{D}_v}^2:=\|f\|^2_{H^1_{-\f32}}+\|  \mathbb{T}_{\S^2} f\|^2_{L^2_{-\f32}}+\| f\|^2_{L^2_{-\f12}}; \mathbf{E}_\a^{\al,\be}(f):=\|W^{\al,\be}_\a\pa^\al_\be f\|^2_{L^2_{x,v}};\\
  \label{Dalbe}
&&\mathbf{D}_\a^{\al,\be}(f):=\|\psi^{\f12}\mathcal{Z}_x^{-1/4}W^{\al,\be}_\a\na_v\pa^\al_\be f\|^2_{L^2_xL^2_{-\f32}}+\| \mathcal{Z}_x^{-1/4}W^{\al,\be}_\a\pa^\al_\be f\|^2_{L^2_xL^2_{-\f12}}\nonumber\\
&&+\|\psi^{\f12}\mathcal{Z}_x^{-1/4}W^{\al,\be}_\a\mathbb{T}_{\S^2}\pa^\al_\be f\|^2_{L^2_xL^2_{-\f32}}+\|\mathcal{P}_{x,v}^1\mathcal{Z}_{x,v}^{\f12(\a-\c-\vth-\eta (|\al|+|\be|))} \pa^\al_\be f\|^2_{L^2_{x,v}},\\
&&\mathbf{D}_0^{\al,\be}(f):=\|\psi^{\f12}\mathcal{Z}_v^{1/4}(\mathbb{I}-\mathbb{P})\pa^\al_\be f\|^2_{L^2_x\mathcal{D}_v}+\|(1-\psi)^{\f12}\mathcal{Z}_v^{1/4}(\mathbb{I}-\mathbb{P})\pa^\al_\be f\|^2_{L^2_xL^2_{-1/2}},\\
&&\label{EDNI} \mathbf{E}^{\mathbf{I}}_{\a,N}(f):=\sum_{|\al|+|\be|\le N}C(\al,\be)\mathbf{E}_\a^{\al,\be}(f);\quad \mathbf{D}^{\mathbf{I}}_{\a,N}(f):=\sum_{|\al|+|\be|\le N} C(\al,\be)\mathbf{D}_\a^{\al,\be}(f); \\
&&\label{EDNII} \mathbf{E}^{\mathbf{II}}_{0,N}(f):=\sum_{|\al|+|\be|\le N} \|\mathcal{Z}_{x,v}^{\f14}\pa^\al_\be f\|_{L^2}^2;\quad \mathbf{D}^{\mathbf{II}}_{0,N}(f):=\sum_{|\al|+|\be|\le N} \mathbf{D}_0^{\al,\be}(f).\een 
Here $C(\al,\be)$ is a positive function on $\Z^3_+\times \Z^3_+$, which will be fixed in the proof of Lemma \ref{fixcalbe}. 
\medskip

Before performing the energy estimates, we present a auxiliary lemma.
\begin{lem}\label{H2paal}
	Let $\psi(t,x)$ be defined in \eqref{depsi} and $1\le m\le 5$. It holds that
	\ben
	&&\|\psi\mathcal{Z}_x^{\f12}f\|_{H^2_xL^2_6}\ls \<t\>^{q_0} \sum_{|\al|\leq 2}\|\mathcal{P}^2_x\mathcal{Z}_{x,v}^{\f14}\pa^\al_xf\|_{L^2_{x,v}};\label{H2pa}\\
	&&\|\mathcal{P}_v^2(1-\psi)f\|_{H^m_{x,v}}\ls \<t\>^{-2q_0(\f12+\de_1-5\eta)}\sum_{|\al|+|\be|\leq 5}\|W_{\f12+\de_1}^{\al,\be}\pa^\al_\be f\|_{L^2_{x,v}},~\de_1>0.\label{H2pa2}
	\een
\end{lem}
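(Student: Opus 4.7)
The plan is to prove both inequalities by a pointwise Leibniz expansion combined with the two support properties recorded just after \eqref{depsi}, namely $\psi\mathcal{Z}_x^{1/4}\ls\<t\>^{q_0}$ and $(1-\psi)\mathcal{Z}_x^{-1/4}\ls\<t\>^{-q_0}$, and the standard pointwise absorption $\mathcal{P}_v^k\ls \mathcal{Z}_v^{\epsilon}$ valid for any $k\ge 0$ and $\epsilon>0$; in particular $\lr{v}^6\ls\mathcal{Z}_v^{1/4}$. A preliminary observation I would record is that on the support of $\psi$ one has $|x|^2\ls \ln\<t\>$, so every polynomial in $x$ is bounded by $C(\ln\<t\>)^{C}$ there; and for $|\alpha_1|\ge 1$ the derivative $\pa_x^{\alpha_1}\psi$ is compactly supported in the thin transition strip $\{\widetilde{\phi}'\neq 0\}$ on which $\mathcal{Z}_x^{1/4}$ is of exact order $\<t\>^{q_0}$.

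For \eqref{H2pa}, I would apply Leibniz,
\[
\pa_x^\alpha(\psi\mathcal{Z}_x^{1/2}f)=\sum_{\alpha_1+\alpha_2+\alpha_3=\alpha}C_{\alpha_1\alpha_2\alpha_3}\,\pa_x^{\alpha_1}\psi\cdot\pa_x^{\alpha_2}\mathcal{Z}_x^{1/2}\cdot\pa_x^{\alpha_3}f,
\]
and observe that $\pa_x^{\alpha_2}\mathcal{Z}_x^{1/2}=P_{|\alpha_2|}(x)\mathcal{Z}_x^{1/2}$ with $P_{|\alpha_2|}$ polynomial of degree $|\alpha_2|$. Writing $\psi\mathcal{Z}_x^{1/2}=(\psi\mathcal{Z}_x^{1/4})\mathcal{Z}_x^{1/4}$ and exploiting the bounds above, the core pointwise estimate
\[
|\pa_x^{\alpha_1}\psi\cdot\pa_x^{\alpha_2}\mathcal{Z}_x^{1/2}|\ls\<t\>^{q_0}\mathcal{P}_x^2\mathcal{Z}_x^{1/4},\qquad |\alpha_1|+|\alpha_2|\le 2,
\]
follows: for $\alpha_1=0$ the polynomial is simply $\mathcal{P}_x^{|\alpha_2|}\le \mathcal{P}_x^2$, and for $|\alpha_1|\ge 1$ the transition-strip estimate $\mathcal{P}_x\sim(\ln\<t\>)^{1/2}$ absorbs the $(\ln\<t\>)^{|\alpha_1|/2}$ factor produced by $\pa_x^{\alpha_1}\psi$ into $\mathcal{P}_x^2$. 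Squaring, integrating in $(x,v)$, and using $\lr{v}^6\ls\mathcal{Z}_v^{1/4}$ to replace the $L^2_6$-norm by a $\mathcal{Z}_v^{1/4}$-weighted $L^2$-norm then yields \eqref{H2pa}.

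For \eqref{H2pa2}, I would apply the analogous Leibniz expansion to $\mathcal{P}_v^2(1-\psi)f$ and set $a:=\tfrac12+\delta_1-\eta(|\alpha_2|+|\beta_2|)$, so that $(W^{\alpha_2,\beta_2}_{1/2+\delta_1})^2\sim\mathcal{Z}_{x,v}^{a}$. Factoring
\[
\pa_x^{\alpha_1}(1-\psi)\cdot\pa_v^{\beta_1}\mathcal{P}_v^2\cdot\pa^{\alpha_2}_{\beta_2}f=\Bigl[\pa_x^{\alpha_1}(1-\psi)\cdot\pa_v^{\beta_1}\mathcal{P}_v^2\cdot\mathcal{Z}_{x,v}^{-a/2}\Bigr]\cdot \mathcal{Z}_{x,v}^{a/2}\pa^{\alpha_2}_{\beta_2}f,
\]
I would bound the bracketed factor pointwise by $C(\ln\<t\>)^{C}\<t\>^{-2q_0 a}$: the main case $\alpha_1=\beta_1=0$ uses the iterated support bound $(1-\psi)\mathcal{Z}_x^{-s}\ls\<t\>^{-4sq_0}$ together with $\mathcal{P}_v^2\mathcal{Z}_v^{-a/2}\ls 1$, while for $|\alpha_1|\ge 1$ the derivative $-\pa_x^{\alpha_1}\psi$ is confined to the transition strip where $\mathcal{Z}_x^{-a}\sim\<t\>^{-4aq_0}$ exactly and polynomial $x$-factors produce at most $(\ln\<t\>)^{C}$. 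Taking $L^2_{x,v}$ norms gives
\[
\|\pa^{\alpha_1}_{\beta_1}(\mathcal{P}_v^2(1-\psi))\,\pa^{\alpha_2}_{\beta_2}f\|_{L^2_{x,v}}\ls (\ln\<t\>)^{C}\<t\>^{-2q_0 a}\|W^{\alpha_2,\beta_2}_{1/2+\delta_1}\pa^{\alpha_2}_{\beta_2}f\|_{L^2_{x,v}}.
\]
Since $|\alpha_2|+|\beta_2|\le m\le 5$ one has $a\ge \tfrac12+\delta_1-5\eta$, with strict inequality whenever $|\alpha_1|+|\beta_1|\ge 1$; for the main term the log factor is absent, and for the tail terms the strict $\eta$-surplus $\<t\>^{-2q_0\eta(5-|\alpha_2|-|\beta_2|)}$ absorbs the log factor. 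Summing over $|\alpha|+|\beta|\le m$ gives \eqref{H2pa2}.

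The main technical obstacle is the bookkeeping of the logarithmic factors produced by derivatives of the cutoff. The essential point is that each derivative falling on $\psi$ is compactly supported in a strip where polynomial factors in $x$ are only polylog in $t$; moreover, whenever such a derivative arises, at least one derivative is consumed from the budget and hence the remaining factor $\pa^{\alpha_2}_{\beta_2}f$ carries weight index $a$ strictly larger than the worst case $\tfrac12+\delta_1-5\eta$. The resulting strict exponential surplus in $\<t\>$ dominates the polylog loss, and once this is arranged the rest of the argument is routine.
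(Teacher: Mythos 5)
Your proof is correct and follows essentially the same route as the paper's: a Leibniz expansion followed by the pointwise bounds coming from $\psi\mathcal{Z}_x^{1/4}\lesssim\langle t\rangle^{q_0}$, $(1-\psi)\mathcal{Z}_x^{-1/4}\lesssim\langle t\rangle^{-q_0}$ and $\mathcal{P}_v^{6}\lesssim\mathcal{Z}_v^{1/4}$, with the coefficient absorbed into $\langle t\rangle^{q_0}\mathcal{P}_x^2\mathcal{Z}_{x,v}^{1/4}$ for \eqref{H2pa} and into $\langle t\rangle^{-2q_0(\frac12+\delta_1-5\eta)}W^{\al,\be}_{\frac12+\delta_1}$ for \eqref{H2pa2}. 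The paper compresses this into two one-line pointwise inequalities; your transition-strip and $\eta$-surplus bookkeeping of the logarithmic losses is exactly the justification left implicit there.
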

\begin{proof}
We begin with the proof of \eqref{H2pa}. By definition, it holds that
\ben\label{51}
\|\psi\mathcal{Z}_x^{\f12}f\|^2_{H^2_xL^2_6}\ls\sum_{|\al_1|+|\al_2|+|\al_3|\leq 2}\int_{\R^3_x\times\R^3_v} |\pa^{\al_1}_x\psi\pa^{\al_2}_x\mathcal{Z}_x^{\f12}\pa^{\al_3}_x f\mathcal{P}_v^{6}|^2dvdx.
\een
Note that $\widetilde{\psi}_+ \mathcal{Z}_x^{\f14}\ls \<t\>^{q_0}$ and    $\mathcal{P}_v^{6}\ls \mathcal{Z}_v^{\f14}$, which implies that \[|\pa^{\al_1}_x\psi\pa^{\al_2}_x\mathcal{Z}_x^{\f12}\mathcal{P}_v^{6}|\ls |\widetilde{\psi}_+\mathcal{P}_x^2\mathcal{Z}_x^{\f12}\mathcal{P}_v^{6}|\ls \<t\>^{q_0} |\mathcal{P}_x^2\mathcal{Z}_{x,v}^{\f14}|.\]
Thus we can obtain \eqref{H2pa} by plugging it into \eqref{51}.

For \eqref{H2pa2}, we first note that
\beno
\|\mathcal{P}_v^2(1-\psi)f\|^2_{H^m_{x,v}}\ls\sum_{\substack{|\al_1|+|\al_2|+|\al_3|\\+|\be_1|+|\be_2|+|\be_3|\le m}}\int_{\R^3_x\times\R^3_v}|\pa^{\al_1}_{\be_1}\mathcal{P}_v^2\pa^{\al_2}_{\be_2}(1-\psi) \pa^{\al_3}_{\be_3}f|^2dvdx.
\eeno
Observe that $|\pa^{\al_1}_{\be_1}\mathcal{P}_v^2\pa^{\al_2}_{\be_2}(1-\psi)|\ls\<t\>^{-2q_0(\f12+\de_1-5\eta)}(W^{\al_3,\be_3}_{\f12+\de_1})$. Thus \eqref{H2pa2} holds and it ends the proof of this lemma.
\end{proof}

\subsection{Energy method(I)} Thanks to \eqref{albeEq} and \eqref{befo}, basic energy method will imply that 
\beno
	&&\f12\pa_t(\pa^\al_\be f,(W^{\al,\be}_\a)^2\pa^\al_\be f)_{L^2_{x,v}}=-(\pa^\al_\be(Tf),(W^{\al,\be}_\a)^2\pa^\al_\be f)_{L^2_{x,v}} 
	+\f{\mathscr{C}}{(2\pi)^{\f32}}(\psi \mathcal{Z}_x^{-1/2}Q(\mu,\pa^\al_\be f),(W^{\al,\be}_\a)^2\pa^\al_\be f)_{L^2_{x,v}}\\&&+ \mathscr{C}(\psi Q(f,\pa^\al_\be f),(W^{\al,\be}_\a)^2\pa^\al_\be f)_{L^2_{x,v}}+\mathscr{C}(Q((1-\psi) G,\pa^\al_\be f),(W^{\al,\be}_\a)^2\pa^\al_\be f)_{L^2_{x,v}}+\mathscr{C}\Big(\sum_{|\al_1|+|\be_1|\geq1}C_\al^{\al_1}C_{\be}^{\be_1}\\
	&&\times \big(Q(\pa^{\al_1}_{\be_1}G,\pa^{\al_2}_{\be_2}f),(W^{\al,\be}_\a)^2\pa^\al_\be f\big)_{L^2_{x,v}}+\sum_{\substack{\al_1+\al_2=\al\\ \be_1+\be_2=\be}}C_\al^{\al_1}C_{\be}^{\be_1}\big(Q(\pa^{\al_1}_{\be_1}f,\pa^{\al_2}_{\be_2}\mathcal{M}),(W^{\al,\be}_\a)^2\pa^\al_\be f\big)_{L^2_{x,v}}\Big)\eqdefa\sum_{i=1}^6\Ga_i.
\eeno

\subsubsection{Estimates of $\Ga_1$ and $\Ga_2$.} We first have
\begin{lem}
Let $\a\in(\f12,1)$.There exists a positive constant $\lambda_\a$ such that
	\ben\label{Ga1lam}
	\Ga_1+\Ga_2\leq -\lam_\a\mathbf{D}_\a^{\al,\be}(f)+C_\a\<t\>^{-2q_0}\|W^{\al,\be}_\a \pa^\al_\be f\|^2_{L^2_{x,v}}+C_\a\|\psi^{\f12}\pa^\al_\be f\|^2_{L^2_{x,v}}+([T,\pa^\al_\be] f,(W_\a^{\al,\be})^2\pa^\al_\be f)_{L^2_{x,v}},
	\een
	where  $q_0$ is defined in \eqref{depsi}.

\end{lem}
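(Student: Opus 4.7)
The strategy is to treat $\Gamma_1$ and $\Gamma_2$ as a coupled pair, with $\Gamma_1$ supplying (via an integration-by-parts/divergence argument for the transport operator) the indefinite polynomial quadratic form $|x|^2-|v|^2-2(\mathfrak{R}\mathfrak{S}^{-1}x,v)$, and $\Gamma_2$ supplying (via the coercivity Lemma~\ref{coer}) the low-order exponentially weighted term $\mathcal{N}\mathcal{Z}_{x,v}^{\a-\eta(|\al|+|\be|)}\langle v\rangle^{-1}$. The combination is then converted into the last piece $\|\mathcal{P}_{x,v}^1 \mathcal{Z}_{x,v}^{\frac12(\a-\c-\vth-\eta(|\al|+|\be|))}\pa^\al_\be f\|^2_{L^2_{x,v}}$ of $\mathbf{D}_\a^{\al,\be}(f)$ via the weight-transfer Lemma~\ref{mixturegainweight}, while the first three pieces of $\mathbf{D}_\a^{\al,\be}(f)$ come directly from the coercivity estimate localized by $\psi$.

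\textbf{Handling $\Gamma_1$.} Write $\pa^\al_\be (Tf)=T(\pa^\al_\be f)+[\pa^\al_\be,T]f$; the commutator piece is moved to the right-hand side as the term $([T,\pa^\al_\be]f,(W^{\al,\be}_\a)^2\pa^\al_\be f)_{L^2_{x,v}}$. For the principal piece $-(T\pa^\al_\be f,(W^{\al,\be}_\a)^2\pa^\al_\be f)_{L^2_{x,v}}$, observe that the vector field $\mathfrak{S}v\cdot\nabla_x-\mathfrak{S}x\cdot\nabla_v+\mathfrak{R}x\cdot\nabla_x-\mathfrak{R}v\cdot\nabla_v$ is divergence-free in $(x,v)$ (using $\mathfrak{S}^\tau=\mathfrak{S}$ and $\mathrm{tr}\mathfrak{R}=0$). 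Therefore one may integrate by parts to obtain
\[
-(T\pa^\al_\be f,(W^{\al,\be}_\a)^2\pa^\al_\be f)_{L^2_{x,v}}=\tfrac12\int_{\R^6}|\pa^\al_\be f|^2\, T\bigl((W^{\al,\be}_\a)^2\bigr)\,dxdv.
\]
Now $T$ annihilates any radial function of $|x|^2+|v|^2$ (both $\mathfrak{S}v\cdot\nabla_x-\mathfrak{S}x\cdot\nabla_v$ and $\mathfrak{R}x\cdot\nabla_x-\mathfrak{R}v\cdot\nabla_v$ kill $|x|^2+|v|^2$ by symmetry/skew-symmetry), so the exponential part of $(W^{\al,\be}_\a)^2$ contributes nothing, and only the cross term $\mathfrak{S}^{-1}x\cdot v$ matters. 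Using $T(\mathfrak{S}^{-1}x\cdot v)=-\mathscr{C}_l(t)^{-1}\mathscr{C}_l(t)(|x|^2-|v|^2-2(\mathfrak{R}\mathfrak{S}^{-1}x,v))$ (as recalled in Subsection~\ref{ansatz}), we obtain
\[
-(T\pa^\al_\be f,(W^{\al,\be}_\a)^2\pa^\al_\be f)_{L^2_{x,v}}=-\frac{\mathscr{C}_l(t)}{2}\int_{\R^6}(|x|^2-|v|^2-2(\mathfrak{R}\mathfrak{S}^{-1}x,v))\,\mathcal{Z}_{x,v}^{\a-\c-\vth-\eta(|\al|+|\be|)}|\pa^\al_\be f|^2\,dxdv.
\]

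\textbf{Handling $\Gamma_2$ and combining.} Since $\a-\eta(|\al|+|\be|)\in(\tfrac12,1)$ by our choice of $\eta$, Lemma~\ref{coer} applied to $Q(\mu,\pa^\al_\be f)$ with weight $e^{(\a-\eta(|\al|+|\be|))|v|^2}$, followed by integration in $x$ against $\psi\mathcal{N}\mathcal{Z}_x^{\a-\frac12-\eta(|\al|+|\be|)}$ (the leading part of $\mathcal{Z}_x^{-1/2}(W^{\al,\be}_\a)^2$), produces the first three pieces of $\mathbf{D}_\a^{\al,\be}(f)$ with $\psi$-localization together with a dominant negative mass $-\lambda\int \psi\mathcal{N}\mathcal{Z}_{x,v}^{\a-\eta(|\al|+|\be|)-(1/2)|x|^2/|x|^2}\langle v\rangle^{-1}|\pa^\al_\be f|^2 dxdv$ (more precisely $-\lambda\int\psi\mathcal{N}\mathcal{Z}_x^{\a-\frac12-\eta(|\al|+|\be|)}\mathcal{Z}_v^{\a-\eta(|\al|+|\be|)}\langle v\rangle^{-1}|\pa^\al_\be f|^2dxdv$) plus an $L^2$ error $C_\a\|\psi^{1/2}\pa^\al_\be f\|^2_{L^2_{x,v}}$. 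The subleading cross-weight contribution $\mathcal{Z}_{x,v}^{\a-\c-\vth-\eta(|\al|+|\be|)}\mathfrak{S}^{-1}x\cdot v$ in $\Gamma_2$ is estimated by Cauchy-Schwarz and Lemma~\ref{expQ} and absorbed into the $L^2_{-1/2}$ coercivity piece, up to an additional $C_\a\|\psi^{1/2}\pa^\al_\be f\|^2_{L^2_{x,v}}$ remainder. On the support of $\psi$, adding the contribution of $\Gamma_1$ and invoking Lemma~\ref{mixturegainweight} (with parameter $\a-\eta(|\al|+|\be|)$ in place of $\a$) converts the sum
\[
\psi\Bigl[\mathcal{N}\mathcal{Z}_x^{\a-\frac12-\eta(|\al|+|\be|)}\mathcal{Z}_v^{\a-\eta(|\al|+|\be|)}\langle v\rangle^{-1}+(|x|^2-|v|^2-2(\mathfrak{R}\mathfrak{S}^{-1}x,v))\mathcal{Z}_{x,v}^{\a-\c-\vth-\eta(|\al|+|\be|)}\Bigr]
\]
into the pointwise lower bound $\gtrsim \psi(|x|^2+|v|^2)\mathcal{Z}_{x,v}^{\a-\c-\vth-\eta(|\al|+|\be|)}$, which yields exactly the last piece $\|\mathcal{P}_{x,v}^1\mathcal{Z}_{x,v}^{(\a-\c-\vth-\eta(|\al|+|\be|))/2}\pa^\al_\be f\|^2_{L^2_{x,v}}$ of $\mathbf{D}_\a^{\al,\be}(f)$ (after splitting off an additional $\psi^{1/2}\pa^\al_\be f$ remainder from the $-C_\vth$ constant in Lemma~\ref{mixturegainweight}).

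\textbf{Handling the region $\{\psi<1\}$ and the obstacle.} Outside the support of $\psi$ one has $(1-\psi)\mathcal{Z}_x^{-1/4}\lesssim\langle t\rangle^{-q_0}$, so the missing global part of $\Gamma_1$'s indefinite term is bounded by
\[
\Bigl|\int(1-\psi)(|x|^2-|v|^2-2(\mathfrak{R}\mathfrak{S}^{-1}x,v))\mathcal{Z}_{x,v}^{\a-\c-\vth-\eta(|\al|+|\be|)}|\pa^\al_\be f|^2\,dxdv\Bigr|\lesssim \langle t\rangle^{-2q_0}\|W^{\al,\be}_\a\pa^\al_\be f\|^2_{L^2_{x,v}},
\]
since one can pay a factor $(1-\psi)\mathcal{Z}_x^{-1/2}\lesssim \langle t\rangle^{-2q_0}\mathcal{Z}_x^{1/2}$ to absorb the polynomial prefactor into the exponential weight, using that $\a-\c-\vth<\a-\eta(|\al|+|\be|)$ leaves a Gaussian margin. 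Collecting every estimate yields the claim. \emph{The main obstacle} is the bookkeeping needed to ensure that the indefinite sign of the quadratic form produced by $\Gamma_1$ is controlled globally: inside the support of $\psi$ this is accomplished by Lemma~\ref{mixturegainweight}, but outside it one must rely on the space-time cutoff to convert the loss into the $\langle t\rangle^{-2q_0}$ factor without spoiling the low-order exponent in $W^{\al,\be}_\a$; choosing $\eta$ small enough so that $\a-\eta(|\al|+|\be|)$ still lies in the admissible window $(\frac12+\c+\vth,1)$ required by Lemma~\ref{mixturegainweight} is precisely what makes the scheme close.
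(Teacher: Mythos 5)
Your overall architecture matches the paper's: split off the commutator in $\Ga_1$ and let $T$ act on the cross part $\mfS^{-1}x\cdot v$ of $(W^{\al,\be}_\a)^2$ to produce $-\f{\mathscr{C}_l(t)}2(|x|^2-|v|^2-2(\mfR\mfS^{-1}x,v))\mathcal{Z}_{x,v}^{\a-\c-\vth-\eta(|\al|+|\be|)}$; treat $\Ga_2$ by Lemma \ref{coer} on the main part of the weight and by the commutator Lemmas \ref{ghf}, \ref{expQ} on the cross part; then combine through Lemma \ref{mixturegainweight}. However, your treatment of the region $\{\psi<1\}$ contains a genuine gap. You apply the weight-transfer only on the support of $\psi$ and then claim that the uncompensated part $\int(1-\psi)(|x|^2-|v|^2-2(\mfR\mfS^{-1}x,v))\mathcal{Z}_{x,v}^{\a-\c-\vth-\eta(|\al|+|\be|)}|\pa^\al_\be f|^2$ is $\lesssim \<t\>^{-2q_0}\|W^{\al,\be}_\a\pa^\al_\be f\|^2$ by ``paying $(1-\psi)\mathcal{Z}_x^{-1/2}$''. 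But the Gaussian margin between this weight and $(W^{\al,\be}_\a)^2\sim\mathcal{Z}_{x,v}^{\a-\eta(|\al|+|\be|)}$ is only $\c+\vth$ per variable, and Lemma \ref{mixturegainweight} forces $\vth<\f12-\c$, so $\c+\vth<\f12$: you can extract at most $(1-\psi)\mathcal{Z}_x^{-(\c+\vth)}\lesssim\<t\>^{-4q_0(\c+\vth)}$, not $\<t\>^{-2q_0}$ (take $|x|^2\approx 4q_0\ln t$ and $|v|$ moderate to see the pointwise ratio blow up). Since $4\c q_0=1-4\de-4\c\eta_0<1$, the resulting coefficient $\<t\>^{-4q_0(\c+\vth)}$ is not integrable in time, so it would also break the Gr\"onwall closure that relies on \eqref{q0}. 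Moreover, with the weight-transfer restricted to $\mathrm{supp}\,\psi$ you only produce the $\psi$-localized versions of the second and fourth pieces of $\mathbf{D}_\a^{\al,\be}(f)$, whereas \eqref{Dalbe} requires them globally, and the fourth piece is not dominated by the second one (that would need $\c+\vth>\f12$), so the global dissipation claimed in \eqref{Ga1lam} is not reached by your scheme.

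The paper's route avoids this: the $\<t\>^{-2q_0}$ error arises when restoring the $(1-\psi)$ portion of the \emph{global} $L^2_{-1/2}$ dissipation piece $\|\mathcal{Z}_x^{-1/4}W^{\al,\be}_\a\pa^\al_\be f\|^2_{L^2_xL^2_{-1/2}}$, whose weight sits exactly a factor $\mathcal{Z}_x^{-1/2}$ below $(W^{\al,\be}_\a)^2$, so $(1-\psi)\mathcal{Z}_x^{-1/2}\lesssim\<t\>^{-2q_0}$ applies; with the global negative mass in hand (and $\mathcal{N}\gg1$ to absorb the cross-weight commutator terms), Lemma \ref{mixturegainweight} is then invoked globally, compensating the indefinite $\Ga_1$ term everywhere and yielding the full fourth piece with only a harmless constant (hence plain $L^2$) remainder. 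A secondary omission: the error in Lemma \ref{coer} is $C_a\|\cdot\|^2_{L^2_v}$ per fixed $x$, so after integrating against $\psi\mathcal{N}\mathcal{Z}_x^{\a-\f12-\eta(|\al|+|\be|)}$ it is $C_\a\|\psi^{\f12}\mathcal{Z}_x^{\f{\a-\eta(|\al|+|\be|)}2-\f14}\pa^\al_\be f\|^2_{L^2_{x,v}}$, not $C_\a\|\psi^{\f12}\pa^\al_\be f\|^2_{L^2_{x,v}}$; the paper removes this exponentially weighted remainder at the very end by interpolation (Lemma \ref{interpolationineq}, using $\c+\vth<\f12$), splitting it into a small multiple of the fourth dissipation piece plus $C_\a\|\psi^{\f12}\pa^\al_\be f\|^2_{L^2_{x,v}}$, a step your sketch skips.
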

\begin{proof} Let us give the estimates term by term.
\smallskip

\noindent\underline{Step 1: Estimate of $\Ga_{1}$}. We observe that
\beno
\Ga_1&=&-(T\pa^\al_\be f,(W_\a^{\al,\be})^2\pa^\al_\be f)_{L^2_{x,v}}+([T,\pa^\al_\be] f,(W_\a^{\al,\be})^2\pa^\al_\be f)_{L^2_{x,v}}\\
&=&-\f{\mathscr{C}_l(t)}2(\pa^\al_\be f,(|x|^2-|v|^2-2(\mfR\mfS^{-1}x,v))\mathcal{Z}_{x,v}^{\a-\c-\vth-\eta(|\al|+|\be|)}\pa^\al_\be f)_{L^2_{x,v}}+([T,\pa^\al_\be] f,(W_\a^{\al,\be})^2\pa^\al_\be f)_{L^2_{x,v}}.
\eeno

 \noindent\underline{Step 2: Estimate of $\Ga_{2}$}. It is easy to verify that
\beno 
 &&\f{(2\pi)^{\f32}}{\mathscr{C}}\Ga_2=\mathcal{N}(\psi \mathcal{Z}_x^{-1/2}Q(\mu,\pa^\al_\be f),\pa^\al_\be f\mathcal{Z}_{x,v}^{\a-\eta (|\al|+|\be|)})_{L^2_{x,v}}\\&&+(\psi \mathcal{Z}_x^{-1/2}Q(\mu,\pa^\al_\be f), 
\pa^\al_\be f\mathcal{Z}_{x,v}^{\a-\c-\vth-\eta (|\al|+|\be|)}\mfS^{-1}x\cdot v)_{L^2_{x,v}}
\eqdefa\Ga_{2,1}+\Ga_{2,2}.
\eeno

Noticing that we can spilt
\ben\label{split}
&&\Ga_{2,2}=\Big(\psi \mathcal{Z}_x^{-1/2}Q(\mu,\pa^\al_\be f \mathcal{Z}_{x,v}^{\f12(\a-\c-\vth-\eta (|\al|+|\be|))}), 
\pa^\al_\be f\mfS^{-1}x\cdot v \mathcal{Z}_{x,v}^{\f12(\a-\c-\vth-\eta (|\al|+|\be|))}\Big)_{L^2_{x,v}}+\Big(\psi \mathcal{Z}_x^{-1/2}\\
\notag&&\times\big(\mathcal{Z}_{x,v}^{\f12(\a-\c-\vth-\eta (|\al|+|\be|))}Q(\mu,\pa^\al_\be f )-Q(\mu,\pa^\al_\be f \mathcal{Z}_{x,v}^{\f12(\a-\c-\vth-\eta (|\al|+|\be|))})\big), 
\pa^\al_\be f\mfS^{-1}x\cdot v \mathcal{Z}_{x,v}^{\f12(\a-\c-\vth-\eta (|\al|+|\be|))}\Big)_{L^2_{x,v}}.
\een

\noindent Thus applying Lemma \ref{coer}, Lemma \ref{ghf} and Lemma \ref{expQ} for $\Ga_{2,1}$ and $\Ga_{2,2}$, we get that
\beno
&&\Ga_{2}\leq -\lam_\a \mathcal{N}\big(\mathbf{D}_\a^{\al,\be}(f)-\|\sqrt{1-\psi} \mathcal{Z}_x^{-1/4}W^{\al,\be}_\a\pa^\al_\be f\|^2_{L^2_xL^2_{-\f12}}-\|\mathcal{P}_{x,v}^1\mathcal{Z}_{x,v}^{\f12(\a-\c-\vth-\eta (|\al|+|\be|))} \pa^\al_\be f\|^2_{L^2_{x,v}}\big) \\&&\qquad\quad+C_\a\|\psi^{\f12}\mathcal{Z}_{x}^{\f{\a-\eta(|\al|+|\be|)}2-\f14}\pa^\al_\be f\|^2_{L^2_{x,v}}+ C_\a \|\psi^{\f12}\mathcal{P}_x^{\f12}\mathcal{P}_v^{\f12}\pa^\al_\be f\mathcal{Z}_{x,v}^{\f12(\a-\c-\vth-\eta (|\al|+|\be|))}\mathcal{Z}_x^{-1/4}\|_{L^2_xH^1_v}^2\\
&&\leq -\f12\lam_\a \mathcal{N}\big(\mathbf{D}_\a^{\al,\be}(f) -\|\mathcal{P}_{x,v}^1\mathcal{Z}_{x,v}^{\f12(\a-\c-\vth-\eta (|\al|+|\be|))} \pa^\al_\be f\|^2_{L^2_{x,v}}\big)+C_\a\lr{t}^{-2q_0}\|W^{\al,\be}_\a\pa^\al_\be f\|^2_{L^2_{x,v}} \\&&\qquad\quad+C_\a\|\psi^{\f12}\mathcal{Z}_{x}^{\f{\a-\eta(|\al|+|\be|)}2-\f14}\pa^\al_\be f\|^2_{L^2_{x,v}}. 
\eeno
Here we use the facts  $(W^{\al,\be}_\a)^2\sim e^{(\a-\eta (|\al|+|\be|))(|x|^2+|v|^2)}$, $(1-\psi)e^{-\f{|x|^2}2}\leq C\<t\>^{-2q_0}$, $\mathcal{N}\gg1$,
\beno
 &&[W_\a^{\al,\be}, \mathbb{T}_{\S^2}]f=\f12(W^{\al,\be}_\a)^{-1}\mathcal{Z}_{x,v}^{(\a-\c-\epsilon-\eta (|\al|+|\be|))} \mfS^{-1}   x\times v f,~~\mbox{and}\\
&&C_\a \|\psi^{\f12}\mathcal{P}_x^{\f12}\mathcal{P}_v^{\f12}\pa^\al_\be f\mathcal{Z}_{x,v}^{\f12(\a-\c-\vth-\eta (|\al|+|\be|))}\mathcal{Z}_x^{-1/4}\|_{L^2_xH^1_v}^2\ls  \mathbf{D}_\a^{\al,\be}(f)-\|\mathcal{P}_{x,v}^1\mathcal{Z}_{x,v}^{\f12(\a-\c-\vth-\eta (|\al|+|\be|))} \pa^\al_\be f\|^2_{L^2_{x,v}} .
\eeno

Putting together the estimates for $\Ga_1$ and $\Ga_2$, by Lemma \ref{mixturegainweight}, we deduce that there exists a constant $\tilde{\lam}_\a>0$ such that
\beno 
\Ga_1+\Ga_2-([T,\pa^\al_\be] f,(W_\a^{\al,\be})^2\pa^\al_\be f)_{L^2_{x,v}}\le  -\tilde{\lam}_\a \mathbf{D}_\a^{\al,\be}(f)+C_\a\lr{t}^{-2q_0}\|W^{\al,\be}_\a\pa^\al_\be f\|^2_{L^2_{x,v}} +C_\a \|\psi^{\f12}\mathcal{Z}_{x}^{\f{\a-\eta(|\al|+|\be|)}2-\f14}\pa^\al_\be f\|^2_{L^2_{x,v}}.\eeno 
 To get the desired result, we observe that 
$\c+\vth<\f12$ which together with Lemma \ref{interpolationineq} yield that 
\begin{equation*}
	\begin{aligned}
	\|\psi^{\f12}\mathcal{Z}_x^{\f{\a-\eta(|\al|+|\be|)}2-\f14}\pa^\al_\be f\|^2_{L^2_{x,v}}\leq \f{\tilde{\lam}_\a} 2\|\mathcal{P}_{x,v}^1\mathcal{Z}_x^{\f {\a-\c-\vth -\eta(|\al|+|\be|)}2}\pa^\al_\be f\|^2_{L^2_{x,v}}+C_\a\|\psi^{\f12}\pa^\al_\be f\|^2_{L^2_{x,v}}.
	\end{aligned}
\end{equation*} 
This is enough to conclude our result.
\end{proof}

\subsubsection{Estimates of $\Ga_3$ and $\Ga_4$.} We have
\begin{lem} \label{lemGa23}
	For any $0<\de_1\leq\a-\f12$ with $\a\in(\f12,1)$ and $|\al|+|\be|=N\leq5$, it holds that
\begin{equation}\label{Ga23}
	\begin{aligned}
&|\Ga_3|+|\Ga_4|
	\leq (\lam_\a/4+C_\a\<t\>^{2q_0}\sum_{|\al|+|\be|\le 4}\| \mathcal{P}_x^2\mathcal{Z}_{x,v}^{1/4}\pa^{\al}_{\be} f\|^2_{L^2_{x,v}})\mathbf{D}_\a^{\al,\be}(f)+C_{\a}\Big(\<t\>^{-2q_0(1-\eta)}\\
	&\qquad\qquad\quad+\<t\>^{-2q_0(\f12+\de_1-5\eta)}\sum_{|\al|+|\be|\leq 4}\|W_{\f12+\de_1}^{\al,\be}\pa^\al_\be f\|_{L^2_{x,v}}\Big)\|W^{\al,\be}_\a \pa^\al_\be f \|^2_{L^2_{x,v}}.
	\end{aligned}
\end{equation}
\end{lem}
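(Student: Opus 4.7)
My approach mirrors the time-phase splitting philosophy sketched in Subsection 1.7.6 and realized through the cutoff $\psi$ from \eqref{depsi}: on $\supp\psi$ one has $\mathcal{Z}_x^{1/4}\lesssim \lr{t}^{q_0}$, so in $\Gamma_3$ we may trade $L^\infty_x$-control of $\psi f$ for an extra factor $\lr{t}^{q_0}$, which is the source of the $\lr{t}^{2q_0}$ prefactor in front of the dissipation $\mathbf{D}_\a^{\al,\be}(f)$. Conversely, on $\supp(1-\psi)$ one has $(1-\psi)\mathcal{Z}_x^{-1/4}\lesssim \lr{t}^{-q_0}$, so in $\Gamma_4$ both the Maxwellian background and the perturbation are tamed by negative powers of $\lr{t}$, yielding the two decay factors in the second summand of the stated bound.

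For $\Gamma_3=\mathscr{C}(\psi Q(f,\pa^\al_\be f),(W^{\al,\be}_\a)^2\pa^\al_\be f)_{L^2_{x,v}}$ I would follow the integration-by-parts scheme of Lemma \ref{IBP}, redistributing the weight $(W^{\al,\be}_\a)^2$ across the two copies of $\pa^\al_\be f$. After symmetrization the diffusive part of $Q$ reproduces the full Landau dissipation of $W^{\al,\be}_\a\pa^\al_\be f$, up to lower-order commutators that are absorbed into $\lam_\a/4\cdot\mathbf{D}_\a^{\al,\be}(f)$ by Lemma \ref{mixturegainweight} and Young's inequality (since $\a-\eta(|\al|+|\be|)\in(1/2,1)$), while the coefficient $\psi f$ is controlled in $L^\infty_x L^2_v$ via the Sobolev embedding $H^2_x\hookrightarrow L^\infty_x$ combined with the first estimate of Lemma \ref{H2paal}:
\[
\|\psi f\|^2_{L^\infty_x L^2_5}\lesssim \|\psi\mathcal{Z}_x^{1/2}f\|^2_{H^2_x L^2_6}\lesssim \lr{t}^{2q_0}\sum_{|\al|\le 2}\|\mathcal{P}_x^2\mathcal{Z}_{x,v}^{1/4}\pa^\al_x f\|^2_{L^2_{x,v}}.
\]
This delivers exactly the announced prefactor in front of $\mathbf{D}_\a^{\al,\be}(f)$.

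For $\Gamma_4=\mathscr{C}(Q((1-\psi)G,\pa^\al_\be f),(W^{\al,\be}_\a)^2\pa^\al_\be f)_{L^2_{x,v}}$ I decompose $(1-\psi)G=(1-\psi)\mathcal{M}+(1-\psi)f$ and apply Lemma \ref{Fff} with $m=W^{\al,\be}_\a$, whose structure matches exactly the hypothesis of that lemma. On $\supp(1-\psi)$ the Maxwellian contributes a pointwise factor $\lesssim\lr{t}^{-2q_0(1-\eta)}$ after trading a sliver of the Gaussian decay for the $\eta$-loss carried by $W^{\al,\be}_\a$, producing the first tail contribution $C_\a \lr{t}^{-2q_0(1-\eta)}\|W^{\al,\be}_\a \pa^\al_\be f\|^2_{L^2_{x,v}}$. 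The perturbative piece is handled directly by the second estimate of Lemma \ref{H2paal}, which provides
\[
\|\mathcal{P}_v^2(1-\psi)f\|_{H^5_{x,v}}\lesssim \lr{t}^{-2q_0(\f12+\de_1-5\eta)}\sum_{|\al|+|\be|\le 5}\|W^{\al,\be}_{\f12+\de_1}\pa^\al_\be f\|_{L^2_{x,v}},
\]
and Lemma \ref{Fff} then reproduces the second tail contribution.

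The main obstacle I anticipate lies in $\Gamma_3$: the natural hypothesis of Lemma \ref{IBP} involves a test function carrying an extra derivative $\pa$, which is absent here, so one must combine Lemma \ref{expQ} to commute the Gaussian part of $(W^{\al,\be}_\a)^2$ across $Q$ with a self-symmetric Cauchy--Schwarz on the resulting coercive bilinear form, all while carefully tracking the non-Gaussian cross term $\mathcal{Z}_{x,v}^{(\cdots)}\mfS^{-1}x\cdot v$ sitting inside $(W^{\al,\be}_\a)^2$. Fortunately, its contribution is exactly of the type absorbed by the last summand $\|\mathcal{P}_{x,v}^1\mathcal{Z}_{x,v}^{(\a-\c-\vth-\eta(|\al|+|\be|))/2}\pa^\al_\be f\|^2_{L^2_{x,v}}$ of $\mathbf{D}_\a^{\al,\be}(f)$, so no new error terms should arise in the final bound.
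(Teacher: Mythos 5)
Your handling of $\Ga_3$ is essentially the paper's argument: split the weight $(W^{\al,\be}_\a)^2$ into its Gaussian part and the cross term $\mathcal{Z}_{x,v}^{(\cdots)}\mfS^{-1}x\cdot v$ as in \eqref{split}, commute the exponential through $Q$ via Lemma \ref{expQ}, bound the resulting trilinear forms with Lemma \ref{ghf}, control the coefficient through $\|\widetilde{\psi}_+\mathcal{Z}_x^{1/2}f\|_{H^2_xL^2_6}\ls\<t\>^{q_0}\sum_{|\al|+|\be|\le4}\|\mathcal{P}_x^2\mathcal{Z}_{x,v}^{1/4}\pa^{\al}_{\be} f\|_{L^2_{x,v}}$ from \eqref{H2pa}, and close with Young's inequality; your initial appeal to Lemma \ref{IBP} is indeed unnecessary, as you observe yourself.

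The genuine gap is in $\Ga_4$. You decompose $(1-\psi)G=(1-\psi)\mathcal{M}+(1-\psi)f$ \emph{first} and then invoke Lemma \ref{Fff} for each piece, but Lemma \ref{Fff} requires its first argument to be nonnegative: its proof discards the top-order contribution $-\int (a\ast F)\,\na_v(fm)\otimes\na_v(fm)$ precisely because $a\ast F$ is nonnegative definite when $F\ge0$. The piece $(1-\psi)f$ is signed, so for it the second-order part of $Q((1-\psi)f,\pa^\al_\be f)$ tested against $(W^{\al,\be}_\a)^2\pa^\al_\be f$ produces, after integration by parts in $v$, a term quadratic in $\na_v(W^{\al,\be}_\a\pa^\al_\be f)$ with no sign; on $\supp(1-\psi)$ such a term is controlled neither by $\|W^{\al,\be}_\a\pa^\al_\be f\|^2_{L^2_{x,v}}$ nor by $\mathbf{D}_\a^{\al,\be}(f)$, whose $\na_v$ and $\mathbb{T}_{\S^2}$ components all carry the cutoff $\psi^{\f12}$. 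The correct order of operations, which is what the paper does, is to apply Lemma \ref{Fff} once to the single nonnegative function $(1-\psi)G$ (using $G\ge0$ and $(1-\psi)(1-\widetilde{\psi}_-)=1-\psi$), obtaining the coefficient $\|\mathcal{P}_v^2(1-\widetilde{\psi}_-)G\|_{H^4_{x,v}}$, and only afterwards split $G=\mathcal{M}+f$ \emph{inside this norm}, estimating the Maxwellian part pointwise by $\<t\>^{-2q_0(1-\eta)}$ and the perturbative part by \eqref{H2pa2}. As a minor point, $H^4_{x,v}$ suffices in Lemma \ref{Fff}; your use of $H^5_{x,v}$ would force the sum over $|\al|+|\be|\le5$, slightly weaker than the stated bound with $|\al|+|\be|\le4$.
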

\begin{proof} We recall that 
\beno  \f{1}{\mathscr{C}}\Ga_3=\mathcal{N}(\psi  Q(f,\pa^\al_\be f),\pa^\al_\be f\mathcal{Z}_{x,v}^{\a-\eta (|\al|+|\be|)})+(\psi Q(f,\pa^\al_\be f), 
\pa^\al_\be f\mathcal{Z}_{x,v}^{\a-\c-\vth-\eta (|\al|+|\be|)}\mfS^{-1}x\cdot v):=\mathcal{N}\Ga_{3,1}+\Ga_{3,2}. \eeno 
Using the similar split as \eqref{split}, we have 
\beno
&&\Ga_{3,1}=\big(\psi  Q(f,\pa^\al_\be f \mathcal{Z}_{x,v}^{\f12(\a-\eta (|\al|+|\be|))}),\pa^\al_\be f\mathcal{Z}_{x,v}^{\f12(\a-\eta (|\al|+|\be|))}\big)\\
&&+\big(\psi  \big(\mathcal{Z}_{x,v}^{\f12(\a-\eta (|\al|+|\be|))} Q(f,\pa^\al_\be f )-Q(f,\pa^\al_\be f \mathcal{Z}_{x,v}^{\f12(\a-\eta (|\al|+|\be|))})\big),\pa^\al_\be f\mathcal{Z}_{x,v}^{\f12(\a-\eta (|\al|+|\be|))}\big),\\
&& \Ga_{3,2}=\Big(\psi Q(f,\pa^\al_\be f \mathcal{Z}_{x,v}^{\f12(\a-\c-\vth-\eta (|\al|+|\be|))}), 
\pa^\al_\be f\mfS^{-1}x\cdot v \mathcal{Z}_{x,v}^{\f12(\a-\c-\vth-\eta (|\al|+|\be|))}\Big)+\Big(\psi \\
\notag&&\times\big(\mathcal{Z}_{x,v}^{\f12(\a-\c-\vth-\eta (|\al|+|\be|))}Q(f,\pa^\al_\be f )-Q(f,\pa^\al_\be f \mathcal{Z}_{x,v}^{\f12(\a-\c-\vth-\eta (|\al|+|\be|))})\big), 
\pa^\al_\be f\mfS^{-1}x\cdot v \mathcal{Z}_{x,v}^{\f12(\a-\c-\vth-\eta (|\al|+|\be|))}\Big).
\eeno
Then thanks to  Lemma \ref{ghf}, Lemma \ref{expQ} and the fact that $\psi\widetilde{\psi}_+=\psi$(due to \eqref{depsi}), we have
 \ben\label{nonestityp1} |\Ga_3|\ls \|\widetilde{\psi}_+ \mathcal{Z}_x^{1/2}f\|_{H^2_xL^2_6}\mathbf{D}_\a^{\al,\be}(f)&\ls& \<t\>^{q_0}\sum_{|\al|+|\be|\leq 4}\|\mathcal{P}_x^2\mathcal{Z}_{x,v}^{1/4}\pa^\al_\be f \|_{L^2_{x,v}}\mathbf{D}_\a^{\al,\be}(f)\\
 &\ls&\notag (\lam_\a/4+C_\a\<t\>^{2q_0}\sum_{|\al|+|\be|\le 4}\| \mathcal{P}_x^2\mathcal{Z}_{x,v}^{1/4}\pa^{\al}_{\be} f\|^2_{L^2_{x,v}})\mathbf{D}_\a^{\al,\be}(f),
   \een 
 where we use \eqref{H2pa}.

Since $\Ga_4=\mathscr{C}(Q((1-\psi) G,\pa^\al_\be f),(W^{\al,\be}_\a)^2\pa^\al_\be f)$ and $G\ge0$, 
by Lemma \ref{Fff}, we have 
 \begin{equation*}
	\begin{aligned}
&|\Ga_4|\ls \|\mathcal{P}_v^2(1-\widetilde{\psi}_-)G\|_{H^4_{x,v}}\|(1-\psi)^{\f12}W^{\al,\be}_\a\pa^\al_\be f \|^2_{L^2_{x,v}}\\
 &\ls ~(\|\mathcal{P}_v^2(1-\widetilde{\psi}_-)\mathcal{M}\|_{H^4_{x,v}}+\|\mathcal{P}_v^2(1-\widetilde{\psi}_-)f\|_{H^4_{x,v}})\|(1-\psi)^{\f12}W^{\al,\be}_\a\pa^\al_\be f \|^2_{L^2_{x,v}},
 	\end{aligned}
\end{equation*}
where we use the fact  that $(1-\psi)(1-\widetilde{\psi}_-)=(1-\psi)$ due to \eqref{depsi}. Due to \eqref{H2pa2} and 
  \\$\mathcal{P}^5_v\mathcal{P}_x^3(1-\widetilde{\psi}_-)|\pa^\al_\be \mathcal{M}|\leq C_\eta\<t\>^{-2q_0(1-\eta)}$ ,  we  obtain that
 \begin{equation}\label{Gamma3}
	\begin{aligned}
		|\Ga_4|\leq&C_{\a}\Big(\<t\>^{-2q_0(1-\eta)}+\<t\>^{-2q_0(\f12+\de_1-5\eta)}\sum_{|\al|+|\be|\leq 4}\|W_{\f12+\de_1}^{\al,\be}\pa^\al_\be f\|_{L^2_{x,v}}\Big)\|W^{\al,\be}_\a \pa^\al_\be f \|^2_{L^2_{x,v}}.
	\end{aligned}
\end{equation}
We conclude the desired results by putting together all the estimates. This ends the proof.
\end{proof}

\subsubsection{Estimates of $\Ga_5$.} We will prove
\begin{lem}
For any $0<\de_1\leq\a-\f12$  with $\a\in(\f12,1)$ and $|\al|+|\be|=N\le5$, it holds that
\begin{equation}\label{Ga4}
	\begin{aligned}
&|\Ga_{5}|\leq ~(\lam_\a/4+C_\a\<t\>^{2q_0}\sum_{|\tilde{\al}|+|\tilde{\be}|\le 5}\| \mathcal{P}_x^2\mathcal{Z}_{x,v}^{1/4}\pa^{\tilde{\al}}_{\tilde{\be}} f\|^2_{L^2_{x,v}})\mathbf{D}_\a^{\al,\be}(f)+C_\a\sum_{|\al_2|+|\be_2|\leq N-1 }\mathbf{D}_\a^{\al_2,\be_2}(f)\\
&+C_{\a}\Big(\<t\>^{-2q_0(1-\eta)}+\<t\>^{-2q_0(\f12+\de_1-5\eta)}\sum_{|\al|+|\be|\leq 5}\|W_{\f12+\de_1}^{\al,\be}\pa^\al_\be f\|_{L^2_{x,v}}\Big)\sum_{|\al|+|\be|\leq 5}\|W^{\al,\be}_\a \pa^\al_\be f \|^2_{L^2_{x,v}}.
\end{aligned}
\end{equation}
\end{lem}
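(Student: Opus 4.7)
The plan is to decompose $\Gamma_5$ by splitting $\pa^{\al_1}_{\be_1}G = \pa^{\al_1}_{\be_1}\mathcal{M} + \pa^{\al_1}_{\be_1}f$ and, on the $f$-piece, inserting the space-time cutoff $\psi$ from \eqref{depsi} exactly as was done for $\Gamma_2$--$\Gamma_4$. This gives four sub-contributions: a Maxwellian term, a $\psi$-truncated inner term, a $\psi$-truncated outer term, and a $(1-\psi)$-truncated $f$-term. In each sub-contribution we then handle the weight $W^{\al,\be}_\a$ by the same two-step trick used in \eqref{split}, namely first commute $\mathcal{Z}_{x,v}^{\frac12(\a-\eta(|\al|+|\be|))}$ (and its $\mfS^{-1}x\cdot v$-companion) past $Q$ via Lemma \ref{expQ}, then apply Lemma \ref{ghf} to the resulting symmetric expression.

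First I would treat the Maxwellian contribution $Q(\pa^{\al_1}_{\be_1}\mathcal{M},\pa^{\al_2}_{\be_2}f)$. Since $|\pa^{\al_1}_{\be_1}\mathcal{M}| \lesssim \mathcal{P}_{x,v}^{|\al_1|+|\be_1|}\mathcal{M}$ and $|\al_1|+|\be_1|\ge 1$, this behaves like $e^{-|x|^2/2}$ times a linearized Landau term applied to $\pa^{\al_2}_{\be_2}f$; Lemma \ref{coer} together with Lemma \ref{expQ} produces a contribution bounded by $\epsilon\,\mathbf{D}_\a^{\al_2,\be_2}(f)+C\,\|\psi^{1/2}\pa^{\al_2}_{\be_2}f\|_{L^2_{x,v}}^2$ plus, on the support of $1-\psi$, a piece that is absorbed into the last term of \eqref{Ga4} thanks to the $\langle t\rangle^{-2q_0(1-\eta)}$ decay of $(1-\psi)\mathcal{M}$. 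When $|\al_2|+|\be_2|<N$ these dissipation pieces feed directly into $C_\a\sum_{|\al_2|+|\be_2|\le N-1}\mathbf{D}_\a^{\al_2,\be_2}(f)$, while when $|\al_2|+|\be_2|=N$ the prefactor is small, hence absorbable into $(\lam_\a/4)\mathbf{D}_\a^{\al,\be}(f)$.

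Next I would turn to $Q(\pa^{\al_1}_{\be_1}f,\pa^{\al_2}_{\be_2}f)$ with $|\al_1|+|\be_1|\ge 1$. Write $\pa^{\al_2}_{\be_2}f=\psi\pa^{\al_2}_{\be_2}f+(1-\psi)\pa^{\al_2}_{\be_2}f$. For the $\psi$-part: in the range where $|\al_1|+|\be_1|\le 2$ I would place the low-derivative factor into $H^2_xL^2_6$ via Lemma \ref{H2paal}(i), obtaining the factor $\langle t\rangle^{q_0}\sum_{|\tilde\al|+|\tilde\be|\le 2}\|\mathcal{P}_x^2\mathcal{Z}_{x,v}^{1/4}\pa^{\tilde\al}_{\tilde\be}f\|_{L^2_{x,v}}$, which combined with the $\mathcal{Z}_x^{-1/4}$ weight inside $\mathbf{D}_\a^{\al,\be}$ produces the advertised $\langle t\rangle^{2q_0}\sum\|\mathcal{P}_x^2\mathcal{Z}_{x,v}^{1/4}\pa^{\tilde\al}_{\tilde\be}f\|^2$ nonlinear smallness times $\mathbf{D}_\a^{\al,\be}(f)$, exactly as in \eqref{nonestityp1}. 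In the opposite range $|\al_2|+|\be_2|\le 2$, the high-derivative factor $\pa^{\al_1}_{\be_1}f$ (with $|\al_1|+|\be_1|\le N-1$) goes into the dissipation slot $\mathbf{D}_\a^{\al_1,\be_1}(f)$, producing the lower-order dissipation sum on the right of \eqref{Ga4}; the remaining low-derivative factor is then controlled by $\|\psi^{1/2}\mathcal{Z}_x^{1/2}f\|_{H^2_xL^2_6}$ and Lemma \ref{H2paal}(i) once more. The intermediate regime $|\al_1|+|\be_1|, |\al_2|+|\be_2|\in\{2,3\}$ is handled by choosing parameters $(a_i,\om_i)$ in Lemma \ref{ghf} that distribute one $H^1$-derivative to each factor, so that both sides can be placed in Sobolev-embedded spaces. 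The $(1-\psi)$-part is estimated by Lemma \ref{Fff} using \eqref{H2pa2}, yielding the $\langle t\rangle^{-2q_0(\f12+\de_1-5\eta)}\sum\|W^{\al,\be}_{\f12+\de_1}\pa^\al_\be f\|$ factor multiplying $\sum\|W^{\al,\be}_\a\pa^\al_\be f\|_{L^2_{x,v}}^2$.

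The main obstacle will be the book-keeping of exponential weights: the weight $W^{\al,\be}_\a$ on the target must split compatibly with the weights one wants to place on the two factors $\pa^{\al_1}_{\be_1}f$ and $\pa^{\al_2}_{\be_2}f$. Because $\eta(|\al_1|+|\be_1|)+\eta(|\al_2|+|\be_2|)=\eta(|\al|+|\be|)$, the exponential exponents balance, but commuting the weight across $Q$ generates error terms of Lemma \ref{expQ} type that carry a $\mfS^{-1}x\cdot v$ factor and hence interact with the mixed part of $W^{\al,\be}_\a$; these have to be handled via Lemma \ref{mixturegainweight} in order to be reabsorbed into $\mathbf{D}_\a^{\al,\be}(f)$ (or its lower-order analogues) rather than left as pure energy terms. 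Once this weight-shuffling is done consistently, each of the four sub-pieces falls into exactly one of the three templates on the right-hand side of \eqref{Ga4}, and summing over admissible $(\al_1,\be_1)$ with $|\al_1|+|\be_1|\ge 1$ gives the claimed bound.
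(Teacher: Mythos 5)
Your overall architecture (splitting $G=\mathcal{M}+f$, inserting the cutoff $\psi$, and running Lemma \ref{ghf}/Lemma \ref{expQ} with a case analysis on how many derivatives fall on each factor) is the same as the paper's, but two of your allocations would not go through with the lemmas you invoke. First, in the regime $|\al_1|+|\be_1|\ge 4$ (so $|\al_2|+|\be_2|\le 1$) you propose to put the high-derivative first argument $\pa^{\al_1}_{\be_1}f$ ``into the dissipation slot'' and the low-derivative second argument into $\|\psi^{\f12}\mathcal{Z}_x^{\f12}f\|_{H^2_xL^2_6}$. This swaps the roles of the two entries of $Q$: in Lemma \ref{ghf} the first argument must sit in the $L^2_5$ (velocity-weighted $L^2$) slot, while the derivative/dissipation norms are carried by the second argument and the test function; the Landau upper bounds are not symmetric in the two entries, and no swapped version is available (nor obviously true, since moving the derivative structure off the second argument would force two derivatives onto the top-order test function $\pa^\al_\be f$). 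The paper's resolution is the opposite allocation: keep $\pa^{\al_1}_{\be_1}f$ in $\|\widetilde{\psi}_+\mathcal{Z}_x^{1/2}\pa^{\al_1}_{\be_1}f\|_{L^2_xL^2_6}$ (no extra $x$-derivatives needed on it) and apply the $H^2_x$ Sobolev embedding to the dissipation norms of the low-derivative factor $\pa^{\al_2}_{\be_2}f$, as in \eqref{upperDV1}; Cauchy--Schwarz then yields exactly the terms in \eqref{Ga4}.

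Second, the $(1-\psi)$ piece of $\Gamma_5$ cannot be estimated by Lemma \ref{Fff}. That lemma is a quadratic-form estimate which requires the first argument of $Q$ to be nonnegative and, crucially, the second argument to coincide with the function appearing in the test $fm^2$; this is precisely why it works for $\Gamma_4=\mathscr{C}(Q((1-\psi)G,\pa^\al_\be f),(W^{\al,\be}_\a)^2\pa^\al_\be f)$, but in $\Gamma_{5,2}$ the first argument is $(1-\psi)\pa^{\al_1}_{\be_1}G$ (no sign) and the second argument $\pa^{\al_2}_{\be_2}f$ differs from the test function $\pa^\al_\be f$. The paper instead splits on $|\al_1|+|\be_1|$: when $|\al_1|+|\be_1|=1$ (so $|\al_2|+|\be_2|=N-1$) it uses the integration-by-parts Lemma \ref{IBP} to avoid losing a $v$-derivative at top order, and when $|\al_1|+|\be_1|\ge 2$ it uses Lemmas \ref{ghf} and \ref{expQ} with Sobolev embedding; in both cases the time-decay prefactor $\<t\>^{-2q_0(1-\eta)}+\<t\>^{-2q_0(\f12+\de_1-5\eta)}\sum\|W^{\al,\be}_{\f12+\de_1}\pa^\al_\be f\|_{L^2_{x,v}}$ comes from $\|(1-\widetilde{\psi}_-)\mathcal{P}_v^{\cdot}G\|_{H^m_{x,v}}$ via \eqref{H2pa2} and the Gaussian decay of $\pa^{\al}_{\be}\mathcal{M}$ off the support of $\psi$. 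A minor further point: your use of Lemma \ref{coer} for the Maxwellian contribution is misplaced, since coercivity is stated for $Q(\mu,\cdot)$ and $\pa^{\al_1}_{\be_1}\mathcal{M}$ changes sign; but as only upper bounds are needed there, Lemma \ref{ghf} with $\|\pa^{\al_1}_{\be_1}\mathcal{M}\|_{L^2_6}\ls e^{-(\f12-\f{\eta}4)|x|^2}$ and the weight gain $W^{\al,\be}_\a\le C_\a e^{-\f{\eta}2(|x|^2+|v|^2)}W^{\al_2,\be_2}_\a$ suffices (and note the case $|\al_2|+|\be_2|=N$ you discuss never occurs, since $|\al_1|+|\be_1|\ge1$).
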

\begin{proof} Recalling the definition, we only  provide a proof for the typical term  $(Q(\pa^{\al_1}_{\be_1}G,\pa^{\al_2}_{\be_2}f),(W^{\al,\be}_\a)^2\pa^\al_\be f)$ with $|\al_1|+|\be_1|\geq1$. We split it into two parts: $\Ga_{5,1}$ and $\Ga_{5,2}$, where 
\beno \Ga_{5,1}:=( Q(\psi\pa^{\al_1}_{\be_1}G,\pa^{\al_2}_{\be_2}f),(W^{\al,\be}_\a)^2\pa^\al_\be f),\quad \Ga_{5,2}:=( Q((1-\psi)\pa^{\al_1}_{\be_1}G,\pa^{\al_2}_{\be_2}f),(W^{\al,\be}_\a)^2\pa^\al_\be f). \eeno

\noindent $\bullet$ \underline{Estimate of $\Ga_{5,1}$.} It is easy to check that   
\begin{equation*}
	\begin{aligned}
\Ga_{5,1} &= ( Q(\psi\pa^{\al_1}_{\be_1}\mathcal{M},\pa^{\al_2}_{\be_2}f),(W^{\al,\be}_\a)^2\pa^\al_\be f)+( Q(\psi\pa^{\al_1}_{\be_1}f,\pa^{\al_2}_{\be_2}f),(W^{\al,\be}_\a)^2\pa^\al_\be f):=\Ga_{5,1,1}+\Ga_{5,1,2}.
\end{aligned}
\end{equation*}

\noindent For $\Ga_{5,1,1}$, using the similar split as $\Ga_{3,1}$ and $\Ga_{3,2}$, then Lemma \ref{ghf} and Lemma \ref{expQ} imply that
\begin{equation*}
\begin{aligned}
|\Ga_{5,1,1}|\leq&~ C_\a\int_{\R_x^3} \psi\|\pa^{\al_1}_{\be_1}\mathcal{M}\|_{L^2_6}\big(\|W^{\al,\be}_\a\mathbb{T}_{\S^2}\pa^{\al_2}_{\be_2} f \|_{L^2_{-\f32}}+\|W^{\al,\be}_\a\pa^{\al_2}_{\be_2} f\|_{H^1_{-\f32}}+\|W^{\al,\be}_\a\pa^{\al_2}_{\be_2} f\|_{L^2_{-\f12}}\big)\\
&\times\big(\|W^{\al,\be}_\a\mathbb{T}_{\S^2}\pa^\al_\be f \|_{L^2_{-\f32}}+\|W^{\al,\be}_\a\pa^\al_\be f\|_{H^1_{-\f32}}+\|W^{\al,\be}_\a\pa^\al_\be f\|_{L^2_{-\f12}}\big)dx.
\end{aligned}
\end{equation*}
Since $\|\pa^{\al_1}_{\be_1}\mathcal{M}\|_{L^2_6}\leq C_\eta e^{-(\f12-\f\eta 4)|x|^2}$ and $W^{\al,\be}_\a\leq C_\a e^{-\f \eta2(|x|^2+|v|^2)}W_\a^{\al_2,\be_2}$ with $|\al_1|+|\be_1|\geq1$,   we   obtain that
\begin{equation*}
\begin{aligned}
|\Ga_{5,1,1}|&\leq   \f {\lam_\a}4\mathbf{D}_\a^{\al,\be}(f)+C_{\a} \sum_{|\al_2|+|\be_2|\leq N-1 }\mathbf{D}_\a^{\al_2,\be_2}(f).
\end{aligned}
\end{equation*}
 
We can copy the above argument for $\Ga_{5,1,2}$   to get that
\begin{equation*}
	\begin{aligned}
|\Ga_{5,1,2}|\leq& ~C_\a\int_{\R_x^3} \psi\|\pa^{\al_1}_{\be_1}f\|_{L^2_6}\big(\|W^{\al,\be}_\a\mathbb{T}_{\S^2}\pa^{\al_2}_{\be_2} f \|_{L^2_{-\f32}}+\|W^{\al,\be}_\a\pa^{\al_2}_{\be_2} f\|_{H^1_{-\f32}}+\|W^{\al,\be}_\a\pa^{\al_2}_{\be_2} f\|_{L^2_{-\f12}}\big)\\
&\times\big(\|W^{\al,\be}_\a\mathbb{T}_{\S^2}\pa^\al_\be f \|_{L^2_{-\f32}}+\|W^{\al,\be}_\a\pa^\al_\be f\|_{H^1_{-\f32}}+\|W^{\al,\be}_\a\pa^\al_\be f\|_{L^2_{-\f12}}\big)dx.
\end{aligned}
\end{equation*}
If $|\alpha_1|+|\beta_1|\le 3$, we apply the Sobolev embedding theorem to $\pa^{\al_1}_{\be_1}f$. And if $|\alpha_1|+|\beta_1|\ge 4$, we apply the Sobolev embedding theorem to $\pa^{\al_2}_{\be_2}f$. Then we get that 
 
\beno 
&&|\Ga_{5,1,2}|\ls  \sum_{1\le|\al_1|+|\be_1|\le3} \|\widetilde{\psi}_+\mathcal{Z}_{x}^{1/2}\pa^{\al_1}_{\be_1}f\|_{H^2_xL^2_6}\mathbf{D}_\a^{\al_2,\be_2}(f)^{1/2}\mathbf{D}_\a^{\al,\be}(f)^{1/2}\\
&&+\sum_{|\al_1|+|\be_1|\ge4}\|\widetilde{\psi}_+\mathcal{Z}_{x}^{1/2}\pa^{\al_1}_{\be_1}f\|_{L^2_xL^2_6} \Big(\|\psi^{\f12}\mathcal{Z}_x^{-\f14}W^{\al,\be}_\a\mathbb{T}_{\S^2}\pa^{\al_2}_{\be_2} f \|_{H^2_xL^2_{-\f32}}\\
&&+\|\psi^{\f12}\mathcal{Z}_x^{-\f14}W^{\al,\be}_\a\na_v\pa^{\al_2}_{\be_2} f\|_{H^2_xL^2_{-\f32}}+\|\psi^{\f12}\mathcal{Z}_x^{-\f14}W^{\al,\be}_\a\pa^{\al_2}_{\be_2} f\|_{H^2_xL^2_{-\f12}}\Big)\mathbf{D}_\a^{\al,\be}(f)^{1/2}.
\eeno 
 Due to Cauchy-Schwarz inequality and  \eqref{H2pa}, the first term has the upper bound as follows:
 \beno
 \mathbf{D}_\a^{\al_2,\be_2}(f)+ \<t\>^{2q_0} (\sum_{|\tilde{\al}|+|\tilde{\be}|\le 5}\| \mathcal{P}_x^2\mathcal{Z}_{x,v}^{1/4}\pa^{\tilde{\al}}_{\tilde{\be}} f\|^2_{L^2_{x,v}})\mathbf{D}_\a^{\al,\be}(f).
 \eeno
For the second term, we only  give the estimate for the term $\|\psi^{\f12}\mathcal{Z}_x^{-\f14}W^{\al,\be}_\a\na_v\pa^{\al_2}_{\be_2} f\|^2_{H^2_xL^2_{-\f32}}$. The others can be treated similarly. In fact, similar to \eqref{H2pa}, we get that 
\ben\label{upperDV1}
\|\psi^{\f12}\mathcal{Z}_x^{-\f14}W^{\al,\be}_\a\na_v\pa^{\al_2}_{\be_2} f\|^2_{H^2_xL^2_{-\f32}}
&\ls& \sum_{|\al_2|+|\be_2|\leq N-1} \Big(\<t\>^{-2q_0(1-\eta)}\|W^{\al,\be}_\a \pa^{\al_2}_{\be_2}f\|^2_{L^2_{x,v}}+\mathbf{D}_\a^{\al_2,\be_2}(f)\Big),
\een 
where we use the facts that $W^{\al,\be}_\a\leq C_\a e^{-\f \eta2(|x|^2+|v|^2)}W_\a^{\al_2,\be_2}$ with $|\al_1|+|\be_1|\geq1$ and $\mathcal{P}_x^2(1-\tilde{\psi}_-)^{\f12}\mathcal{Z}_x^{-\f14}\ls \<t\>^{-q_0(1-\eta)}$. Thus the second term can be bounded by 
\beno
\<t\>^{2q_0} (\sum_{|\tilde{\al}|+|\tilde{\be}|\le 5}\| \mathcal{P}_x^2\mathcal{Z}_{x,v}^{1/4}\pa^{\tilde{\al}}_{\tilde{\be}} f\|^2_{L^2_{x,v}})\mathbf{D}_\a^{\al,\be}(f)+\sum_{|\al_2|+|\be_2|\leq N-1} \Big(\<t\>^{-2q_0(1-\eta)}\|W^{\al,\be}_\a \pa^{\al_2}_{\be_2}f\|^2_{L^2_{x,v}}+\mathbf{D}_\a^{\al_2,\be_2}(f)\Big).
\eeno 

Hence we conclude that
\begin{equation}\label{Ga41}
	\begin{aligned}
|\Ga_{5,1}|~\leq&~ (\f{\lam_\a}4+C_\a\<t\>^{2q_0}\sum_{|\tilde{\al}|+|\tilde{\be}|\le 5}\| \mathcal{P}_x^2\mathcal{Z}_{x,v}^{1/4}\pa^{\tilde{\al}}_{\tilde{\be}} f\|^2_{L^2_{x,v}})\mathbf{D}_\a^{\al,\be}(f)\\
&+C_\a\sum_{|\al_2|+|\be_2|\leq N-1} \Big(\<t\>^{-2q_0(1-\eta)}\|W^{\al,\be}_\a \pa^{\al_2}_{\be_2}f\|^2_{L^2_{x,v}}+\mathbf{D}_\a^{\al_2,\be_2}(f)\Big).
\end{aligned}
\end{equation}

\noindent  $\bullet$ \underline{Estimate of $\Ga_{5,2}$.} We further split $\Ga_{5,2}$ into two cases:  $|\al_1|+|\be_1|=1$ and $|\al_1|+|\be_1|\ge2$.
 If $|\al_1|+|\be_1|=1$, then due to Lemma \ref{IBP}, we have  
\begin{equation*}
	\begin{aligned} |\Ga_{5,2}|&\le C_\a (\|(1-\widetilde{\psi}_-)\mathcal{P}_{x,v}^3G\|_{H^4_{x,v}}+\|(1-\widetilde{\psi}_-)\mathcal{P}_{v}^3G\|_{H^5_{x,v}})\\
		\times&\Big(\|(1-\psi)^{\f12}W^{\al,\be}_\a\pa^{\al_2}_{\be_2+\tilde{\be}} f\|^2_{L^2_{x,v}}+\|(1-\psi)^{\f12}W^{\al,\be}_\a\pa^\al_\be f\|^2_{L^2_{x,v}}\Big),
	\end{aligned}
\end{equation*}
where $\tilde{\be}\in \Z^3_+$ satisfy $|\tilde{\be}|\leq1$.  From this together with  \eqref{Gamma3}, we deduce that
 \begin{equation*}
	\begin{aligned}
		|\Ga_{5,2}|\leq&C_{\a}\Big(\<t\>^{-2q_0(1-\eta)}+\<t\>^{-2q_0(\f12+\de_1-5\eta)}\sum_{|\al|+|\be|\leq 5}\|W_{\f12+\de_1}^{\al,\be}\pa^\al_\be f\|_{L^2_{x,v}}\Big)\sum_{|\al|+|\be|\leq N}\|W^{\al,\be}_\a \pa^\al_\be f \|^2_{L^2_{x,v}}.
	\end{aligned}
\end{equation*}

When $2\le |\al_1|+|\be_1|\le N-2$,  due to Lemma \ref{ghf} and Lemma \ref{expQ}, we have 
 \begin{equation*}
	\begin{aligned}
|\Ga_{5,2}|\leq& C_\a \|(1-\widetilde{\psi}_-)\mathcal{P}_v^6G\|_{H^N_{x,v}}\|W^{\al,\be}_\a\pa^{\al_2}_{\be_2} f\|_{L^2_xH^2_{v}}\|W^{\al,\be}_\a\pa^\al_\be f\|_{L^2_{x,v}}.
	\end{aligned}
\end{equation*} In the case of $|\al_1|+|\be_1|\ge N-1$, we derive that 
\begin{equation*}
	\begin{aligned}
|\Ga_{5,2}|\leq& C_\a \|(1-\widetilde{\psi}_-)\mathcal{P}_v^6G\|_{H^N_{x,v}}\|W^{\al,\be}_\a\pa^{\al_2}_{\be_2} f\|_{H^4_{x,v}}\|W^{\al,\be}_\a\pa^\al_\be f\|_{L^2_{x,v}}.
	\end{aligned}
\end{equation*}

 We can conclude that
 \begin{equation*}
	\begin{aligned}
		|\Ga_{5,2}|\leq&C_{\a}\Big(\<t\>^{-2q_0(1-\eta)}+\<t\>^{-2q_0(\f12+\de_1-5\eta)}\sum_{|\al|+|\be|\leq 5}\|W_{\f12+\de_1}^{\al,\be}\pa^\al_\be f\|_{L^2_{x,v}}\Big)\sum_{|\al|+|\be|\le 5}\|W^{\al,\be}_\a \pa^\al_\be f \|^2_{L^2_{x,v}}.
	\end{aligned}
\end{equation*}
 
 Combining the estimates of $\Ga_{5,1}$ and $\Ga_{5,2}$, we can get the upper bound of $\Ga_5$. It ends the proof of the lemma.
\end{proof}

\subsubsection{Estimates of $\Ga_6$.}  We want to show
\begin{lem}
	For any $0<\de_1\leq\a-\f12$  with $\a\in(\f12,1)$ and $|\al|+|\be|=N\le 5$, it holds that
	 \begin{equation}\label{Ga5}
		\begin{aligned}
|\Ga_6|&\leq \f{\lam_\a}4 \mathbf{D}_\a^{\al,\be}(f)+C_\a\sum_{|\al_1|+|\be_1|\leq N-1}\mathbf{D}_\a^{\al_1,\be_1}(f)\\
+&C_\a \<t\>^{-2q_0(1-5\eta)}\sum_{|\al|+|\be|\leq 5}\|W^{\al,\be}_\a \pa^\al_\be f\|^2_{L^2_{x,v}}+ C_\a\sum_{|\al_1|\leq|\al|,|\be_1|\leq|\be|}\|\pa^{\al_1}_{\be_1}f\|^2_{L^2_{x,v}}.
	\end{aligned}
\end{equation}
\end{lem}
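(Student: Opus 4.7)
The plan is to exploit the fact that each $\partial^{\alpha_2}_{\beta_2}\mathcal{M}$ is a polynomial times the Gaussian $\mathcal{M}$, so the "$\mathcal{M}$-argument" of $Q$ always carries strong decay in both $x$ and $v$, and in particular is a Schwartz function with all semi-norms in Lemma~\ref{ghf}, \ref{expQ}, \ref{IBP} uniformly bounded. I would split the sum defining $\Gamma_6$ into the \emph{diagonal} summand $(\alpha_2,\beta_2)=(0,0)$ and the \emph{off-diagonal} summands where $|\alpha_2|+|\beta_2|\ge 1$, and in each case decompose $\mathcal{M}=\psi\mathcal{M}+(1-\psi)\mathcal{M}$ (so the support of the second piece enters the regime $\mathcal{Z}_x^{1/4}\gtrsim q_0\ln\langle t\rangle$), proceeding by analogy with the treatments of $\Gamma_3,\Gamma_4,\Gamma_5$ just carried out.

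\emph{Off-diagonal part.} When $|\alpha_2|+|\beta_2|\ge 1$ we have $|\alpha_1|+|\beta_1|\le N-1$, and $\partial^{\alpha_2}_{\beta_2}\mathcal{M}$ still has the full Gaussian decay $e^{-\frac12(|x|^2+|v|^2)}$ times a polynomial. For the $\psi\mathcal{M}$ piece I would apply Lemma~\ref{ghf} (second bound) with $g=\partial^{\alpha_1}_{\beta_1}f$, $h=\psi\partial^{\alpha_2}_{\beta_2}\mathcal{M}$, together with the pointwise inequality $W^{\alpha,\beta}_\alpha\le C_\a e^{-\frac{\eta}{2}(|x|^2+|v|^2)}W^{\alpha_1,\beta_1}_\alpha$ for $|\alpha_2|+|\beta_2|\ge 1$; the excess Gaussian factor absorbs all polynomial weights arising from Lemma~\ref{ghf} and the $\mathcal{D}_v$-norms of $\partial^{\alpha_2}_{\beta_2}\mathcal{M}$. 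A Cauchy--Schwarz in the spatial variable then produces $\mathbf{D}_\alpha^{\alpha_1,\beta_1}(f)^{1/2}\mathbf{D}_\alpha^{\alpha,\beta}(f)^{1/2}$, and Young's inequality splits this as $\tfrac{\lambda_\alpha}{8}\mathbf{D}_\alpha^{\alpha,\beta}(f)+C_\alpha\mathbf{D}_\alpha^{\alpha_1,\beta_1}(f)$, which is the second contribution on the right-hand side of \eqref{Ga5}. The $(1-\psi)\mathcal{M}$ piece is handled as in the treatment of $\Gamma_4$: the cutoff forces $(1-\psi)\partial^{\alpha_2}_{\beta_2}\mathcal{M}\le C\langle t\rangle^{-2q_0(1-5\eta)}$ (pointwise, up to multiplication by $W$-weights with index shifted by $\eta$), producing exactly the third contribution $C_\alpha\langle t\rangle^{-2q_0(1-5\eta)}\|W^{\alpha,\beta}_\alpha\partial^{\alpha,\beta}f\|_{L^2}^2$.

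\emph{Diagonal part $(\alpha_2,\beta_2)=(0,0)$.} This is the structurally delicate case because now all derivatives fall on $f$; the factor $\mathcal{M}$ appears only as the second argument of $Q$, so Lemma~\ref{coer} is unavailable and we must bound the term without invoking any coercivity. Writing $Q(\partial^{\alpha}_\beta f,\psi\mathcal{M})=(2\pi)^{-3/2}\psi\mathcal{Z}_x^{-1/2}Q(\partial^{\alpha}_\beta f,\mu)$, I would apply Lemma~\ref{ghf} with $g=\partial^{\alpha}_\beta f$ and $h=\mu$, then absorb one factor of $(W^{\alpha,\beta}_\alpha)^2$ by rewriting the pairing as $(W^{\alpha,\beta}_\alpha\cdot)\cdot(W^{\alpha,\beta}_\alpha\partial^\alpha_\beta f)$. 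The Gaussian pre-factor $\psi\mathcal{Z}_x^{-1/2}$ multiplied by $(W^{\alpha,\beta}_\alpha)^2$ is bounded by $C e^{(\alpha-\eta(|\alpha|+|\beta|)-\frac12)(|x|^2+|v|^2)/2}$, and because $\alpha<1$ this can be dominated by a small exponential weight in $v$ alone (absorbed via $\mathcal{P}_v^5\le C_\eta e^{\eta|v|^2}$), at the cost of picking up the unweighted factor $\|\partial^\alpha_\beta f\|_{L^2_{x,v}}$. After Young, the dissipative parts of the $v$-norm fit into $\tfrac{\lambda_\alpha}{8}\mathbf{D}_\alpha^{\alpha,\beta}(f)$, while the remaining term is exactly $C_\alpha\|\partial^\alpha_\beta f\|_{L^2_{x,v}}^2$, yielding the fourth contribution on the right-hand side of \eqref{Ga5}. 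The $(1-\psi)\mathcal{M}$ diagonal piece is again handled like $\Gamma_4$, contributing only to the $\langle t\rangle^{-2q_0(1-5\eta)}$ term.

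\emph{Main obstacle.} The delicate point is the diagonal summand just described: losing coercivity forces us to put the entire exponential weight $(W^{\alpha,\beta}_\alpha)^2$ against only a single factor of $\mathcal{M}$, and the product $(W^{\alpha,\beta}_\alpha)^2\mathcal{M}$ grows in $v$ whenever $\alpha>1/2$. The remedy is the restriction $\alpha<1$, which makes the combined weight barely integrable after using the $\eta$-reduction of the exponent in $W^{\alpha,\beta}_\alpha$ (Lemma~\ref{mixturegainweight} being the analogue for $\Gamma_1+\Gamma_2$) and the polynomial-to-exponential absorption $\mathcal{P}_v^5\le C_\eta e^{\eta|v|^2}$; this is the mechanism that generates the unweighted $L^2$ remainder $\|\partial^{\alpha_1}_{\beta_1}f\|_{L^2_{x,v}}^2$ in \eqref{Ga5}, as opposed to the dissipation norms appearing in the off-diagonal cases.
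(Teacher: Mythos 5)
Your overall architecture (split by $\psi$, use Lemmas \ref{ghf} and \ref{expQ}, exploit the Gaussian decay of $\pa^{\al_2}_{\be_2}\mathcal{M}$, treat the $(1-\psi)$ pieces as in $\Ga_4$) is the right one, and placing the cutoff on $\mathcal{M}$ rather than on $\pa^{\al_1}_{\be_1}f$ is immaterial since $\psi=\psi(t,x)$ commutes with $Q$; your off-diagonal discussion is essentially the paper's argument. The genuine gap is in your diagonal summand $(\al_2,\be_2)=(0,0)$, which you correctly flag as the delicate case but then resolve with a weight count that does not work. Pointwise one has $\psi\,\mathcal{Z}_x^{-\f12}(W^{\al,\be}_\a)^2\sim e^{(\a-\eta N-\f12)|x|^2+(\a-\eta N)|v|^2}$, and this is \emph{not} bounded by $Ce^{(\a-\eta N-\f12)(|x|^2+|v|^2)/2}$: in the $v$ variable the exponent is $\a-\eta N>\f12$, far above the claimed one, and in the $x$ variable the exponent is positive since $\a>\f12$. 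Concretely, after giving the test slot its dissipation weight $\mathcal{Z}_x^{-1/4}W^{\al,\be}_\a$ and passing an exponential $v$-weight $e^{\aa|v|^2}$ with $\aa<\f12$ onto $\mu$ via Lemma \ref{expQ}, there remains a \emph{growing} factor $\mathcal{Z}_x^{\f12(\a-\eta N-\f12)}$ (together with polynomials of the type $\mathcal{P}_x^8\mathcal{P}_v^6$) sitting on the first-argument copy of $\pa^\al_\be f$. This leftover lives in $x$, so it cannot be ``dominated by a small exponential weight in $v$ alone,'' and the restriction $\a<1$ does not remove it; what you end up with is $\|\mathcal{Z}_x^{\f\a2-\f14-\f\eta2 N}\mathcal{P}_x^8\mathcal{P}_v^6\psi^{\f12}\pa^{\al}_{\be}f\|_{L^2_{x,v}}$, not the unweighted norm $\|\pa^\al_\be f\|_{L^2_{x,v}}$.

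The missing ingredient is the interpolation step. The paper bounds all summands (diagonal and off-diagonal uniformly, without your case split) by $\|\mathcal{Z}_x^{\f\a2-\f14-\f\eta2 N}\mathcal{P}_x^8\mathcal{P}_v^6\psi^{\f12}\pa^{\al_1}_{\be_1}f\|_{L^2_{x,v}}\,(\mathbf{D}_\a^{\al,\be}(f))^{\f12}$ and then invokes Lemma \ref{interpolationineq}, using precisely that $\a-\f12<\a-\c-\vth$ (i.e. $\c+\vth<\f12$), to get $\|\mathcal{Z}_x^{\f\a2-\f14-\f\eta2 N}\mathcal{P}_x^8\mathcal{P}_v^6\psi^{\f12}\pa^{\al_1}_{\be_1}f\|^2_{L^2_{x,v}}\le \epsilon\,\mathbf{D}_\a^{\al_1,\be_1}(f)+C_\epsilon\|\pa^{\al_1}_{\be_1}f\|^2_{L^2_{x,v}}$; in the diagonal case the $\epsilon\,\mathbf{D}_\a^{\al,\be}(f)$ is re-absorbed into the $\f{\lam_\a}4\mathbf{D}_\a^{\al,\be}(f)$ budget, and it is this interpolation — not Gaussian absorption enabled by $\a<1$ — that produces the unweighted remainder $C_\a\sum\|\pa^{\al_1}_{\be_1}f\|^2_{L^2_{x,v}}$ in \eqref{Ga5}. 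Without this step (and without explicitly using the room $\c+\vth<\f12$, which also quietly underlies your off-diagonal reduction to $\mathbf{D}_\a^{\al_1,\be_1}(f)^{1/2}\mathbf{D}_\a^{\al,\be}(f)^{1/2}$ when absorbing the polynomial weights), the diagonal estimate as you propose it does not close.
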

\begin{proof} 
We only deal with the typical term $\big(Q(\pa^{\al_1}_{\be_1}f,\pa^{\al_2}_{\be_2}\mathcal{M}),(W^{\al,\be}_\a)^2\pa^\al_\be f\big)_{L^2_{x,v}}$. Let
\beno
\Ga_{6,1}:=\big(Q(\psi \pa^{\al_1}_{\be_1}f,\pa^{\al_2}_{\be_2}\mathcal{M}),(W^{\al,\be}_\a)^2\pa^\al_\be f\big),\quad \Ga_{6,2}:=\big(Q((1-\psi)\pa^{\al_1}_{\be_1}f,\pa^{\al_2}_{\be_2}\mathcal{M}),(W^{\al,\be}_\a)^2\pa^\al_\be f\big).
\eeno

For $\Ga_{6,2}$,  by Lemma \ref{ghf} and Lemma \ref{expQ}, we can derive that
\beno
|\Ga_{6,2}|\ls \| \mathcal{Z}_x^{-(\f12-\f\a2 )}\mathcal{P}_x^{8}\mathcal{P}_v^{6}(1-\psi)^{\f12}\pa^{\al_1}_{\be_1} f\|_{L^2_{x,v}}\|W^{\al,\be}_\a \pa^\al_\be f\|_{L^2_{x,v}}\ls \<t\>^{-2q_0(1-5\eta)}\sum_{|\al|+|\be|\leq 5}\|W^{\al,\be}_\a \pa^\al_\be f\|^2_{L^2_{x,v}}.
\eeno
For $\Ga_{6,1}$, also   by Lemma \ref{ghf} and Lemma \ref{expQ}, we have 
\beno
|\Ga_{6,1}|&\ls& \|\mathcal{Z}_x^{\f\a2-\f14-\f\eta 2(|\al|+|\be|) }\mathcal{P}_x^8\mathcal{P}_v^6\psi^{\f12}\pa^{\al_1}_{\be_1}f\|_{L^2_{x,v}}(\mathbf{D}_\a^{\al,\be}(f))^{\f12}\\
&\leq& \f{\lam_\a}4 \mathbf{D}_\a^{\al,\be}(f)+C_\a\sum_{|\al_1|+|\be_1|\leq N-1}\mathbf{D}_\a^{\al_1,\be_1}(f)+C_\a\sum_{|\al_1|+|\be_1|\leq 5} \|\pa^{\al_1}_{\be_1}f\|^2_{L^2_{x,v}},
\eeno
where we use Lemma \ref{interpolationineq} in the last step since $\a-\f12<\a-\c-\vartheta$. 
The desired result follows  the estimates of $\Ga_{6,1}$ and $\Ga_{6,2}$. And 
 this ends the proof.
\end{proof}

\subsubsection{Energy estimates} We are in a position to prove
\begin{lem}\label{fixcalbe}
For any $0<\de_1\leq\a-\f12$  with $\a\in(\f12,1)$, it holds that 
\ben\label{EN}
	&& \f d{dt}\mathbf{E}^{\mathbf{I}}_{\a,5}(f)+\f{\lambda_\a}8\mathbf{D}^{\mathbf{I}}_{\a,5}(f)\leq C_\a \<t\>^{2q_0}\mathbf{E}^{\mathbf{I}}_{\f12+6\eta,5}(f)\mathbf{D}^{\mathbf{I}}_{\a,5}(f)+C_\a\big(\<t\>^{-2q_0(1-5\eta)}\nonumber\\ &&\qquad+\mathbf{E}^{\mathbf{I}}_{\f12+\de_1,5}(f)\big)\mathbf{E}^{\mathbf{I}}_{\a,5}(f)
		+C_\a (\sum_{|\al|+|\be|=5}\|\psi^{\f12}\pa^\al_\be f\|^2_{L^2_{x,v}}+\|\psi^{\f12}f\|^2_{L^2_{x,v}}).
 \een
\end{lem}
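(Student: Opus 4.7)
The plan is to sum the per-multi-index energy identity
\begin{equation*}
\tfrac12\partial_t\mathbf{E}^{\alpha,\beta}_\alpha(f)=\Gamma_1+\Gamma_2+\Gamma_3+\Gamma_4+\Gamma_5+\Gamma_6
\end{equation*}
over $|\alpha|+|\beta|\le 5$ with positive weights $C(\alpha,\beta)$ and invoke the estimates \eqref{Ga1lam}, \eqref{Ga23}, \eqref{Ga4}, \eqref{Ga5} term by term. The single structural input that makes everything work is the observation that the weight $(W^{\alpha,\beta}_\alpha)^2$ depends on $(\alpha,\beta)$ only through $N:=|\alpha|+|\beta|$, via the factor $\eta(|\alpha|+|\beta|)$. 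Hence at each level $N$ it reduces to a common scalar weight $m=(W^{N}_\alpha)^2$, and Lemma~\ref{trans} applies to yield
\begin{equation*}
\sum_{|\alpha|+|\beta|=N}\bigl([T,\partial^\alpha_\beta]f,(W^{\alpha,\beta}_\alpha)^2\partial^\alpha_\beta f\bigr)_{L^2_{x,v}}=0,
\end{equation*}
so the commutator remainder appearing in \eqref{Ga1lam} is wiped out upon summation at each level. This reduction is what permits a clean cascade through derivative orders.

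Next I will make the hierarchical choice $C(\alpha,\beta):=A^{5-|\alpha|-|\beta|}$ for a constant $A\gg 1$ depending only on $\alpha$, $r$ and the combinatorial factors in \eqref{Ga1lam}--\eqref{Ga5}. With this choice the lower-order dissipation remainders $\sum_{|\alpha_2|+|\beta_2|\le N-1}\mathbf{D}^{\alpha_2,\beta_2}_\alpha(f)$ produced by \eqref{Ga4} at level $N$ are controlled by $\tfrac{\lambda_\alpha}{16}\sum C(\alpha_2,\beta_2)\mathbf{D}^{\alpha_2,\beta_2}_\alpha(f)$ on the left, since $C(\alpha_2,\beta_2)\ge A\cdot C(\alpha,\beta)$. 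The same device absorbs the $\|\partial^{\alpha_1}_{\beta_1}f\|^2_{L^2_{x,v}}$ sum from \eqref{Ga5}: after multiplication by the prefactor $\<t\>^{-2q_0(1-5\eta)}$ supplied by \eqref{Ga5}, it fits into the RHS remainder $C_\alpha\<t\>^{-2q_0(1-5\eta)}\mathbf{E}^{\mathbf{I}}_{\alpha,5}(f)$.

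The intermediate-order $\|\psi^{1/2}\partial^\alpha_\beta f\|^2_{L^2_{x,v}}$ terms for $1\le|\alpha|+|\beta|\le 4$ produced by \eqref{Ga1lam} require a separate argument. Since $\alpha-\eta(|\alpha|+|\beta|)>1/2$, the pointwise bound $\psi(t,x)\le K_\alpha\mathcal{Z}_x^{-1/2}(W^{\alpha,\beta}_\alpha)^2\mathcal{P}_v^{-1}$ holds, giving $\|\psi^{1/2}\partial^\alpha_\beta f\|^2_{L^2_{x,v}}\le K_\alpha\mathbf{D}^{\alpha,\beta}_\alpha(f)$; combining this with a standard Gagliardo--Nirenberg interpolation of $\psi^{1/2}f$ between $L^2$ and $H^5$ yields, for each $1\le|\alpha|+|\beta|\le 4$ and any $\epsilon>0$,
\begin{equation*}
\|\psi^{1/2}\partial^\alpha_\beta f\|^2_{L^2_{x,v}}\le \epsilon\!\!\sum_{|\alpha'|+|\beta'|=5}\!\!\|\psi^{1/2}\partial^{\alpha'}_{\beta'}f\|^2_{L^2_{x,v}}+C_\epsilon\|\psi^{1/2}f\|^2_{L^2_{x,v}},
\end{equation*}
so only the top and zeroth orders survive in the final remainder of \eqref{EN}. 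The boundary-term contributions produced when $\partial_x$ hits $\psi^{1/2}$ are bounded by $\mathbf{D}^{\alpha',\beta'}_\alpha(f)$ for $|\alpha'|+|\beta'|\le|\alpha|+|\beta|$ and reabsorbed via the hierarchy.

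Finally, the nonlinear RHS factor $\<t\>^{2q_0}\mathbf{E}^{\mathbf{I}}_{1/2+6\eta,5}(f)\mathbf{D}^{\mathbf{I}}_{\alpha,5}(f)$ is read off \eqref{Ga23} and \eqref{Ga4}, using $\mathcal{P}_x^2\mathcal{Z}_{x,v}^{1/4}\lesssim W^{\alpha,\beta}_{1/2+6\eta}$ pointwise (valid since $\eta\ll 1$); similarly, the estimates $\|W^{\alpha,\beta}_{1/2+\delta_1}\partial^\alpha_\beta f\|_{L^2_{x,v}}\le(\mathbf{E}^{\mathbf{I}}_{1/2+\delta_1,5}(f))^{1/2}$ supply the $\mathbf{E}^{\mathbf{I}}_{1/2+\delta_1,5}(f)\cdot\mathbf{E}^{\mathbf{I}}_{\alpha,5}(f)$ remainder. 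The main obstacle is the bookkeeping: one must verify that a \emph{single} constant $A$ can be chosen to simultaneously dominate the combinatorial prefactors coming from \eqref{Ga1lam}--\eqref{Ga5} at all six levels $N=0,\dots,5$, while keeping the coefficient on $\mathbf{D}^{\mathbf{I}}_{\alpha,5}(f)$ positive and not inflating any of the RHS remainders by more than an $\alpha$-dependent constant. This is carried out by iterating the absorption from $N=0$ upward, picking up at each step a finite, controllable multiplicative factor.
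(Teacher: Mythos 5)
Your overall architecture is the same as the paper's (sum the $\Gamma$-estimates over $|\al|+|\be|=N$, kill the commutators with Lemma \ref{trans} since $(W^{\al,\be}_\a)^2$ depends only on $|\al|+|\be|$, and use a hierarchy of weights $C(\al,\be)$ to absorb lower-order dissipation), but there are two concrete gaps in how you dispose of the remainder terms. First, you misread \eqref{Ga5}: the term $C_\a\sum_{|\al_1|\leq|\al|,|\be_1|\leq|\be|}\|\pa^{\al_1}_{\be_1}f\|^2_{L^2_{x,v}}$ there carries \emph{no} factor $\<t\>^{-2q_0(1-5\eta)}$ (that factor sits on the separate $\|W^{\al,\be}_\a\pa^\al_\be f\|^2$ term), it contains same-level and top-order derivatives, and its constant is large; so it cannot simply "fit into the remainder $C_\a\<t\>^{-2q_0(1-5\eta)}\mathbf{E}^{\mathbf{I}}_{\a,5}(f)$" nor be absorbed by same-level dissipation. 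Handling this term is precisely why the paper inserts the bound $\sum_{|\al|+|\be|\le5}\|\pa^\al_\be f\|^2_{L^2_{x,v}}\ls\sum_{|\al|+|\be|=5}\|\psi^{\f12}\pa^\al_\be f\|^2_{L^2_{x,v}}+\|\psi^{\f12}f\|^2_{L^2_{x,v}}+\<t\>^{-2q_0(1+2\de_1-5\eta)}\sum_{|\al|+|\be|=5}\|W^{\al,\be}_{\f12+\de_1}\pa^\al_\be f\|^2_{L^2_{x,v}}$ (Lemma \ref{interpolationineq} plus the trick of \eqref{Gamma3}), and why exactly the $\psi^{\f12}$-weighted top- and zero-order terms survive on the right of \eqref{EN}.

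Second, your reduction of the intermediate-order $\|\psi^{\f12}\pa^\al_\be f\|^2$ terms interpolates $\psi^{\f12}f$ in Sobolev norms, which forces you to control commutators $[\pa^\al_\be,\psi^{\f12}]$, and your claim that these are "bounded by $\mathbf{D}^{\al',\be'}_\a(f)$ and reabsorbed via the hierarchy" does not hold: on the transition region $|\na_x\psi^{\f12}|\sim\<x\>e^{\f14|x|^2}$ (with higher derivatives of $\psi^{\f12}$ carrying $e^{\f{k}4|x|^2}$), whereas the dissipation only provides the weight $\<x\>e^{\f12(\a-\c-\vth-\eta(|\al|+|\be|))|x|^2}$, and $\a-\c-\vth<\f12$ whenever $\a<\f12+\c+\vth$ — in particular for $\a=\f12+\de_1$ (used in the application) and for every admissible $\a$ once $r$ is close to $1$. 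Trading the support of $\na\psi$ for time decay only yields a factor like $\<t\>^{-cq_0\de_1}$, which is too slow to fit the claimed remainder $\<t\>^{-2q_0(1-5\eta)}\mathbf{E}^{\mathbf{I}}_{\a,5}(f)$ and would break the subsequent Gr\"onwall argument. The repair is the paper's order of operations: interpolate the \emph{unweighted} norms in the derivative order first, and only afterwards split the resulting top- and zero-order terms with $\psi$ and $1-\psi$, so that no derivative ever falls on $\psi$; also note that your pointwise domination $\psi\ls\mathcal{Z}_x^{-\f12}(W^{\al,\be}_\a)^2\mathcal{P}_v^{-1}$, while true, only reproduces a same-level dissipation bound with a large constant and so cannot by itself remove the same-level $\psi$-terms.
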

\begin{proof}  Patch  together \eqref{Ga1lam},\eqref{Ga23},\eqref{Ga4} and \eqref{Ga5}, and sum up with respect to $|\al|+|\be|=N\leq 5$. Due to Lemma \ref{trans}, i.e.,
	$\sum\limits_{|\al|+|\be|=N}([T,\pa^\al_\be] f,(W_\a^{\al,\be})^2\pa^\al_\be f)_{L^2_{x,v}}=0$, and $6\eta<\de_1$(see \eqref{epsiloneta}), \[\<t\>^{-2q_0(\f12+\de_1-5\eta)}\sum\limits_{|\al|+|\be|\leq 5}\|W^{\al,\be}_{\f12+\de_1}\pa^\al_\be f\|_{L^2_{x,v}}\ls \<t\>^{-2q_0(1-5\eta)}+\sum\limits_{|\al|+|\be|\leq 5}\|W^{\al,\be}_{\f12+\de_1}\pa^\al_\be f\|^2_{L^2_{x,v}},\] we can derive that
	\begin{equation}\label{E1}
	\begin{aligned}
&\pa_t\sum_{|\al|+|\be|=N}\|W^{\al,\be}_\a\pa^\al_\be f\|^2_{L^2_{x,v}}+\f{\lam_\a}4\sum_{|\al|+|\be|= N}\mathbf{D}_\a^{\al,\be}(f)\leq C_\a\<t\>^{2q_0}(\sum_{|\tilde{\al}|+|\tilde{\be}|\le 5}\| \mathcal{P}_x^2\mathcal{Z}_{x,v}^{1/4}\pa^{\tilde{\al}}_{\tilde{\be}} f\|^2_{L^2_{x,v}})\sum_{|\al|+|\be|= N}\mathbf{D}_\a^{\al,\be}(f)\\
&+C_{\a}\Big(\<t\>^{-2q_0(1-5\eta)}+\sum_{|\al|+|\be|\leq 5}\|W_{\f12+\de_1}^{\al,\be}\pa^\al_\be f\|^2_{L^2_{x,v}}\Big) \sum_{|\al|+|\be|\le 5}\|W^{\al,\be}_\a \pa^\al_\be f\|^2_{L^2_{x,v}}+C_\a(\|\psi^{\f12}f\|^2_{L^2_{x,v}} \\
& +\sum_{|\al|+|\be|=5}\|\psi^{\f12}\pa^\al_\be f\|^2_{L^2_{x,v}})  + C_\a\sum_{|\al_2|+|\be_2|\leq N-1 }\mathbf{D}_\a^{\al_2,\be_2}(f).
	\end{aligned}
\end{equation}
We remark that the terms in the last line of \eqref{E1} is used to control the last term in \eqref{Ga5} thanks to the interpolation inequality(see Lemma \ref{interpolationineq}) and the trick used in \eqref{Gamma3}, i.e., 
\begin{equation*}
	\begin{aligned}
		\sum_{|\al|+|\be|\leq 5}\|\pa^\al_\be f\|^2_{L^2_{x,v}} 
		\ls& (\sum_{|\al|+|\be|=5}\|\psi^{\f12}\pa^\al_\be f\|^2+\|\psi^{\f12}f\|^2_{L^2_{x,v}})+\<t\>^{-2q_0(1+2\de_1-5\eta)}\sum_{|\al|+|\be|=5}\|W_{\f12+\de_1}^{\al,\be}\pa^\al_\be f\|^2_{L^2_{x,v}},
	\end{aligned}
\end{equation*}
where the term $\|W_{\f12+\de_1}^{\al,\be}\pa^\al_\be f\|^2_{L^2_{x,v}}$ can be bounded by $\|W_{\a}^{\al,\be}\pa^\al_\be f\|^2_{L^2_{x,v}}$ since $\a\geq\f12+\de_1$.  

Recalling the definition of $\mathbf{E}^{\mathbf{I}}_{\a,5}(f)$(see \eqref{EDNI}), we set $C:\Z^3_+\times\Z^3_+\rightarrow \R^+_{>0}$ verifies that (i).   $C(\al,\be)< C(\al_1,\be_1)$ if $|\al_1|+|\be_1|<|\al|+|\be|$; (ii). $C(\al,\be)C_\a<(\lam_\a/8) C(\al_1,\be_1)$. Then   \eqref{E1} will imply the desired result thanks to  
 the fact that $\sum_{|\al|+|\be|\le 5}\| \mathcal{P}_x^2\mathcal{Z}_{x,v}^{1/4}\pa^\al_\be f\|^2_{L^2_{x,v}}\ls \mathbf{E}^{\mathbf{I}}_{\f12+6\eta,5}(f)$.
\end{proof}

\subsection{Energy method(II)} We focus on the energy method for \eqref{co=1} in the standard perturbation framework. In our presentation, we first get that 
\begin{equation}\label{albeeq3}
	\begin{aligned}
&\f12\pa_t(\pa^\al_\be f,\mathcal{Z}_{x,v}^{1/2}\pa^\al_\be f)_{L^2_{x,v}}+ (\pa^\al_\be(Tf),\mathcal{Z}_{x,v}^{1/2}\pa^\al_\be f)_{L^2_{x,v}}
=\mathscr{C}\bigg[(2\pi)^{-\f32}(\psi  L(\pa^\al_\be f),\mathcal{Z}_{v}^{1/2}\pa^\al_\be f)_{L^2_{x,v}}+(Q(\psi f,\pa^\al_\be f),\\
&\mathcal{Z}_{x,v}^{1/2}\pa^\al_\be f)_{L^2_{x,v}}+(Q((1-\psi)G,\pa^\al_\be f),\mathcal{Z}_{x,v}^{1/2}\pa^\al_\be f)_{L^2_{x,v}}+\big(\sum_{|\al_1|+|\be_1|\geq 1}C_\al^{\al_1}C_\be^{\be_1}Q(\pa^{\al_1}_{\be_1}G, \pa^{\al_2}_{\be_2}f\big)+Q((1-\psi)\pa^\al_\be f,\\&\mathcal{M}), \mathcal{Z}_{x,v}^{1/2}\pa^\al_\be f)_{L^2_{x,v}}  +\sum_{|\al_2|+|\be_2|\geq 1}C_\al^{\al_1}C_\be^{\be_1}Q(\pa^{\al_1}_{\be_1}f,\pa^{\al_2}_{\be_2}\mathcal{M}), \mathcal{Z}_{x,v}^{1/2}\pa^\al_\be f)_{L^2_{x,v}}\big)\bigg]:=\mathscr{C}\sum_{i=1}^5\mathcal{I}_{i}.
	\end{aligned}
\end{equation}
\subsubsection{Estimates of $\mathcal{I}_i$} Let us give the estimates term by term.   Thanks to Lemma \ref{coer}\eqref{i-p} and the fact that $\mathcal{I}_2$ and $\mathcal{I}_3$ are very similar to $\Ga_3$ and $\Ga_4$, we first get that 
\beno  &&\mathcal{I}_1\leq -\lam \|\psi^{\f12} \mathcal{Z}_{v}^{1/4}(\mathbb{I}-\mP)\pa^\al_\be f\|^2_{L^2_{x}\mathcal{D}_v}\leq -\lam \mathbf{D}_0^{\al,\be}(f)+C_\a \<t\>^{-2q_0(1-5\eta)}\mathbf{E}^{\mathbf{I}}_{\a,5}(f),\\
&&|\mathcal{I}_2|+|\mathcal{I}_3|\ls \lr{t}^{q_0}(\mathbf{E}^{\mathbf{I}}_{\f12+6\eta,5}(f))^{\f12} \|\psi^{\f12} \mathcal{Z}_{v}^{1/4}\pa^\al_\be f\|_{L^2_{x}\mathcal{D}_v}^2+(\<t\>^{-2q_0(1-5\eta)}+\mathbf{E}^{\mathbf{I}}_{\f12+\de_1,5}(f)) \mathbf{E}^{\mathbf{I}}_{\a,5}(f).
\eeno

 For $\mathcal{I}_4$, we may copy the argument used for $\Ga_5$ and $\Ga_6$ to get that
\beno 
 |\mathcal{I}_4|&\ls& \sum_{|\al_1|+|\be_1|\ge1}\int_{\R_x^3} \psi\|\pa^{\al_1}_{\be_1}G\|_{L^2_6}\|\mathcal{Z}_{x,v}^{1/4}\pa^{\al_2}_{\be_2} f\|_{\mathcal{D}_v}\|\mathcal{Z}_{x,v}^{1/4}\pa^{\al}_{\be} f\|_{\mathcal{D}_v}dx\\
 &&+\Big(\<t\>^{-2q_0(1-5\eta)}+\sum_{|\al|+|\be|\leq 5}\|W_{\f12+\de_1}^{\al,\be}\pa^\al_\be f\|^2_{L^2_{x,v}}\Big)\sum_{|\al|+|\be|\le 5}\|W^{\al,\be}_\a \pa^\al_\be f \|^2_{L^2_{x,v}}.
\eeno
\noindent$\bullet$ Recalling that $G=\mathcal{M}+f$, we first observe that  for any $R>0$ and $0<\epsilon_1,\epsilon_2\ll1$, 
\beno  &&\int_{\R_x^3} \psi\|\pa^{\al_1}_{\be_1}\mathcal{M}\|_{L^2_6}\|\mathcal{Z}_{x,v}^{1/4}\pa^{\al_2}_{\be_2} f\|_{\mathcal{D}_v}\|\mathcal{Z}_{x,v}^{1/4}\pa^{\al}_{\be} f\|_{\mathcal{D}_v}dx\ls \|\psi^{\f12}\mathcal{P}_x^{|\al|}\mathcal{Z}_{v}^{1/4}\pa^{\al_2}_{\be_2} f\|_{L^2_x\mathcal{D}_v}\|\psi^{\f12}\mathcal{Z}_{v}^{1/4}\pa^{\al}_{\be} f\|_{L^2_x\mathcal{D}_v}\\ 
&&\ls (R^{|\al|} \|\psi^{\f12}\mathcal{Z}_{v}^{1/4}\pa^{\al_2}_{\be_2} f\|_{L^2_x\mathcal{D}_v}+C_\a e^{-\f12(\a-\f12-\f\eta 2(|\al_2|+|\be_2|))R^2}\mathbf{D}_\a^{\alpha_2,\be_2}(f)^{\f12})\|\psi^{\f12}\mathcal{Z}_{v}^{1/4}\pa^{\al}_{\be} f\|_{L^2_x\mathcal{D}_v}\\
&&
\ls (\epsilon_1+\epsilon_2) \|\psi^{\f12}\mathcal{Z}_{v}^{1/4}\pa^{\al}_{\be} f\|^2_{L^2_x\mathcal{D}_v}+\epsilon_1 \mathbf{D}_\a^{\alpha_2,\be_2}(f)+|\ln \epsilon_1|^6\epsilon_2^{-1}\|\psi^{\f12}\mathcal{Z}_{v}^{1/4}\pa^{\al_2}_{\be_2} f\|_{L^2_x\mathcal{D}_v}^2.
\eeno

\noindent$\bullet$ If $1\le |\al_1|+|\be_1|\le 3$, by \eqref{H2pa}, it holds that
\beno  &&\int_{\R_x^3} \psi\|\pa^{\al_1}_{\be_1}f\|_{L^2_6}\|\mathcal{Z}_{x,v}^{1/4}\pa^{\al_2}_{\be_2} f\|_{\mathcal{D}_v}\|\mathcal{Z}_{x,v}^{1/4}\pa^{\al}_{\be} f\|_{\mathcal{D}_v}dx\ls \|\widetilde{\psi}_+\mathcal{Z}_x^{\f12}\pa^{\al_1}_{\be_1}f\|_{H^2_xL^2_6}\|\psi^{\f12} \mathcal{Z}_{v}^{1/4}\pa^{\al_2}_{\be_2} f\|_{L^2_x\mathcal{D}_v} \\ 
&&\times\|\psi^{\f12}\mathcal{Z}_{v}^{1/4}\pa^{\al}_{\be} f\|_{L^2_x\mathcal{D}_v}\ls  \epsilon_2^{-1}\lr{t}^{2q_0}\mathbf{E}^{\mathbf{I}}_{\f12+6\eta,5}(f)\|\psi^{\f12}\mathcal{Z}_{v}^{1/4}\pa^{\al}_{\be} f\|_{L^2_x\mathcal{D}_v}^2+\epsilon_2\|\psi^{\f12} \mathcal{Z}_{v}^{1/4}\pa^{\al_2}_{\be_2} f\|_{L^2_x\mathcal{D}_v}^2. \eeno 
While if $ |\al_1|+|\be_1|\ge 4$, similar to \eqref{upperDV1}, it is not difficult to see that
\beno  &&\int_{\R_x^3} \psi\|\pa^{\al_1}_{\be_1}f\|_{L^2_6}\|\mathcal{Z}_{x,v}^{1/4}\pa^{\al_2}_{\be_2} f\|_{\mathcal{D}_v}\|\mathcal{Z}_{x,v}^{1/4}\pa^{\al}_{\be} f\|_{\mathcal{D}_v}dx\ls \|\widetilde{\psi}_+\mathcal{Z}_x^{\f12}\pa^{\al_1}_{\be_1}f\|_{L^2_xL^2_6}\|\psi^{\f12} \mathcal{Z}_{v}^{1/4}\pa^{\al_2}_{\be_2} f\|_{H^2_x\mathcal{D}_v}\\
&&\times\|\psi^{\f12}\mathcal{Z}_{v}^{1/4}\pa^{\al}_{\be} f\|_{L^2_x\mathcal{D}_v}\ls (\lr{t}^{2q_0}\mathbf{E}_{\f12+6\eta,5}(f))^{\f12}  (\mathbf{D}^{\mathbf{I}}_{\a,N-1}(f)+\<t\>^{-2q_0(1-\eta)}\mathbf{E}^{\mathbf{I}}_{\a,5}(f))^{\f12} \|\psi^{\f12}\mathcal{Z}_{v}^{1/4}\pa^{\al}_{\be} f\|_{L^2_x\mathcal{D}_v}\\&&
\ls  \epsilon_2 \mathbf{D}^{\mathbf{I}}_{\a,N-1}(f) +  \epsilon_2^{-1}\lr{t}^{2q_0}\mathbf{E}_{\f12+6\eta,5}(f) \|\psi^{\f12}\mathcal{Z}_{v}^{1/4}\pa^{\al}_{\be} f\|_{L^2_x\mathcal{D}_v}^2 +\<t\>^{-2q_0(1-\eta)}\mathbf{E}^{\mathbf{I}}_{\a,5}(f).  \eeno 

 We conclude that for any $\epsilon_1,\epsilon_2\ll1$,
\beno |\mathcal{I}_4|&\ls&  (\epsilon_2^{-1}\lr{t}^{2q_0}\mathbf{E}^{\mathbf{I}}_{\f12+6\eta,5}(f)+\epsilon_1+\epsilon_2) \|\psi^{\f12}\mathcal{Z}_{v}^{1/4}\pa^{\al}_{\be} f\|_{L^2_x\mathcal{D}_v}^2+(\epsilon_1+\epsilon_2)\mathbf{D}^{\mathbf{I}}_{\a,N-1}(f)+(|\ln\epsilon_1|^6\epsilon_2^{-1}+\epsilon_2)\\&&\times\sum_{|\al_2|+|\be_2|\le N-1}\|\psi^{\f12}\mathcal{Z}_{v}^{1/4}\pa^{\al_2}_{\be_2} f\|_{L^2_x\mathcal{D}_v}^2 +\Big(\<t\>^{-2q_0(1-5\eta)}+\mathbf{E}^{\mathbf{I}}_{\f12+\delta_1,5}(f) \Big)\mathbf{E}^{\mathbf{I}}_{\a,5}(f). \eeno 

 For $\mathcal{I}_5$, by the similar trick as $\mathcal{I}_4$, we may derive that

\beno
|\mathcal{I}_5|&\ls&    (\epsilon_1+\epsilon_2) \|\psi^{\f12}\mathcal{Z}_{v}^{1/4}\pa^{\al}_{\be} f\|_{L^2_x\mathcal{D}_v}^2+\epsilon_1\mathbf{D}^{\mathbf{I}}_{\a,N-1}(f)+|\ln\epsilon_1|^6\epsilon_2^{-1}\sum_{|\al_2|+|\be_2|\le N-1}\|\psi^{\f12}\mathcal{Z}_{v}^{1/4}\pa^{\al_2}_{\be_2} f\|_{L^2_x\mathcal{D}_v}^2 \\&& +\ \<t\>^{-2q_0(1-5\eta)} \mathbf{E}^{\mathbf{I}}_{\a,5}(f).  
\eeno

Note that by Lemma \ref{trans}, it holds that $\sum\limits_{|\al|+|\be|=N}(\pa^\al_\be (Tf),\mathcal{Z}_{x,v}^{1/2}\pa^\al_\be f)_{L^2_{x,v}}=0$. Summarizing the above estimates, we conclude that  for $1\le N\le 5$,
\ben\label{EstPerN1} 
 &&\f{d}{dt}\sum_{|\al|+|\be|=N}\|\mathcal{Z}_{x,v}^{\f14}\pa^\al_\be f\|_{L^2}^2+\lambda   \sum_{|\al|+|\be|=N}(\|\psi^{\f12} \mathcal{Z}_{v}^{1/4}(\mathbb{I}-\mP)\pa^\al_\be f\|^2_{L^2_{x}\mathcal{D}_v}+\|(1-\psi)^{\f12} \mathcal{Z}_{v}^{1/4}(\mathbb{I}-\mP)\pa^\al_\be f\|^2_{L^2_{x}L^2_{-1/2}})\nonumber\\
 &&\ls (\epsilon_2^{-1}\lr{t}^{2q_0}\mathbf{E}^{\mathbf{I}}_{\f12+6\eta,5}(f)+\epsilon_1+\epsilon_2) \|\psi^{\f12}\mathcal{Z}_{v}^{1/4}\pa^\al_\be f\|_{L^2_{x}\mathcal{D}_v}^2+ (\epsilon_1+\epsilon_2)\mathbf{D}^{\mathbf{I}}_{\a,N-1}(f)+ (|\ln\epsilon_1|^6\epsilon_2^{-1}+\epsilon_2)\nonumber\\&&\times\sum_{|\al_2|+|\be_2|\le N-1}\|\psi^{\f12}\mathcal{Z}_{v}^{1/4}\pa^{\al_2}_{\be_2} f\|_{L^2_x\mathcal{D}_v}^2 +C_{\epsilon_1,\epsilon_2}\Big(\<t\>^{-2q_0(1-5\eta)}+\mathbf{E}^{\mathbf{I}}_{\f12+\delta_1,5}(f) \Big)\mathbf{E}^{\mathbf{I}}_{\a,5}(f).
\een
 To control the third term in the right-hand side, we apply interpolation inequality in Lemma \ref{interpolationineq} to get that
 \beno \|\psi^{\f12}\mathcal{Z}_{v}^{1/4}\pa^{\al_2}_{\be_2} f\|_{L^2_x\mathcal{D}_v}^2\ls \epsilon \mathbf{D}_{\a,N}^{\mathbf{I}}(f)
 +C_\epsilon(\|\psi^{\f12}\na_v\pa^{\al_2}_{\be_2} f\|_{L^2_xL^2_{-3/2}}^2+\|\psi^{\f12}\mathbb{T}_{\SS^2}\pa^{\al_2}_{\be_2} f\|_{L^2_xL^2_{-3/2}}^2+\|\psi^{\f12}\pa^{\al_2}_{\be_2} f\|_{L^2_xL^2_{-1/2}}^2). \eeno 
Since $|\al_2|+|\be_2|\le N-1$, it holds that 
\beno &&\|\psi^{\f12}\na_v\pa^{\al_2}_{\be_2} f\|_{L^2_xL^2_{-3/2}}^2\le \|\pa^{\al_2}_{\be_2}\na_v (\psi^{\f12} f)\|_{L^2_xL^2_{-3/2}}^2+\|[\psi^{\f12},\na_v\pa^{\al_2}_{\be_2}] f\|_{L^2_xL^2_{-3/2}}^2
\\&&\ls\epsilon\|\mathcal{P}_v^{-3/2}\psi^{\f12}\na_vf\|_{H^N_{x,v}}^2+C_\epsilon\|\mathcal{P}_v^{-3/2}\psi^{\f12}f\|_{L^2_{x,v}}^2+  \<t\>^{-2q_0(1-5\eta)}\mathbf{E}_{\a,N}^{\mathbf{I}}(f)\\ &&\ls \<t\>^{-2q_0(1-5\eta)}\mathbf{E}_{\a,N}^{\mathbf{I}}(f)
+\epsilon\mathbf{D}_{\a,N}^{\mathbf{I}}(f)+ C_\epsilon\| \psi^{\f12}f\|_{L^2_xL^2_{-3/2} }^2.\eeno 
Similar argument can be applied to obtain that 
\beno \|\psi^{\f12}\mathcal{Z}_{v}^{1/4}\pa^{\al_2}_{\be_2} f\|_{L^2_x\mathcal{D}_v}^2\ls\<t\>^{-2q_0(1-5\eta)}\mathbf{E}_{\a,N}^{\mathbf{I}}(f)
+\epsilon\mathbf{D}_{\a,N}^{\mathbf{I}}(f)+ C_\epsilon\| \psi^{\f12}f\|_{L^2_xL^2_{-1/2} }^2.\eeno

Now substituting the above estimate into \eqref{EstPerN1} and noting that $f=\mathbb{P}f+(\mathbb{I}-\mathbb{P})f$, we get that
\ben\label{EstPerN3} 
 &&\f{d}{dt}\mathbf{E}^{\mathbf{II}}_{0,N}(f)+(\lambda-\epsilon^{-1}C\lr{t}^{2q_0}\mathbf{E}^{\mathbf{I}}_{\f12+6\eta,5}(f)-\epsilon)\mathbf{D}^{\mathbf{II}}_{0,N}(f)\ls (\epsilon^{-1}\lr{t}^{2q_0}\mathbf{E}^{\mathbf{I}}_{\f12+6\eta,5}(f)+\epsilon) \|\mathbb{P}f\|^2_{\mathcal{H}^{N,0}_{x}L^2_v}\nonumber\\
 &&+ \epsilon\mathbf{D}^{\mathbf{I}}_{\a,N}(f)+C_{\epsilon}  \Big(\<t\>^{-2q_0(1-5\eta)}+\mathbf{E}^{\mathbf{I}}_{\f12+\delta_1,5}(f) \Big)\mathbf{E}^{\mathbf{I}}_{\a,5}(f)+ C_{\epsilon}\| \psi^{\f12}f\|_{L^2_xL^2_{-1/2} }^2.
\een
If  $N=0$, one can easily derive that
\ben \label{EstPerN0}
&&\f{d}{dt}\mathbf{E}^{\mathbf{II}}_{0,0}(f)+(\lambda-C\lr{t}^{2q_0}\mathbf{E}_{\f12+6\eta,5}(f))  \mathbf{D}^{\mathbf{II}}_{0,0}(f)\ls  \big(\epsilon^{-1}\lr{t}^{q_0}\mathbf{E}_{\f12+6\eta,5}^{\mathbf{I}}(f)+\epsilon\big)\|\psi^{\f12} \mathbb{P}f\|_{L^2_{x,v}}^2\nonumber  \\
&&+\Big(\<t\>^{-2q_0(1-5\eta)}+\mathbf{E}^{\mathbf{I}}_{\f12+\delta_1,5}(f) \Big)\mathbf{E}^{\mathbf{I}}_{\a,5}(f).
\een

\subsubsection{Micro-macro combination(I)}  By Theorem 
\ref{macroestimate} with $\de:=\f{\a-\c}{16}$, then for $N\leq 5$,
\begin{equation*}
	\begin{aligned}
	\f d{dt} \mathcal{F}_{N,\f{\a-\c}{16}}+\|\mP f\|^2_{\mathcal{H}^{N,\f{\a-\c}{32}}_{x}L^2_v}&\ls  \|\mathcal{P}_v^{10}\mathbb{(I-P)}f\|^2_{\mathcal{H}^{N,\f {\a-\c}{8}}_{x}L^2_v} + \sum_{|\al|\leq N-1}\sum_{i=1}^{13}\int_{\R^3_x}e^{\f{\a-\c}4|x|^2}(\pa^\al_xQ(f,f),e_i)^2_{L^2_v}dx.
	\end{aligned}
\end{equation*}
Here the energy functional $\mathcal{F}_{N,\f{\a-\c}{16}}$ verifies  $|\mathcal{F}_{N,\f{\a-\c}{16}}|\leq C_{N}\|\mathcal{P}_v^{10} f\|^2_{\mathcal{H}^{N,\f{\a-\c}{8}}_{x}L^2_v}$.  We also notice that
\begin{equation*}
	\begin{aligned}
		&\Big|\sum_{|\al|\leq N-1}\sum_{i=1}^{13}\int_{\R^3_x}e^{\f{\a-\c}4|x|^2}(\pa^\al_xQ(f,f),e_i)^2_{L^2_v}dx\Big|\ls
		\mathbf{E}^{\mathbf{I}}_{\f12+\delta_1,5}(f)\mathbf{E}^{\mathbf{I}}_{\a,5}(f).
	\end{aligned}
\end{equation*}

Thanks to Lemma \ref{interpolationineq},  we can derive that
\beno
\|\mathcal{P}_v^{10}\mathbb{(I-P)}f\|^2_{\mathcal{H}^{4,\f {\a-\c}{8}}_{x}L^2_v}&=&\sum_{|\al|\leq 4}\|\mathcal{P}_v^{10}\mathcal{Z}^{\f {\a-\c}{8}}_x \mathbb{(I-P)}\pa^\al_xf\|^2_{L^2_{x,v}}\leq\epsilon \sum_{|\al|\leq 4}\|\mathcal{Z}_{x,v}^{\f {\a-\c}{4}} \mathbb{(I-P)}\pa^\al_xf\|^2_{L^2_{x,v}}\\
&&+C_\epsilon\|\mathbb{(I-P)}f\|^2_{H^4_xL^2_{-\f12}}
\leq\epsilon \mathbf{D}^{\mathbf{I}}_{\a,5}(f)+C_\epsilon \|\mathbb{(I-P)}f\|^2_{H^4_xL^2_{-\f12}}\\
&&\ls \epsilon(\mathbf{D}^{\mathbf{I}}_{\a,5}(f)+\mathbf{D}^{\mathbf{II}}_{0,5}(f))+C_\epsilon\|(\mathbb{I}-\mathbb{P})f\|_{L^2_xL^2_{-1/2}}^2.
\eeno 
By the same manner, it holds that
\beno
\|\mathcal{P}_v^{10}\mathbb{(I-P)}f\|^2_{\mathcal{H}^{5,\f {\a-\c}{8}}_{x}L^2_v}\ls \epsilon \mathbf{D}_{\a,5}^{\mathbf{I}}(f)+C_{\epsilon} \mathbf{D}_{0,5}^{\mathbf{II}}(f).
\eeno
Thus, we can derive that 
 \ben
 &&\f d{dt} \mathcal{F}_{4,\f{\a-\c}{16}}+\|\mP f\|^2_{\mathcal{H}^{4,\f {\a-\c}{32}}_{x}L^2_v}
 \ls \epsilon(\mathbf{D}^{\mathbf{I}}_{\a,5}(f)+\mathbf{D}^{\mathbf{II}}_{0,5}(f)) +C_{\epsilon}\|(\mathbb{I}-\mathbb{P})f\|_{L^2_xL^2_{-1/2}}^2+\mathbf{E}^{\mathbf{I}}_{\f12+\delta_1,5}(f)\mathbf{E}^{\mathbf{I}}_{\a,5}(f);\label{F4}\notag\\
 &&\\
 &&\f d{dt} \mathcal{F}_{5,\f{\a-\c}{16}}+\|\mP f\|^2_{\mathcal{H}^{5,\f{\a-\c}{32}}_{x}L^2_v}
 \ls \epsilon \mathbf{D}^{\mathbf{I}}_{\a,5}(f)+C_\epsilon \mathbf{D}^{\mathbf{II}}_{0,5}(f) +\mathbf{E}^{\mathbf{I}}_{\f12+\delta_1,5}(f)\mathbf{E}^{\mathbf{I}}_{\a,5}(f).\label{F5}
  \een 

Thanks to \eqref{EstPerN0}, \eqref{EstPerN3} and \eqref{F4}, there exist constants $0<B_1\ll B_2\ll B_3<\infty$ such that  
\ben\label{MMC1} &&\f{d}{dt}\big(B_1\mathbf{E}^{\mathbf{II}}_{0,4}+B_2\mathcal{F}_{4,\f{\a-\c}{16}}+B_3\mathbf{E}^{\mathbf{II}}_{0,0}\big)
+B_1(\lambda-C\epsilon_1^{-1}\lr{t}^{2q_0}\mathbf{E}^{\mathbf{I}}_{\f12+6\eta,5}(f)-\epsilon_1)\mathbf{D}^{\mathbf{II}}_{0,4}(f)+(B_2-(B_1\epsilon_1^{-1}\\
\nonumber&&+B_3\epsilon^{-1}_3)\lr{t}^{2q_0} \mathbf{E}^{\mathbf{I}}_{\f12+6\eta,5}(f)-(C_{\epsilon_1}+\epsilon_1)B_1-\epsilon_3B_3)\|\mP f\|^2_{\mathcal{H}^{4,\f{\a-\c}{32}}_{x}L^2_v}+\big(B_3(\lambda-C\lr{t}^{2q_0}\mathbf{E}^{\mathbf{I}}_{\f12+6\eta,5}(f))- (C_{\epsilon_1}B_1 \\ &&+B_2C_{\epsilon_2})) \mathbf{D}^{\mathbf{II}}_{0,0}(f)\ls   (B_1\epsilon_1+B_2\epsilon_2)(\mathbf{D}^{\mathbf{I}}_{\a,5}(f)+\mathbf{D}^{\mathbf{II}}_{0,5}(f))+C_{\epsilon_1,\epsilon_2,\epsilon_3}\Big(\<t\>^{-2q_0(1-5\eta)}+\mathbf{E}^{\mathbf{I}}_{\f12+\delta_1,5}(f) \Big)\mathbf{E}^{\mathbf{I}}_{\a,5}(f).\nonumber
\een 

 Let $M\gg2$. We choose 
 \ben\label{constconstrain1} B_1:=1,B_2>(C_{\epsilon_1}+\epsilon_1)B_1+2\, \mbox{with}\, \epsilon_1<\f\lam4, B_3>4(M+C_{\epsilon_1} B_1+C_{\epsilon_2}B_2)/\lam, \epsilon_3<1/B_3,\een  
 which imply that
 $\lam-\epsilon_1>\lam/2,~~B_2-(\epsilon_1+C_{\epsilon_1})B_1-\epsilon_3B_3>1$, and $B_3\lam-(C_{\epsilon_1}B_1+C_{\epsilon_2}B_2) > B_3\lam/2$.
Now \eqref{MMC1} can be rewritten as 
\ben\label{MMC2} &&\f{d}{dt}\big(B_1\mathbf{E}^{\mathbf{II}}_{0,4}+B_2\mathcal{F}_{4,\f{\a-\c}{16}}+B_3\mathbf{E}^{\mathbf{II}}_{0,0}\big)
+B_1\big(\lambda/2-C\epsilon_1^{-1}\lr{t}^{2q_0}\mathbf{E}^{\mathbf{I}}_{\f12+6\eta,5}(f)\big)\mathbf{D}^{\mathbf{II}}_{0,4}(f)+\big(1-(B_1\epsilon_1^{-1}+B_3\epsilon_3^{-1})\nonumber\\&&\times\mathbf{E}^{\mathbf{I}}_{\f12+6\eta,5}(f)\lr{t}^{2q_0}\big) \|\mP f\|^2_{\mathcal{H}^{4,\f{\a-\c}{32}}_{x}L^2_v}+\big(  B_3(\lambda/2-C\lr{t}^{2q_0}\mathbf{E}^{\mathbf{I}}_{\f12+6\eta,5}(f)) \mathbf{D}^{\mathbf{II}}_{0,0}(f)\ls    (B_1\epsilon_1+B_2\epsilon_2)(\mathbf{D}^{\mathbf{I}}_{\a,5}(f)\nonumber \\ &&+\mathbf{D}^{\mathbf{II}}_{0,5}(f))+C_{\epsilon_1,\epsilon_2,\epsilon_3}\Big(\<t\>^{-2q_0(1-5\eta)}+\mathbf{E}^{\mathbf{I}}_{\f12+\delta_1,5}(f) \Big) \mathbf{E}^{\mathbf{I}}_{\a,5}(f). \een

\subsubsection{Micro-macro combination(II)}  By \eqref{EstPerN3} and \eqref{F5}, by choosing $B_5\ll B_4$, we have
\ben\label{MMC3} &&\f{d}{dt}\big(B_4\mathbf{E}^{\mathbf{II}}_{0,5}+B_5\mathcal{F}_{5,\f{\a-\c}{16}}\big)+\big(
B_4(\lambda-C\epsilon_4^{-1}\lr{t}^{2q_0}\mathbf{E}^{\mathbf{I}}_{\f12+6\eta,5}(f)-\epsilon_4)-B_5C_{\epsilon_5}\big)\mathbf{D}^{\mathbf{II}}_{0,5}(f)
+(B_5\nonumber\\&&-B_4(\epsilon_4^{-1}\lr{t}^{2q_0}\mathbf{E}^{\mathbf{I}}_{\f12+6\eta,5}(f)+\epsilon_4))\|\mP f\|^2_{\mathcal{H}^{5,\f{\a-\c}{32}}_{x}L^2_v} \ls   (\epsilon_4B_4+\epsilon_5B_5)\mathbf{D}^{\mathbf{I}}_{\a,5}(f)+B_4C_{\epsilon_4} \| \psi^{\f12} f\|_{L^2_xL^2_{-1/2} }^2\notag  \\
&&+C_{\epsilon_4,\epsilon_5} \Big(\<t\>^{-2q_0(1-5\eta)}+\mathbf{E}^{\mathbf{I}}_{\f12+\delta_1,5}(f) \Big)\mathbf{E}^{\mathbf{I}}_{\a,5}(f).\een  
Now we choose
 \ben\label{constconstrain2} B_5:=2, B_4:=8(C_{\epsilon_5}+M)/\lam, \epsilon_4<\min\{\f{1}{2B_4},\lambda/4\}.\een Then we have
 $B_5-\epsilon_4B_4>1$ and $\f14B_4\lambda- \epsilon_4 B_4 - B_5C_{\epsilon_5}\ge0$, 
which imply that  
\ben\label{MMC4} &&\f{d}{dt}\big(B_4\mathbf{E}^{\mathbf{II}}_{0,5}+B_5\mathcal{F}_{5,\f{\a-\c}{16}}\big)+
 B_4\big(\lambda/2-C\epsilon_4^{-1}\lr{t}^{2q_0}\mathbf{E}^{\mathbf{I}}_{\f12+6\eta,5}(f)\big)\mathbf{D}^{\mathbf{II}}_{0,5}(f)
\\&&\qquad\qquad+(1-B_4\epsilon_4^{-1}\lr{t}^{2q_0}\mathbf{E}^{\mathbf{I}}_{\f12+6\eta,5}(f))\|\mP f\|^2_{\mathcal{H}^{5,\f{\a-\c}{32}}_{x}L^2_v} \nonumber\\&&\ls   (\epsilon_4B_4+\epsilon_5B_5)\mathbf{D}^{\mathbf{I}}_{\a,5}(f)+ B_4C_{\epsilon_4} \| \psi^{\f12} f\|_{L^2_xL^2_{-1/2} }^2 +C_{\epsilon_4,\epsilon_5} \Big(\<t\>^{-2q_0(1-5\eta)}+\mathbf{E}^{\mathbf{I}}_{\f12+\delta_1,5}(f) \Big)\mathbf{E}^{\mathbf{I}}_{\a,5}(f).\nonumber\een

\subsubsection{Micro-macro combination(III)} 
Observe that \beno
\sum\limits_{|\al|+|\be|=5}\|\psi^{\f12}\pa^\al_\be f\|^2_{L^2_{x,v}}\ls \mathbf{D}^{\mathbf{II}}_{0,5}(f)+\|\mP f\|^2_{\mathcal{H}^{5,\f{\a-\c}{32}}_{x}L^2_v}~~\mbox{and}~~\|\psi^{\f12} f\|^2_{L^2_{x,v}}\ls \mathbf{D}^{\mathbf{II}}_{0,0}(f)+\|\mP f\|^2_{\mathcal{H}^{4,\f{\a-\c}{32}}_{x}L^2_v}.
\eeno
Then from \eqref{EN}, \eqref{MMC2} and \eqref{MMC4}, we are led to
that there exist constants   $1\ll B_7\ll B_6$  such that
\beno &&\f{d}{dt}\Big[\mathbf{E}^{\mathbf{I}}_{\a,5}(f)+B_6\big(B_1\mathbf{E}^{\mathbf{II}}_{0,4}+B_2\mathcal{F}_{4,\f{\a-\c}{16}}+B_3\mathbf{E}^{\mathbf{II}}_{0,0}\big)+B_7\big(B_4\mathbf{E}^{\mathbf{II}}_{0,5}+B_5\mathcal{F}_{5,\f{\a-\c}{16}}\big)\Big]+\Big[\f{\lambda_\a}8- B_6(B_1\epsilon_1+B_2\epsilon_2) \\&&-B_7(B_4\epsilon_4+B_5\epsilon_5)-C_\a \<t\>^{2q_0}\mathbf{E}^{\mathbf{I}}_{\f12+6\eta,5}(f)\Big]\mathbf{D}^{\mathbf{I}}_{\a,5}(f)+\Big[
B_7B_4\big(\lambda/2-C\epsilon_4^{-1}\lr{t}^{2q_0}\mathbf{E}^{\mathbf{I}}_{\f12+6\eta,5}(f)\big)-C_\a-B_6\\
\\&&\times(B_1\epsilon_1+B_2\epsilon_2)\Big]\mathbf{D}^{\mathbf{II}}_{0,5}(f)
+\Big[B_7(1-B_4\epsilon_4^{-1}\lr{t}^{2q_0}\mathbf{E}^{\mathbf{I}}_{\f12+6\eta,5}(f))-C_\a\Big]\|\mP f\|^2_{\mathcal{H}^{5,\f{\a-\c}{32}}_{x}L^2_v} +\Big[B_6B_1(\lambda/2-C\epsilon_1^{-1}\eeno\beno&&\times\lr{t}^{2q_0}\mathbf{E^{\mathbf{I}}}_{\f12+6\eta,5}(f))\Big]\mathbf{D}^{\mathbf{II}}_{0,4}(f)+\Big[B_6(1-(B_1\epsilon_1^{-1}+B_3\epsilon_3^{-1})\lr{t}^{2q_0}\mathbf{E}^{\mathbf{I}}_{\f12+6\eta,5}(f) )-C_\a-B_7B_4C_{\epsilon_4} \Big]\\&&\times\|\mP f\|^2_{\mathcal{H}^{4,\f{\a-\c}{32}}_{x}L^2_v} +\Big[B_6\big(B_3(\lambda/2-C\lr{t}^{2q_0}\mathbf{E}^{\mathbf{I}}_{\f12+6\eta,5}(f)) \big)-C_\a-B_7B_4C_{\epsilon_4} \Big] \mathbf{D}^{\mathbf{II}}_{0,0}(f)
\\&&\ls C_{\epsilon_1,\epsilon_2,\epsilon_3,\epsilon_4,\epsilon_5}\Big(\<t\>^{-2q_0(1-5\eta)} +\mathbf{E}^{\mathbf{I}}_{\f12+\delta_1,5}(f) \Big)\mathbf{E}^{\mathbf{I}}_{\a,5}(f).
\eeno
 
Choose 
 \ben\label{constconstrain3} B_7:=4C_\a, B_6:=4C_\a+4B_7B_4C_{\epsilon_4},\een then by the choice of $B_3$ and $B_4$, we get that
  $B_7B_4\lambda/4\ge C_\a, B_6B_3\lambda/4\ge  C_\a+B_7B_4C_{\epsilon_4}$,
which yields that 
\ben\label{Engfinal} &&\f{d}{dt}\Big[\mathbf{E}^{\mathbf{I}}_{\a,5}(f)+B_6\big(B_1\mathbf{E}^{\mathbf{II}}_{0,4}+B_2\mathcal{F}_{4,\f{\a-\c}{16}}+B_3\mathbf{E}^{\mathbf{II}}_{0,0}\big)+B_7\big(B_4\mathbf{E}^{\mathbf{II}}_{0,5}+B_5\mathcal{F}_{5,\f{\a-\c}{16}}\big)\Big]+\Big[\f{\lambda_\a}8- B_6(B_1
\epsilon_1 +B_2\epsilon_2)\nonumber \\&&-B_7(B_4\epsilon_4+B_5\epsilon_5)-C_\a \<t\>^{2q_0}\mathbf{E}^{\mathbf{I}}_{\f12+6\eta,5}(f)\Big]\mathbf{D}^{\mathbf{I}}_{\a,5}(f)+\Big[
B_7B_4\big(\lambda/4-C\epsilon_4^{-1}\lr{t}^{2q_0}\mathbf{E}^{\mathbf{I}}_{\f12+6\eta,5}(f)\big) -B_6(B_1\epsilon_1\nonumber\\&&+B_2\epsilon_2)\Big]\mathbf{D}^{\mathbf{II}}_{0,5}(f)
+\Big[B_7(1/2-B_4\epsilon_4^{-1}\lr{t}^{2q_0}\mathbf{E}^{\mathbf{I}}_{\f12+6\eta,5}(f))\Big]\|\mP f\|^2_{\mathcal{H}^{5,\f{\a-\c}{32}}_{x}L^2_v} +\Big[B_6B_1(\lambda/2-C\epsilon_1^{-1}\lr{t}^{2q_0}\nonumber\\ &&\times\mathbf{E}^{\mathbf{I}}_{\f12+6\eta,5}(f))\Big]\mathbf{D}^{\mathbf{II}}_{0,4}(f) +\Big[B_6(\f12-(B_1\epsilon_1^{-1}+B_3\epsilon_3^{-1})\lr{t}^{2q_0}\mathbf{E}^{\mathbf{I}}_{\f12+6\eta,5}(f) ) \Big]\|\mP f\|^2_{\mathcal{H}^{4,\f{\a-\c}{32}}_{x}L^2_v}+\Big[B_6\big(B_3(\lambda/4\nonumber
\\&&-C\lr{t}^{2q_0} \mathbf{E}^{\mathbf{I}}_{\f12+6\eta,5}(f)) \big) \Big]\mathbf{D}^{\mathbf{II}}_{0,0}(f)\ls C_{\epsilon_1,\epsilon_2,\epsilon_3,\epsilon_4,\epsilon_5}\Big(\<t\>^{-2q_0(1-5\eta)} +\mathbf{E}^{\mathbf{I}}_{\f12+\delta_1,5}(f) \Big)\mathbf{E}^{\mathbf{I}}_{\a,5}(f).
\een 

Now we can determine the choice of $\epsilon_1,\epsilon_2,\epsilon_3,\epsilon_4$ and $\epsilon_5$. By \eqref{constconstrain2} and  \eqref{constconstrain3}, we first observe that $B_5$ and $B_7$ are universal constants. From this, we set that $\epsilon_5:= (64B_5B_7)^{-1}\lambda_\a$. It implies that $C_{\epsilon_5}$ is a universal constant and so is $B_4$ thanks to \eqref{constconstrain2}. By setting $\epsilon_4:=\min\{(64B_4B_7)^{-1}\lambda_\a , 1/(4B_4), \lambda/8\}$, we get that $C_{\epsilon_4}$ is a universal constant and so is $B_6$ by \eqref{constconstrain3}. Now we can set $\epsilon_1:= \min\{(64B_1B_6)^{-1}\lam B_7B_4,(64B_1B_6)^{-1}\lambda_\a, \lambda/8\}$. This implies that $B_2$ is a universal constant by \eqref{constconstrain1}. Let $\epsilon_2:= \min\{(64B_2B_6)^{-1}\lam B_7B_4,(64B_2B_6)^{-1}\lambda_\a\}$. By  \eqref{constconstrain1}, we finally fix  $B_3\gg1$ and $\epsilon_3=(2B_3)^{-1}$. These imply that 
 
\begin{lem}
Let $0<\de_1,\eta\ll1 $ and   $\a\in[\f12+\de_1,1)$. There exists computable constants  $C_i,c_i(1\le i\le 6)$ and $B_j(1\le j\le 7)$ such that if
\beno \mathbb{E}(\a;f):= \mathbf{E}^{\mathbf{I}}_{\a,5}(f)+B_6\big(B_1\mathbf{E}^{\mathbf{II}}_{0,4}+B_2\mathcal{F}_{4,\f{\a-\c}{16}}+B_3\mathbf{E}^{\mathbf{II}}_{0,0}\big)+B_7\big(B_4\mathbf{E}^{\mathbf{II}}_{0,5}+B_5\mathcal{F}_{5,\f{\a-\c}{16}}\big),\eeno 
then
\ben\label{bfE}
&&\f{d}{dt} \mathbb{E}(\a; f)+\big[c_1-C_1\lr{t}^{2q_0}\mathbf{E}^{\mathbf{I}}_{\f12+6\eta,5}(f)\big]\mathbf{D}^{\mathbf{I}}_{\a,5}(f)+\big[c_2-C_2\lr{t}^{2q_0}\mathbf{E}^{\mathbf{I}}_{\f12+6\eta,5}(f)\big]\mathbf{D}^{\mathbf{II}}_{0,5}(f)\nonumber\\&&+\big[c_3-C_3\lr{t}^{2q_0}\mathbf{E}^{\mathbf{I}}_{\f12+6\eta,5}(f)\big]\|\mP f\|^2_{\mathcal{H}^{5,\f{\a-\c}{32}}_{x}L^2_v}+\big[c_4-C_4\lr{t}^{2q_0}\mathbf{E}^{\mathbf{I}}_{\f12+6\eta,5}(f)\big]\mathbf{D}^{\mathbf{II}}_{0,4}(f)\nonumber\\&&+\big[c_5-C_5\lr{t}^{2q_0}\mathbf{E}^{\mathbf{I}}_{\f12+6\eta,5}(f)\big]\|\mP f\|^2_{\mathcal{H}^{4,\f{\a-\c}{32}}_{x}L^2_v}+\big[c_6-C_6\lr{t}^{2q_0}\mathbf{E}^{\mathbf{I}}_{\f12+6\eta,5}(f)\big]\mathbf{D}^{\mathbf{II}}_{0,0}(f)\nonumber \\&&
\ls  \Big(\<t\>^{-2q_0(1-5\eta)} +\mathbb{E}(\f12+\delta_1;f) \Big)\mathbb{E}(\a;f),
\een  
and we have $0\leq \mathbb{E}(\a;f) \sim \mathbf{E}^{\mathbf{I}}_{\a,5}(f)$.
\end{lem}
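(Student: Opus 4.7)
The plan is to assemble the lemma from three ingredients already prepared: the Level~I exponentially-weighted estimate from Lemma~\ref{fixcalbe}, the Level~II perturbative estimates \eqref{EstPerN0} and \eqref{EstPerN3} (carried out in the lighter weight $\mathcal{Z}_{x,v}^{1/4}$), and the macroscopic dissipation from Theorem~\ref{macroestimate} applied at $N=4,5$ with $\delta=(\a-\c)/16$. The Level~I estimate controls $\mathbf{E}^{\mathbf{I}}_{\a,5}$ with its full dissipation $\mathbf{D}^{\mathbf{I}}_{\a,5}$, paying only with $\|\psi^{1/2}\partial^\alpha_\beta f\|_{L^2_{x,v}}^2$ remainders. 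The Level~II estimates produce the microscopic dissipation $\mathbf{D}^{\mathbf{II}}_{0,N}$, but leave a macroscopic residue $\|\mathbb{P}f\|^2_{\mathcal{H}^{N,\cdot}_x L^2_v}$ on the bad side. The macroscopic functionals $\mathcal{F}_{N,(\a-\c)/16}$ supply precisely that missing $\|\mathbb{P}f\|^2$ dissipation, at the cost of an $\epsilon\mathbf{D}^{\mathbf{I}}_{\a,5}+C_\epsilon\mathbf{D}^{\mathbf{II}}_{0,N}$ error and a $\|Q(f,f)\|^2$ nonlinear error that will be estimated via the Sobolev embedding $H^2_x\hookrightarrow L^\infty_x$ using the $\psi$-localized bound \eqref{H2pa}.

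I would first build the lower-order block by linearly combining \eqref{EstPerN3}${}_{N=4}$, \eqref{F4} and \eqref{EstPerN0} with weights $B_1,B_2,B_3$: here $B_1$ couples Level~II at order $4$, $B_2$ couples $\mathcal{F}_{4,(\a-\c)/16}$, and $B_3$ couples Level~II at order $0$, the last being mandatory because the $\mathcal{I}_4,\mathcal{I}_5$ terms generate $\|\psi^{1/2} f\|_{L^2_xL^2_{-1/2}}^2$ factors that only the zeroth-order dissipation $\mathbf{D}^{\mathbf{II}}_{0,0}$ can absorb. Independently I would build the top-order block from \eqref{EstPerN3}${}_{N=5}$ and \eqref{F5} with weights $B_4,B_5$. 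Finally the outermost combination with weights $B_6,B_7$ glues these two blocks to Lemma~\ref{fixcalbe}; the $\|\psi^{1/2}\partial^\alpha_\beta f\|^2$ and $\|\psi^{1/2}f\|^2$ residues emitted by the Level~I estimate are dominated by $\mathbf{D}^{\mathbf{II}}_{0,5}+\|\mathbb{P}f\|^2_{\mathcal{H}^{5,\cdot}_xL^2_v}$ and $\mathbf{D}^{\mathbf{II}}_{0,0}+\|\mathbb{P}f\|^2_{\mathcal{H}^{4,\cdot}_xL^2_v}$ respectively, both of which live in the lower-order block.

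The main obstacle is ensuring that the hierarchy of constants can be chosen so that every coefficient on the left-hand side is strictly positive. Each combination introduces cross-dependencies between $\mathbf{D}^{\mathbf{I}}_{\a,5}$, $\mathbf{D}^{\mathbf{II}}_{0,N}$ and $\|\mathbb{P}f\|^2_{\mathcal{H}^{N,\cdot}}$. The resolution is to fix the auxiliary $\epsilon$-parameters and the $B_j$'s strictly from outside in: first $\epsilon_5\rightsquigarrow B_5,\epsilon_4\rightsquigarrow B_4$ (the resulting $C_{\epsilon_4},C_{\epsilon_5}$ being universal), then $\epsilon_1,\epsilon_2\rightsquigarrow B_2$ and $\epsilon_3\rightsquigarrow B_3$, finally $B_7$ (large enough to outweigh the $C_\a$ from Lemma~\ref{fixcalbe}) and $B_6$ (large enough to outweigh $C_\a+B_7B_4C_{\epsilon_4}$). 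This is exactly the cascade schematized in \eqref{constconstrain1}--\eqref{constconstrain3}. At each step I would check that the coefficient in front of the newly-activated dissipation becomes $\geq\tfrac12$ of its ``expected'' positive value, leaving the remaining $\langle t\rangle^{2q_0}\mathbf{E}^{\mathbf{I}}_{1/2+6\eta,5}(f)\cdot(\text{dissipation})$ terms as genuinely small perturbations under a bootstrap assumption.

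The remaining terms of the form $\langle t\rangle^{2q_0}\mathbf{E}^{\mathbf{I}}_{1/2+6\eta,5}(f)\cdot\mathbf{D}^{\mathbf{I}}_{\a,5}$ etc.\ cannot be absorbed by any choice of constants; they will stay on the left-hand side with the coefficient $c_i-C_i\langle t\rangle^{2q_0}\mathbf{E}^{\mathbf{I}}_{1/2+6\eta,5}(f)$ and become positive only under the a posteriori smallness provided by the main theorem. The equivalence $\mathbb{E}(\a;f)\sim\mathbf{E}^{\mathbf{I}}_{\a,5}(f)$ follows immediately from $|\mathcal{F}_{N,(\a-\c)/16}|\lesssim\|\mathcal{P}_v^{10}f\|^2_{\mathcal{H}^{N,(\a-\c)/8}_xL^2_v}$ (recorded in Theorem~\ref{macroestimate}) together with the obvious bound $\mathbf{E}^{\mathbf{II}}_{0,N}(f)\lesssim\mathbf{E}^{\mathbf{I}}_{\a,5}(f)$, valid since $\a-\eta(|\al|+|\be|)>\tfrac12$ for $|\al|+|\be|\le 5$ and $\eta$ small.
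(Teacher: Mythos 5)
Your derivation of the differential inequality \eqref{bfE} is the paper's own argument: it is essentially a restatement of \eqref{Engfinal}, obtained by combining Lemma \ref{fixcalbe} with \eqref{EstPerN0}, \eqref{EstPerN3} (at orders $4$ and $5$) and the macroscopic estimates \eqref{F4}, \eqref{F5}, dominating the $\psi$-localized remainders of the Level I estimate by $\mathbf{D}^{\mathbf{II}}_{0,5}(f)+\|\mP f\|^2_{\mathcal{H}^{5,\f{\a-\c}{32}}_{x}L^2_v}$ and $\mathbf{D}^{\mathbf{II}}_{0,0}(f)+\|\mP f\|^2_{\mathcal{H}^{4,\f{\a-\c}{32}}_{x}L^2_v}$, and fixing the constants through the cascade \eqref{constconstrain1}--\eqref{constconstrain3}; that part is in order (your ordering of the cascade is stated a bit loosely --- in the paper $B_5$ and $B_7$ are universal and fixed first, so that $\epsilon_5,\epsilon_4$ and then $B_4,B_6,B_2,B_3$ can be defined without circularity --- but you point to the same scheme).

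The genuine gap is the final claim that $0\le\mathbb{E}(\a;f)\sim\mathbf{E}^{\mathbf{I}}_{\a,5}(f)$ ``follows immediately'' from $|\mathcal{F}_{N,\f{\a-\c}{16}}|\ls\|\mathcal{P}_v^{10}f\|^2_{\mathcal{H}^{N,\f{\a-\c}{8}}_xL^2_v}$ together with $\mathbf{E}^{\mathbf{II}}_{0,N}(f)\ls\mathbf{E}^{\mathbf{I}}_{\a,5}(f)$. These two facts give only the upper bound $\mathbb{E}(\a;f)\ls\mathbf{E}^{\mathbf{I}}_{\a,5}(f)$. The lower bound and the nonnegativity are not automatic, because $\mathcal{F}_{4,\f{\a-\c}{16}}$ and $\mathcal{F}_{5,\f{\a-\c}{16}}$ have no sign and enter $\mathbb{E}$ with the large weights $B_6B_2$ and $B_7B_5$, while the positive terms available to compensate them carry $B_6B_1$ (with $B_1=1<B_2$), $B_7B_4$ and $B_6B_3$. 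Since the implicit constant in $|\mathcal{F}_{4,\f{\a-\c}{16}}|\ls\mathbf{E}^{\mathbf{II}}_{0,4}(f)$ is not small and $B_6\gs B_7B_4C_{\epsilon_4}$ by \eqref{constconstrain3}, neither $B_6B_1\mathbf{E}^{\mathbf{II}}_{0,4}(f)$ nor $B_7B_4\mathbf{E}^{\mathbf{II}}_{0,5}(f)$ can by itself absorb $B_6B_2|\mathcal{F}_{4,\f{\a-\c}{16}}|$, so with the constants fixed as in your cascade the functional could a priori fail to dominate $\mathbf{E}^{\mathbf{I}}_{\a,5}(f)$, or even to be nonnegative. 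What is needed --- and is the whole substantive content of the paper's proof of this lemma beyond citing \eqref{Engfinal} --- is: (i) $B_4\gg B_5$, so that $B_7B_5|\mathcal{F}_{5,\f{\a-\c}{16}}|\le\f12B_7B_4\mathbf{E}^{\mathbf{II}}_{0,5}(f)$; (ii) an interpolation of the order-four functional, $|\mathcal{F}_{4,\f{\a-\c}{16}}|\ls\epsilon\,\mathbf{E}^{\mathbf{II}}_{0,5}(f)+C_\epsilon\mathbf{E}^{\mathbf{II}}_{0,0}(f)$, which yields $B_6B_2|\mathcal{F}_{4,\f{\a-\c}{16}}|\le\f14B_7B_4\mathbf{E}^{\mathbf{II}}_{0,5}(f)+C_{B_2,B_4,B_6,B_7}\mathbf{E}^{\mathbf{II}}_{0,0}(f)$; and (iii) the observation that $B_3$ remains free and may be taken arbitrarily large consistently with \eqref{constconstrain1} (this only enlarges the computable constants in \eqref{bfE}), so that $B_6B_3\mathbf{E}^{\mathbf{II}}_{0,0}(f)$ swallows the last term. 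Without this absorption step your proposal establishes \eqref{bfE} but not the stated equivalence $0\leq \mathbb{E}(\a;f)\sim\mathbf{E}^{\mathbf{I}}_{\a,5}(f)$.
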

\begin{proof} Initially, it should be noted that \eqref{bfE} can be deduced from \eqref{Engfinal} based on our choice of $B_j$ for $1\leq j\leq 7$ and $\epsilon_i$ for $1\leq i\leq 5$. Additionally, we possess the flexibility to set $B_3$ to be arbitrarily large. To finalize the lemma's proof, it is imperative to demonstrate that $0\leq \mathbb{E}(\alpha; f)\sim \mathbf{E}^{\mathbf{I}}_{\alpha,5}(f)$.

Thanks to \eqref{constconstrain2} and Theorem \ref{macroestimate}, we have $B_4\gg B_5$ and $|\mathcal{F}_{5,\f{\a-\c}{16}}|\ls \mathbf{E}^{\mathbf{II}}_{0,5}$, which imply that  
\beno
B_7\big(B_4\mathbf{E}^{\mathbf{II}}_{0,5}+B_5\mathcal{F}_{5,\f{\a-\c}{16}}\big)\geq \f12 B_7B_4\mathbf{E}^{\mathbf{II}}_{0,5}.
\eeno
By the  interpolation inequality that $|\mathcal{F}_{4,\f{\a-\c}{16}}|\ls \|\mathcal{P}_v^{10}f\|^2_{\mathcal{H}_x^{4,\f{\a-\c}8}L^2_v}\leq \epsilon \mathbf{E}^{\mathbf{II}}_{0,5}+C_\epsilon \mathbf{E}^{\mathbf{II}}_{0,0}$, we get that 
\beno
|B_6B_2\mathcal{F}_{4,\f{\a-\c}{16}}|\leq  \f14 B_7B_4\mathbf{E}^{\mathbf{II}}_{0,5}+C_{B_2,B_4,B_6,B_7}\mathbf{E}^{\mathbf{II}}_{0,0}.
\eeno
Now choose   \( B_3  \) to be sufficinelty large, then we derive that
\beno
\mathbb{E}(\a;f)\geq \mathbf{E}^{\mathbf{I}}_{\a,5}(f)+\f14 B_7B_4 \mathbf{E}^{\mathbf{II}}_{0,5}+(B_6B_3-C_{B_2,B_4,B_6,B_7})\mathbf{E}^{\mathbf{II}}_{0,0}\geq \mathbf{E}^{\mathbf{I}}_{\a,5}(f).
\eeno

From the upper bounds of $\mathbf{E}^{\mathbf{II}}_{0,N}$ and $\mathcal{F}_{N,\f{\a-\c}{16}}$, one may easily check that \( \mathbb{E}(\a;f)\ls \mathbf{E}^{\mathbf{I}}_{\a,5}(f) \). This ends the proof of the lemma.
\end{proof}

\subsection{Proof of Theorem \ref{ThmGstofNSlandau}}\label{(1+t)-1} Now, we are ready to give the proof of Theorem \ref{ThmGstofNSlandau}. 
\begin{proof}[Proof of Theorem \ref{ThmGstofNSlandau}] The whole proof will be separated into several steps.
We   postpone the proof of the existence, uniqueness and the non-negativity of the solution to \eqref{NSlandauCauchy} and focus on the {\it a priori} estimates for the global stability of \eqref{pertubNSlandaucauchy}. 
\smallskip

\noindent\underline{Step 1: Proof of global stability.} To prove the desired result, we shall use the continuity argument. Let $\delta\in (0,\f14-\f{\c}2)$ and $\eta_0\in(0,\f{1-4\delta}{4\c}-\f12)$.  We set $q_0:=\f{1-4\de}{4\c}-\eta_0$ and  
\begin{equation}\label{epsiloneta}
 	\delta_1,\vth \in(0,\f{\eta_0}{200}), \quad \eta\in \Big(0, \min\big\{\f{\de_1}6,\f15-\f1{10q_0}\big\}\Big).
 \end{equation}
Here $\vth,\eta$ are defined in \eqref{deW}. We assume that 
\ben\label{DefT*} T^*:=\sup\Big\{T>0\Big|\sup_{t\in[0,T]}(\mathbb{E}(1-2\delta;f)+\lr{t}^{2q_0}\mathbb{E}(1/2+\de_1;f))\le \varepsilon^{\f12}\Big\}.\een

Set \beno \mathbb{E}_{\mathsf{T}}(f):=\mathbb{E}(1-2\delta;f)+\lr{t}^{2q_0}\mathbb{E}(\f12+\de_1;f). \eeno
Then applying \eqref{bfE} with $\a=1-2\de$ and $\a=\f12+\de_1$, by the definition of $T^*$, we drive that for any $t\in[0,T^*]$,  
we get that
\beno
&&\f d{dt}\mathbb{E}_{\mathsf{T}}(f)+\f{c_1}2\big[\mathbf{D}^{\mathbf{I}}_{1-2\de,5}(f)+\lr{t}^{2q_0}\mathbf{D}^{\mathbf{I}}_{\f12+\de_1,5}(f)\big]\ls  \<t\>^{-2q_0(1-5\eta)}(\mathbb{E}_{\mathsf{T}}(f)+\mathbb{E}^2_{\mathsf{T}}(f))+\lr{t}^{2q_0-1}\mathbb{E}(\f12+\de_1;f).
\eeno
 
Due to the the definition   \eqref{Dalbe} and \eqref{EDNI}, we have  
\begin{equation*}
	\mathbf{D}^{\mathbf{I}}_{\f12+\de_1,5}(f)\geq  \sum_{|\al|+|\be|\leq 5}\|\mathcal{Z}_{x,v}^{\f12(\f12+\de_1-\c-\vth-5\eta)}\pa^\al_\be f\|^2_{L^2_{x,v}},\quad \mathbb{E}(1-2\de;f)\geq  \sum_{|\al|+|\be|\leq 5}\|\mathcal{Z}_{x,v}^{\f12(1-2\de-5\eta)}\pa^\al_\be f\|^2_{L^2_{x,v}},
\end{equation*}
from which together with Lemma \ref{interpolationineq} imply that
\beno
\mathbb{E}(\f12+\de_1;f)\leq (\mathbf{D}^{\mathbf{I}}_{\f12+\de_1,5}(f))^{\th}(\mathbb{E}(1-2\de;f))^{1-\th},\quad\mbox{with}\quad\th=\f{\f12-2\de-\de_1-5\eta}{\f12-2\de-\de_1+\c+\vth}.
\eeno
By Young inequality, we have  
\beno
 \lr{t}^{2q_0-1}\mathbb{E}(\f12+\de_1;f) \leq\f{c_1}4\<t\>^{2q_0}\mathbf{D}^{\mathbf{I}}_{\f12+\de_1,5}(f)+C\<t\>^{2q_0-\f1{1-\th}}\mathbb{E}(1-2\de;f).
\eeno
This yields that
\beno
&&\f d{dt}\mathbb{E}_{\mathsf{T}}(f) \ls  (\<t\>^{-2q_0(1-5\eta)}+\<t\>^{2q_0-\f1{1-\th}})(\mathbb{E}_{\mathsf{T}}(f)+\mathbb{E}^2_{\mathsf{T}}(f)).
\eeno 
Thanks to the choice of \eqref{epsiloneta}, we have
\begin{equation}\label{q0}
	-2q_0(1-5\eta)<-1,\quad 2q_0-\f1{1-\th}=2q_0-\f{\f12-2\de-\de_1+\c+\vth}{\c+\vth+5\eta}<-1.
\end{equation}
The Gr\"{o}nwall inequality implies that  for any $t\in[0,T^*]$, $\mathbb{E}_{\mathsf{T}}(f(t))\ls \mathbb{E}_{\mathsf{T}}(f_0)\sim \varepsilon$. This in particular means that $T^*=+\infty$ and moreover,
\ben \sup_{t>0}\big(\mathbb{E}(1-2\delta;f(t))+\lr{t}^{2q_0}\mathbb{E}(\f12+\de_1;f(t))\big)\ls \mathbb{E}_{\mathsf{T}}(f_0)\ls \mathbb{E}(1-2\delta;f_0).\label{anstzenergy}  \een
This ends the proof of global stability.
\smallskip

\noindent\underline{Step 2: Construction of the solution.}
 Let us  recall that the function space $\mathcal{H}^N_{x,v}$ defined as follows:
\beno
\mathcal{H}^N_{x,v}:=\Big\{f\in H^N_{x,v} |\|f\|^2_{\mathcal{H}^N_{x,v}}:=\sum_{|\al|+|\be|\leq N}\|\mathcal{Z}_{x,v}^{\f12( 1-2\de -\eta(|\al|+|\be|))}\pa^\al_\be f\|^2_{L^2_{x,v}}<+\infty\Big\}.
\eeno
We first have 

\begin{prop}\label{lGPWApplinear} Given $0\le F=F(t,x,v)\in L^\infty([0,T];\mathcal{H}^5_{x,v}),0\leq G_0(x,v)\in \mathcal{H}^5_{x,v}$ and  $\sfT:=\mathscr{C}_l(t)(\mfS v \cdot \nabla_x - \mfS x \cdot \nabla_v + \mfR x \cdot \nabla_x - \mfR v \cdot \nabla_v)$. The linear equation 
\ben\label{AppLinearNSlandau}  \begin{aligned} \partial_t G  + \sfT G=\mathscr{C}Q(F,G),\quad G|_{t=0}=G_0,\end{aligned} \een
admits a unique and non-negative solution $G=G(t,x,v)\in L^\infty([0,T];\mathcal{H}^5_{x,v})$. Moreover, it holds that
\ben\label{energyestimate} &&\|G\|_{L^\infty([0,T];\mathcal{H}^5_{x,v})} \ls C(T;\|F\|_{L^\infty([0,T];\mathcal{H}^5_{x,v})})\|G_0\|_{\mathcal{H}^5_{x,v}}. \een 
\end{prop}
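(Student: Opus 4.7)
The plan is to construct the solution via a parabolic regularization scheme and derive a weighted energy estimate that is uniform in the regularization parameter; uniqueness will follow from the same estimate applied to the difference of two solutions, while non-negativity will come from a maximum-principle argument on the regularized equation. For $\epsilon>0$ I would consider
\[ \partial_t G^\epsilon + \sfT G^\epsilon - \epsilon\Delta_v G^\epsilon - \epsilon\Delta_x G^\epsilon = \mathscr{C}Q(F, G^\epsilon),\quad G^\epsilon|_{t=0}=G_0, \]
which, after rewriting $Q(F,G^\epsilon)=\sum_{i,j}(a_{ij}*F)\partial_{ij}G^\epsilon+8\pi F G^\epsilon$, becomes a linear parabolic equation whose coefficients $a_{ij}*F$ are bounded and smooth in view of Sobolev embedding applied to $F\in \mathcal{H}^5_{x,v}$. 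Existence and uniqueness of a smooth solution $G^\epsilon$ in a sufficiently regular weighted class can be obtained by a Galerkin approximation or a contraction-mapping argument in $L^\infty([0,T];\mathcal{H}^5_{x,v})$.

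The core of the argument is the $\epsilon$-independent version of \eqref{energyestimate}. Applying $\partial^\alpha_\beta$ with $|\alpha|+|\beta|\leq 5$ to the regularized equation and testing against $(W^{\alpha,\beta}_{1-2\delta})^2 \partial^\alpha_\beta G^\epsilon$, where $W^{\alpha,\beta}_{1-2\delta}\sim \mathcal{Z}_{x,v}^{\frac12(1-2\delta-\eta(|\alpha|+|\beta|))}$, I would exploit three facts. First, by Lemma \ref{trans} the summation over derivatives of a given order cancels the commutator between $\partial^\alpha_\beta$ and $\sfT$, and the remaining contribution of the transport operator reduces to a polynomial weight term that is absorbed by Lemma \ref{mixturegainweight}. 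Second, the principal part of $Q(F,\cdot)$ produces the non-positive quantity $-\sum_{i,j}\int(a_{ij}*F)\partial_i\partial^\alpha_\beta G^\epsilon\,\partial_j\partial^\alpha_\beta G^\epsilon\,dxdv\leq 0$, while the zeroth-order piece $8\pi F\partial^\alpha_\beta G^\epsilon$, the commutators between $\partial^\alpha_\beta$ and the collision operator, and the weight-derivative terms are all dominated by $C(\|F\|_{\mathcal{H}^5_{x,v}})\|G^\epsilon\|^2_{\mathcal{H}^5_{x,v}}$ via Lemma \ref{Fff}, Lemma \ref{ghf}, Lemma \ref{expQ}, and Lemma \ref{IBP}. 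Third, the $\epsilon(\Delta_v+\Delta_x)$ contribution yields a non-negative dissipation that may simply be discarded. Summing over $|\alpha|+|\beta|\leq 5$ with the hierarchical coefficients $C(\alpha,\beta)$ of Lemma \ref{fixcalbe} and invoking Gr\"onwall's inequality on $[0,T]$ produces \eqref{energyestimate} uniformly in $\epsilon$.

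Non-negativity of $G^\epsilon$ will be established by testing the regularized equation against $-(G^\epsilon)_-:=-\min(G^\epsilon,0)$ in an appropriate weighted $L^2$ space: the lossless transport contribution vanishes by Lemma \ref{trans}, the diffusion-type term $-\sum_{i,j}(a_{ij}*F+\epsilon\delta_{ij})\partial_i(G^\epsilon)_-\partial_j(G^\epsilon)_-\leq 0$, and the zeroth-order piece satisfies $8\pi F G^\epsilon (G^\epsilon)_-=-8\pi F((G^\epsilon)_-)^2\leq 0$, so that $\tfrac{d}{dt}\|(G^\epsilon)_-\|^2_{L^2_{\mathrm{weight}}}\leq C(\|F\|_{\mathcal{H}^5_{x,v}})\|(G^\epsilon)_-\|^2_{L^2_{\mathrm{weight}}}$; since $(G_0)_-=0$, Gr\"onwall forces $(G^\epsilon)_-\equiv 0$. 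Passing to a weak-$\star$ limit $\epsilon\to 0$ in $L^\infty([0,T];\mathcal{H}^5_{x,v})$, with strong convergence in a lower-order weighted space obtained from an Aubin--Lions argument, yields a non-negative solution $G$ of \eqref{AppLinearNSlandau} satisfying \eqref{energyestimate}. Uniqueness follows by applying the same energy estimate at order zero to the difference of two solutions.

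The main obstacle will be the uniform-in-$\epsilon$ control of the commutators $[\partial^\alpha_\beta, Q(F,\cdot)]$ in the exponentially weighted space $\mathcal{H}^5_{x,v}$: derivatives falling on $F$ produce coefficients $a_{ij}*\partial^{\alpha'}_{\beta'}F$ of reduced regularity, and the exponential weight $\mathcal{Z}_{x,v}^{(1-2\delta)/2}$ growing at infinity requires careful bookkeeping of weight exchanges via Lemma \ref{mixturegainweight}. However, because $|\alpha|+|\beta|\leq 5$ matches the available Sobolev regularity of $F$, the collision estimates collected in Section \ref{Section Landau} are exactly what is needed to close every commutator term without any additional smoothing mechanism.
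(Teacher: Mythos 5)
Your proposal is correct and follows essentially the same scheme as the paper: regularize, prove an $\epsilon$-uniform weighted energy estimate in $\mathcal{H}^5_{x,v}$ using Lemmas \ref{Fff}, \ref{ghf}, \ref{expQ}, \ref{IBP} together with Lemma \ref{trans} and Gr\"onwall, obtain non-negativity by testing against the negative part, pass to the limit, and get uniqueness from the same estimate. The only substantive difference is the regularizer: you add $-\epsilon(\Delta_x+\Delta_v)$ and keep $Q(F,\cdot)$ implicit as part of a variable-coefficient parabolic operator, whereas the paper uses $\epsilon\mathscr{P}$ with $\mathscr{P}=a_1(-\triangle_v)^2+a_2(-\triangle_v)\mathcal{P}_v^2+a_3\mathcal{P}_v^4$ precisely so that a Picard scheme with $Q(F,G^{n-1})$ as a source closes in $\mathcal{H}^5_{x,v}$ (the fourth-order operator and the $\mathcal{P}_v^4$ weight regain the two $v$-derivatives and the polynomial moments that $Q(F,\cdot)$ costs). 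Your Galerkin route is viable, but note that your fallback of a contraction mapping in $L^\infty([0,T];\mathcal{H}^5_{x,v})$, with $Q(F,G)$ treated as a Duhamel perturbation, does not close as stated: $Q(F,\cdot)$ loses two velocity derivatives while the smoothing of $e^{\epsilon t\Delta}$ at that order costs a non-integrable factor $(\epsilon t)^{-1}$; so either keep $(a_{ij}*F)\partial_{ij}+\epsilon\Delta$ in the principal part (and then justify solvability in the exponentially weighted space with the linearly growing drift of $\sfT$, e.g.\ by Galerkin after conjugation with the weight), or adopt the paper's stronger regularization. Two further small points: your appeal to Lemma \ref{mixturegainweight} for the transport-weight term is misplaced here, since that lemma pairs the indefinite quadratic factor with the coercivity of $e^{-\frac12|x|^2}L$, which is absent for a general background $F\ge 0$; but this is harmless, because with the pure radial Gaussian weight of $\mathcal{H}^5_{x,v}$ the weight is $\sfT$-invariant and the transport contribution vanishes identically (and even with your cross-term weight $W^{\al,\be}_{1-2\delta}$ the extra quadratic factor is absorbed by the exponential gap $\mathcal{Z}_{x,v}^{-\c-\vth}$ into the Gr\"onwall term). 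Likewise the hierarchical coefficients of Lemma \ref{fixcalbe} are unnecessary for this linear estimate, though invoking them does no harm.
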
\begin{proof} We first remark that the linear equation can be solved by the following approximation equation:
\ben\label{EpAppAppLinearNSlandau}  \begin{aligned} &\partial_t G^\epsilon   + \sfT G^\epsilon+\epsilon \mathscr{P} G^\epsilon=\mathscr{C}Q(F,G^\epsilon),\quad G^\eps|_{t=0}=G_0,\end{aligned}  \een where $\mathscr{P}:=a_1(-\triangle_v)^2+a_2(-\triangle_v)\mathcal{P}_v^2+a_3\mathcal{P}_v^{4}$ with $0<a_1\ll a_2\ll a_3$ satisfies that
\ben\label{propertyP} (\mathscr{P}G, G\mathcal{Z}_{x,v}^\a)_{L^2_{x,v}}\sim \|(-\triangle_v G) \mathcal{Z}_{x,v}^{\a/2}\|_{L^2_{x,v}}^2+\|(\na_vG) \mathcal{Z}_{x,v}^{\a/2}\mathcal{P}_v^1\|_{L^2_{x,v}}^2+\|G \mathcal{Z}_{x,v}^{\a/2}\mathcal{P}_v^2\|_{L^2_{x,v}}^2, \een  for $\a\in[0,1]$. 
One may solve it by using the following Picard iteration scheme:
\ben\label{AppAppLinearNSlandau} \left\{ \begin{aligned} &\partial_t G^n     + \sfT G^n+\epsilon \mathscr{P} G^n=\mathscr{C}Q(F,G^{n-1}),\\&G^n|_{t=0}=G_0,\quad G^{0}:=G_0.\end{aligned} \right. \een
We omit the details here. To get the  uniform bound for $G^\epsilon$, 
 thanks to \eqref{propertyP}, Lemma \ref{Fff}, Lemma \ref{ghf}, Lemma \ref{expQ} and Lemma \ref{IBP}, we easily derive that 
\beno &&\f{d}{dt}\|G^\epsilon\|_{\mathcal{H}^5_{x,v}}^2+\sum_{|\al|+|\be|\leq 5}\iint (F*a):(\na_v (\mathcal{Z}_{x,v}^{\f12( 1-2\de -\eta(|\al|+|\be|)}\pa^\al_\be G^\eps))\otimes \na_v (\mathcal{Z}_{x,v}^{\f12( 1-2\de -\eta(|\al|+|\be|)}\pa^\al_\be G^\eps))) dxdv\\&&+C_{a_1,a_2,a_3}\epsilon(\|(-\triangle)_vG^\epsilon\|_{\mathcal{H}^5_{x,v}}^2+\|(\na_vG^\epsilon)\mathcal{P}_v^1\|_{\mathcal{H}^5_{x,v}}^2+\| G^\epsilon \mathcal{P}_v^2\|_{\mathcal{H}^5_{x,v}}^2)  \ls (1+\|F\|_{\mathcal{H}^5_{x,v}}) \|G^\epsilon\|_{\mathcal{H}^5_{x,v}}^2.\eeno
From this together with Gr\"{o}nwall inequality, we conclude the following energy estimates: 
\beno \|G^\epsilon\|_{L^\infty([0,T];\mathcal{H}^5_{x,v})} \ls C(T;\|F\|_{L^\infty([0,T];\mathcal{H}^5_{x,v})})\|G_0\|_{\mathcal{H}^5_{x,v}}.\eeno 

To prove the non-negativity, we observe that if $G^\epsilon_\pm=\pm\max\{\pm G^\epsilon,0\}$, then   the basic energy method implies that
\beno \f12\f{d}{dt}\|G^\epsilon_-\|_{L^2_{x,v}}^2=(-\sfT G^\epsilon-\epsilon\mathscr{P}G^\epsilon+\mathscr{C}Q(F,G^\epsilon),G^\epsilon_-)_{L^2_{x,v}}\leq(\mathscr{C}Q(F,G^\epsilon_-),G^\epsilon_-)_{L^2_{x,v}}\ls \|F\|_{\mathcal{H}^5_{x,v}} \|G^\epsilon_-\|_{L^2_{x,v}}^2.\eeno
Note that $G^\epsilon_-|_{t=0}=0$, therefore by Gr\"{o}nwall inequality, $G^\epsilon(t)$ is non-negative for any $t\in[0,T]$. 

Finally, we can conclude our results by the vanishing limit $\epsilon\rightarrow0$ and the uniqueness is a natural byproduct of the energy estimates.
\end{proof}
 
 To solve the equation \eqref{NSlandauCauchy}, we will consider the following approximation equation: 
\ben\label{AppNSlandau} 
\begin{aligned}
\partial_t G^n + \mathsf{T}G^n=\mathscr{C}Q(G^{n-1},G^n),\quad
G^n|_{t=0}=\mathcal{M}+f_0(x,v)\ge0,\quad G^0:=\mathcal{M}.
\end{aligned}
\een 
Applying Proposition \ref{lGPWApplinear} to \eqref{AppNSlandau}, we may derive that there exists a common lifespan $T=T(G_0)<\infty$ such that for any $n\in\N$,  $G^n\ge0$ and
\beno   &&\|G^n\|_{L^\infty([0,T];\mathcal{H}^5_{x,v})} \ls C(T;\|G_0\|_{\mathcal{H}^5_{x,v}}). \eeno 
Let $g^n=G^{n+1}-G^{n}$. Then we have 
\beno  \begin{aligned} \partial_t g^n  +\mathsf{T}g^n=\mathscr{C}Q(G^{n},g^n)+\mathscr{C}Q(g^{n-1},G^n),\quad g^n|_{t=0}=0.\end{aligned} \eeno
Again by Lemma \ref{Fff}, Lemma \ref{ghf}, Lemma \ref{expQ} and Lemma \ref{IBP}, we get that 
\ben\label{CSinH3}
\f{d}{dt}\|g^n\|_{\mathcal{H}^3_{x,v}}^2\ls \|G^n\|_{\mathcal{H}^5_{x,v}}(\|g^n\|_{\mathcal{H}^3_{x,v}}^2+\|g^{n-1}\|_{\mathcal{H}^3_{x,v}}\|g^n\|_{\mathcal{H}^3_{x,v}}),
 \een 
 which is enough to conclude that $\{g^n\}_{n\ge1}$ is a Cauchy sequence that converges to zero in $L^\infty([0,T];\mathcal{H}^3_{x,v})$. This implies the existence of non-negative solution in $L^\infty([0,T];\mathcal{H}^5_{x,v})$ for \eqref{NSlandauCauchy}. The uniqueness can be proved similarly thanks to \eqref{CSinH3}. This completes the proof of the local well-posedness of \eqref{NSlandauCauchy}. 

Next, let $\om_1=(1,0,0),\om_2=(0,1,0)$ and $\om_3=(0,0,1)$ and define difference operators as follows:
\beno
D^{\tilde{\al}}_{h,x}f(x)=\f{f(x+h\tilde{\al})-f(x)} h,\quad D^{\tilde{\be}}_{h,v}f(v)=\f{f(v+h\tilde{\be})-f(v)} h,~~\tilde{\al},\tilde{\be}=\om_i,i=1,2,3.
\eeno
Denote $D^{\tilde{\al}}_{\tilde{\be},h}:=D^{\tilde{\al}}_{x,h}D^{\tilde{\be}}_{v,h}$ with $|\tilde{\al}|+|\tilde{\be}|=1$. It is not difficult to check that
\ben\label{comT}
&& \big|\sum_{\substack{|\al|+|\be|=4\\|\tilde{\al}|+|\tilde{\be}|=1}}([\pa^\al_\be D^{\tilde{\al}}_{\tilde{\be},h},\mathsf{T}]G,\mathcal{Z}_{x,v}^{( 1-2\de -5\eta)}D^{\tilde{\al}}_{\tilde{\be},h}\pa^\al_\be G)_{L^2_{x,v}}\big|\ls \|G\|^2_{\mathcal{H}^5_{x,v}},\\
\mbox{and}&& D^{\tilde{\al}}_{\tilde{\be},h}Q(G,G)=Q(G,D^{\tilde{\al}}_{\tilde{\be},h}G)+Q(D^{\tilde{\al}}_{\tilde{\be},h}G,G(\cdot+\tilde{\al}h,\cdot+\tilde{\be}h)).\label{chainrule}
\een
Thanks to Lemma \ref{Fff}, Lemma \ref{ghf}, Lemma \ref{expQ}, Lemma \ref{IBP}, and \eqref{chainrule}, we easily derive that
\ben\label{Gepsilon} \notag&&\f{d}{dt}(\|G\|_{\mathcal{H}^4_{x,v}}^2+\sum_{\substack{|\al|+|\be|=4\\|\tilde{\al}|+|\tilde{\be}|=1}}\|\mathcal{Z}_{x,v}^{\f12( 1-2\de -5\eta)}D^{\tilde{\al}}_{\tilde{\be},h}\pa^\al_\be G\|_{L^2_{x,v}}^2)+\sum_{\substack{|\al|+|\be|=4\\|\tilde{\al}|+|\tilde{\be}|=1}}([\pa^\al_\be D^{\tilde{\al}}_{\tilde{\be},h},\mathsf{T}]G,\mathcal{Z}_{x,v}^{( 1-2\de -5\eta)}D^{\tilde{\al}}_{\tilde{\be},h}\pa^\al_\be G)_{L^2_{x,v}}\\
\notag&&+\sum_{|\al|+|\be|\leq 4}\iint (G*a):(\na_v (\mathcal{Z}_{x,v}^{\f12( 1-2\de -\eta(|\al|+|\be|))}\pa^\al_\be G))\otimes \na_v (\mathcal{Z}_{x,v}^{\f12( 1-2\de -\eta(|\al|+|\be|))}\pa^\al_\be G))) dxdv\\
&&+\sum_{\substack{|\al|+|\be|=4\\|\tilde{\al}|+|\tilde{\be}|=1}}\iint (G*a):(\na_v (\mathcal{Z}_{x,v}^{\f12( 1-2\de -5\eta)}D^{\tilde{\al}}_{\tilde{\be},h}\pa^\al_\be G))\otimes \na_v (\mathcal{Z}_{x,v}^{\f12( 1-2\de -5\eta)} D^{\tilde{\al}}_{\tilde{\be},h}\pa^\al_\be G))) dxdv \\
\notag&\ls&(1+\|G\|_{\mathcal{H}^5_{x,v}})\|G\|^2_{\mathcal{H}^5_{x,v}}.\een
By Fatou Lemma, \eqref{comT} and the fact that $ G\in L^\infty([0,T];\mathcal{H}^5_{x,v})$, taking $h\rightarrow 0$ implies that 
\beno  && \sum_{|\al|+|\be|\leq 5}\int_0^T\iint (G*a):(\na_v (\mathcal{Z}_{x,v}^{\f12( 1-2\de -\eta(|\al|+|\be|))}\pa^\al_\be G))\otimes (\na_v (\mathcal{Z}_{x,v}^{\f12( 1-2\de -\eta(|\al|+|\be|))}\pa^\al_\be G)) dt\\&&\ls C(T,\|G_0\|_{\mathcal{H}^5_{x,v}}). 
\eeno
This in particular implies that all the {\it a priori} estimates employed in Step 1 are valid. We end the proof.
\end{proof}

\subsection{Proof of Theorem \ref{Thmnonstability}} Now we can give a proof to Theorem \ref{Thmnonstability}  thanks to Theorem \ref{ThmGstofNSlandau}.
\begin{proof}[Proof of Theorem \ref{Thmnonstability}] We begin with the proof of asymptotic stability. Recalling \eqref{RelationFG}, we have  
\beno
G(t,x,v)=m^{-1}\left(\det \mathsf{S}\right)^{-1} F\left(t,\mathbf{x},\mathbf{v}\right),\quad \mathcal{M}(x,v)=m^{-1}\left(\det \mathsf{S}\right)^{-1}M(t,\mathbf{x},\mathbf{v}),
\eeno
with $\mathbf{x}=\sqrt{C(t)}\mathsf{S}^{-1}\mathsf{U}x+y\cos t+z\sin t$ and $\mathbf{v}=\frac{\mathsf{U}v-B(t)\mathsf{S}^{-1}\mathsf{U}x+\mathsf{R}\mathsf{S}^{-1}\mathsf{U}x}{\sqrt{C(t)}}-y\sin t+z\cos t$. One may easily check that the Jacobian between $(x,v)$ and $(\mathbf{x},\mathbf{v})$ is invertible and bounded uniformly in $t$. Indeed, we have 
\beno 
m\left(\det \mathsf{S}\right)(\f\pa{\pa x},\f\pa{\pa v})=(\f\pa{\pa \mathbf{x}},\f\pa{\pa \mathbf{v}})
\begin{pmatrix}
	\sqrt{C(t)}\mathsf{S}^{-1}\mathsf{U} & 0 \\
	\f{-B(t)\mathsf{S}^{-1}\mathsf{U}+\mathsf{R}\mathsf{S}^{-1}\mathsf{U}}{\sqrt{C(t)}} & \f{\mathsf{U}}{\sqrt{C(t)}}
\end{pmatrix}.
\eeno 
Thus it holds that
\beno
\sum_{|\tilde{\al}|+|\tilde{\be}|\leq 5}\pa^{\tilde{\al}}_x\pa^{\tilde{\be}}_v=\sum_{|\al|+|\be|\leq 5}\mathsf{C}_{\al,\be}^{\tilde{\al},\tilde{\be}}\pa^\al_{\mathbf{x}}\pa^\be_{\mathbf{v}},\quad\sum_{|\al|+|\be|\leq 5}\pa^\al_{\mathbf{x}}\pa^\be_{\mathbf{v}} =\sum_{|\tilde{\al}|+|\tilde{\be}|\leq 5}C^{\al,\be}_{\tilde{\al},\tilde{\be}}\pa^{\tilde{\al}}_x\pa^{\tilde{\be}}_v,
\eeno
where the constants $\mathsf{C}_{\al,\be}^{\tilde{\al},\tilde{\be}}$ and $C^{\al,\be}_{\tilde{\al},\tilde{\be}}$ are all bounded. It leads to 
\beno
\sum_{|\tilde{\al}|+|\tilde{\be}|\leq 5}\|\mathcal{M}^{\a}\pa^{\tilde{\al}}_x\pa^{\tilde{\be}}_v f(t)\|_{L^2_{x,v}}= \sum_{|\tilde{\al}|+|\tilde{\be}|\leq 5}\|\mathcal{M}^{\a}\pa^{\tilde{\al}}_x\pa^{\tilde{\be}}_v( G(t)-\mathcal{M})\|_{L^2_{x,v}}\sim	\sum_{|\al|+|\be|\leq 5}\|M^{\a}(t)\pa^\al_{\mathbf{x}}\pa^\be_{\mathbf{v}}( F(t)-M(t))\|_{L^2_{\mathbf{x},\mathbf{v}}}
\eeno
for any $\a\in \R$ and $t\in[0,\infty)$. Hence, the asymptotic stability can be directly deduced from Theorem \ref{ThmGstofNSlandau}.

\medskip

Next, we prove the Lyapunov stability. Let $M\in \mathscr{M}$ have the form \eqref{explicitformM0} with parameters $(m,y,z,a,b,c,\mathsf{R})$. Suppose that $F_0$ satisfies $F_0\geq0, \Psi_1(F_0)>0$ and 
\beno
\sum_{|\al|+|\be|\leq 5}\|M^{-1+2\de}(0)\pa^\al_\be( F_0-M(0))\|_{L^2_{x,v}}\leq \vep,
\eeno
for some $\de\in (\f16-\f\c 3)$ and $\vep\ll1$. By Theorem \ref{biinjectionofMF0}, there exists an unique $M_1\in \mathscr{M}$ such that $\Psi(M_1)=\Psi(F_0)$. Assume that $M_1$ has the form \eqref{explicitformM0} with parameters $(m_1,y_1,z_1,a_1,b_1,c_1,\mathsf{R}_1)$. Then
\begin{equation}\label{MM1}
	\begin{aligned}
\|\Psi(M_1)-\Psi(M)\|_2&=\|\Psi(F_0)-\Psi(M(0))\|_2\ls \|(1+|x|^2+|v|^2)(F_0-M(0))\|_{L^1_{x,v}}\\
&\ls\sum_{|\al|+|\be|\leq 5}\|M^{-1+2\de}(0)\pa^\al_\be( F_0-M(0))\|_{L^2_{x,v}}\leq \vep.
	\end{aligned}
\end{equation}
Thus Theorem \ref{biinjectionofMF0} gives that
\begin{multline}\label{abcR1}
 \|(m-m_1,y-y_1,z-z_1,a-a_1, b-b_1,c-c_1,\mathsf{R}-\mathsf{R}_1)\|_2\ls \vep\\
 \mbox{and}\quad \f M {M_1}+\f{M_1}M\ls e^{C\vep(1+|x|^2+|v|^2)}\ls \min\{M^{-C\vep},M_1^{-C\vep}\}.
\end{multline}
By virtue of \eqref{abcR1} and Mean Value Theorem, one may get that
\ben\label{MM4}
|M_1^{-1}\pa^\al_\be(M-M_1)|&\ls& C(\Psi(M))\|(m-m_1,y-y_1,z-z_1,a-a_1, b-b_1,c-c_1,\mathsf{R}-\mathsf{R}_1)\|_2M_1^{-C\vep}\notag\\
&\ls& \vep M_1^{-C\vep}.
\een

Since $C\vep\ll1$, it holds that $\de+\f{C\vep}2\in (0,\f14-\f\c 2)$. Thanks to  \eqref{abcR1} and \eqref{MM4}, we can obtain that 
\begin{equation}\label{MM2}
	\begin{aligned}
&\|M_1^{-1+2\de+C\vep}(0)\pa^\al_\be( F_0-M_1(0))\|_{L^2_{x,v}}\\
\leq&~~~\|(M_1/M)^{-1+2\de+C\vep}(0)M^{-1+2\de+C\vep}(0)\pa^\al_\be( F_0-M(0))\|_{L^2_{x,v}}+\|M_1^{-1+2\de+C\vep}(0)\pa^\al_\be(M(0)-M_1(0))\|_{L^2_{x,v}}\\
\ls&~~~  \|M^{-1+2\de}(0)\pa^\al_\be( F_0-M(0))\|_{L^2_{x,v}}+\vep \|M_1^{\de}(0)\|_{L^2_{x,v}}\ls \vep. 
\end{aligned}
\end{equation}
Thus the asymptotic stability ensures the global existence of solution $F(t)$ with initial data $F_0$ satisfying
	\ben\label{MM3}
	 \sum_{|\al|+|\be|\leq 5} \|M_1^{-1+3\de}(t)\pa^\al_\be (F(t)-M_1(t))\|_{L^2_{x,v}} \lesssim \vep.
\een
Thanks to \eqref{MM3} and the same trick used in \eqref{MM2}, we can derive that
	\beno \sum_{|\al|+|\be|\leq 5} \|M^{-1+4\de}(t)\pa^\al_\be (F(t)-M(t))\|_{L^2_{x,v}} \lesssim \vep.
\eeno
This ends the proof of Theorem \ref{Thmnonstability}.
 \end{proof}

\section{Appendix}\label{Section Appendix}
In this appendix, we will provide  detail proofs for the claims in the previous sections. We begin with a detailed proof for \eqref{entropyinvariant}.

\begin{prop}
Let \mbox{$F$ be an entropy-invariant solution of \eqref{kineticwithexternalpotential}} with $\mathcal{C}_{coll}(F,F)=Q(F,F)$. Then  
\beno  \mathscr{D}(F)(t)=0, \forall t\ge0 \Leftrightarrow \mathcal{C}_{coll}(F,F)(t)=0, \forall t\ge0. \eeno 
\end{prop}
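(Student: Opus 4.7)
The forward implication $Q(F,F)\equiv 0\Rightarrow\mathscr{D}(F)\equiv 0$ is immediate from the definition in \eqref{entropy}, since $\mathscr{D}(F)(t)=-\int_{\R^3_x\times\R^3_v} Q(F,F)\log F\,dv\,dx$ vanishes pointwise in $t$ as soon as the integrand does. So the content is the reverse direction.

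My starting point would be the already-displayed formula \eqref{entropydisspationoflandau}, obtained from the standard symmetrisation under $v\leftrightarrow v_*$ together with integration by parts in $v$, which realises $\mathscr{D}(F)$ as the integral of a pointwise non-negative quantity. Using that $a_{ij}(z)=|z|^{-1}\Pi_{ij}(z)$ with $\Pi(z)=\mathbb{I}-zz^\tau/|z|^2$ the orthogonal projection onto $z^\perp$, the integrand in \eqref{entropydisspationoflandau} equals
\beno
|v-v_*|^{-1}\big|\Pi(v-v_*)\big(\nabla_v\log F(v)-\nabla_v\log F(v_*)\big)\big|^2 F F_*.
\eeno
The hypothesis $\mathscr{D}(F)(t)=0$ therefore forces, for a.e.\ $(x,v,v_*)$ in the support of $F(t,x,v)F(t,x,v_*)$, the vector $\nabla_v\log F(t,x,v)-\nabla_v\log F(t,x,v_*)$ to be parallel to $v-v_*$.

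Fixing such $(t,x)$ and writing $B(v):=\nabla_v\log F(t,x,v)$, the relation $B(v)-B(v_*)=\lambda(v,v_*)(v-v_*)$ is a classical pointwise identity whose only solutions are affine maps $B(v)=\lambda v+b$, with $\lambda\in\R$ and $b\in\R^3$ depending only on $(t,x)$; the scalar nature of the linear part follows either by differentiating the relation in $v$ and $v_*$ separately and comparing, or by fixing two reference velocities and varying the third. Integrating $B$ in $v$ yields $F(t,x,v)=A(t,x)\exp\{\tfrac{\lambda(t,x)}{2}|v|^2+b(t,x)\cdot v\}$, i.e.\ $F$ is a local Maxwellian of the form \eqref{localMaxwellian} at every $(t,x)$ (with $\lambda<0$ forced by integrability). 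Since local Maxwellians lie in the kernel of $Q(\cdot,\cdot)$, this gives $Q(F,F)(t,x,v)=0$ for all $t\geq 0$, concluding the proof.

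The main obstacle I anticipate is the classification step: deducing rigorously that $B$ is affine with $\nabla B$ proportional to $\mathbb{I}$ requires handling the set $\{F>0\}$ and the measurability of $\lambda(v,v_*)$ carefully, particularly if $F$ is only assumed integrable. In practice one argues on an open connected component of $\{F>0\}$ where $\sqrt{F}$ enjoys enough Sobolev regularity (a consequence of the finiteness of $\mathscr{D}(F)$ together with the coercivity of $a_{ij}$ away from the kernel direction), and then propagates the affine identity by density. These technical points are standard in the Desvillettes–Villani analysis of the Landau entropy dissipation, and I would cite that machinery rather than reproduce it in full.
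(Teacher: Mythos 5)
Your proposal is correct, but in the nontrivial direction it takes a genuinely different route from the paper. To get $\mathscr{D}(F)\equiv 0\Rightarrow Q(F,F)\equiv 0$ you classify the null set of the dissipation: the projection structure of $a_{ij}$ forces $\nabla_v\log F(v)-\nabla_v\log F(v_*)$ to be parallel to $v-v_*$ on the support of $FF_*$, the classical rigidity lemma then makes $\nabla_v\log F$ affine with scalar linear part, so $F$ is a local Maxwellian of the form \eqref{localMaxwellian} and hence $Q(F,F)=0$. The paper never classifies $F$ at this stage: it writes the weak form of $Q(F,F)$ tested against an arbitrary $\varphi$, which after symmetrization reads $-\tfrac12\iint a_{ij}(v-v_*)\{\partial_j\log F(v)-\partial_j\log F(v_*)\}\{\partial_i\varphi(v)-\partial_i\varphi(v_*)\}FF_*$, and applies pointwise the Cauchy--Schwarz inequality $|(A\xi,\eta)_{\R^3}|^2\le (A\xi,\xi)_{\R^3}(A\eta,\eta)_{\R^3}$ for the semi-positive matrix $a$; this bounds $|\int Q(F,F)\varphi\,dv\,dx|$ by $\mathscr{D}(F)^{1/2}$ times a finite quantity, so $\mathscr{D}(F)=0$ yields $Q(F,F)=0$ in the distributional sense with no discussion of positivity, the set $\{F>0\}$, or regularity. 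Your route buys the explicit local Maxwellian form as a by-product, but that is precisely where your acknowledged technical debt sits (measurability of $\lambda(v,v_*)$, connectedness of $\{F>0\}$, regularity of $\sqrt F$), and you would indeed need to import the Desvillettes--Villani machinery to discharge it; the paper's duality trick is shorter, self-contained, and valid at the level of weak solutions. Your forward direction, read off directly from \eqref{entropy}, is fine; the paper instead passes through the local Maxwellian form and the nonnegative representation \eqref{entropydisspationoflandau}, which also sidesteps interpreting $Q(F,F)\log F$ on $\{F=0\}$.
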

\begin{proof}  We first assume that $\mathcal{C}_{coll}(F,F)(t)=0, \forall t\ge0$. This implies that $F$ takes the form \eqref{localMaxwellian}. From this together with \eqref{entropydisspationoflandau}, we conclude that  $\mathscr{D}(F)(t)=0, \forall t\ge0$. 
For the inverse direction, suppose that $ \mathscr{D}(F)(t)=0, \forall t\ge0$.
Let  $\varphi$ be a  test function. It is easy to verify that
 \beno
&& \int_{\R^6} Q(F,F) (x,v)\, \varphi(x,v) \, dvdx= - \frac12 \, \sum_{i=1}^3\sum_{j=1}^3 \iint_{\R^9} a_{ij}(v-v_{*}) \\
&&\quad\times
 \left\{ \frac{\partial_j F}{F}(v) - \frac{\partial_j F}{F}(v_{*})  \right\} \left\{ \partial_i \varphi(v) - \partial_i \varphi(v_{*})  \right\}   
 F(v) F(v_{*}) \, dv \, dv_{*}dx.
\eeno
Since for all $\xi,\eta\in\R^3$,  it holds that $|(A\xi,\eta)_{\R^3}|^2\le (A\xi,\xi)_{\R^3}(A\eta,\eta)_{\R^3}$ if $A$ is semi-positive definite. Then it implies that
\beno 
&&\bigg|\int_{\R^6} Q(F,F) (x,v)\, \varphi(x,v) \, dvdx\bigg|\le \mathscr{D}(F)^{\f12}\bigg(\f12\int_{\R^9}  a_{ij}(v-v_{*}) \\
&&\quad\times
 \left\{ \partial_j \varphi(v) - \partial_j \varphi(v_{*})  \right\}    \left\{ \partial_i \varphi(v) - \partial_i \varphi(v_{*})  \right\}   
 F(v) F(v_{*}) \, dv \, dv_{*}dx\bigg)^{\f12}.
\eeno 
We conclude the desired result and end the proof.
\end{proof}

Next we give a detailed proof of \eqref{conservationofNSlandau} in Remark \ref{rmk140}.

\begin{prop}Let $G(t,x,v)$ be a global solution to \eqref{NSlandauCauchy} with the initial data $G_0=G_0(x,v)$. Then 
\ben\label{conservationofNSlandau1}
\f{d}{dt}\int_{\R^3_x\times\R^3_v} G(t, x, v)\begin{bmatrix} 1 \\ v   \\ \mfS^{-1}x \\ |v + \mfR\mfS^{-1}x|^2 \\ \mfS^{-1}x \cdot v \\ |\mfS^{-1}x|^2 \\   (v + \mfR\mfS^{-1}x) \wedge \mfS^{-1}x   \end{bmatrix} dxdv=0.
\een
\end{prop}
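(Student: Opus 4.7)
My plan is to test equation \eqref{NSlandauCauchy} against each scalar entry of the vector on the right-hand side of \eqref{conservationofNSlandau1} and integrate over $\mathbb{R}^3_x\times\mathbb{R}^3_v$. The transport operator $\tilde T:=\mfS v\cdot\nabla_x-\mfS x\cdot\nabla_v+\mfR x\cdot\nabla_x-\mfR v\cdot\nabla_v$ is divergence-free in $(x,v)$, since $\mathrm{tr}(\mfR)=0$ by skew-symmetry and $\mfS v$, $\mfS x$ are linear in variables they do not differentiate; hence integration by parts produces
\[
\frac{d}{dt}\int G\phi\,dxdv = \mathscr{C}_l(t)\int G\,\tilde T\phi\,dxdv + \mathscr{C}\int Q(G,G)\phi\,dxdv.
\]
Every $\phi$ in the list is polynomial of degree at most two in $v$ whose $v$-content lies in the span of the Landau collisional invariants $\{1,v_i,|v|^2\}$ (with $x$-dependent coefficients), so $\int Q(G,G)\phi\,dv=0$ pointwise in $x$. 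The problem reduces to showing $\int G\,\tilde T\phi\,dxdv=0$ for each scalar component.

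The heart of the argument is the explicit calculation of $\tilde T\phi$, which relies on the identities $\mfS=\mfS^\tau$, $\mfR^\tau=-\mfR$, $\mfS\mfR=\mfR\mfS$ from \eqref{equality R G -1 x G x}, together with the crucial block-diagonal identity $\mfS^2-\mfR^2=\mathbb{I}$ coming from \eqref{deofR}--\eqref{deofS}. Using $\mfS x=(\mathbb{I}+\mfR^2)\mfS^{-1}x$ I obtain $\tilde T(\mfS^{-1}x)=v+\mfR\mfS^{-1}x$ and $\tilde T v=-\mfS^{-1}x-\mfR(v+\mfR\mfS^{-1}x)$. The skew-symmetry of $\mfR$ and the contraction $(\mfS^{-1}x,\mfR^2\mfS^{-1}x)=-|\mfR\mfS^{-1}x|^2$ yield $|x|^2=|\mfS^{-1}x|^2-|\mfR\mfS^{-1}x|^2$, from which I deduce
\[
\tilde T(\mfS^{-1}x\cdot v)=|v+\mfR\mfS^{-1}x|^2-|\mfS^{-1}x|^2,\qquad \tilde T|v+\mfR\mfS^{-1}x|^2=-2(\mfS^{-1}x\cdot v)=-\tilde T|\mfS^{-1}x|^2.
\]
For the antisymmetric tensor, differentiating gives the symmetric outputs $(v+\mfR\mfS^{-1}x)\otimes(v+\mfR\mfS^{-1}x)-(\mfS^{-1}x)\otimes(\mfS^{-1}x)$, which vanish upon antisymmetrization; thus $\tilde T\bigl((v+\mfR\mfS^{-1}x)\wedge\mfS^{-1}x\bigr)=0$ identically.

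These identities already give the conservation of $\phi=1$, of the angular-momentum tensor $(v+\mfR\mfS^{-1}x)\wedge\mfS^{-1}x$, and of the trace-like combination $|v+\mfR\mfS^{-1}x|^2+|\mfS^{-1}x|^2$. For the remaining entries the time derivative of $\bar{\Psi}(G)$ takes the form of a closed linear skew-rotation system in the moments themselves, coupling the vector pair $\bigl(\int G(v+\mfR\mfS^{-1}x),\int G\mfS^{-1}x\bigr)$ via $\dot A=-\mathscr{C}_l(t)B$, $\dot B=\mathscr{C}_l(t)A$, and analogously the scalar triple $\bigl(\int G|v+\mfR\mfS^{-1}x|^2,\int G|\mfS^{-1}x|^2,\int G(\mfS^{-1}x\cdot v)\bigr)$ around the fixed point $R=S$, $P=0$. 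The main obstacle---and the place where the proof is not just formal algebra---is closing this system to zero driving term. I would do this by evaluating the source terms on the reference Maxwellian $\mathcal{M}$ and exploiting the Gaussian moment identity $\int\mathcal{M}|v+\mfR\mfS^{-1}x|^2=3+\tfrac{2r^2}{1-r^2}=\tfrac{3-r^2}{1-r^2}=\int\mathcal{M}|\mfS^{-1}x|^2$ (a direct computation using $\mfS^2-\mfR^2=\mathbb{I}$), so that the ODE system for the moments is initialized at its fixed point; uniqueness of solutions of the linear ODE then forces the moments to remain constant for all $t\geq0$. The remaining routine step is careful index bookkeeping for the wedge product and systematic substitution of $\mfR^2=\mfS^2-\mathbb{I}$ throughout.
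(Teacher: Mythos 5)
Your route is genuinely different from the paper's, and most of it is correct: testing \eqref{NSlandauCauchy} against the thirteen functions, using that the transport field is divergence-free and that each entry is, for fixed $x$, a Landau collision invariant, and the explicit identities $\tilde T(\mfS^{-1}x)=v+\mfR\mfS^{-1}x$, $\tilde T(v+\mfR\mfS^{-1}x)=-\mfS^{-1}x$, $\tilde T(\mfS^{-1}x\cdot v)=|v+\mfR\mfS^{-1}x|^2-|\mfS^{-1}x|^2$ and $\tilde T\bigl((v+\mfR\mfS^{-1}x)\wedge\mfS^{-1}x\bigr)=0$ are all right. The paper never works on \eqref{NSlandauCauchy} directly: it pulls $G=F_5$ back through the unit-Jacobian changes of variables \eqref{FtoF1}--\eqref{F4toF5}, applies Lemma \ref{CLaw} and Proposition \ref{properM0100} to the underlying solution of \eqref{CauchyLandau}, and extracts time-independence by $\cos t$/$\sin t$ combinations of \eqref{conservF1}--\eqref{conservF6}.

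The gap is your closure step. Your own computation gives the rotation system $\frac{d}{dt}\int G\,\mfS^{-1}x\,dxdv=\mathscr{C}_l(t)\int G(v+\mfR\mfS^{-1}x)\,dxdv$ and $\frac{d}{dt}\int G(v+\mfR\mfS^{-1}x)\,dxdv=-\mathscr{C}_l(t)\int G\,\mfS^{-1}x\,dxdv$, and the analogous system for the quadratic triple; since $\mathscr{C}_l(t)>0$, the individual entries are constant only if the system starts at its fixed point, i.e. only if $\int G_0 v=\int G_0\mfS^{-1}x=0$, $\int G_0\,\mfS^{-1}x\cdot v=0$ and $\int G_0|v+\mfR\mfS^{-1}x|^2=\int G_0|\mfS^{-1}x|^2$. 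Your justification---computing Gaussian moments of $\mathcal{M}$---constrains $\mathcal{M}$, not $G_0$; nothing in the stated hypotheses places $G_0$ at the fixed point, and for $G_0(x,v)=\mathcal{M}(x-x_0,v)$ with $x_0\neq0$ one finds $\frac{d}{dt}\int Gv\,dxdv$ at $t=0$ equal to $-\mathscr{C}_l(0)\,\mfS x_0\neq0$, so unconditionally your argument yields only the first entry, the wedge entry, and the sum of the fourth and sixth entries. The missing input is exactly what the paper's proof supplies: there $G$ is identified via \eqref{RelationFG} with the reduction of a solution $F$ of \eqref{CauchyLandau} whose thirteen conserved quantities are those of the chosen time-periodic Maxwellian $M$---this is what \eqref{F2con1}--\eqref{F2con2} encode---and that normalization is precisely what pins the rotating moments at the fixed-point values recorded in \eqref{conF5}. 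To complete your proof you must import such an assumption explicitly (e.g. $\bar{\Psi}(G_0)=\bar{\Psi}(\mathcal{M})$, as in Theorem \ref{ThmGstofNSlandau}, or that $G$ arises from the reduction); some restriction of this kind is unavoidable, because the unconditional claim for the second, third and fifth entries (and for the fourth and sixth taken separately) fails.
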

\begin{proof} To prove the desired result, we follow the notations used in (\ref{FtoF1}-\ref{F4toF5}).
Thanks to  \eqref{FtoF1}, \eqref{F1toF2}, we first  get that
\ben\label{F2con1}
\int_{\R^3_x\times\R^3_v}\begin{bmatrix} 1 \\ x\cos t - v\sin t \\ x\sin t + v\cos t \end{bmatrix}F_2(t, x, v)dxdv = \begin{bmatrix} 1 \\ 0 \\ 0 \end{bmatrix}=\int_{\R^3_x\times\R^3_v}\begin{bmatrix} 1 \\ x\cos t - v\sin t \\ x\sin t + v\cos t \end{bmatrix}M_2(t, x, v)dxdv;
\een  
Thanks to Proposition \ref{properM0100}, we further derive that
\ben\label{F2con2}
	&& \notag\quad\int_{\R^3_x\times\R^3_v}\begin{bmatrix} |x\cos t - v\sin t|^2 \\ (x\cos t - v\sin t) \cdot (x\sin t + v\cos t) \\ |x\sin t + v\cos t|^2 \\ (x\cos t - v\sin t) \wedge (x\sin t + v\cos t) \end{bmatrix}F_2(t, x, v)dxdv\\
	 &=& \int_{\R^3_x\times\R^3_v}(|x|^2 , x \cdot v , |v|^2 ,x \wedge v)^\tau M_2(0, x, v)dxdv
	=(c \mathrm{tr}\mathsf{Q}^{-1}, -b \mathrm{tr}\mathsf{Q}^{-1} , a \mathrm{tr}\mathsf{Q}^{-1} , -2\mathsf{Q}^{-1}\mathsf{R})^\tau.
\een

In what follows, we will frequently use the following equality:
\ben\label{FJocobi} \int_{\R_x^3\times\R_v^3} (F\circ  \Phi_1)(x,v)(P\circ \Phi_1)(x,v)dxdv=\int_{\R_x^3\times\R_v^3} F(X,V)P(X,V) dXdV, \een 
if $\Phi_1$ is a transformation from $(x,v)\in\R^3\times\R^3$ to $(X,V)\in\R^3\times\R^3$ with a unit Jacobian.
By the relation \eqref{F2toF3}, it is easy to verify that  
\begin{equation*}
	\Phi_1: (x, v) \to (\sqrt{C(t)}x, \frac{v - B(t)x + \mathsf{R}x}{\sqrt{C(t)}}),
\end{equation*}
has a unit Jacobian. Since
\begin{equation*}
	\begin{aligned}
		x\cos t - v\sin t &\to \sqrt{C(t)}x\cos t - \frac{v - B(t)x + \mathsf{R}x}{\sqrt{C(t)}}\sin t = \frac{(c\cos t - b\sin t)x - (v + \mathsf{R}x)\sin t}{\sqrt{C(t)}};\\
		x\sin t + v\cos t &\to \sqrt{C(t)}x\sin t + \frac{v - B(t)x + \mathsf{R}x}{\sqrt{C(t)}}\cos t = \frac{(a\sin t - b\cos t)x + (v + \mathsf{R}x)\cos t}{\sqrt{C(t)}},\\
	\end{aligned}
\end{equation*}
 \eqref{F2con1}, \eqref{F2con2} and \eqref{FJocobi} will imply that
\ben  &&\int_{\R_x^3\times\R_v^3} F_3(t, x, v)dxdv=1;\nonumber \\
		  &&\int_{\R_x^3\times\R_v^3}[(c\cos t - b\sin t)x - (v + \mathsf{R}x)\sin t]F_3(t, x, v)dxdv=0;\label{conservF1}\\
	  &&\int_{\R_x^3\times\R_v^3}[(a\sin t - b\cos t)x + (v + \mathsf{R}x)\cos t]F_3(t, x, v)dxdv=0;\label{conservF2}\\
	 && \int_{\R_x^3\times\R_v^3}|(c\cos t - b\sin t)x - (v + \mathsf{R}x)\sin t|^2F_3(t, x, v)dxdv= cC(t)\mathrm{tr}\mathsf{Q}^{-1}; \label{conservF3}\\
	&&\int_{\R_x^3\times\R_v^3}[(c\cos t - b\sin t)x - (v + \mathsf{R}x)\sin t] \cdot [(a\sin t - b\cos t)x + (v + \mathsf{R}x)\cos t]F_3(t, x, v)dxdv\nonumber \\&&= -bC(t)\mathrm{tr}\mathsf{Q}^{-1}; \label{conservF4}\\
		&&  \int_{\R_x^3\times\R_v^3}|(a\sin t - b\cos t)x + (v + \mathsf{R}x)\cos t|^2F_3(t, x, v)dxdv=aC(t)\mathrm{tr}\mathsf{Q}^{-1};\label{conservF5}\\
		&&\int_{\R_x^3\times\R_v^3}[(c\cos t - b\sin t)x - (v + \mathsf{R}x)\sin t] \wedge [(a\sin t - b\cos t)x + (v + \mathsf{R}x)\cos t]F_3(t, x, v)dxdv\nonumber\\&&= -2C(t)\mathsf{Q}^{-1}\mathsf{R}.\label{conservF6} 
\een

  We first observe that   $\cos t \times \eqref{conservF1} + \sin t \times \eqref{conservF2}$ implies  that
\beno
\int_{\R_x^3\times\R_v^3} xF_3(t, x, v)dxdv = \int_{\R_x^3\times\R_v^3} vF_3(t, x, v)dxdv = 0.
\eeno
Secondly, the combination of $\cos t \times \eqref{conservF3} + \sin t \times \eqref{conservF4}$ and $\sin t \times \eqref{conservF5} + \cos t\times \eqref{conservF4}$ yield that 
\ben
 &&\int_{\R_x^3\times\R_v^3}[(c\cos t - b\sin t)|x|^2 - \sin tx \cdot v]F_3(t, x, v)dxdv=(c\cos t - b\sin t)\mathrm{tr}\mathsf{Q}^{-1};\label{conservF7}\\
 &&\int_{\R_x^3\times\R_v^3}[(a\sin t - b\cos t)|x|^2 + \cos tx \cdot v]F_3(t, x, v)dxdv=(a\sin t - b\cos t)\mathrm{tr}\mathsf{Q}^{-1}. \label{conservF8}
\een
Thirdly,   $\cos t \times \eqref{conservF7} + \sin t \times \eqref{conservF8} $ will lead to that
\beno
\int_{\R_x^3\times\R_v^3}|x|^2F_3(t, x, v)dxdv=\mathrm{tr}\mathsf{Q}^{-1}~~\mbox{and}~~ \int_{\R_x^3\times\R_v^3} x \cdot vF_3(t, x, v)dxdv = 0.
\eeno
Putting together $ \eqref{conservF3} + \eqref{conservF5} $ and   \eqref{conservF6}, we have 
\beno
\int_{\R_x^3\times\R_v^3}|v + \mathsf{R}x|^2F_3(t, x, v)dxdv = (ac - b^2)\mathrm{tr}\mathsf{Q}^{-1}~~\mbox{and}~~  \int_{\R_x^3\times\R_v^3} x \wedge (v +  \mathsf{R}x)F_3(t, x, v)dxdv = -2\mathsf{Q}^{-1} \mathsf{R}.
\eeno
 
In conclusion, we  obtain that
\beno
	&&\int_{\R_x^3\times\R_v^3}(1,x,v,|x|^2,x\cdot v,|v+\mathsf{R} x|^2,x\wedge(v+\mathsf{R}x))^\tau F_3(t, x, v)dxdv \\
	&&=( 1 ,0 , 0 ,\mathrm{tr}\mathsf{Q}^{-1} ,0, (ac - b^2)\mathrm{tr}\mathsf{Q}^{-1} ,-2\mathsf{Q}^{-1}\mathsf{R} )^{\tau}.
\eeno
 
Thanks to \eqref{F3toF4}, this gives that 
\beno
	&&\int_{\R^3_x\times\R^3_v}( 1 , \mathsf{S}^{-1}x, v , |\mathsf{S}^{-1}x|^2 , \mathsf{S}^{-1}x \cdot v ,|v + \mathsf{R}\mathsf{S}^{-1}x|^2 , \mathsf{S}^{-1}x \wedge (v + \mathsf{R}\mathsf{S}^{-1}x) )^\tau F_4(t, x, v)dxdv \\
	&&= ( 1, 0 , 0 , \mathrm{tr}\mathsf{Q}^{-1} , 0 , (ac - b^2)\mathrm{tr}\mathsf{Q}^{-1} ,-2\mathsf{Q}^{-1}\mathsf{R})^\tau.
\eeno
From this together with \eqref{F4toF5}, we conclude that
\begin{multline}\label{conF5}
	\int_{\R^3_x\times\R^3_v}( 1 ,\mfS^{-1}x, v , |\mfS^{-1}x|^2 , \mfS^{-1}x \cdot v ,|v + \mfR\mfS^{-1}x|^2 , \mfS^{-1}x \wedge (v + \mfR\mfS^{-1}x) )^\tau F_5(t, x, v)dxdv \\
	= ( 1, 0 , 0 , (ac-b^2)\mathrm{tr}\mathsf{Q}^{-1} , 0 , (ac - b^2)\mathrm{tr}\mathsf{Q}^{-1} ,-2(ac-b^2)^{\f12}\mathsf{Q}^{-1}\mathsf{R})^\tau,
\end{multline}
where $\mfS$ and $\mfR$ are defined in \eqref{deofS} and \eqref{deofR}. This completes the proof of \eqref{conservationofNSlandau1}.
\end{proof}

\smallskip

Next, we give some interpolation inequalities. One may easily get the proof by combining Cauchy-Schwarz inequality, Young inequality and Lemma 4.15 in \cite{CHJ}.
\begin{lem}\label{interpolationineq}
	Let $a_i,b_i\in\R,i=1,2,3,\th\in(0,1)$ verifying $a_1=a_2\th+a_3(1-\th)$ and $b_1=b_2\th+b_3(1-\th)$, then for any function $f(v)$ or $f(x,v)$, we have 
	\beno
	&&\|\<\cdot\>^{a_1}e^{b_1\<\cdot\>^2}f\|_{L^2}\leq \|\<\cdot\>^{a_2}e^{b_2\<\cdot\>^2}f\|^\th_{L^2}\|\<\cdot\>^{a_3}e^{b_3\<\cdot\>^2}f\|^{1-\th}_{L^2}\leq \epsilon \|\<\cdot\>^{a_2}e^{b_2\<\cdot\>^2}f\|_{L^2}+C_{\epsilon,\th}\|\<\cdot\>^{a_3}e^{b_3\<\cdot\>^2}f\|_{L^2},\\
	&&\|f\|_{H^{a_1}_{b_1}}=\|\<D\>^{a_1}\<\cdot\>^{b_1}f\|_{L^2}\sim\|\<\cdot\>^{b_1}\<D\>^{a_1}f\|_{L^2}~~\mbox{and}~~\|f\|_{H^{a_1}_{b_1}}\leq \|f\|^\th_{H^{a_2}_{b_2}}\|f\|^{1-\th}_{H^{a_3}_{b_3}}\ls \epsilon \|f\|_{H^{a_2}_{b_2}}+C_{\epsilon,\th}\|f\|_{H^{a_3}_{b_3}}.
	\eeno
\end{lem}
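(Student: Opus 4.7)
The plan is to derive both estimates by combining a pointwise factorization of the weights with H\"older's inequality, and then to convert each multiplicative bound into the additive form by a weighted Young inequality. The argument is identical for the pure $v$-version and the $(x,v)$-version, so I describe only the former.

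First I would exploit the linear relations $a_1 = \theta a_2 + (1-\theta)a_3$ and $b_1 = \theta b_2 + (1-\theta)b_3$, which give the pointwise identity $\langle v \rangle^{a_1} e^{b_1 \langle v\rangle^2} = (\langle v\rangle^{a_2} e^{b_2 \langle v\rangle^2})^\theta (\langle v\rangle^{a_3} e^{b_3 \langle v\rangle^2})^{1-\theta}$. Multiplying by $f$, squaring, integrating, and applying H\"older's inequality with the conjugate exponents $1/\theta$ and $1/(1-\theta)$ to the resulting product $|\langle v\rangle^{a_2} e^{b_2\langle v\rangle^2} f|^{2\theta} \cdot |\langle v\rangle^{a_3} e^{b_3\langle v\rangle^2} f|^{2(1-\theta)}$ yields the multiplicative estimate. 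The additive form is then an instance of the weighted Young inequality $XY \leq \epsilon X^{1/\theta} + C_{\epsilon,\theta} Y^{1/(1-\theta)}$ applied to $X = \|\langle \cdot\rangle^{a_2} e^{b_2\langle\cdot\rangle^2} f\|_{L^2}^{\theta}$ and $Y = \|\langle \cdot\rangle^{a_3} e^{b_3\langle\cdot\rangle^2} f\|_{L^2}^{1-\theta}$.

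For the Sobolev-version I would first establish the norm equivalence $\|\langle D\rangle^{a_1}\langle v\rangle^{b_1} f\|_{L^2} \sim \|\langle v\rangle^{b_1}\langle D\rangle^{a_1} f\|_{L^2}$ by commuting the Bessel multiplier $\langle D\rangle^{a_1}$ with the polynomial weight $\langle v\rangle^{b_1}$; this commutator estimate is precisely Lemma 4.15 in \cite{CHJ} (with an extra interpolation between integer orders if $a_1$ is not an integer). Once the equivalence is in place, one localizes in phase space and applies the multiplicative H\"older interpolation of the first part of the lemma componentwise (in the $D$-variable via Plancherel and in the $v$-variable via the polynomial weight factorization), which yields the interpolation bound $\|f\|_{H^{a_1}_{b_1}} \leq \|f\|_{H^{a_2}_{b_2}}^{\theta} \|f\|_{H^{a_3}_{b_3}}^{1-\theta}$. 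A final invocation of Young's inequality converts this product bound into the advertised additive form.

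The only non-mechanical step is the commutator estimate underpinning the norm equivalence, since $\langle v\rangle^{b_1}$ and $\langle D\rangle^{a_1}$ do not commute; however, this is precisely what \cite[Lemma~4.15]{CHJ} supplies, so no genuine obstacle is expected and the remainder is routine bookkeeping with H\"older and Young.
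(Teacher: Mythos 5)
Your treatment of the first chain of inequalities is exactly the paper's (unwritten) argument: the linear relations give the pointwise factorization $\langle v\rangle^{a_1}e^{b_1\langle v\rangle^2}=(\langle v\rangle^{a_2}e^{b_2\langle v\rangle^2})^\theta(\langle v\rangle^{a_3}e^{b_3\langle v\rangle^2})^{1-\theta}$, H\"older with exponents $1/\theta$, $1/(1-\theta)$ yields the multiplicative bound, and Young's inequality converts it to the additive one; the paper itself only says "combine Cauchy--Schwarz, Young and Lemma 4.15 in [CHJ]", so on this part you coincide with it.

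For the Sobolev part, the one step I would not accept as written is the claim that, once the equivalence $\|\langle D\rangle^{a_1}\langle\cdot\rangle^{b_1}f\|_{L^2}\sim\|\langle\cdot\rangle^{b_1}\langle D\rangle^{a_1}f\|_{L^2}$ is available, the mixed interpolation follows by applying the H\"older argument "componentwise, in the $D$-variable via Plancherel and in the $v$-variable via the weight factorization". The weight $\langle v\rangle^{b}$ and the multiplier $\langle D\rangle^{a}$ act in dual variables and cannot be simultaneously diagonalized: after Plancherel the weight becomes a nonlocal operator in frequency, so a plain pointwise factorization plus H\"older does not produce $\|\langle\cdot\rangle^{b_1}\langle D\rangle^{a_1}f\|\le\|\langle\cdot\rangle^{b_2}\langle D\rangle^{a_2}f\|^{\theta}\|\langle\cdot\rangle^{b_3}\langle D\rangle^{a_3}f\|^{1-\theta}$. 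To make this rigorous you either need an actual argument (e.g.\ Littlewood--Paley decomposition in frequency, using $\langle D\rangle^{a}\sim 2^{ja}$ on each dyadic block together with the commutator control, and then H\"older in $j$ and in $v$; or complex interpolation of the weighted spaces), or you simply invoke [CHJ, Lemma 4.15] for the full mixed regularity--weight interpolation rather than only for the commutator equivalence --- the latter is precisely what the paper does, and with that adjustment your proof is complete.
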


\medskip

At the end of this paper, we provide a brief derivation of  macroscopic equation (\ref{macroeq}-\ref{defofT2}).
\begin{prop}\label{abcequa}
Let $f$ be a solution to \eqref{pertubNSlandaucauchy}.  
Then (\ref{macroeq}-\ref{defofT2}) hold ture.
\end{prop}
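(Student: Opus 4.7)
The plan is to derive the five vector/matrix equations in \eqref{macroeq} by testing \eqref{pertubNSlandaucauchy} against the thirteen velocity moments $\{1,\ v_i,\ \tfrac{|v|^2-3}{6},\ \tfrac{v_iv_j}{2}\,(i\ne j),\ \tfrac{v_i^2-1}{2},\ \tfrac{v_i(|v|^2-5)}{10}\}$, integrating over $v$, and then multiplying by $e^{\frac12|x|^2}$. For each test function $\phi(v)$, the term $\int \mathscr{C}_1 e^{-\frac12|x|^2}L(f)\phi\,dv$ vanishes when $\phi$ is a collision invariant (i.e. for $\phi\in\{1,v,|v|^2\}$), and otherwise enters directly into the microscopic source terms $\sfT_{k1}$. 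The contribution $\int \mathscr{C}_2 Q(f,f)\phi\,dv = e^{-\frac12|x|^2}(g,\phi)_{L^2_v}$ is what, after multiplying back by $e^{\frac12|x|^2}$, makes up the remaining piece of $\sfT_{k1}$.

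For the streaming operator, I would compute term by term using integration by parts:
\begin{align*}
\int(\mfS v\cdot\na_x f)\phi\,dv &= \textstyle\sum_{k,l}\mfS_{kl}\pa_{x_k}\!\!\int v_l\phi f\,dv,\quad \int(-\mfS x\cdot\na_v f)\phi\,dv = \textstyle\sum_{k,l}\mfS_{kl}x_l\!\!\int f\pa_{v_k}\phi\,dv,\\
\int(\mfR x\cdot\na_x f)\phi\,dv &= \textstyle\sum_{k,l}\mfR_{kl}x_l\pa_{x_k}\!\!\int f\phi\,dv,\quad \int(-\mfR v\cdot\na_v f)\phi\,dv = \textstyle\sum_{k,l}\mfR_{kl}\!\int f\,v_l\pa_{v_k}\phi\,dv,
\end{align*}
using $\mathrm{tr}(\mfR)=0$ to discard the divergence-of-$v$ piece. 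Splitting $f=\mathbb{P}f+(\mathbb{I}-\mathbb{P})f$, the $\mathbb{P}f$-contributions are computed exactly using standard Gaussian moments ($\int v_i^2\mu=1$, $\int v_i^4\mu=3$, $\int|v|^4\mu=15$, $\int v_i^2|v|^2\mu=5$, $\int v_i^2|v|^4\mu=35$, $\int(|v|^2-3)^2\mu=6$, etc.), while the $(\mathbb{I}-\mathbb{P})f$-contributions are packaged into the microscopic source terms. Finally the factor $e^{\frac12|x|^2}$ is used via the two identities $\mfR x\cdot\na_x(h\,e^{\frac12|x|^2}) = (\mfR x\cdot\na_x h)e^{\frac12|x|^2}$ (since $\mfR x\cdot x=0$ by skew-symmetry) and $\mfS\na_x(h\,e^{\frac12|x|^2})=(\mfS\na_x h + \mfS x\,h)e^{\frac12|x|^2}$ (to merge transport terms with $\mfS x$-forcing terms).

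The main obstacle is the stress equation (fourth line) and the heat-flux equation (fifth line), where the test functions $v_iv_j/2$, $(v_i^2-1)/2$, and $v_i(|v|^2-5)/10$ are \emph{not} collision invariants but still have specific projections onto $\mathbb{P}f$. The crucial observation is that $v_i(|v|^2-5)$ is $L^2(\mu\,dv)$-orthogonal to $\mathrm{span}\{1,v,|v|^2-3\}$, so $\int f\cdot v_i(|v|^2-5)/10\,dv=((\mathbb{I}-\mathbb{P})f,v_i(|v|^2-5)/10)_{L^2_v}$ exactly, which is why $\pa_t\sfT_{52}$ appears rather than a $\pa_t\mc$ term. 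Meanwhile, one must verify the Gaussian identity $\int v_i^2(|v|^2-3)(|v|^2-5)/10\,\mu\,dv = 1$ to check that the transport piece $\int(\mfS v\cdot\na_x\mathbb{P}f)\cdot v_i(|v|^2-5)/10\,dv$ produces precisely $(\mfS\na_x\mc)_i$, which after multiplication by $e^{\frac12|x|^2}$ combines with the $\mfS x\mc$ piece coming from the $\mfS x\cdot\na_v$ transport (evaluated through $\pa_{v_k}(v_i(|v|^2-5)/10)$) to assemble the full expression $\mfS\na_x(\mc e^{\frac12|x|^2})$ on the left-hand side of the fifth equation. The analogous careful bookkeeping is needed for the $3\times3$ matrix equation, where the diagonal and off-diagonal test functions produce $\mc\,\mathbb{I}_{3\times3}$ and zero respectively from $\mathbb{P}f$, so that the microscopic moments contribute the $\sfT_{42}$ correction while $\na^{sym}_\mfS(\mb\,e^{\frac12|x|^2})$ is recovered from the $\mfS v\cdot\na_x$ and $\mfS x\cdot\na_v$ transport through $\pa_{v_k}(v_iv_j/2) = \delta_{jk}v_i/2+\delta_{ik}v_j/2$.
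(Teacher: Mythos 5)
Your proposal is correct and follows essentially the same route as the paper's proof: substitute the micro-macro decomposition $f=\mathbb{P}f+(\mathbb{I}-\mathbb{P})f$, test \eqref{pertubNSlandaucauchy} against the velocity moments $1$, $v$, $\tfrac{|v|^2-3}{6}$, $\tfrac12 v_iv_j$, $\tfrac12(v_i^2-1)$, $\tfrac{v(|v|^2-5)}{10}$, evaluate the macroscopic contributions by Gaussian moment identities and the microscopic/collision contributions as the $\sfT$-terms, and multiply by $e^{\frac12|x|^2}$ using $\mfR x\cdot\na_x e^{\frac12|x|^2}=0$. Your key observations (orthogonality of $v_i(|v|^2-5)$ to $\mathrm{span}\{1,v,|v|^2-3\}$ in $L^2(\mu\,dv)$, the moment $\int v_i^2(|v|^2-3)(|v|^2-5)\mu\,dv=10$, and the merging of $\mfS\na_x$ and $\mfS x$ terms into $\mfS\na_x(\cdot\,e^{\frac12|x|^2})$) are exactly the bookkeeping the paper performs, with the only cosmetic difference that the paper writes $L((\mathbb{I}-\mathbb{P})f)$ in the source terms, which agrees with your $L(f)$ since $L\mathbb{P}f=0$.
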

\begin{proof}
 Substituting $f=\mathbb{P}f+(\mathbb{I-P})f$ into \eqref{pertubNSlandaucauchy}, we can derive that
 \begin{equation}\label{abceq}
	\begin{aligned}
		&(\pa_t \ma +\pa_t\mb\cdot v +\pa_t\mc(|v|^2-3))\mu+\mathscr{C}_l(t)(\mfS v+\mfR x)\cdot(\na_x \ma+\na_x(\mb\cdot v)+\na_x\mc(|v|^2-3))\mu\\
		&-\mathscr{C}_l(t)(\mfS x+\mfR v)\cdot(\ma\na_v\mu+\na_v(b\cdot v\mu)+\mc\na_v((|v|^2-3)\mu))\\
		=&-(\pa_t+\mathscr{C}_l(t)(\mfS v\cdot\na_x-\mfS x\cdot\na_v+\mfR x\cdot\na_x-\mfR v\cdot\na_v))(\mathbb{I-P})f+\mathscr{C}_1e^{-\f12|x|^2}L((\mathbb{I-P})f)+g,
	\end{aligned}
\end{equation}
where $g:=\mathscr{C}_2Q(f,f)$. It is noteworthy that 
\begin{equation}\label{basicpro1}
	\begin{aligned}
		&(\pa_t(\mathbb{(I-P)}f),\phi(v))_{L^2_v}=0,\quad ((\mfS v)\cdot\na_x(\mathbb{(I-P)}f),1)_{L^2_v}=0,\quad ((\mfR x)\cdot\na_x(\mathbb{(I-P)}f),\phi(v))_{L^2_v}=0,\\
		&((\mfS x)\cdot\na_v(\mathbb{(I-P)}f),\phi(v))_{L^2_v}=0,\quad ((\mfR v)\cdot\na_v(\mathbb{(I-P)}f),\phi(v))_{L^2_v}=0,~~ \mbox{for}~~\phi(v)=1,v,|v|^2,
	\end{aligned}
\end{equation}
thanks to the facts that 
\begin{equation*}
	\begin{aligned}
		(\mathbb{(I-P)}f,\phi(v))_{L^2_v}=0,\quad (L(\mathbb{(I-P)}f),\phi(v))_{L^2_v}=0,\quad(\mfR v,v)_{\R^3}=0,~~ \mbox{for}~~\phi(v)=1,v,|v|^2.
	\end{aligned}
\end{equation*}

Let us  prove the first and the fourth equation in \eqref{macroeq} as typical cases. 
Taking inner product of \eqref{abceq} with $1$ in $L^2(\R^3_v)$, and using \eqref{msA}, we can get that
\beno
&&\pa_t \ma+\mathscr{C}_l(t)(\mfS\na_x\cdot\mb)+\mathscr{C}_l(t)\mfR x\cdot\na_x\ma-(g,1)_{L^2_v}\\
&=&\big(-(\pa_t+\mathscr{C}_l(t)(\mfS v\cdot\na_x-\mfS x\cdot\na_v+\mfR x\cdot\na_x-\mfR v\cdot\na_v))(\mathbb{I-P})f+\mathscr{C}_1e^{-\f12|x|^2}L((\mathbb{I-P})f),1\big)_{L^2_v}.
\eeno
Noting that \eqref{basicpro1} implies that the right-hand side term equals $0$, we can multiply both sides by \( e^{\frac{1}{2}|x|^2} \) to obtain the first equation in \eqref{macroeq} with $\sfT_{11}$ defined in \eqref{defofT1}.

Next, taking inner product of \eqref{abceq} with $\f12v_iv_j,i,j=1,2,3,i\neq j$ in $L^2(\R^3_v)$, and using \eqref{msA}, we can get that
\beno
&&\f12\mathscr{C}_l(t)((\mfS x)_i\mb_j+(\mfS x)_j\mb_i)+\pa_t\big((\mathbb{I-P})f,\f12v_iv_j\big)_{L^2_v}\\
&=&-\big(\mathscr{C}_l(t)(\mfS v\cdot\na_x-\mfS x\cdot\na_v+\mfR x\cdot\na_x-\mfR v\cdot\na_v))(\mathbb{I-P})f+\mathscr{C}_1e^{-\f12|x|^2}L((\mathbb{I-P})f)+g,\f12v_iv_j\big)_{L^2_v}
\eeno
Analogously, taking inner product of \eqref{abceq} with $\f12(v_i^2-1),i=1,2,3$ in $L^2(\R^3_v)$, we can obtain that
\beno
&&\pa_t\mc+\mathscr{C}_l(t)((\mfS x)_i\mb_i+\mfR x\cdot\na_x \mc)+\pa_t\big((\mathbb{I-P})f,\f12(v_i^2-1)\big)_{L^2_v}\\
&=&-\big(\mathscr{C}_l(t)(\mfS v\cdot\na_x-\mfS x\cdot\na_v+\mfR x\cdot\na_x-\mfR v\cdot\na_v))(\mathbb{I-P})f+\mathscr{C}_1e^{-\f12|x|^2}L((\mathbb{I-P})f)+g,\f12(v_i^2-1)\big)_{L^2_v}.
\eeno
Observing that $\mfR x\cdot\na_x(e^{\f12|x|^2})=0$ and $\int_{\R^3_x}(\mfS\na_x)_i\mb_j dx=0$ for any $i,j=1,2,3$, we can multiply both sides of two equations in the above by $e^{\f12|x|^2}$, yielding the fourth equation in \eqref{macroeq} with $\sfT_{41}$ and $\sfT_{42}$ defined in \eqref{defofT2}.

To reiterate the procedure described above, compute the inner product of \eqref{abceq} with \( v \),  then with \( \frac{|v|^2 - 3}{6} \) and finally with \( \frac{v(|v|^2 - 5)}{10} \). By subsequently applying \eqref{msA} and \eqref{basicpro1},  we can obtain the remaining equations outlined in \eqref{macroeq}.
\end{proof}


\begin{thebibliography}{99}

\bibitem{AMUXY}  R. Alexandre,  Y.Morimoto, S. Ukai, C.-J. Xu and T. Yang, Global existence and full regularity of the Boltzmann equation without angular cutoff, Comm. Math. Phys. 304 (2011), no. 2, 513-581.

\bibitem{Bakry Rate 2008}
D. Bakry, P. Cattiaux and A. Guillin,
Rate of convergence for ergodic continuous Markov processes: Lyapunov versus Poincar\'{e}. 
J. Funct. Anal., 254(2008), no. 3, 727-759.


 \bibitem{BGGL} C. Bardos,  I.M. Gamba, F. Golse, C. D. Levermore, 
Global solutions of the Boltzmann equation over  $\R^n$ near global Maxwellians with small mass,  Comm. Math. Phys. 346 (2016), no. 2, 435-467. 


\bibitem{Boltzmann} L. Boltzmann,  \"Uber die Aufstellung und Integration der Gleichungen, welche die Molekularbewegung in Gasen bestimmen. Kk Hof-und Staatsdruckerei, 1876.

\bibitem{BC} R. Bosi,  and M. J. Caceres, The BGK model with external confining potential: existence, longtime behavior and time-periodic Maxwellian equilibria, J. Stat. Phys. 136, 297-330 (2009).

\bibitem{CHJ}  C. Cao, L.-B. He, J. Ji, Propagation of moments and sharp convergence rate for inhomogeneous noncutoff Boltzmann equation with soft potentials, SIAM J. Math. Anal. 56 (2024), no. 1, 1321-1426.
 
\bibitem{KJFSC}
K. Carrapatoso, J. Dolbeault, F. H\'erau, S. Mischler and C. Mouhot,
{ Weighted Korn and Poincar\'e-Korn inequalities in the Euclidean space and associated operators,}
Archive for Rational Mechanics and Analysis, 243(2022), 1565-1596 .

\bibitem{KJFSCS}  K. Carrapatoso, J. Dolbeault, F. H\'erau, S. Mischler,  C. Mouhot   and C. Schmeiser, Special macroscopic modes and hypocoercivity. J. Eur. Math. Soc. (2024), published online first.


\bibitem{KS}
K. Carrapatoso and S. Mischer.
  Landau equation for very soft and Coulomb potentials near Maxwellians. 
Ann. PDE(2017) 3:1.
 
\bibitem{Ce} C. Cercignani, The Boltzmann equation and its applications. Applied Mathematical Sciences 67, Springer-Verlag, New York (1988).

\bibitem{DV1} L. Desvillettes   and  C. Villani,   On the trend to global equilibrium in spatially inhomogeneous entropy-dissipating systems: the linear Fokker-Planck equation,
Comm. Pure Appl. Math. 54 (2001), no. 1, 1-42.

\bibitem{DV2}  L. Desvillettes and C. Villani,  On the trend to global equilibrium for spatially inhomogeneous kinetic systems: the Boltzmann equation. Invent. Math. 159(2005), 245-316.

\bibitem{PM}
P. Degond and M. Lemou,
{ Dispersion relations for the linearized Fokker-Planck equation},
Archive for Rational Mechanics and Analysis, 138(1997) 137-167.



\bibitem{DMS} J. Dolbeault, C. Mouhot and C. Schmeiser,  Hypocoercivity for linear kinetic equations conserving mass. Trans. Amer. Math. Soc. 367, 3807-3828 (2015).

 

\bibitem{Douc Subgeometric 2009}
R. Douc, G. Fort, and A. Guillin,
Subgeometric rates of convergence of f-ergodic strong Markov processes. 
Stochastic Process. Appl., 119 (2009), no. 3, 897-923.



\bibitem{Grad} H. Grad,  On Boltzmann's H-theorem. J. Soc. Indust. Appl. Math. 13, 259-277 (1965).

 \bibitem{GS}  P. T. Gressman and R. M. Strain,  Global classical solutions of the Boltzmann equation without angular cut-off, J. Amer. Math. Soc. 24 (2011), no. 3, 771-847.

\bibitem{GMM} M. P. Gualdani,  S. Mischler, and C. Mouhot, Factorization of non-symmetric operators and exponential H-theorem,
M\'em. Soc. Math. Fr. (N.S.)(2017), no. 153, 137 pp.

 
\bibitem{Guo1} Y. Guo,   The Landau equation in a periodic box, Comm. Math. Phys. 231, 391-434 (2002).

\bibitem{Guo2} Y. Guo,  The Boltzmann equation in the whole space, Indiana Univ. Math. J. 53 (2004), no. 4, 1081-1094.


 \bibitem{HN1}  B. Helffer   and F. Nier,   Hypoelliptic estimates and spectral theory for Fokker-Planck operators and Witten Laplacians. Lecture Notes in Mathematics 1862, Springer-Verlag, Berlin (2005).


\bibitem{HL} L.-B. He, Sharp bounds for Boltzmann and Landau collision operators, Ann. Sci. Ec. Norm. Super. 
 51 (2018), no.4, 1285-1373.


\bibitem{HN2} F. H\'erau   and F. Nier, Isotropic hypoellipticity and trend to equilibrium for the Fokker-Planck equation with a high-degree potential,
Arch. Ration. Mech. Anal. 171 (2004), no. 2, 151-218.




\bibitem{Kim The 2018}

C. Kim and D. Lee. The Boltzmann equation with specular boundary condition in convex domains. Communications on Pure and Applied Mathematics 71, no. 3 (2018): 411-504.


 


 \bibitem{Leoni Giovanni (2017)}
G. Leoni. A First Course in Sobolev Spaces. Graduate Studies in Mathematics, Volume 105. American Mathematical Society. 2009.





\bibitem{Levermore1}
C. D. Levermore,  
 Global Maxwellians over all space and their relation to conserved quantities of classical kinetic equations,  preprint.





\bibitem{LYY} T.-P. Liu, T. Yang  and  S.-H. Yu,
Energy method for Boltzmann equation. 
Phys. D 188(2004), no.3-4, 178-192.


  \bibitem{Luk} L. Jonathan, Stability of vacuum for the Landau equation with moderately soft potentials. 
Ann. PDE 5 (2019), no. 1, Paper No. 11, 101 pp.



\bibitem{Mischler Exponential 2016}
S. Mischler, C. Mouhot,
Exponential stability of slowing decaying solutions to the Kinetic-Fokker-Planck equation.
Arch. Ration. Mech. Anal., 221(2): 677-723, 2016. 

 


\bibitem{Tabata Decay 1993}

M. Tabata,  Decay of solutions to the Cauchy problem for the linearized Boltzmann equation with some external-force potential. Japan J. Indust. Appl. Math. 10, 2 (1993), 237-253.  

\bibitem{Tabata Decay 1994}

M. Tabata,   Decay of solutions to the Cauchy problem for the linearized Boltzmann equation with an unbounded external-force potential. Transport Theory Statist. Phys. 23, 6 (1994), 741-780. 



\bibitem{uhlenbeck-ford}
G. E. Uhlenbeck and  G. W. Ford, Lectures in statistical mechanics,
Lectures in Applied Mathematics (Proceedings of the Summer Seminar, Boulder, Colorado, 1960), Vol. I
American Mathematical Society, Providence, RI, 1963, x+181 pp.


 
 
\bibitem{V2} C. Villani, Hypocoercivity. Mem. Amer. Math. Soc. 202, iv+141 (2009).


\bibitem{Wu Large 2001}
L. Wu. Large and moderate deviations and exponential convergence for stochastic damping Hamiltonian systems. Stochastic Process. Appl., 91(2): 205-238, 2001.




\end{thebibliography}
 \end{document}